\newenvironment{proof of}[1]{\emph{Proof of #1.}}{\hfill $\qquad \square$\par}
\DeclareMathOperator{\In}{In}
\DeclareMathOperator{\Proj}{Proj}
\DeclareMathOperator{\Ob}{Ob}
\DeclareMathOperator{\id}{id}
\DeclareMathOperator{\Aut}{Aut}
\DeclareMathOperator{\spane}{span}
\DeclareMathOperator{\clsp}{\overline{span}}
\newcommand{\I}{\mathcal I}
\newcommand{\K}{\mathcal K}
\newcommand{\LL}{\mathcal{L}}
\newcommand{\KK}{\mathcal K}
\newcommand{\FF}{\mathcal F}
\newcommand{\OO}{\mathcal O}
\renewcommand{\P}{\mathcal{P}}
\renewcommand{\SS}{\mathcal S}
\newcommand{\B}{\mathcal B}
\newcommand{\R}{\mathbb R}
\newcommand{\JJ}{\mathcal J}
\newcommand{\Z}{\mathbb Z}
\newcommand{\N}{\mathbb N}
\newcommand{\TT}{\mathcal T}
\renewcommand{\LL}{\mathcal L}
\newcommand{\NT}{\mathcal{NT}}
\numberwithin{equation}{section}
\newtheorem{thm}{Theorem}[section]\newtheorem{lem}[thm]{Lemma}
\newtheorem{prop}[thm]{Proposition}
\newtheorem{cor}[thm]{Corollary}
\theoremstyle{definition}
\newtheorem{defn}[thm]{Definition}
\newtheorem{ex}[thm]{Example}
\newtheorem{rem}[thm]{Remark}
\title[Nica-Toeplitz algebras for right tensor $C^*$-precategories]{Nica-Toeplitz algebras associated with right tensor $C^*$-precategories over right LCM semigroups}
\author{Bartosz K.  Kwa\'sniewski}
 \email{bartoszk@math.uwb.edu.pl}
 \address{Institute of Mathematics,  University  of Bialystok
ul. Ciolkowskiego  1M, 15-245  Bialystok,   Poland // Department of Mathematics and Computer Science, The University of Southern Denmark,
Campusvej 55, DK-5230 Odense M, Denmark}
\author{Nadia S.  Larsen}
\email{nadiasl@math.uio.no}
\address{Department of Mathematics, University of Oslo, PO Box 1053 Blindern, 0316 Oslo, Norway }
\subjclass[2010]{Primary 46L55; Secondary 46L05}
\date{25 November 2016. Revised 5 June 2018.}
\begin{document}

\begin{abstract} We introduce and analyze the full $\NT_{\LL}(\KK)$ and the reduced  $\NT_\LL^r(\KK)$ Nica-Toeplitz algebra associated to an ideal $\KK$ in a right tensor $C^*$-precategory $\LL$ over a right LCM semigroup $P$. These  $C^*$-algebras unify  cross-sectional $C^*$-algebras associated to Fell bundles over discrete groups and Nica-Toeplitz $C^*$-algebras associated to  product systems. They also allow a study of Doplicher-Roberts versions of the latter.

A new phenomenon is that when $P$ is not right cancellative then the canonical conditional expectation takes values outside the ambient algebra. 
Our main result is a uniqueness theorem that gives sufficient conditions for a representation  of $\KK$ to generate a $C^*$-algebra naturally lying between $\NT_{\LL}(\KK)$ and $\NT_\LL^r(\KK)$. 
We also characterise the situation when  $\NT_{\LL}(\KK)\cong \NT_\LL^r(\KK)$.
Unlike previous results for quasi-lattice monoids, $P$ is allowed to contain nontrivial invertible elements, and we accommodate this by identifying an assumption of aperiodicity of an action of the group of invertible elements in $P$. One prominent condition for uniqueness  is a geometric condition of Coburn's type,  exploited in the work of Fowler, Laca and Raeburn. Here we shed  new light on the role  of this condition by relating it to a $C^*$-algebra associated to $\LL$ itself.

\end{abstract}

\maketitle

 \setcounter{tocdepth}{1}

\section{Introduction}

Tensor $C^*$-categories (or monoidal $C^*$-categories) and all the more right-tensor $C^*$-categories
(also called semitensor $C^*$-categories)   arise naturally  in quantum field theory and duality theory of compact (quantum) groups \cite{dr}, \cite{dpz}. In particular, these structures play a fundamental role in numerous recent
results with a flavor of geometric group theory,  see e.g. \cite{KRVV}, \cite{NT}, \cite{PV} and references therein.
Right tensor $C^*$-(pre)categories proved also to be a very natural framework allowing efficient description of the structure of Cuntz-Pimsner and Nica-Toeplitz algebras associated to product systems, see \cite{kwa-doplicher}, \cite{kwa-szym},  \cite{kwa-larII}. In the present paper we initiate a systematic study of $C^*$-algebras associated to right tensor $C^*$-precategories inspired by this last class of examples. In fact, already in the context of Nica-Toeplitz algebras associated to product systems,  our results extend substantially the existing theory, see  \cite{kwa-larII}.
We believe that the ``categorial language'' is well suited to the complicated analysis of $C^*$-algebras over semigroup structures, and has a potential to be used, for instance,  in the study of Cuntz-Nica-Pimsner algebras \cite{SY}  or Doplicher-Roberts algebras  \cite{dr}, \cite{dpz} and their generalizations.

Our initial data consists of  an ideal $\KK$ in a  right-tensor $C^*$-precategory $\LL$ over a discrete, left cancellative and unital semigroup $P$. A $C^*$-precategory, as introduced in \cite{kwa-doplicher}, is a non-unital version of a $C^*$-category. The important example that the reader may keep  in mind  is that of Banach spaces $\LL(X_p, X_q)$ of adjointable operators between Hilbert $A$-modules $X_p, X_q$ for $p,q\in P$, that form a product system $X=\bigsqcup_{p\in P}X_p$ over $P$. Then  the  right tensoring structure $\{\otimes 1_r\}_{r\in P}$
 on $\LL=\{\LL(X_p, X_q)\}_{p,q\in P}$ is given by tensoring on the right with the unit $1_r$ in $\LL(X_r)$.
 The Banach spaces $\KK(X_p, X_q)$, $p,q\in P$, of generalized compacts form a $C^*$-precategory $\KK$,  in fact an ideal in $\LL$, which need not be preserved by the `functors' $\otimes 1_r$, $r\in P$. In special cases such structures were considered in  \cite[Example 3.2]{kwa-doplicher}, \cite[Subsection 3.1]{kwa-szym}.  We give a  detailed analysis of this example in \cite{kwa-larII} where we also explain how  the results of the present paper give a new insight to $C^*$-algebras associated with $X$. Another somewhat trivial but important and instructive example is when $P=G$ is a group.
Then  as we explain (see Section \ref{groups section})  our framework is equivalent to the theory of Fell bundles over discrete groups.

 In general,  there is a natural notion of a right-tensor representation of $\KK$ which allows us to define the \emph{Toeplitz algebra} $\TT_{\LL}(\KK)$ of $\KK$ as the universal $C^*$-algebra for these representations. We construct a canonical injective right-tensor representation $T$ of $\KK$  on an appropriate Fock module $\FF_\KK$.
We call $T$ \emph{the Fock representation} of $\KK$. We wish to study $C^*$-algebras which in general are quotients of $\TT_{\LL}(\KK)$ obtained by considering additional relations  coming from the Fock representation. It was Nica \cite{N}, who first identified and used explicitly such relations
in the context of  $C^*$-algebras associated to positive cones in quasi-lattice ordered groups. Fowler \cite{F99} generalized these conditions to product systems over semigroups studied by Nica. In particular, he introduced notions of \emph{compact-alignment} and
 \emph{Nica-covariance} for such objects.
Recently, definitions of Nica covariant representations and the corresponding Nica-Toeplitz algebras  were generalized to product systems over right LCM semigroups in \cite{bls2}.

In order to define Nica covariance  we work under the assumption that $P$ is a \emph{right LCM semigroup}, a terminology introduced in \cite{BRRW}.  Such semigroups appear also under the name of semigroups satisfying Clifford's condition, see \cite{Law2} and \cite{No0}. We emphasize that passing from positive cones in quasi-lattice ordered groups to right LCM semigroups is not straightforward, and has a number of important consequences. First,
 it allows to develop a theory independent of  the ambient group. In fact, the semigroups we consider need not  be (right) cancellative, and hence they might not be embeddable into any group.
Second, LCM semigroups allow invertible elements. This makes a number of problems much more delicate, but also
allows us to cover a  larger class of interesting examples.  In particular LCM semigroups  could be viewed as a unification of  quasi-lattice ordered semigroups and groups.

Given a  right-tensor $C^*$-precategory $\LL$ over an LCM semigroup $P$ we say that an ideal $\KK$ in $\LL$ is \emph{well-aligned} if for every two `morphisms' in $\KK$ that can be tensored so that they can be composed, the composition  is again in $\KK$. This generalises the notion of compactly aligned product systems. For a well-aligned ideal $\KK$ in a right-tensor $C^*$-precategory $\LL$  we introduce  representations which we call \emph{Nica covariant}. We show that the Fock representation of $\KK$ is Nica covariant. Two $C^*$-algebras are then naturally associated to $\KK$: a \emph{Nica-Toeplitz algebra} $\NT_\LL(\KK)$ universal for Nica covariant representations, and a \emph{reduced Nica-Toeplitz} algebra $\NT_\LL^r(\KK)$ which is generated by the Fock representation of $\KK$.

One important tool to study Nica-Toeplitz type $C^*$-algebras is a conditional expectation onto a natural \emph{core} subalgebra. While core subalgebras of both $\NT_\LL(\KK)$ and $\NT_\LL^r(\KK)$ are easy to write down,  conditional expectations onto the respective cores do not obviously exist. In the generality of a right LCM semigroup, which need not be right cancellative, we find  new ingredients, namely a self-adjoint operator space $B_\KK$, which in general is not a subspace of $\NT_\LL^r(\KK)$,  and a faithful completely positive map $E^T$ from $\NT_\LL^r(\KK)$ onto $B_\KK$. We refer to $B_\KK$ as a \emph{transcendental core} and to $E^T$ as a \emph{transcendental conditional expectation}.
The map $E^T$ becomes a genuine conditional expectation onto the core subalgebra of $\NT_\LL^r(\KK)$ if and only if the semigroup $P$ is cancellative. We note that a similar phenomenon was recently discovered  in the context of $C^*$-algebras associated to actions of inverse semigroups of Hilbert bimodules in \cite{BEM}, where a notion of a weak conditional expectation is introduced (it is a genuine conditional expectation if and only if the space of units is closed in the dual transformation groupoid).  We define an \emph{exotic Nica-Toeplitz algebra }to be the $C^*$-algebra $C^*(\Phi(\KK))$ generated by a  Nica covariant representation $\Phi$ such that there is a compatible transcendental conditional expectation from $C^*(\Phi(\KK))$ onto $B_\KK$. This is equivalent to existence of a $*$-homomorphism $\Phi_*$ making the following diagram commute:
$$ 
\xymatrix{  \NT_{\LL}(\KK)  \ar@/_2pc/[rrrr]^{\,\, T\rtimes P}  \ar[rr]^{\Phi\rtimes P}  &
&C^*(\Phi(\KK))  \ar[rr]^{\Phi_* \,\,} & &   \NT_{\LL}^{r}(\KK)}
$$ 
The uniqueness theorems we aim at require studying two somewhat  independent problems, which are interesting in their own right.  The first problem is to identify ideals $\KK$ for which the regular representation $ T\rtimes P$ is injective. The second one is to find conditions on a Nica covariant representation $\Phi$ so that
 $C^*(\Phi(\KK))$ is an
exotic Nica-Toeplitz algebra. Having these two ingredients we infer that both $\Phi\rtimes P$ and $\Phi_*$ are isomorphisms.

Amenability as a characterization of injectivity of the regular representation of the universal $C^*$-algebra for Nica covariant representations is prominent in work of  Nica \cite{N}, Laca-Raeburn \cite{LR} and Fowler \cite{F99}. Motivated by this, we say that a well aligned ideal $\KK$ of $\LL$ is \emph{amenable} if the regular representation $T\rtimes P$  is an isomorphism from $\NT_\LL(\KK)$ onto $\NT_\LL^r(\KK)$. In Theorem~\ref{amenability main result} we prove a far-reaching generalization of \cite[Proposition 4.2]{LR} and \cite[Theorem 8.1]{F99}. 
 Our result establishes necessary and sufficient conditions for amenability of $\KK$ in terms of amenability of a Fell bundle that arises in a canonical way whenever there is a \emph{controlled semigroup homomorphism} from $P$ to another right LCM semigroup that sits inside a group $G$. In particular, if $G$ can be chosen to be amenable, then any well-aligned ideal in a right-tensor $C^*$-precategory over $P$ is amenable. As we show in Corollary~\ref{amenability of free products} this applies to a large class of semigroups
 obtained from free products of right LCM semigroups.

 A novel ingredient in our study is that we identify  an algebraic condition which characterizes when a representation
 $\Phi\rtimes P$ of 
$\NT_\LL(\KK)$ is injective on the core: we call this \emph{Toeplitz covariance}. Such a condition was previously considered only in the case $P=\N$, in the context of relative Cuntz-Pimsner and relative Doplicher-Roberts algebras, cf. \cite{kwa-doplicher}.
 In Corollary~\ref{the reduced core} we prove that $T$ is  Toeplitz covariant, which equivalently means that $T\rtimes P$ is injective on the core of $\NT_\LL(\KK)$. The main technical result needed to achieve these characterizations is Theorem~\ref{theorem for amenability and spectral subspaces}. It says that any controlled semigroup homomorphism  from $P$ to an arbitrary right LCM semigroup  induces a $C^*$-subalgebra of $\NT_\LL(\KK)$ containing the core, and gives conditions 
characterising when $\Phi\rtimes P$ is injective on the induced $C^*$-algebra. This result is inspired by \cite[Lemma 4.1]{LR} and the proof of \cite[Theorem 8.1]{F99}. Nevertheless, since we deal here with much more general situation our proof requires some new non-trivial steps.

Another new ingredient in our approach is related to a potential existence of invertible elements in a right LCM semigroup $P$. We show that for any well-aligned ideal $\KK$ in a right-tensor $C^*$-precategory $\LL$ the restriction of right tensoring to the group of invertible elements $P^*$ preserves $\KK$. In Definition \ref{definition of aperiodicity for right-tensor categories} we introduce the \emph{aperiodicity condition} for this action of $P^*$ on $\KK$. If $P^*=\{e\}$ is trivial, this condition becomes vacuous. If $P^*=P$, that is, when $P$ is a group, then  $\KK$ can be viewed as a Fell bundle $\B$ over $P$, and aperiodicity of $\KK$ is equivalent to aperiodicity of $\B$ (see Section \ref{groups section}). By  \cite{KM}, if $B_e$  is separable or of Type I, apperiodicity of $\B$ is equivalent to  \emph{topological freeness} of the dual partial action. Non-trivial examples where the aperiodicity condition holds come for instance from wreath products, see \cite{kwa-larII}.

 Our main results are the uniqueness theorems in Section~\ref{The main result section} and Section~\ref{relationship-K-L}. For a Nica covariant representation $\Phi$ of a well-aligned ideal $\KK$ in a right-tensor $C^*$-precategory $\LL$ we identify two sources from which to extract characterizations of injectivity of $\Phi\rtimes P$.  
 In Definition \ref{defn:Condition (C)} we introduce a geometric condition on $\Phi$,  which we call \emph{condition (C)}. It is closely related to the condition describing injectivity of representations of the Toeplitz algebra of a single $C^*$-correspondence, see \cite[Theorem 2.1]{Fow-Rae} or the condition for semigroup crossed products  twisted by a product system, see \cite[Equation (7.2)]{F99}.
 Theorem~\ref{preludium to Nica Toeplitz uniqueness2} is the main technical result on $\Phi$ that links condition (C),  injectivity of $\Phi\rtimes P$ on the core, Toeplitz covariance, and generation of an exotic Nica-Toeplitz algebra. Our first uniqueness result, Corollary~\ref{cor:uniquenessI}, says that  when $\KK$ is amenable and the action of $P^*$ on $\KK$ is aperiodic, then condition (C) on a representation $\Phi$ of $\KK$ implies injectivity of $\Phi\rtimes P$. The converse holds if $\KK$ is right-tensor invariant.

 The most satisfactory uniqueness result is contained in  Section~\ref{relationship-K-L}. Here we reveal the true nature of condition (C). Under natural assumptions we show that the $C^*$-algebra $\NT_\LL(\KK)$ associated to $\KK$ is a subalgebra of the $C^*$-algebra $\NT(\LL):=\NT_{\LL}(\LL)$ associated to $\LL$ (the latter can be thought of as Doplicher-Roberts version of the former). Moreover, every Nica  covariant representation $\Phi$ of $\KK$ extends uniquely to a Nica covariant representation $\overline{\Phi}$ of $\LL$. We prove, see Corollary~\ref{Uniqueness Theorem}, that condition (C) for $\Phi$ is equivalent to injectivity of  the representation $\overline{\Phi}\rtimes P$ of the (Doplicher-Roberts) $C^*$-algebra $\NT(\LL)$. This  also implies injectivity of  $\Phi\rtimes P$ on $\NT_\LL(\KK)$, as $\Phi\rtimes P$ is a restriction of  $\overline{\Phi}\rtimes P$. In addition  condition (C) is equivalent to  $\overline{\Phi}$ being Toeplitz covariant and injective, cf. Theorem \ref{Nica Toeplitz uniqueness}. 
It seems that in general, the geometric condition (C) is responsible for uniqueness of the $C^*$-algebra associated to $\LL$ while 
uniqueness of $C^*$-algebra associated to $\KK$ should be related with the algebraic condition of Toeplitz covariance, 
cf. also \cite{kwa-larII}.

\subsection{Acknowledgements}

The research leading to these
results has received funding from the European Union's
Seventh Framework Programme (FP7/2007-2013) under grant agreement number 621724. B.K. was  partially supported by the NCN (National Centre of Science) grant number 2014/14/E/ST1/00525.  This work was finished while he participated in the Simons Semester at IMPAN - Fundation grant 346300 and the Polish Government MNiSW 2015-2019 matching fund. Part of the work was done during the participation of both  authors in
the program "Classification of operator algebras: complexity, rigidity, and dynamics" at the Mittag-Leffler Institute (Sweden), and during the visit of N.L. at the University of Victoria (Canada). She thanks Marcelo Laca and the department at UVic for hospitality.

\section{Preliminaries}\label{section:preliminaries}

\subsection{LCM semigroups} We refer to \cite{BRRW} and \cite{bls} and the references therein for general facts about LCM semigroups. Throughout this paper $P$ is a \emph{left cancellative} semigroup with the \emph{identity} element $e$. We let $P^*$ be the group of \emph{units}, or invertible elements, in $P$, where $x\in P$ is invertible if there exists (a necessarily unique) $x^{-1}\in P$ such that $xx^{-1}=x^{-1}x=e$. A \emph{principal right ideal} in $P$ is a right ideal in $P$ of the form $pP=\{ps: s\in P\}$ for some $p\in P$. Occasionally, we will  write $\langle  p \rangle:=pP$.
The relation of inclusion on the principal right ideals induces a left invariant \emph{preorder} on $P$ given by
$$
p \leq q \quad \stackrel{def}{\Longleftrightarrow}\quad qP\subseteq  pP   \quad \Longleftrightarrow\quad \exists {r\in P}\,\, q=pr.
$$
This preorder  is a partial order if and only if $P^*=\{e\}$.
Left cancellation in $P$ implies that for fixed  $p,q\in P$, $pr=q$  determines $r\in P$ uniquely, motivating the notation:
$$
 p^{-1}q: =r \quad \textrm{ if }\,\,\,  q=pr .
 $$
The following property of semigroups is sometimes called {\em Clifford's condition}  \cite{Law2}, \cite{No0}.

\begin{defn}
 A semigroup $P$ is a \emph{right LCM semigroup} if it is left cancellative and the family $\{pP\}_{p\in P}$ of principal right ideals extended by the empty set  is closed under intersections, that is if for every pair of elements
$p, q\in P$ we have $pP\cap qP=\emptyset$ or $pP\cap qP=rP$ for some $r\in P$.
\end{defn}
In the case that $pP\cap qP=rP$, the element $r$ is a \emph{right least common multiple (LCM)}  of $p$ and $q$. Note that a right LCM is determined by $p$ and $q$ up to  multiplication from the right by an invertible element. Namely, if  $pP\cap qP=rP$, then  $pP\cap qP=tP$ if and only if there is $x\in P^*$ such that $t=rx$. If $P$ is a right LCM semigroup we will refer to $
J(P):=\{pP\}_{p\in P}\cup\{\emptyset\}$ as the \emph{semilattice of principal right ideals} of $P$, see \cite{BRRW} and \cite{Li}.

\begin{ex} One of the most known and studied examples of right LCM semigroups are positive cones in quasi-lattice ordered groups, introduced by Nica \cite{N}. More precisely, suppose that $P$ is a subsemigroup of a group $G$ such that $P\cap P^*=\{e\}$. Then the partial order we defined on $P$ extends 
to a left-invariant partial order on $G$  where $g \leq h$   
if and only if $g^{-1}h\in P$
 for all $g,h\in G$. The pair $(G,P)$ is a \emph{quasi-lattice ordered group} if every pair of elements $g,h\in G$ that has an upper bound in $P$ admits a least upper bound in $P$. Note that if the upper bound exists, it is unique since $P^*=\{e\}$. In \cite[Definition 32.1]{exel-book}, Exel calls the pair $(G,P)$  \emph{weakly quasi-lattice ordered} if for each pair of elements $g,h\in P$ with an upper bound in $P$ there exists a (necessarily unique)   least upper bound in $P$.

Every positive cone  $P$ in a weakly quasi-lattice ordered group $(G,P)$ is an LCM semigroup with $P^*=\{e\}$. Conversely, if $P$ is an  LCM subsemigroup of a group $G$ such that $P^*=\{e\}$ then $(G,P)$ is a weakly quasi-lattice ordered group.
\end{ex}

Let $P_i$, $i\in I$, be a family of right LCM semigroups. 
 The direct sum $\bigoplus_{i\in I} P_i$ is a right LCM semigroup with semilattice of principal right ideals 
 isomorphic to the direct sum of semilattices $J(P_i)$, $i\in I$. It is slightly less obvious that also the 
free product $\prod^*_{i\in I}P_i$ is a right LCM semigroup. This follows from the proof of the following proposition, which is a generalization of \cite[Proposition 4.3]{LR}. 
It links the two aforementioned constructions by a useful homomorphism.

\begin{prop}\label{form free products to direct sums}
Let $P_i$, $i\in I$, be a family of right LCM semigroups. Put $P:=\prod^\ast_{i\in I} P_i$  and $\P:=\bigoplus_{i\in I} P_i$, and let $\theta:P\to \P$ be the homomorphism which is the identity on each $P_i$, $i\in I$. Then  $\theta(P^*)=\P^*$ and for any $s,t,r\in P$ with $sP\cap tP = rP$ we have
\begin{equation}\label{lcb preserver2}
\theta(s)\P\cap \theta(t)\P = \theta(r)\P  \qquad \text{and}\qquad \theta(s)=\theta(t) \,\text{ implies }\,  s=t.
\end{equation}
\end{prop}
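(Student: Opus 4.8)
The plan is to work throughout with the normal form for the free product: every $s\in P=\prod^\ast_i P_i$ has a unique reduced expression $s=x_1x_2\cdots x_n$ with $x_k\in P_{i_k}\setminus\{e\}$ and $i_k\ne i_{k+1}$, and $\theta$ simply gathers the syllables factor by factor, so the $i$-th coordinate of $\theta(s)\in\P=\bigoplus_i P_i$ is the product in $P_i$ of those $x_k$ with $i_k=i$. I would record at the outset that $\P$ is left cancellative (a direct sum of left cancellative $P_i$) and that in $\P$ principal right ideals meet coordinatewise, $(p_i)\P\cap(q_i)\P=(m_i)\P$ where $m_iP_i=p_iP_i\cap q_iP_i$. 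To settle $\theta(P^\ast)=\P^\ast$, one shows from the normal form that a reduced word is invertible in $P$ iff each syllable is invertible in its factor (cancelling $uv=e$ from the middle forces the innermost syllables to be units, then one inducts); hence $\theta$ carries $P^\ast$ into $\bigoplus_i P_i^\ast=\P^\ast$, and surjectivity is immediate by reading a given unit $(u_i)_i$ of $\P$ as the word juxtaposing its nontrivial coordinates. In particular $rP=sP$ implies $\theta(r)\P=\theta(s)\P$, which I use below.

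The heart is a structural lemma describing $sP\cap tP$. Writing $s=x_1\cdots x_n$, $t=y_1\cdots y_m$ and letting $k$ be the length of their longest common prefix, I would argue from uniqueness of reduced words that if $sP\cap tP\ne\emptyset$ then the first syllables beyond the prefix lie in a common factor $P_i$: a mismatch of factors would leave the two remainders with different leading syllables that right multiplication cannot reconcile. A short analysis of which syllable right multiplication can alter (only the last) then shows that, unless one of $s,t$ is a right multiple of the other, both words must terminate at position $k+1$. Thus exactly one of three situations occurs: $s\in tP$, or $t\in sP$, or $n=m=k+1$ with $x_{k+1},y_{k+1}\in P_i$ admitting an in-factor least common multiple $z$ (so $x_{k+1}P_i\cap y_{k+1}P_i=zP_i$), in which case $r=x_1\cdots x_k z$.

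With this lemma both conclusions of \eqref{lcb preserver2} follow. For $\theta(s)\P\cap\theta(t)\P=\theta(r)\P$: in the two ``multiple'' cases $rP$ equals $sP$ or $tP$ and the claim reduces to the unit remark above, while in the genuine case $\theta(s)$ and $\theta(t)$ agree in every coordinate $\ne i$ and differ in coordinate $i$ only by the right factors $x_{k+1}$ versus $y_{k+1}$; the coordinatewise LCM in $\P$ is then computed from $x_{k+1}P_i\cap y_{k+1}P_i=zP_i$ using left cancellation in $P_i$, and equals exactly $\theta(r)$. For $\theta(s)=\theta(t)\Rightarrow s=t$: equality of the $\theta$-images rules out the genuine case outright, since cancelling the common prefix in the $i$-th coordinate would force $x_{k+1}=y_{k+1}$; hence $s$ and $t$ are comparable, say $s=tw$, whence $\theta(w)=e$ by left cancellation in $\P$, and it remains to conclude $w=e$.

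The main obstacle is precisely this last step once nontrivial invertible elements are allowed, because a nonempty reduced word can have trivial image in $\P$. I would therefore isolate as the key lemma the statement that a \emph{connector} $w$ with $s=tw$, where $s,t$ have a common right multiple, and with $\theta(w)=e$, must itself be trivial, and prove it by a dedicated analysis of invertible syllables. When the factors satisfy $P_i^\ast=\{e\}$ (the Laca--Raeburn setting) this is automatic: in a left cancellative monoid with no nontrivial units a product of non-identity elements is never $e$, so $\theta(w)=e$ forces $w$ to have no syllables. Controlling the units in $P$ is exactly the delicate point that takes the argument beyond \cite[Proposition 4.3]{LR}, and I expect the bulk of the work, and all the genuinely new input, to sit there rather than in the coordinatewise LCM computation.
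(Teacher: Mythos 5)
There is a genuine gap, and it sits exactly at the step you deferred. The ``connector lemma'' --- that $s=tw$ with $\theta(w)=e$, where $s$ and $t$ have a common right multiple, forces $w=e$ --- is not merely ``the bulk of the work'': it is false, so no ``dedicated analysis of invertible syllables'' can prove it. Take $P_1=P_2=\Z$ (any group is a right LCM semigroup), so that $P=\Z\ast\Z$ is the free group on two generators $a,b$, $\P=\Z\oplus\Z$, and $\theta$ is the abelianization map. In a group all principal right ideals equal the whole group, so every pair $s,t$ satisfies $sP\cap tP=rP$ with $r=e$; yet $s=ab$ and $t=ba$ have $\theta(s)=\theta(t)$ and $s\neq t$, and $t=e$, $w=s=aba^{-1}b^{-1}$ is a nontrivial connector with $\theta(w)=e$. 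Thus the implication $\theta(s)=\theta(t)\Rightarrow s=t$ in \eqref{lcb preserver2} genuinely fails once two of the factors contain nontrivial units; your reduction of it to the connector lemma is logically sound, but it reduces the claim to a statement that is itself false. Your preliminary claims also break on units: for $s=x\in P_1^*\setminus\{e\}$ and $t=y\in P_2\setminus\{e\}$ one has $sP\cap tP=tP\neq\emptyset$ although the leading syllables lie in different factors, so neither ``the first syllables beyond the prefix lie in a common factor'' nor ``right multiplication can alter only the last syllable'' is true as stated. Those two defects, unlike the connector lemma, are repairable: strip trailing unit syllables from $s$ and $t$ (this changes none of $sP$, $tP$, $\theta(s)\P$, $\theta(t)\P$), and for such right-reduced words your trichotomy and coordinatewise LCM computation do go through; in this way your argument correctly proves $\theta(P^*)=\P^*$ and the first identity in \eqref{lcb preserver2} in full generality.

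For comparison, the paper's proof is the same normal-form argument you outline, minus your candour about units: it asserts outright that $sP\cap tP\neq\emptyset$ forces the first $n-1$ syllables of $s$ and $t$ to agree and $i_n=j_n$, runs your two comparability/LCM cases, and concludes injectivity in one line. That argument is correct whenever every $P_i^*=\{e\}$, i.e. in the Laca--Raeburn setting of \cite[Proposition 4.3]{LR}, but it silently steps over precisely the point you isolated, and its injectivity conclusion is refuted by the example above. So your diagnosis of where the genuinely new input must lie was accurate; the honest resolution, however, is not a harder proof but an additional hypothesis (for instance $P_i^*=\{e\}$ for all $i$) under which the second assertion of the proposition holds --- and under which your proof and the paper's coincide.
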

\begin{proof}
 Note that $P^*=\prod^*_{i\in I} P_i^*$ and $\P^*=\bigoplus_{i\in I} P_i^*$. Hence $\theta(P^*)=\P^*$.
Now, let $s=p_{i_1}\cdots p_{i_n}\in P$ be in reduced form, that is $p_{i_k}\in P_{i_k}$ and  $i_k\neq i_{k+1}$ for all $k=1,\dots, n-1$. Similarly, write $t=q_{j_1}\cdots q_{j_m}\in P$. Without loss of generality we may assume that $m\geq n$. Suppose that $sP\cap tP \neq \emptyset$. This implies that
$p_{i_1}\cdots p_{i_{n-1}}=q_{i_1}\cdots q_{i_{n-1}}$, $i_n=j_n$ and either
\begin{itemize}
\item[(1)] $p_{i_n}\leq q_{j_n}$, if $m >n$ , in which case $sP\cap tP=tP$, or
\item[(2)] $m =n$ and $p_{i_n}P_{i_n}\cap  q_{j_n}P_{i_n}=r_{i_n}P_{i_n}$ for some $r_{i_n}\in P_{i_n}$, in which case  $sP\cap tP=sr_{i_n}P$.
\end{itemize}
Clearly, in both cases, we have $\theta(s)\P\cap \theta(t)\P = \theta(r)\P$. Moreover, $\theta(s)=\theta(t)$ implies that $m=n$ and
$p_{i_n}= q_{j_n}$, that is, $s=t$.
\end{proof}

The property identified in \eqref{lcb preserver2} is important.  We will formalise it in a definition. To indicate the analogy with similar concepts introduced in \cite{LR} and \cite{CL}, we borrow Crisp and Laca's terminology of \emph{controlled map}, see \cite[Definition 4.1]{CL} and in particular conditions (C2), (C3) and (C5), but add the tag of right LCM semigroup.

\begin{defn}\label{def:controlled map}
A \emph{controlled map of right LCM semigroups} is an identity preserving homomorphism $\theta:P\to \P$ between right LCM semigroups $P,\P$ such that $\theta(P^*)=\P^*$ and for all $s,t\in P$ with $sP\cap tP \neq\emptyset$ we have
\begin{equation}\label{lcb preserver}
\theta(s)\P\cap \theta(t)\P = \theta(r)\P\,\text{ whenever }r\text{ is a right LCM for }s,t
\end{equation}
and
\begin{equation}\label{injectivity in the same direction}
\theta(s)=\theta(t) \,\, \Longrightarrow\,\,  s=t.
\end{equation}
\end{defn}

\begin{rem}\label{rem:P cancellative if controlled}
{(a)} If $\theta:P \to \P$  is a controlled map of right LCM semigroups and $\P$ is right cancellative, then so is $P$. Indeed, if $pr=qr$ in $P$ then $\theta(p)\theta(r)=\theta(q)\theta(r)$, so right cancellation in $\P$ implies $\theta(p)=\theta(q)$ giving $p=q$ by condition \eqref{injectivity in the same direction}. In our applications, $\P$ will be contained in a group so it will necessarily be cancellative.

(b) A controlled map of right LCM semigroups seems to be different from a homomorphism of right LCM semigroups as considered in \cite[Equation (3.1)]{bls2}. This is because a homomorphism of right LCM semigroups need not preserve the group of units and, besides, it keeps track of pairs of elements in $P$ and their images in $\P$ that do not have a right common upper bound as well as those who do.
\end{rem}

\subsection{$C^*$-precategories}

We recall some background on $C^*$-precategories from \cite{kwa-doplicher}. $C^*$-precategories should be viewed as  non-unital versions of $C^*$-categories, cf. \cite{glr}, \cite{dr}.

A \emph{precategory }$\LL$ consists of
a set  of   objects $\Ob(\LL)$ and a collection $\{\LL(\sigma,\rho)\}_{\sigma,\rho\in \Ob(\LL)}$ of sets  of morphisms   endowed with an associative composition. Explicitly,   $\LL(\sigma,\rho)$ stands for the space of morphisms from $\rho$ to $\sigma$, and  the
composition
$
\LL(\tau,\sigma)\times \LL(\sigma,\rho) \to  \LL(\tau,\rho),\,
(a, b)\to ab
$
must satisfy
$
(a b) c=a (b c)
$  whenever the compositions of morphisms $a$, $b$, $c$ are allowable. A morphism in $\LL(\sigma,\rho)$ may  be regarded as an arrow from $\rho$ to $\sigma$. One  can equip (if necessary)  $\LL(\sigma, \sigma)$ with identity morphisms  in such a way that a given  precategory $\LL$ becomes a category. In the sequel, we will identify $\LL$ with the collection of morphisms $\{\LL(\sigma,\rho)\}_{\sigma,\rho\in \Ob(\LL)}$.

 \begin{defn}(\cite[Definition 2.2]{kwa-doplicher})
 A \emph{$C^*$-precategory} is a precategory $\LL=\{\LL(\sigma,\rho)\}_{\sigma,\rho\in \Ob(\LL)}$  together with an operation
 $^*:\LL\to\LL$  such that the following hold:
\begin{enumerate}\renewcommand{\theenumi}{p\arabic{enumi}}
\item\label{it:p-1} each set of morphisms $\LL(\sigma,\rho)$, $\sigma, \rho\in \LL$ is a complex Banach space;
\item\label{it:p-2} composition gives a bilinear map
$$
\LL(\tau,\sigma)\times \LL(\sigma,\rho)\ni(a, b)\to ab\in \LL(\tau,\rho),
$$
which satisfies $\|ab\|\leq \|a\|\cdot \|b\|$;
\item\label{it:p-3} for each $\sigma$, $\rho\in \LL$,    $a \to a^*$  is an antilinear map from $\LL(\sigma,\rho)$ to $\LL(\rho,\sigma)$ such that $(a^*)^*=a$ and $(a b)^*=b^* a^*$ for all $b\in  \LL(\rho,\tau)$ and all $\tau \in \LL$;
\item\label{it:p-4} $\|a^* a\|=\|a\|^2$ for every  $a \in   \LL(\sigma,\rho)$; and
\item\label{it:p-5} for each $a \in   \LL(\sigma,\rho)$, we have $a^*a=b^*b$ for some $b\in \LL(\rho,\rho)$.
\end{enumerate}

We say that  a $C^*$-precategory $\LL$  is  \emph{$C^*$-category}  if  $\LL$ is a category.
\end{defn}
Note that (p3) says that the operation $^*$ is antilinear, involutive and  contravariant. Moreover, (p1)--(p4) imply that $\LL(\rho,\rho)$ is  a $C^*$-algebra, and (p5) says that $a^*a$ is positive as an element of  $\LL(\rho,\rho)$. Condition (p4)  implies that the operation $^*$ is isometric on every space $\LL(\sigma,\rho)$.
A $C^*$-precategory $\LL$ is a $C^*$-category if and only if every $C^*$-algebra $\LL(\rho,\rho)$, $\rho \in \Ob(\LL)$, is unital.
\begin{lem}\label{about approximate units}
Given  a $C^*$-precategory $\LL$ and objects $\sigma, \rho$ we have
$$
\LL(\sigma,\rho)= \LL(\sigma,\sigma) \LL(\sigma,\rho)= \LL(\sigma,\rho)\LL(\rho,\rho).
$$
\end{lem}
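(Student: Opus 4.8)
The plan is to read the juxtaposition $\LL(\sigma,\sigma)\LL(\sigma,\rho)$ as a closed linear span and to prove the identity by an approximate-unit argument, exactly as one shows that a $C^*$-algebra acts nondegenerately on its Hilbert modules. I would first establish the right-hand equality $\LL(\sigma,\rho)=\overline{\LL(\sigma,\rho)\LL(\rho,\rho)}$ and then deduce the left-hand one by passing to adjoints.

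For the right-hand equality the inclusion $\supseteq$ is immediate: by (p2) every product $bv$ with $b\in\LL(\sigma,\rho)$ and $v\in\LL(\rho,\rho)$ lies in $\LL(\sigma,\rho)$, which is a Banach space by (p1), so its closed linear span lies there too. For $\subseteq$ I fix $b\in\LL(\sigma,\rho)$ and choose an approximate unit $(v_\mu)$ of the $C^*$-algebra $\LL(\rho,\rho)$ consisting of positive contractions; it suffices to show $bv_\mu\to b$. Using (p4) together with $v_\mu^*=v_\mu$ I compute
$$\|b-bv_\mu\|^2=\|(b-bv_\mu)^*(b-bv_\mu)\|=\|c-cv_\mu-v_\mu c+v_\mu c v_\mu\|,$$
where $c:=b^*b$ is a positive element of $\LL(\rho,\rho)$ by (p5) (see the remark following the definition). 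Passing to the unitization of $\LL(\rho,\rho)$, the right-hand side equals $\|(1-v_\mu)c(1-v_\mu)\|=\|c^{1/2}(1-v_\mu)\|^2$, which tends to $0$ because $c^{1/2}\in\LL(\rho,\rho)$ and $(v_\mu)$ is an approximate unit. Hence $b\in\overline{\LL(\sigma,\rho)\LL(\rho,\rho)}$.

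The left-hand equality then follows without a new estimate. Applying what was just proved with the roles of $\sigma$ and $\rho$ interchanged to the morphism $b^*\in\LL(\rho,\sigma)$ yields $b^*u_\lambda\to b^*$ for an approximate unit $(u_\lambda)$ of $\LL(\sigma,\sigma)$ consisting of positive contractions. Taking adjoints and using that $^*$ is isometric by (p4) while $u_\lambda^*=u_\lambda$, I obtain $u_\lambda b=(b^*u_\lambda)^*\to b$, so $b\in\overline{\LL(\sigma,\sigma)\LL(\sigma,\rho)}$; the reverse inclusion is again clear from (p2) and (p1).

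The argument is essentially routine, and the only points demanding care are the bookkeeping—checking at each step that the composable morphisms lie in the correct morphism spaces and that one uses the approximate unit of the \emph{correct} corner $C^*$-algebra—and the possible non-unitality of $\LL(\rho,\rho)$, which is handled by the standard passage to the unitization in the norm estimate. If instead one wishes to read $\LL(\sigma,\sigma)\LL(\sigma,\rho)$ literally as the set of finite sums of products rather than as a closed span, the same convergence establishes nondegeneracy of $\LL(\sigma,\rho)$ as a Banach module over $\LL(\sigma,\sigma)$, and the Cohen--Hewitt factorization theorem upgrades the closed-span identity to an exact factorization.
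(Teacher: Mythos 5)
Your proposal is correct and takes essentially the same route as the paper: the paper's proof regards $\LL(\sigma,\rho)$ as a left Banach $\LL(\sigma,\sigma)$-module and a right Banach $\LL(\rho,\rho)$-module, invokes a cited lemma for nondegeneracy of these actions, and concludes by the Cohen--Hewitt factorization theorem, which is exactly the structure of your argument with Cohen--Hewitt supplying the exact-product reading. The only difference is that you prove the nondegeneracy yourself via the standard approximate-unit estimate $\|b-bv_\mu\|^2=\|(1-v_\mu)b^*b(1-v_\mu)\|$ (using (p4), (p5) and the unitization), where the paper simply cites it.
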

\begin{proof} In a natural way, $\LL(\sigma,\rho)$ is a left Banach $\LL(\sigma,\sigma)$-module and a right  Banach $\LL(\rho,\rho)$-module.
By  \cite[Lemma 2.5]{kwa-doplicher},  these module actions are non-degenerate. Hence the assertion follows by the Cohen-Hewitt factorization theorem.
\end{proof}

\begin{defn}[Definition 2.4 in \cite{kwa-doplicher}]
An \emph{ideal in a $C^*$-precategory} $\LL$ is a collection $\KK=\{\KK(\sigma,\rho)\}_{\sigma,\rho\in \Ob(\LL)}$  of closed linear subspaces $\KK(\sigma,\rho)$ of $\LL(\sigma,\rho)$,  $ \rho,\sigma \in \Ob(\LL)$,  such that
$$
  \LL(\tau,\sigma)\KK(\sigma,\rho) \subseteq \KK(\tau,\rho)\quad \textrm{ and } \quad \KK(\tau,\sigma)\LL(\sigma,\rho)\, \subseteq \KK(\tau,\rho),
  $$
  for all $\sigma,\rho,\tau\in \Ob(\LL)$.
\end{defn}

An ideal $\KK$ in a $C^*$-precategory $\LL$ is automatically selfadjoint in the sense that $\KK(\sigma,\rho)^*= \KK(\rho,\sigma)$,
for all $\sigma,\rho\in \Ob(\LL)$, cf. \cite[Proposition 1.7]{glr}. Hence $\KK$ is a $C^*$-precategory. Each space $\KK(\rho,\rho)$   is a closed two-sided  ideal in the  $C^*$-algebra $\LL(\rho,\rho)$.
A useful fact is that    $\KK$  is uniquely determined by these diagonal ideals.

\begin{prop}[Theorem 2.6 in \cite{kwa-doplicher}]\label{diagonal of ideals}
If $\KK$ is an ideal in a $C^*$-precategory $\LL$, then for all $\sigma$ and $\rho$ the space $\KK(\sigma,\rho)$ coincides with
\begin{equation}\label{ideal defining formula}
\{a\in \LL(\sigma,\rho):a^*a \in \KK(\rho,\rho)\}=\{a\in \LL(\sigma,\rho):aa^* \in \KK(\sigma,\sigma)\}.
\end{equation}
 Conversely,  a collection of ideals $\KK(\rho,\rho)$  in $\LL(\rho,\rho)$, for $\rho\in \Ob(\LL)$, satisfying \eqref{ideal defining formula} gives rise to an ideal $\KK$ in $\LL$.
\end{prop}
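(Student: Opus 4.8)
The plan is to prove the two implications separately, invoking three standard facts: the self-adjointness $\KK(\sigma,\rho)^*=\KK(\rho,\sigma)$ of ideals recalled just before the statement; closure of a closed two-sided $C^*$-ideal under continuous functional calculus of positive elements; and the heredity of $C^*$-ideals, namely that $0\le x\le y$ with $y\in J$ forces $x\in J$. These are the workhorses I would isolate first.

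For the forward direction, fix objects $\sigma,\rho$ and an ideal $\KK$. The inclusion $\KK(\sigma,\rho)\subseteq\{a:a^*a\in\KK(\rho,\rho)\}$ is immediate: if $a\in\KK(\sigma,\rho)$ then $a^*\in\KK(\rho,\sigma)$ by self-adjointness, whence $a^*a\in\KK(\rho,\sigma)\LL(\sigma,\rho)\subseteq\KK(\rho,\rho)$. For the reverse inclusion I would recover $a$ from $h:=a^*a$ by approximation. Since $h\in\KK(\rho,\rho)$ is positive, each $h^{1/n}$ lies in $\KK(\rho,\rho)$ (being a norm-limit of polynomials in $h$ with vanishing constant term), so $ah^{1/n}\in\LL(\sigma,\rho)\KK(\rho,\rho)\subseteq\KK(\sigma,\rho)$. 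A functional-calculus computation in the commutative $C^*$-algebra generated by $h$ (or its unitization) gives
\begin{equation*}
\|a-ah^{1/n}\|^2=\|(1-h^{1/n})\,h\,(1-h^{1/n})\|=\sup_{t\in\mathrm{sp}(h)}(1-t^{1/n})^2\,t\xrightarrow[n\to\infty]{}0,
\end{equation*}
so $a=\lim_n ah^{1/n}$ lies in the closed space $\KK(\sigma,\rho)$. Applying this characterization to $a^*$ and using self-adjointness identifies $\{a:a^*a\in\KK(\rho,\rho)\}$ with $\{a:aa^*\in\KK(\sigma,\sigma)\}$, yielding the full equality \eqref{ideal defining formula}.

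For the converse, suppose we are given, for each object $\rho$, a closed two-sided ideal $\KK(\rho,\rho)$ of the $C^*$-algebra $\LL(\rho,\rho)$, and define $\KK(\sigma,\rho):=\{a\in\LL(\sigma,\rho):a^*a\in\KK(\rho,\rho)\}$, the standing hypothesis being that this set coincides with $\{a\in\LL(\sigma,\rho):aa^*\in\KK(\sigma,\sigma)\}$. Closedness of $\KK(\sigma,\rho)$ is clear, as $a\mapsto a^*a$ is continuous and $\KK(\rho,\rho)$ is closed; stability under scalars is trivial, and stability under addition follows from heredity applied to $0\le(a+b)^*(a+b)\le 2(a^*a+b^*b)\in\KK(\rho,\rho)$. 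For the two ideal inclusions I would use the order estimate $c^*z^*zc\le\|z\|^2\,c^*c$ together with both descriptions of $\KK$. Indeed, for $x\in\LL(\tau,\sigma)$ and $a\in\KK(\sigma,\rho)$ one has $0\le(xa)^*(xa)\le\|x\|^2\,a^*a\in\KK(\rho,\rho)$, so $xa\in\KK(\tau,\rho)$ by heredity; dually, for $b\in\KK(\tau,\sigma)$ and $y\in\LL(\sigma,\rho)$ the hypothesis supplies $bb^*\in\KK(\tau,\tau)$, and $0\le(by)(by)^*\le\|y\|^2\,bb^*\in\KK(\tau,\tau)$ forces $(by)(by)^*\in\KK(\tau,\tau)$, i.e. $by\in\KK(\tau,\rho)$ via the $aa^*$-picture.

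I expect the two delicate points to be the reverse inclusion in the forward direction and the precise role of the hypothesis in the converse. The former is nontrivial because $a\mapsto a^*a$ destroys information and must be inverted through the approximation $ah^{1/n}\to a$ rather than by any algebraic identity. In the converse, the standing equality \eqref{ideal defining formula} is exactly what allows the right-multiplication inclusion $\KK(\tau,\sigma)\LL(\sigma,\rho)\subseteq\KK(\tau,\rho)$ to be verified through the $aa^*$-description, where the left-multiplication inclusion naturally uses the $a^*a$-description; without the consistency of the two descriptions the family need not close up to an ideal. Both halves ultimately rest on the heredity of $C^*$-ideals, so confirming that lemma in the (non-unital) setting of the diagonal algebras $\LL(\rho,\rho)$ is the foundation I would lay before anything else.
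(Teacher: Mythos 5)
Your proposal is correct, but there is nothing in this paper to compare it against: the statement is quoted verbatim from Theorem 2.6 of \cite{kwa-doplicher} and no proof is reproduced here, so your argument stands or falls on its own. It stands. The two genuinely nontrivial steps both survive scrutiny in the precategory setting. In the forward direction, $h^{1/n}\in\KK(\rho,\rho)$ because $t\mapsto t^{1/n}$ is a uniform limit on $[0,\|h\|]$ of polynomials vanishing at $0$; property (p4) converts $\|a-ah^{1/n}\|^2$ into the norm of $(1-h^{1/n})h(1-h^{1/n})$, which lives in the commutative $C^*$-algebra generated by $h$ inside $\LL(\rho,\rho)$, and $\sup_{t\in[0,\|h\|]}(1-t^{1/n})^2t\to 0$ (split the supremum at $t=1$; on $[0,1]$ it is at most $1/(n+1)$, above $1$ it is at most $\|h\|(\|h\|^{1/n}-1)^2$). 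An equivalent route, closer in spirit to the paper's Lemma \ref{about approximate units}, is to use an approximate unit $(u_\lambda)$ of $\KK(\rho,\rho)$ and the identity $\|a-au_\lambda\|^2=\|(1-u_\lambda)a^*a(1-u_\lambda)\|$. In the converse direction, the order inequality $c^*z^*zc\le\|z\|^2c^*c$ that you invoke does hold in a $C^*$-precategory, but it deserves the one-line justification you left implicit: writing $\|z\|^2 1-z^*z=w^*w$ in the unitization of the relevant diagonal algebra, one gets $\|z\|^2c^*c-c^*z^*zc=(wc)^*(wc)\ge 0$ by axiom (p5), where $wc$ makes sense via the evident unitized module action; heredity of closed ideals in (possibly non-unital) $C^*$-algebras then finishes both the additivity and the two multiplication inclusions. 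Your identification of exactly where the standing equality of the $a^*a$- and $aa^*$-descriptions is needed (the right-multiplication inclusion) is also the correct reading of the hypothesis in the converse.
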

We generalize the notion of an essential ideal in a $C^*$-algebra as follows.
\begin{defn}\label{defn:essential ideal} An ideal  $\KK$ in a $C^*$-precategory $\LL$ is an \emph{essential ideal}  in $\LL$ if
 \begin{equation}\label{eq:essential ideal}
\KK(\rho,\rho) \text{ is an essential ideal in }\LL(\rho,\rho) \text{ for every } \rho \in \Ob(\LL).
\end{equation}
\end{defn}
Homomorphisms between $C^*$-precategories are defined in a natural way. We recall from
  \cite[Definition 2.8]{kwa-doplicher} that
a \emph{homomorphism} $\Phi$ from a $C^*$-precategory $\LL$ to a $C^*$-pre\-ca\-te\-gory  $\SS$ consists of a map $\Ob(\LL) \ni\sigma \mapsto \Phi(\sigma)\in \Ob(\SS)$ and  linear operators
$ \LL(\sigma,\rho)\ni a \mapsto \Phi(a) \in \SS(\Phi(\sigma),\Phi(\rho))$,  $\sigma, \rho\in \Ob(\LL)$, such that $
\Phi(a)\Phi(b)=\Phi(ab)$ and $\Phi(a^*)=\Phi(a)^*$
for all $a\in \LL(\tau,\sigma)$, $b\in \LL(\sigma,\rho)$ and all $\sigma, \rho, \tau\in \Ob(\LL)$.
 An \emph{endomorphism} of $\LL$ is a homomorphism from $\LL$ to $\LL$.

Note that composition of homomorphisms is again a homomorphism. If  $\Phi:\LL\to \SS$ is a homomorphism between $C^*$-precategories,
 then  $\Phi:\LL(\rho,\rho)\to \SS(\Phi(\rho),\Phi(\rho))$ are $^*$-homomorphisms of $C^*$-algebras. Using this observation one gets the following fact, cf.  \cite[Proposition 2.9]{kwa-doplicher}.
\begin{lem}\label{proposition 1.5}
For any homomorphism $\Phi:\LL\to \SS$  of  $C^*$-precategories   the  operators
\begin{equation}\label{restricted maps}
\Phi:\LL(\sigma,\rho)\to \SS(\Phi(\sigma),\Phi(\rho)), \qquad \sigma,\rho \in \Ob(\LL),
\end{equation}
 are  contractions. Moreover, all the maps in \eqref{restricted maps} are isometric if and only if all  the maps in \eqref{restricted maps} with $\sigma=\rho$ are injective.
 \end{lem}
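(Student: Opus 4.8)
The plan is to reduce every norm computation on the off-diagonal spaces $\LL(\sigma,\rho)$ to one on the diagonal $C^*$-algebra $\LL(\rho,\rho)$, exploiting the $C^*$-identity (p4) together with the fact recalled just before the statement that each restriction $\Phi\colon\LL(\rho,\rho)\to\SS(\Phi(\rho),\Phi(\rho))$ is a $^*$-homomorphism of $C^*$-algebras. The two standard facts about $C^*$-algebras that will drive the argument are: a $^*$-homomorphism of $C^*$-algebras is automatically contractive, and an injective $^*$-homomorphism is automatically isometric.

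For the contractivity claim, I would fix $a\in\LL(\sigma,\rho)$ and note that $a^*a\in\LL(\rho,\rho)$, while multiplicativity and the fact that $\Phi$ preserves adjoints give $\Phi(a)^*\Phi(a)=\Phi(a^*a)$. Applying (p4) in $\SS$, then the contractivity of the diagonal $^*$-homomorphism on $\LL(\rho,\rho)$, and finally (p4) in $\LL$, I compute
$$
\|\Phi(a)\|^2=\|\Phi(a)^*\Phi(a)\|=\|\Phi(a^*a)\|\leq\|a^*a\|=\|a\|^2,
$$
which yields $\|\Phi(a)\|\leq\|a\|$ for every $\sigma,\rho$, as required.

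For the equivalence, the ``only if'' direction is immediate, since an isometry is injective, so if all maps in \eqref{restricted maps} are isometric then in particular the diagonal ones are. For the ``if'' direction I would assume that every diagonal map is injective; being an injective $^*$-homomorphism of $C^*$-algebras, each $\Phi\colon\LL(\rho,\rho)\to\SS(\Phi(\rho),\Phi(\rho))$ is then isometric. Re-running the displayed computation, but now invoking $\|\Phi(a^*a)\|=\|a^*a\|$ in place of the inequality, gives $\|\Phi(a)\|^2=\|a^*a\|=\|a\|^2$, so each off-diagonal map is isometric as well.

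I do not expect a serious obstacle here; the only point requiring a little care is the clean reduction to the diagonal, namely the observation that $a^*a$ lands in the $C^*$-algebra $\LL(\rho,\rho)$ and that the $C^*$-identity lets one transfer the contractivity (respectively isometry) statement from the diagonal to the whole space. Once this reduction is in place, everything else is just the invocation of the two standard $C^*$-algebra facts above.
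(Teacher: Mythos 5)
Your proof is correct and follows exactly the route the paper intends: the paper derives this lemma from the observation, stated immediately before it, that the diagonal restrictions $\Phi\colon\LL(\rho,\rho)\to\SS(\Phi(\rho),\Phi(\rho))$ are $^*$-homomorphisms of $C^*$-algebras (citing \cite[Proposition 2.9]{kwa-doplicher}), and your reduction via $\|\Phi(a)\|^2=\|\Phi(a^*a)\|$ together with automatic contractivity/isometry of ($^*$-injective) $^*$-homomorphisms is precisely that argument.
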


 \begin{defn}\label{representations of categories definition}
 A \emph{representation of a $C^*$-precategory} $\LL$ in a $C^*$-algebra $B$ is a homomorphism $\Phi:\LL \to B$ where $B$ is considered as a $C^*$-precategory with a single object. Equivalently, $\Phi$ may be viewed as a collection $\{\Phi_{\sigma,\rho}\}_{\sigma,\rho\in \Ob(\LL)}$ of linear operators  $\Phi_{\sigma,\rho}:\LL(\sigma,\rho)\to B$ such that
$$
\Phi_{\sigma,\rho}(a)^*=\Phi_{\rho,\sigma}(a^*), \quad \textrm{ and } \quad  \Phi_{\tau,\rho}(ab)=\Phi_{\tau,\sigma}(a)\Phi_{\sigma,\rho}(b),
$$
for  all $a\in \LL(\tau,\sigma)$, $b\in \LL(\sigma,\rho)$.
 A \emph{representation} of $\LL$ \emph{on a Hilbert space} $H$  is a representation of  $\LL$ in the $C^*$-algebra $\B(H)$ of all bounded operators on $H$. We say that a representation $\{\Phi_{\sigma,\rho}\}_{\sigma,\rho\in \Ob(\LL)}$ is \emph{injective} if all $\Phi_{\rho,\rho}$ for $\rho\in \Ob(\LL)$ are injective (then all the maps $\Phi_{\sigma,\rho}$,  $\sigma\in \Ob(\LL)$, are isometric  by Lemma \ref{proposition 1.5}).
 \end{defn}

Let  $\{\Phi_{\sigma,\rho}\}_{\sigma,\rho\in \LL}$  be a representation of a $C^*$-precategory $\LL$.  If $\KK$ is an ideal in $\LL$ then $\{\Phi_{\sigma,\rho}|_{\KK(\sigma,\rho)}\}_{\sigma,\rho\in \LL}$ is a representation of $\KK$. In the converse direction we have the following result, cf. \cite[Proposition 2.13]{kwa-doplicher}.
\begin{prop}\label{extensions of representations on Hilbert spaces0} Suppose that $\KK$ is an ideal in a $C^*$-precategory $\LL$ and let $\Phi=\{\Phi_{\sigma,\rho}\}_{\sigma,\rho\in \LL}$ be a representation of $\KK$ on a Hilbert space $H$.
 There is  a unique extension $\overline{\Phi}=\{\overline{\Phi}_{\sigma,\rho}\}_{\sigma,\rho\in \LL}$ of  $\Phi$  to a representation  of  $\LL$   such that the essential subspace of $\overline{\Phi}_{\sigma,\rho}$ is  contained in the essential subspace of $\Phi_{\sigma,\rho}$, for every $\sigma,\rho\in\Ob(\LL)$.  Namely,
\begin{equation}\label{formula defining extensions of right tensor representations}
\overline{\Phi}_{\sigma,\rho}(a)(\Phi_{\rho,\rho}(\KK(\rho,\rho)) H)^\bot =0,\quad \text{ and }\quad  \overline{\Phi}_{\sigma,\rho}(a) \Phi_{\rho,\rho}(b)h = \Phi_{\sigma,\rho}(ab)h
\end{equation}
for all $ a\in \LL(\sigma,\rho)$, $b \in \KK(\rho,\rho)$, $h \in H$.
Moreover,
\begin{equation}\label{kernel of the extended}
(\ker\overline{\Phi})(\sigma,\rho)=\{a\in \LL(\sigma,\rho): a\KK(\rho,\rho)\subseteq \ker\Phi_{\sigma,\rho}\}.
\end{equation}
In particular, $\overline{\Phi}$ is injective if and only if $\Phi$ is injective and $\KK$ is an essential ideal in $\LL$.
\end{prop}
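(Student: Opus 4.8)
The plan is to build $\overline{\Phi}$ out of the diagonal data, the key point being that for each $\rho\in\Ob(\LL)$ the map $\Phi_{\rho,\rho}\colon\KK(\rho,\rho)\to\B(H)$ is a $\ast$-homomorphism of the ideal $\KK(\rho,\rho)$ in $\LL(\rho,\rho)$, so its essential subspace $H_\rho:=\overline{\Phi_{\rho,\rho}(\KK(\rho,\rho))H}$ is well defined and $H=H_\rho\oplus H_\rho^\perp$. For $a\in\LL(\sigma,\rho)$ I would declare $\overline{\Phi}_{\sigma,\rho}(a)$ to be $0$ on $H_\rho^\perp$ and to act on the dense span of $\{\Phi_{\rho,\rho}(b)h: b\in\KK(\rho,\rho),\ h\in H\}\subseteq H_\rho$ by the second formula in \eqref{formula defining extensions of right tensor representations}; this makes sense because $ab\in\LL(\sigma,\rho)\KK(\rho,\rho)\subseteq\KK(\sigma,\rho)$, so $\Phi_{\sigma,\rho}(ab)$ is defined. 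The case $\sigma=\rho$ is the classical extension of a representation of a $C^*$-ideal to the ambient $C^*$-algebra, and the off-diagonal construction proceeds along the same lines.

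The crux is well-definedness and boundedness, and this is the one genuine computation. Fix $a\in\LL(\sigma,\rho)$ and write $\|a\|^2 1-a^\ast a=d^\ast d$ for some $d$ in the minimal unitization $\widetilde{\LL(\rho,\rho)}$, using $a^\ast a\in\LL(\rho,\rho)_+$ with $\|a^\ast a\|=\|a\|^2$. For $b_1,\dots,b_n\in\KK(\rho,\rho)$ the matrix $\big((db_i)^\ast(db_j)\big)_{i,j}$ equals $Y^\ast Y$ with $Y=(db_1,\dots,db_n)$, hence is positive in $M_n(\KK(\rho,\rho))$ (note $db_i\in\KK(\rho,\rho)$ since $\KK(\rho,\rho)$ is an ideal). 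Applying the completely positive map $\Phi_{\rho,\rho}$ entrywise and pairing against $(h_1,\dots,h_n)\in H^n$ gives $\sum_{i,j}\langle\Phi_{\rho,\rho}\big(b_i^\ast(\|a\|^2 1-a^\ast a)b_j\big)h_j,h_i\rangle\ge 0$. Expanding and using $\Phi_{\rho,\rho}((ab_i)^\ast(ab_j))=\Phi_{\sigma,\rho}(ab_i)^\ast\Phi_{\sigma,\rho}(ab_j)$ together with $\Phi_{\rho,\sigma}(x^\ast)=\Phi_{\sigma,\rho}(x)^\ast$ rewrites this as
\begin{equation*}
\Big\|\sum_i\Phi_{\sigma,\rho}(ab_i)h_i\Big\|^2\le\|a\|^2\,\Big\|\sum_i\Phi_{\rho,\rho}(b_i)h_i\Big\|^2 .
\end{equation*}
Thus $\Phi_{\rho,\rho}(b)h\mapsto\Phi_{\sigma,\rho}(ab)h$ is well defined and of norm $\le\|a\|$ on the dense span of $H_\rho$, so it extends to a bounded operator on $H_\rho$, which together with $0$ on $H_\rho^\perp$ defines $\overline{\Phi}_{\sigma,\rho}(a)$. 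A Cohen--Hewitt factorization $ab=cd$ with $c\in\KK(\sigma,\sigma)$, $d\in\KK(\sigma,\rho)$ shows $\Phi_{\sigma,\rho}(ab)h\in H_\sigma$, so $\overline{\Phi}_{\sigma,\rho}(a)$ maps $H_\rho$ into $H_\sigma$.

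It then remains to check that $\overline{\Phi}=\{\overline{\Phi}_{\sigma,\rho}\}$ is a representation of $\LL$ extending $\Phi$, which I would do by evaluating the relations on the dense sets above. Linearity is clear; for $a\in\LL(\tau,\sigma)$, $c\in\LL(\sigma,\rho)$ one gets $\overline{\Phi}_{\tau,\rho}(ac)\Phi_{\rho,\rho}(b)h=\Phi_{\tau,\rho}(acb)h=\overline{\Phi}_{\tau,\sigma}(a)\overline{\Phi}_{\sigma,\rho}(c)\Phi_{\rho,\rho}(b)h$, factoring $cb$ to land in $H_\sigma$ before applying the formula for $\overline{\Phi}_{\tau,\sigma}(a)$; the adjoint relation $\overline{\Phi}_{\sigma,\rho}(a)^\ast=\overline{\Phi}_{\rho,\sigma}(a^\ast)$ follows from a direct pairing, with all cross terms against the orthogonal complements vanishing because each $\overline{\Phi}_{\sigma,\rho}(a)$ is supported on $H_\rho$ with range in $H_\sigma$. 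That $\overline{\Phi}$ extends $\Phi$ is immediate from the defining formula and an approximate unit of $\KK(\rho,\rho)$. For uniqueness, any extension $\Psi$ whose essential subspaces are contained in those of $\Phi$ agrees with $\overline{\Phi}$ on $H_\rho$ by multiplicativity, since $\Psi_{\sigma,\rho}(a)\Phi_{\rho,\rho}(b)h=\Psi_{\sigma,\rho}(ab)h=\Phi_{\sigma,\rho}(ab)h$, and vanishes on $H_\rho^\perp$ by the essential-subspace hypothesis; hence $\Psi=\overline{\Phi}$.

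Finally, since $\overline{\Phi}_{\sigma,\rho}(a)$ vanishes on $H_\rho^\perp$ by construction, it is zero exactly when $\Phi_{\sigma,\rho}(ab)h=0$ for all $b\in\KK(\rho,\rho)$, $h\in H$, i.e. when $a\KK(\rho,\rho)\subseteq\ker\Phi_{\sigma,\rho}$, which is \eqref{kernel of the extended}. Specialising to $\sigma=\rho$, $\overline{\Phi}$ is injective iff $(\ker\overline{\Phi})(\rho,\rho)=0$ for every $\rho$. If $\Phi$ is injective and $\KK$ is essential, then $a\KK(\rho,\rho)\subseteq\ker\Phi_{\rho,\rho}=0$ forces $a=0$ as $\KK(\rho,\rho)$ has trivial annihilator in $\LL(\rho,\rho)$; conversely $(\ker\overline{\Phi})(\rho,\rho)=0$ applied to $a\in\KK(\rho,\rho)$ with $\Phi_{\rho,\rho}(a)=0$ yields injectivity of $\Phi_{\rho,\rho}$, and applied to $a\in\LL(\rho,\rho)$ with $a\KK(\rho,\rho)=0$ yields that $\KK(\rho,\rho)$ is essential. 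The main obstacle is the boundedness estimate of the second paragraph; once this positivity argument is secured, the remaining steps are routine manipulations on dense subspaces, the only real care being the bookkeeping of the $\ast$-precategory identities and of the support/range subspaces used to kill cross terms.
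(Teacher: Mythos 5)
Your proof is correct, but note how it relates to the paper's: the paper does not prove existence and uniqueness at all --- it cites \cite[Proposition 2.13]{kwa-doplicher} for the extension $\overline{\Phi}$ satisfying \eqref{formula defining extensions of right tensor representations}, and its only actual argument is the derivation of the kernel formula \eqref{kernel of the extended} from that defining formula, which coincides with your final paragraph. What you have written is therefore a self-contained reconstruction of the outsourced result. Your one genuine computation --- writing $\|a\|^2 1-a^*a=d^*d$ in the unitization of $\LL(\rho,\rho)$, observing $db_i\in\KK(\rho,\rho)$ by the ideal property, and applying the $*$-homomorphism (hence completely positive map) $\Phi_{\rho,\rho}$ entrywise to the positive matrix $\bigl(b_i^*(\|a\|^2 1-a^*a)b_j\bigr)_{i,j}$ --- is exactly what is needed to make \eqref{formula defining extensions of right tensor representations} define a bounded operator of norm at most $\|a\|$; the remaining verifications (range in $H_\sigma$ and multiplicativity via the factorization $\KK(\sigma,\rho)=\KK(\sigma,\sigma)\KK(\sigma,\rho)$ of Lemma \ref{about approximate units}, the adjoint relation, the extension property) are routine, as you say. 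One spot is terser than it should be: in the uniqueness argument the hypothesis is a containment of essential subspaces, i.e.\ of closed spans of ranges, so to conclude that $\Psi_{\sigma,\rho}(a)$ vanishes on $H_\rho^\perp$ you must pass through the adjoint --- the essential subspace of $\Phi_{\rho,\sigma}$ lies in $H_\rho$ (factor $\KK(\rho,\sigma)=\KK(\rho,\rho)\KK(\rho,\sigma)$), hence $\Psi_{\sigma,\rho}(a)^*=\Psi_{\rho,\sigma}(a^*)$ has range in $H_\rho$, and therefore $\Psi_{\sigma,\rho}(a)$ kills $H_\rho^\perp$. That is bookkeeping, not a gap. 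In sum, the paper buys brevity by a forward reference; you buy self-containedness, with the complete-positivity estimate as the engine.
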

\begin{proof}
The existence and uniqueness of $\overline{\Phi}$ satisfying \eqref{formula defining extensions of right tensor representations} are guaranteed by \cite[Proposition 2.13]{kwa-doplicher}.  Let  $a\in \LL(\sigma,\rho)$, $\sigma, \rho\in \Ob(\LL)$. By \eqref{formula defining extensions of right tensor representations},  we have
$$
a\in\ker\overline{\Phi}_{\sigma,\rho}\,  \Longleftrightarrow\,  \Phi_{\sigma,\rho}(a \KK(\rho,\rho))=\{0\} \, \Longrightarrow\,   a\KK(\rho,\rho)\subseteq \ker\Phi_{\sigma,\rho},
$$
which proves the second part of  the assertion.
  \end{proof}

\begin{ex}\label{ex:primordial} The prototypical examples of $C^*$-precategories arise from adjointable maps between Hilbert modules. Specifically, let $X=\{ X_p\}_{p \in S}$ be a family
of right Hilbert modules over a $C^*$-algebra $A$, indexed by a set $S$.  Then the families
\begin{equation*}
\KK(p,q):= \KK(X_q,X_p),\qquad \LL(p,q) := \LL(X_q, X_p)
\end{equation*}
for $p,q \in S$ with  operations inherited from the corresponding spaces form   $C^*$-precategories.  In fact, $\LL$ is a $C^*$-category and $\KK$ is an essential ideal in $\LL$.
\end{ex}

\section{Nica covariant representations and Nica-Toeplitz algebra}\label{section:Nica covariance}

Right-tensor $C^*$-precategories over the semigroup $\N$ were introduced in \cite{kwa-doplicher}, where they were shown to provide a good framework for  studying Pimsner and Doplicher-Roberts type $C^*$-algebras. Here we notice that \cite[Definition 3.1]{kwa-doplicher} makes sense for an arbitrary semigroup $P$, and we set out to study associated $C^*$-algebras.

\begin{defn}  A \emph{right-tensor $C^*$-precategory} is a  $C^*$-precategory $\LL=\{\LL(p,q)\}_{p,q\in P}$ whose objects form a  semigroup $P$ with identity $e$ and which is equipped with a semigroup $\{\otimes 1_r\}_{r\in P}$ of endomorphisms of $\LL$ such that $\otimes 1_r$  acts on $P$ by sending  $p$ to $pr$, for all $p,r\in P$, and $\otimes 1_e=\id$.  For a morphism $a\in \LL(p,q)$ we denote the value of $\otimes 1_r$ on $a$ by $a\otimes 1_r$, and note that it belongs to $\LL(pr,qr)$.  We refer to $\{\otimes 1_r\}_{r\in P}$  as to a \emph{right tensoring} on $\LL=\{\LL(p,q)\}_{p,q\in P}$.
\end{defn}

Note in particular that for all $ a\in \LL(p,q)$,  $b\in \LL(q,s)$, and $p,q,r, s\in P$ we have
$$
 ((a\otimes 1_r)\otimes 1_s) = a\otimes 1_{rs},
 \qquad
(a\otimes 1_r)^*=a^*\otimes 1_r,\qquad   (a \otimes 1_r)  (b\otimes 1_r)= (ab)\otimes 1_r.
$$
The following definition is a semigroup generalization of \cite[Definition 3.6]{kwa-doplicher}.
\begin{defn}\label{tensor representation definition}
Let $\KK$ be an ideal in a right-tensor $C^*$-precategory $\TT$. We say that a representation $\Phi:\KK\to B$ of  $\KK$  in a $C^*$-algebra $B$ is a \emph{right-tensor representation} if for all
 $a\in \KK(p,q)$ and $b\in   \KK(s,t) $ such that $sP\subseteq qP$ we have
\begin{equation}\label{right tensor representation condition}
\Phi(a)\Phi(b)
=
\Phi \left((a \otimes 1_{q^{-1}s}) b\right).
\end{equation}
We let $C^*(\Phi(\KK))$ be the $C^*$-algebra  generated by the spaces $\Phi(\KK(p,q))$, $p,q\in P$. We call it the $C^*$-algebra generated by $\Phi$.
 \end{defn}
 \begin{rem}
 Since $\KK$ is an ideal  the right hand side  of \eqref{right tensor representation condition}  makes sense. Furthermore,  by  taking adjoints one gets the symmetrized  version of this equation:
  $$
 \Phi(a)\Phi(b)= \Phi( a (b\otimes 1_{s^{-1}q})),
$$
where $a\in \KK(p,q)$, $b\in  \KK(s,t)$, $qP\subseteq sP$, $p,q,s,t\in P$.
\end{rem}

Standard arguments coupled with our proof of existence of an injective Nica covariant representation, see Proposition \ref{Nica Toeplitz reduced representation} below, show that the following proposition holds.
\begin{prop}\label{Toeplitz description}  Let $\KK$ be an  ideal in a  right-tensor  $C^*$-precategory $\LL$. There are a $C^*$-algebra $\TT_{\LL}(\KK)$ and
an injective right-tensor representation $t_{\KK}: \KK \to \TT_{\LL}(\KK) $, such that
\begin{itemize}
\item[(a)] for every right-tensor  representation $\Phi$ of $\KK$ there is a homomorphism  $\Phi\times P$
of $\TT_{\LL}(\KK)$ such that $(\Phi\times P)\circ t_{\KK} =\Phi$; and
\item[(b)] $\TT_{\LL}(\KK)=C^*(t_{\KK}(\KK))$.
\end{itemize}
The $C^*$-algebra  $\TT_{\LL}(\KK)$ is unique up to canonical isomorphism.
\end{prop}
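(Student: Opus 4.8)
The plan is to construct $\TT_\LL(\KK)$ as the enveloping $C^*$-algebra of a universal $*$-algebra encoding the defining relations of a right-tensor representation, and to deduce injectivity of $t_\KK$ from the existence of an injective right-tensor representation supplied by Proposition \ref{Nica Toeplitz reduced representation}. First I would form the free complex $*$-algebra generated by symbols $t(a)$, one for each $a\in\KK(p,q)$ and $p,q\in P$, and pass to the quotient by the two-sided $*$-ideal generated by the relations that force $a\mapsto t(a)$ to be a right-tensor representation: namely $\C$-linearity on each $\KK(p,q)$, the relation $t(a)^*=t(a^*)$, the multiplicativity $t(ab)=t(a)t(b)$ for composable $a,b$, and the right-tensor identity $t(a)t(b)=t((a\otimes 1_{q^{-1}s})b)$ whenever $a\in\KK(p,q)$, $b\in\KK(s,t)$ and $sP\subseteq qP$. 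Denoting the resulting $*$-algebra by $\mathcal{A}_0$, the canonical map $t_\KK\colon\KK\to\mathcal{A}_0$ is by construction a right-tensor representation whose image generates $\mathcal{A}_0$.

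Next I would define a $C^*$-seminorm on $\mathcal{A}_0$ by $\|x\|:=\sup_\Phi\|(\Phi\times P)(x)\|$, where the supremum runs over all right-tensor representations $\Phi$ of $\KK$ (equivalently, over all $*$-representations of $\mathcal{A}_0$ on Hilbert space, since every $C^*$-algebra embeds faithfully into some $\B(H)$). This supremum is finite: each generator satisfies $\|(\Phi\times P)(t(a))\|=\|\Phi(a)\|\leq\|a\|$ by Lemma \ref{proposition 1.5}, so a general element of $\mathcal{A}_0$, being a finite linear combination of words in the generators, has seminorm bounded by a fixed polynomial expression in the norms of the morphisms involved, independent of $\Phi$. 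Setting $\TT_\LL(\KK)$ to be the completion of $\mathcal{A}_0/\{x:\|x\|=0\}$ then yields a $C^*$-algebra; composing $t_\KK$ with the quotient map gives a generating right-tensor representation, establishing (b). Property (a) is immediate, since for any right-tensor representation $\Phi$ the induced $*$-homomorphism $\mathcal{A}_0\to C^*(\Phi(\KK))$ is contractive for the universal seminorm and hence descends to the required $\Phi\times P$.

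The step I expect to be the main obstacle—and the only one that is not purely formal—is the injectivity of $t_\KK$. Contractivity gives $\|t_\KK(a)\|\leq\|a\|$ for every $a$, so it remains to produce a matching lower bound. For this I would invoke the injective Fock representation $T$ furnished by Proposition \ref{Nica Toeplitz reduced representation}: since $T$ is an injective right-tensor representation, $T_{\rho,\rho}$ is isometric on each $\KK(\rho,\rho)$ by Lemma \ref{proposition 1.5}, whence $\|t_\KK(a)\|\geq\|(T\times P)(t_\KK(a))\|=\|T(a)\|=\|a\|$ for $a\in\KK(\rho,\rho)$. Thus $t_\KK$ is isometric on the diagonals and therefore an injective representation. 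Finally, uniqueness up to canonical isomorphism is the standard consequence of the universal property: if $(\TT',t')$ is another pair satisfying (a) and (b), applying (a) to each pair produces mutually inverse $*$-homomorphisms intertwining $t_\KK$ and $t'$, which are isomorphisms by (b).
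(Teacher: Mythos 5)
Your proposal is correct and takes essentially the same route as the paper: the paper compresses your explicit free-$*$-algebra/sup-seminorm construction into a citation of Blackadar's framework for universal $C^*$-algebras (justified, exactly as you do, by automatic contractivity of representations via Lemma~\ref{proposition 1.5}, together with closure of right-tensor representations under direct sums), and it likewise deduces injectivity of $t_{\KK}$ from the injective Fock representation of Proposition~\ref{Nica Toeplitz reduced representation}.
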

\begin{proof}
Since  every representation $\Phi:\KK\to B$ is automatically contractive, cf. Lemma \ref{proposition 1.5}, a direct sum of  right-tensor representations of $\KK$ is a right-tensor representation. Thus existence and uniqueness of $\TT_{\LL}(\KK)$ follow from \cite[Section 1]{blackadar}. 
Injectivity of $t_{\KK}$ follows from Proposition \ref{Nica Toeplitz reduced representation} that we prove below.
\end{proof}
\begin{defn}
Given an  ideal $\KK$ in a right-tensor $C^*$-precategory $(\LL, \{\otimes 1_r\}_{r\in P})$, the $C^*$-algebra $\TT_{\LL}(\KK)$ described in Proposition \ref{Toeplitz description} is the  \emph{Toeplitz algebra} of $\KK$.
\end{defn}
The Toeplitz algebra $\TT_{\LL}(\KK)$ in general is very large. It lacks a version of 'Wick ordering' and therefore its structure is hardly accessible. This is the main reason why in the present paper we will study  $C^*$-algebras generated by representations satisfying a  condition of Nica type, which is stronger than
\eqref{right tensor representation condition}. Since such conditions are (so far) established only for right LCM semigroups,  from now on (with the exception of Section \ref{Fock representation section}) we will \emph{always assume that $P$ is a  right LCM semigroup}.

\begin{defn}\label{well-alignment definition}
Let $(\LL, \{\otimes 1_r\}_{r\in P})$ be a right-tensor  $C^*$-precategory  over a right LCM semigroup $P$. An ideal $\KK$ in $\LL$ is \emph{well-aligned} in $(\LL, \{\otimes 1_r\}_{r\in P})$ if   for all $a\in \KK(p,p)$, $b\in \KK(q,q) $ we have
\begin{equation}\label{compact alignment relation}
(a\otimes 1_{p^{-1}r}) (b\otimes 1_{q^{-1}r}) \in \KK(r,r)\qquad \textrm{whenever}\quad pP\cap qP=rP.
 \end{equation}
An ideal $\KK$ in $\LL$ is $\otimes 1$-\emph{invariant}  if  $\KK(p,p)\otimes 1_r \subseteq \KK(pr,pr)$ for all $p,r\in P$. We denote this property of $\KK$ as $\KK\otimes 1\subseteq \KK$.
\end{defn}

Note that by Proposition~\ref{diagonal of ideals}, if $\KK\otimes 1\subseteq \KK$, then $\KK(p,q)\otimes 1_r \subseteq \KK(pr,qr)$ for all $p,q,r\in P$.
Plainly, if $\KK$ is a $\otimes 1$-invariant ideal in a right-tensor  $C^*$-precategory $\LL$, then $\KK$ is itself a right-tensor $C^*$-precategory, and $\KK$ is well-aligned both in $\LL$ and in $\KK$. The condition in \eqref{compact alignment relation} is a  generalization of the notion of compact alignment for product systems of $C^*$-correspondences from  \cite[Definition 5.7]{F99}, cf. \cite{bls2}, and see \cite{kwa-larII} for details. The next lemma shows that \eqref{compact alignment relation} captures more than just \emph{diagonal} fibres $\KK(p,p)$ for $p\in P$.

\begin{lem}\label{lemma on alignment}
Let $\KK$ be a well-aligned ideal in a  right-tensor  $C^*$-precategory $\LL$. For all
 $a\in \KK(p,q)$, $b\in   \KK(s,t) $ we have
\begin{equation}\label{compact alignment relation2}
(a\otimes 1_{q^{-1}r}) (b\otimes 1_{s^{-1}r}) \in \KK(pq^{-1}r,ts^{-1}r)\qquad \textrm{whenever}\quad qP\cap sP=rP.
 \end{equation}
\end{lem}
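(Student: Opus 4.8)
The plan is to deduce the off-diagonal relation \eqref{compact alignment relation2} from the diagonal hypothesis \eqref{compact alignment relation} by passing to the ``moduli'' $a^*a$ and $bb^*$, which live on the diagonal, and then invoking the principle of Proposition \ref{diagonal of ideals} that $\KK$ is completely determined by its diagonal fibres. First I would fix the bookkeeping. Since $rP=qP\cap sP$ is contained in both $qP$ and $sP$, the elements $q^{-1}r$ and $s^{-1}r$ are well defined, and the right-tensoring endomorphisms place $a\otimes 1_{q^{-1}r}\in \LL(pq^{-1}r,r)$ and $b\otimes 1_{s^{-1}r}\in \LL(r,ts^{-1}r)$, so that $c:=(a\otimes 1_{q^{-1}r})(b\otimes 1_{s^{-1}r})$ is a composable product lying in $\LL(pq^{-1}r,ts^{-1}r)$, matching the target fibre. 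By the characterisation in Proposition \ref{diagonal of ideals}, it then suffices to prove that $c^*c\in \KK(ts^{-1}r,ts^{-1}r)$.

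The heart of the argument applies the diagonal well-alignment \eqref{compact alignment relation} to the positive elements $a^*a\in \KK(q,q)$ and $bb^*\in \KK(s,s)$, which indeed lie in $\KK$ by Proposition \ref{diagonal of ideals}. As $qP\cap sP=rP$, relation \eqref{compact alignment relation} yields
\[
z:=\big((a^*a)\otimes 1_{q^{-1}r}\big)\big((bb^*)\otimes 1_{s^{-1}r}\big)\in \KK(r,r).
\]
Next I would introduce the auxiliary morphism $m:=\big((a^*a)\otimes 1_{q^{-1}r}\big)(b\otimes 1_{s^{-1}r})\in \LL(r,ts^{-1}r)$. Using $(x\otimes 1_v)(y\otimes 1_v)=(xy)\otimes 1_v$ and self-adjointness of $a^*a$, a short computation gives $mm^*=z\cdot\big((a^*a)\otimes 1_{q^{-1}r}\big)$; since $z\in \KK(r,r)$ and $\KK(r,r)$ is an ideal in the $C^*$-algebra $\LL(r,r)$, this product lies in $\KK(r,r)$, whence $m\in \KK(r,ts^{-1}r)$, again by Proposition \ref{diagonal of ideals}. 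Finally $c^*c=(b^*\otimes 1_{s^{-1}r})\,m$ with $b^*\otimes 1_{s^{-1}r}\in \LL(ts^{-1}r,r)$, so the ideal property of $\KK$ forces $c^*c\in \KK(ts^{-1}r,ts^{-1}r)$, which is exactly what was needed.

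The one genuine subtlety, and the reason a naive argument fails, is that right-tensoring need not preserve $\KK$: even though $a^*a\in \KK(q,q)$, in general $(a^*a)\otimes 1_{q^{-1}r}$ need not belong to $\KK(r,r)$. The diagonal condition \eqref{compact alignment relation} is precisely the input that repairs this defect, manufacturing the compact diagonal element $z$; everything else is ideal arithmetic combined with the ``determined by the diagonal'' passage of Proposition \ref{diagonal of ideals}. I therefore expect no conceptual obstacle, and the main point requiring care is simply keeping the objects (sources and ranges) of all morphisms consistent, i.e. checking composability at each multiplication, which is routine but must be tracked precisely throughout.
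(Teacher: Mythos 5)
Your proof is correct, but it takes a genuinely different route from the paper's. The paper reduces to the diagonal case via Cohen--Hewitt factorization (Lemma \ref{about approximate units}): it writes $a=a'a''$ with $a'\in\KK(p,q)$, $a''\in\KK(q,q)$ and $b=b''b'$ with $b''\in\KK(s,s)$, $b'\in\KK(s,t)$, applies \eqref{compact alignment relation} to the diagonal factors $a''$, $b''$ to land in $\KK(r,r)$, and then multiplies back by $a'\otimes 1_{q^{-1}r}$ and $b'\otimes 1_{s^{-1}r}$, invoking only the ideal property of $\KK$ in $\LL$. You instead avoid factorization entirely: you pass to the positive diagonal elements $a^*a\in\KK(q,q)$ and $bb^*\in\KK(s,s)$, apply \eqref{compact alignment relation} to produce $z\in\KK(r,r)$, and then use the characterization of $\KK$ by its diagonal fibres (Proposition \ref{diagonal of ideals}) twice --- first to conclude $m=\bigl((a^*a)\otimes 1_{q^{-1}r}\bigr)(b\otimes 1_{s^{-1}r})\in\KK(r,ts^{-1}r)$ from $mm^*=z\bigl((a^*a)\otimes 1_{q^{-1}r}\bigr)\in\KK(r,r)$, and then to conclude $c\in\KK(pq^{-1}r,ts^{-1}r)$ from $c^*c=(b^*\otimes 1_{s^{-1}r})m\in\KK(ts^{-1}r,ts^{-1}r)$; all your object bookkeeping and the algebraic identities check out. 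The paper's argument is shorter and uses the factorization machinery already established; yours trades that machinery for $C^*$-identity manipulations and the ``determined by the diagonal'' principle, which makes the mechanism by which the diagonal hypothesis \eqref{compact alignment relation} propagates to off-diagonal fibres more explicit, at the cost of the auxiliary element $m$ and a slightly longer chain of reductions.
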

\begin{proof} By Lemma \ref{about approximate units}  we  have $a=a' a''$ and $b=b'b''$ where $a'\in \KK(p,q)$, $b'\in   \KK(s,t) $ and $a''\in \KK(q,q)$, $b''\in   \KK(s,s)$. If $qP\cap sP=rP$ then by \eqref{compact alignment relation} we have $(a''\otimes 1_{q^{-1}r}) (b''\otimes 1_{s^{-1}r}) \in \KK(r,r)$. Composing this from the left by $(a'\otimes 1_{q^{-1}r})\in \LL(pq^{-1}r,r)$ and from the right by $(b'\otimes 1_{s^{-1}r})\in \LL(r,ts^{-1}r)$ and using that $\KK$ is an ideal in $\LL$ gives  \eqref{compact alignment relation2}.
\end{proof}

The notion of Nica covariance for a representation of a compactly aligned product system of $C^*$-correspondences was introduced in \cite{F99} in the context of quasi-lattice ordered groups, and was extended to right LCM semigroups in \cite{bls2}. Lemma \ref{lemma on alignment} allows us to extend this concept to $C^*$-precategories over right LCM semigroups. In our generalization below, Nica covariance will be imposed in subspaces $\KK(p,q)$ for $p,q\in P$ that are not necessarily diagonal, in the sense that $p$ need not equal $q$.

\begin{defn} Let $\KK$ be a well-aligned ideal in a  right-tensor  $C^*$-precategory $\LL$.
A representation $\Phi:\KK\to B$ of  $\KK$  in a $C^*$-algebra $B$ is \emph{Nica covariant} if for all
 $a\in \KK(p,q)$, $b\in   \KK(s,t) $ we have
\begin{equation}\label{Nica covariance}
\Phi(a)\Phi(b)
=\begin{cases}
\Phi \left((a \otimes 1_{q^{-1}r}) (b\otimes 1_{s^{-1}r})\right)  & \textrm{ if } qP\cap sP=rP \textrm{ for some } r\in P,
\\
0 & \textrm{ otherwise}.
\end{cases}
 \end{equation}
\end{defn}
\begin{rem}\label{Wick is sick}
If  $\Phi$ is a Nica covariant representation of a well-aligned ideal $\KK$, then   $\Phi$ is a right-tensor representation and moreover the space
\begin{equation}\label{black star algebra}
C^*(\Phi(\KK))^0:=\spane\{\bigcup_{p,q \in P} \Phi(\KK(p,q))\}
\end{equation}
is a dense $*$-subalgebra of $C^*(\Phi(\KK))$; it is clearly
closed under taking adjoints and it 
is closed under multiplication
 by \eqref{Nica covariance}. Hence $C^*(\Phi(\KK))=\clsp\{\bigcup_{p,q \in P} \Phi(\KK(p,q))\}$.
\end{rem}

Since a  right-tensor $C^*$-precategory $\LL$ is well-aligned in itself we may 
always
talk about Nica covariant representations of $\LL$. For every Nica covariant representation $\Phi:\LL\to B$ of $\LL$ and every well-aligned ideal $\KK$ in $\LL$ the  restriction  $\Phi:\KK\to B$ is a Nica covariant representation of $\KK$. Moreover, \eqref{Nica covariance} readily implies that $C^*(\Phi(\KK))$ is a $C^*$-subalgebra of $C^*(\Phi(\LL))$, and if $\KK\otimes 1\subseteq \KK$, then   $C^*(\Phi(\KK))$ is in fact an ideal in $C^*(\Phi(\LL))$.

Since the element $r$ in the right hand side of \eqref{Nica covariance} is determined only up to invertible elements in $P^*$, Nica covariant representations behave in a special way with respect to $\otimes 1_x$, $x\in P^*$.
\begin{lem}\label{lemma on automorphic actions on ideals}  Let $\KK$ be a well-aligned ideal in a  right-tensor  $C^*$-precategory $\LL$.
For every  $x\in P^*$,  $\otimes 1_x$  is an automorphism of $\LL$ which maps $\KK(p,q)$ onto $\KK(px,qx)$ for every $p,q\in P$, thus it
restricts to an automorphism of   $\KK$.

 Moreover,
 if $x\in P^*$  and $\Phi:\KK\to B$ is a Nica covariant representation of $\KK$,
then  $\Phi(a)=\Phi(a\otimes 1_x)$ for all $a\in \K(p,q)$, $p,q\in P$.
\end{lem}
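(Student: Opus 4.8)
The plan is to treat the two assertions separately: first that $\otimes 1_x$ is an automorphism of $\LL$ preserving $\KK$, and then that Nica covariance forces the identity $\Phi(a)=\Phi(a\otimes 1_x)$. The engine behind both parts is the same observation, namely that for $x\in P^*$ one has $pxP=pP$, so that $px$ is an admissible right LCM wherever $p$ is, while right LCMs are only determined up to right multiplication by a unit.

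For the first assertion I would start with the purely algebraic point that $\otimes 1_x$ is invertible: since $\{\otimes 1_r\}_{r\in P}$ is a semigroup of endomorphisms with $(a\otimes 1_r)\otimes 1_s=a\otimes 1_{rs}$ and $\otimes 1_e=\id$, the endomorphism $\otimes 1_{x^{-1}}$ is a two-sided inverse of $\otimes 1_x$, and on objects $p\mapsto px$ is a bijection of $P$. Hence $\otimes 1_x$ is an automorphism of $\LL$. The substantive issue is that it carries $\KK$ into $\KK$, which a general $\otimes 1_r$ need not do. By Proposition \ref{diagonal of ideals} it suffices to show $\KK(p,p)\otimes 1_x\subseteq\KK(px,px)$, since for $a\in\KK(p,q)$ one then has $(a\otimes 1_x)^*(a\otimes 1_x)=(a^*a)\otimes 1_x\in\KK(qx,qx)$.

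The key step is to apply well-alignment \eqref{compact alignment relation} with $p=q$ but with the nonstandard right LCM $r=px$, which is legitimate because $pxP=pP=pP\cap pP$. Since $p^{-1}(px)=x$, the relation reads $(a\otimes 1_x)(b\otimes 1_x)=(ab)\otimes 1_x\in\KK(px,px)$ for all $a,b\in\KK(p,p)$. As $\KK(p,p)$ is a $C^*$-algebra we have $\KK(p,p)=\KK(p,p)\KK(p,p)$, whence $\KK(p,p)\otimes 1_x\subseteq\KK(px,px)$. Applying the resulting inclusion $\KK(p,q)\otimes 1_x\subseteq\KK(px,qx)$ with $(p,q,x)$ replaced by $(px,qx,x^{-1})$ and composing with $\otimes 1_x$ yields the reverse inclusion, hence equality $\KK(p,q)\otimes 1_x=\KK(px,qx)$ and the asserted restriction to an automorphism of $\KK$.

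For the second assertion I would exploit that the right LCM $r$ in the Nica covariance relation \eqref{Nica covariance} is determined only up to a unit, whereas its left-hand side is not. Factor $a\in\KK(p,q)$ as $a=a'c$ with $a'\in\KK(p,q)$ and $c\in\KK(q,q)$, which is possible by Lemma \ref{about approximate units} and Cohen–Hewitt factorization. Here $qP\cap qP=qP\neq\emptyset$, so we are in the first case of \eqref{Nica covariance}; the choice $r=q$ gives $\Phi(a')\Phi(c)=\Phi(a'c)$, while the choice $r=qx$, using $q^{-1}(qx)=x$, gives $\Phi(a')\Phi(c)=\Phi((a'\otimes 1_x)(c\otimes 1_x))=\Phi((a'c)\otimes 1_x)$, the element $(a'c)\otimes 1_x$ lying in $\KK(px,qx)$ by the first part (or by Lemma \ref{lemma on alignment}). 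Equating and recalling $a=a'c$ gives $\Phi(a)=\Phi(a\otimes 1_x)$. I expect the only genuine obstacle to be the preservation of $\KK$ in the first assertion, since right tensoring is not assumed to preserve $\KK$; the point is precisely that invertibility of $x$ collapses $pxP$ to $pP$, making $px$ an admissible right LCM and letting well-alignment supply the inclusion, with everything else reducing to bookkeeping with the tensoring identities and the factorization $\KK(p,p)=\KK(p,p)\KK(p,p)$.
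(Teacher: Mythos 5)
Your proposal is correct and follows essentially the same route as the paper: both hinge on the observation that $pxP=pP$ makes $px$ an admissible right LCM, so that well-alignment (via a Cohen--Hewitt factorization) yields $\KK(p,q)\otimes 1_x\subseteq\KK(px,qx)$, and both derive $\Phi(a)=\Phi(a\otimes 1_x)$ by applying Nica covariance to a factorization $a=a'a''$ with the two choices $r=q$ and $r=qx$. The only cosmetic difference is that you reduce the off-diagonal case to the diagonal one via Proposition~\ref{diagonal of ideals}, whereas the paper applies the off-diagonal form of well-alignment (Lemma~\ref{lemma on alignment}) directly; your explicit treatment of surjectivity via $x^{-1}$ is a small bonus the paper leaves implicit.
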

\begin{proof} Let $x\in P^*$.  Clearly, $\otimes 1_x$  is an automorphism of $\LL$ since $\otimes 1_{x^{-1}}$ acts as an inverse. Take $a\in \KK(p,q)$. As in the proof of Lemma \ref{lemma on alignment}, write  $a=a' a''$ where $a'\in \KK(p,q)$ and $a''\in \KK(q,q)$. Since $qP=qxP$, by \eqref{compact alignment relation2} we get $a\otimes 1_x= (a' a'')\otimes 1_x=(a'\otimes 1_x) (a''\otimes 1_x)\in \KK(px,qx)$. If $\Phi:\KK\to B$ is a Nica covariant representation of $\KK$,
using \eqref{Nica covariance} we get $\Phi(a)=\Phi(a')\Phi(a'')=\Phi((a'\otimes 1_x) (a''\otimes 1_x))= \Phi(a\otimes 1_x)$.
\end{proof}
Existence (and uniqueness) of the universal $C^*$-algebra described in the following proposition can be shown as in the proof of Proposition \ref{Toeplitz description}, or by considering a quotient of the Toeplitz algebra $\TT_\LL(\KK)$. Injectivity of the universal Nica covariant representation follows from  Proposition \ref{Nica Toeplitz reduced representation} below.
\begin{prop}\label{Nica Toeplitz description}  Let $\KK$ be a well-aligned ideal in a  right-tensor  $C^*$-precategory $\LL$. There are a $C^*$-algebra $\NT_{\LL}(\KK)$ and
an injective Nica covariant representation $i_{\KK}: \KK \to \NT_{\LL}(\KK) $, such that
\begin{itemize}
\item[(a)] for every Nica covariant  representation $\Phi$ of $\KK$ there is a homomorphism  $\Phi\rtimes P$
of $\NT_{\LL}(\KK)$ such that $(\Phi\rtimes P)\circ i_{\KK} =\Phi$; and
\item[(b)] $\NT_{\LL}(\KK)=C^*(i_{\KK}(\KK))$.
\end{itemize}
The $C^*$-algebra  $\NT_{\LL}(\KK)$ is unique up to canonical isomorphism.
\end{prop}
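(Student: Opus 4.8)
The plan is to mimic the proof of Proposition \ref{Toeplitz description}, obtaining $\NT_\LL(\KK)$ either directly from a universal generators-and-relations construction (where boundedness of the universal seminorm is guaranteed by the automatic contractivity of representations, Lemma \ref{proposition 1.5}) or, more transparently, as a canonical quotient of the Toeplitz algebra $\TT_\LL(\KK)$. I will pursue the quotient route. Let $N$ denote the closed two-sided ideal of $\TT_\LL(\KK)$ generated by all elements of the form
$$
t_\KK(a)t_\KK(b)-t_\KK\big((a\otimes 1_{q^{-1}r})(b\otimes 1_{s^{-1}r})\big),\qquad qP\cap sP=rP,
$$
together with all products $t_\KK(a)t_\KK(b)$ for which $qP\cap sP=\emptyset$, where $a\in\KK(p,q)$ and $b\in\KK(s,t)$ range over all admissible morphisms. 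I would set $\NT_\LL(\KK):=\TT_\LL(\KK)/N$, let $\pi$ be the quotient map, and define $i_\KK:=\pi\circ t_\KK$. By construction $i_\KK$ satisfies \eqref{Nica covariance}, hence is Nica covariant, and since $t_\KK(\KK)$ generates $\TT_\LL(\KK)$ one obtains (b), namely $\NT_\LL(\KK)=C^*(i_\KK(\KK))$.

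Before forming $N$ I would record a consistency check: by Lemma \ref{lemma on automorphic actions on ideals} the right-hand side of \eqref{Nica covariance} is independent of the choice of right LCM $r$ (which is determined only up to right multiplication by $P^*$), so the defining relations of $N$ are unambiguous. For the universal property (a), let $\Phi:\KK\to B$ be any Nica covariant representation. Since $\Phi$ is in particular a right-tensor representation (Remark \ref{Wick is sick}), Proposition \ref{Toeplitz description} yields a homomorphism $\Phi\times P:\TT_\LL(\KK)\to B$ with $(\Phi\times P)\circ t_\KK=\Phi$. Because $\Phi$ satisfies \eqref{Nica covariance}, the map $\Phi\times P$ annihilates every generator of $N$, hence its kernel contains $N$ and it factors through the quotient as a homomorphism $\Phi\rtimes P:\NT_\LL(\KK)\to B$ with $(\Phi\rtimes P)\circ i_\KK=\Phi$. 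Uniqueness of $\Phi\rtimes P$ is immediate since $i_\KK(\KK)$ generates $\NT_\LL(\KK)$, and uniqueness of the pair $(\NT_\LL(\KK),i_\KK)$ up to canonical isomorphism follows by the standard argument comparing two objects enjoying the same universal property.

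The one statement not settled by this formal construction is that $i_\KK$ is injective; a priori the quotient by $N$ could collapse some of the fibres $\KK(p,q)$. This is exactly the point I expect to be the main obstacle, and it cannot be resolved by soft arguments: it requires exhibiting a concrete injective Nica covariant representation. The Fock representation $T$ built in Proposition \ref{Nica Toeplitz reduced representation} supplies such a representation. Granting that result, $T\rtimes P$ maps $i_\KK(a)$ to $T(a)$, so that $(T\rtimes P)\circ i_\KK=T$; since $T$ is injective, hence isometric on every fibre by Lemma \ref{proposition 1.5}, the composite is isometric, forcing $i_\KK$ itself to be isometric and in particular injective. This is why the injectivity clause is legitimately deferred to the construction of the Fock representation.
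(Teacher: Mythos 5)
Your proposal is correct and follows one of the two routes the paper itself indicates: the paper's (very terse) proof says existence and uniqueness can be obtained either as in Proposition \ref{Toeplitz description} or ``by considering a quotient of the Toeplitz algebra $\TT_\LL(\KK)$'', and defers injectivity of $i_\KK$ to the Fock representation of Proposition \ref{Nica Toeplitz reduced representation}, exactly as you do. Your elaboration of the quotient construction (including the consistency check for the choice of $r$ up to units, handled via Lemma \ref{lemma on automorphic actions on ideals}) and the isometry argument for $i_\KK$ are sound fillings-in of details the paper leaves implicit.
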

\begin{defn}
Given a well-aligned ideal  $\KK$ in a right-tensor $C^*$-precategory $\LL$, the $C^*$-algebra $\NT_{\LL}(\KK)$ described in Proposition \ref{Nica Toeplitz description} is called the \emph{Nica-Toeplitz algebra} of $\KK$. We write $\NT(\LL)$ for the Nica-Toeplitz algebra $\NT_{\LL}(\LL)
$ associated to $\LL$, viewed as a well-aligned ideal in itself.

\end{defn}\begin{rem}\label{remark about inclusions}
The universal property of $\NT_{\LL}(\KK)$ ensures existence of a  homomorphism
\begin{equation}\label{homomorphism to be embedding}
\iota: \NT_{\LL}(\KK) \longmapsto \NT(\LL),\qquad i_\KK(a)\longmapsto i_\LL\vert_\KK(a),
\end{equation}
for $a\in \KK(p,q)$ and $p,q\in P$.
We note that $\iota$ is injective  whenever the universal representation $i_\KK:\KK\to \NT_{\LL}(\KK)$  can be extended to a Nica covariant representation $\overline{i_\KK}:\LL\to B$ where $B$ is a $C^*$-algebra containing $\NT_{\LL}(\KK)$. Indeed, in this case we have $(\overline{i_\KK}\rtimes P)\circ i_\LL=\overline{i_\KK}$, from which it follows that $(\overline{i_\KK}\rtimes P)\circ \iota=\operatorname{id}_{\NT_{\LL}(\KK)}$, showing the claimed injectivity.
We will explore these issues in more detail in Section~\ref{relationship-K-L}.
\end{rem}
\begin{rem}\label{remark about abstract characterization NTalg} If $\KK\otimes 1\subseteq \KK$, then $\NT_{\LL}(\KK)$ does not depend on $\LL$. Indeed, in this case, the definition of Nica covariant representations involves only elements of $\KK$. Therefore, with $\NT(\KK)$ denoting the Nica-Toeplitz algebra of the right-tensor $C^*$-category $\KK$, we have $\NT(\KK)=\NT_{\LL}(\KK)$.   In general, $\NT_{\LL}(\KK)$ depends only on the $C^*$-precategory structure of $\KK$ equipped with a family of mappings $\{N_r\}_{r\in P}$ where  $N_r$, $r\in P$, is defined for  quadruples $p,q,s,t\in P$ such that $qP\cap sP=rP$
by the formula
$$
\KK(p,q)\times \KK(s,t) \ni (a,b)\longmapsto N_r(a,b):=(a\otimes 1_{q^{-1}r}) (b\otimes 1_{s^{-1}r}) \in \KK(pq^{-1}r,ts^{-1}r).
$$
Note that $N_r(a,b)^*=N_r(a^*,b^*)$ and  $N_e(a,b)=a b$   if $q=s$.
Moreover, for  any $c\in \KK(u,w)$ where $uP\cap ts^{-1}rP=zP$, for some $z\in P$, we have
$$
N_z(N_r(a,b),c)=N_{y^{-1}z}(a,N_y(b,c))
$$
for any $y\in P$ such that $tP\cap uP =yP$ (we then necessarily have  $qP\cap st^{-1}yP =y^{-1}zP$). Nevertheless, in this paper we will not pursue
this more general intrinsic description of $\NT_{\LL}(\KK)$ for three reasons. Firstly, such a  theory would be technically more  involved. Secondly,
we do not have good examples that require such an approach. Thirdly, the relationship between  $\NT_{\LL}(\KK)$ and $\NT(\LL)$ is interesting in its own right (in the context of Doplicher-Roberts algebras such a relationship was studied,
for instance, in \cite{dpz}, \cite{fmr}, \cite{kwa-doplicher}). The latter problem, in our setting, will be  addressed in Section~\ref{relationship-K-L}.
 \end{rem}

Obviously, the Nica-Toeplitz algebra $\NT_{\LL}(\KK)$  may be viewed as a quotient of the Toeplitz algebra $\TT_{\LL}(\KK)$.
If  every two elements in $P$ are comparable, then every ideal $\KK$ in a right-tensor $C^*$-precategory $\LL$ over $P$ is automatically well-aligned and  right-tensor representations of $\KK$ coincide with Nica covariant representations. Hence  $\NT_{\LL}(\KK)\cong\TT_{\LL}(\KK)$ in this case.

\begin{ex}[The case when $P=\N$]\label{The case when P=N}
	Let $\LL$ be a right-tensor $C^*$-precategory over $\N$ and $\KK$ an ideal in $\LL$. 
Due to above discussion $\KK$  is automatically well-aligned, and  a representation $\Phi$ of $\KK$ is Nica covariant if and only if $\Phi$ is a right-tensor representation.
	 For any ideal $\JJ$ in $J(\KK):=\otimes 1^{-1}(\KK)\cap \KK$, $C^*$-algebras $\OO_\LL(\KK, \JJ)$  were introduced  in \cite{kwa-doplicher} as universal $C^*$-algebras with respect to  right-tensor representations $\Phi$ of $\KK$ satisfying $\Phi_{n,m}(a) = \Phi_{n+1,m+1}(a\otimes 1)$ for all $a\in \JJ(n,m)$ and $n,m,\in \N$.
	Therefore,
	$$\TT_\LL(\KK)\cong \NT_\LL(\KK)\cong \OO_\LL(\KK, \{0\})$$
	and every $C^*$-algebra $\OO_\LL(\KK, \JJ)$ is a quotient of $\NT_\LL(\KK)$.
\end{ex}

\begin{ex}[Product systems]\label{The case when LL(p,q)=C} Let $X= \bigsqcup_{p\in P}X_{p}$ be a product system as defined in \cite{F99}.
 In \cite[Section 2.1]{kwa-larII}, cf. the introduction, we associate to $X$ the right-tensor $C^*$-precategory  $\LL_X$. Then $\KK_X=\{\KK(X_p, X_q)\}_{p,q\in P}$ is an essential ideal in $\LL_X$. By \cite[Proposition 2.8]{kwa-larII} we have a natural isomorphism $\TT_{\LL_X}(\KK_X)\cong \TT(X)$ where $\TT(X)$ is the Toeplitz algebra of $X$ defined in \cite{F99}.
Assume that $P$ is a right LCM semigroup. Then $\KK_X$ is well-aligned if and only if $X$ is compactly aligned. In this case, \cite[Proposition 2.10]{kwa-larII} gives
$$
\NT_{\LL_X}(\KK_X)\cong \NT(X),
$$
where $\NT(X)$ is the \emph{Nica-Toeplitz algebra}  associated to $X$, see \cite{bls2}, \cite{F99}.
In \cite{kwa-larII} we also   analyze a Doplicher-Roberts version $\mathcal{DR}(\NT(X))$  of $\NT(X)$, which by definition is $\NT(\LL_X)$.
\end{ex}

In view of the following lemma we may always assume that a well-aligned ideal $\KK$ in a right-tensor $C^*$-precategory $\LL$ generates $\LL$ as a right-tensor $C^*$-precategory.
\begin{lem}\label{the right tensor precatory generated by K}
For every well-aligned ideal $\KK$ in a right-tensor $C^*$-precategory $\LL$ the spaces
$$
\LL_\KK(p,q)=\clsp\{\KK(s,t)\otimes 1_r: sr=p, tr=q, \text{ for }s,t,r\in P\},\qquad p,q\in P,
$$
define the minimal right-tensor sub-$C^*$-precategory of $\LL$ containing $\KK$. In particular, we have
$
\NT_\LL(\KK)\cong\NT_{\LL_\KK}(\KK).
$
\end{lem}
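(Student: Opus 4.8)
The plan is to verify that the family $\{\LL_\KK(p,q)\}_{p,q\in P}$ is closed under the three operations defining a right-tensor sub-$C^*$-precategory---adjoints, right tensoring, and composition---and then to identify it as the smallest such family containing $\KK$. First I would record the easy closure properties. Since $\KK(s,t)^*=\KK(t,s)$ and $(c\otimes 1_r)^*=c^*\otimes 1_r$, the adjoint of a generator $c\otimes 1_r\in\KK(s,t)\otimes 1_r$ of $\LL_\KK(p,q)$ is a generator of $\LL_\KK(q,p)$, so $\LL_\KK$ is selfadjoint. Similarly, $(c\otimes 1_r)\otimes 1_u=c\otimes 1_{ru}$ with $s(ru)=pu$ and $t(ru)=qu$ shows $\LL_\KK(p,q)\otimes 1_u\subseteq\LL_\KK(pu,qu)$, so $\LL_\KK$ is invariant under the right tensoring. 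Taking $s=p$, $t=q$, $r=e$ gives $\KK(p,q)\subseteq\LL_\KK(p,q)$, so $\KK\subseteq\LL_\KK$. Each $\LL_\KK(p,q)$ is a closed subspace by construction; once composition closure is established (below), $\LL_\KK(\rho,\rho)$ becomes a $C^*$-subalgebra of $\LL(\rho,\rho)$ and axiom (p5) follows by applying continuous functional calculus to $a^*a$ inside it, so that $\LL_\KK$ inherits all the $C^*$-precategory axioms from $\LL$.

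The main point---and the only step where well-alignment enters---is closure under composition, i.e. $\LL_\KK(p,q)\LL_\KK(q,m)\subseteq\LL_\KK(p,m)$. Because composition is bilinear and contractive, it suffices to treat generators. So I would take $a\otimes 1_r$ with $a\in\KK(s,t)$, $sr=p$, $tr=q$, and $b\otimes 1_u$ with $b\in\KK(v,w)$, $vu=q$, $wu=m$. The equality $tr=q=vu$ forces $q\in tP\cap vP$, so by the right LCM property $tP\cap vP=yP$ for some $y$; since $y\in tP\cap vP$ and $q\in yP$, the elements $t^{-1}y$, $v^{-1}y$ and $z:=y^{-1}q$ are well defined, and writing $y=t(t^{-1}y)=v(v^{-1}y)$ and cancelling on the left gives $r=(t^{-1}y)z$ and $u=(v^{-1}y)z$ with a common $z$. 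Then the semigroup law for right tensoring together with multiplicativity of $\otimes 1_z$ gives
$$
(a\otimes 1_r)(b\otimes 1_u)=\big((a\otimes 1_{t^{-1}y})(b\otimes 1_{v^{-1}y})\big)\otimes 1_z,
$$
and Lemma~\ref{lemma on alignment} applied to $a\in\KK(s,t)$, $b\in\KK(v,w)$ with $tP\cap vP=yP$ places the inner composition in $\KK(s(t^{-1}y),w(v^{-1}y))$. Since $s(t^{-1}y)z=sr=p$ and $w(v^{-1}y)z=wu=m$, the product lands in $\KK(s(t^{-1}y),w(v^{-1}y))\otimes 1_z\subseteq\LL_\KK(p,m)$, as required. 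This confirms that $\LL_\KK$ is a right-tensor sub-$C^*$-precategory of $\LL$ containing $\KK$. Minimality is then immediate: any right-tensor sub-$C^*$-precategory $\LL'$ containing $\KK$ must contain each $\KK(s,t)\otimes 1_r$ by $\otimes 1_r$-invariance, and hence, being norm-closed, their closed span, so $\LL_\KK(p,q)\subseteq\LL'(p,q)$ for all $p,q$.

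For the final assertion I would observe that $\KK$ is an ideal in $\LL_\KK$ (the ideal inclusions for $\LL_\KK$ follow from those for $\LL$ since $\LL_\KK\subseteq\LL$) and is well-aligned there, because well-alignment \eqref{compact alignment relation} is a condition on elements of $\KK$ and the right tensoring alone, which $\LL_\KK$ inherits unchanged from $\LL$. For the same reason the Nica-covariance condition \eqref{Nica covariance} for a representation of $\KK$ is intrinsic to $\KK$: by Lemma~\ref{lemma on alignment} the right-hand side of \eqref{Nica covariance} always lies in $\KK$, so the notion of a Nica covariant representation of $\KK$ makes no reference to the ambient precategory---this is precisely the intrinsic description recorded in Remark~\ref{remark about abstract characterization NTalg}. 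Consequently $\NT_\LL(\KK)$ and $\NT_{\LL_\KK}(\KK)$ are universal for one and the same class of representations, and the uniqueness up to canonical isomorphism in Proposition~\ref{Nica Toeplitz description} yields $\NT_\LL(\KK)\cong\NT_{\LL_\KK}(\KK)$. I expect the composition step to be the only genuine obstacle, since it is the single place where the LCM factorisations $r=(t^{-1}y)z$, $u=(v^{-1}y)z$ and the alignment Lemma~\ref{lemma on alignment} must be combined; the remaining verifications are bookkeeping.
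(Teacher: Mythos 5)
Your proof is correct and follows essentially the same route as the paper's: the same LCM factorisation $r=(t^{-1}y)z$, $u=(v^{-1}y)z$ with $z=y^{-1}q$, the same use of well-alignment (via Lemma~\ref{lemma on alignment}) to place the inner product in $\KK$, and the same minimality argument. The only difference is that you spell out details the paper leaves implicit, namely the verification of axiom (p5) for $\LL_\KK$ and the intrinsic-Nica-covariance argument for $\NT_\LL(\KK)\cong\NT_{\LL_\KK}(\KK)$, both of which are correct.
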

\begin{proof} Plainly, the family  $\{\LL_\KK(p,q)\}_{p,q\in P}$ is closed under right-tensoring $\otimes 1$ and under taking adjoints. Suppose that $a\otimes{1_r}\in \LL_\KK(p,q)$ and $b\otimes 1_w\in \KK(q,z)$  where $a\in \KK(s,t)$, $b\in \KK(u,v)$, $sr=p$, $tr=q$, $uw=q$, $vw=z$. Then $tP\cap u P=yP$ for some $y\in P$, which implies that $(t^{-1}y)^{-1}r=y^{-1}q$ and $(u^{-1}y)^{-1}w=y^{-1}q$. Since $\KK$ is well-aligned we have $(a\otimes 1_{t^{-1}y})(b\otimes 1_{u^{-1}y})\in \KK(st^{-1}y, vu^{-1}y)$. Using this we obtain
\begin{align*}
(a\otimes{1_r}) (b\otimes 1_w)&=\Big((a\otimes 1_{t^{-1}y}) \otimes 1_{y^{-1}q}\Big) \Big((b\otimes 1_{u^{-1}y}) \otimes 1_{y^{-1}q}\Big)
\\
&=\Big((a\otimes 1_{t^{-1}y})  (b\otimes 1_{u^{-1}y})\Big)\otimes 1_{y^{-1}q}\in  \LL_\KK(p,z).
\end{align*}
Hence  $\{\LL_\KK(p,q)\}_{p,q\in P}$ is a right-tensor sub-$C^*$-precategory of $\LL$. Clearly, it is the smallest sub-$C^*$-precategory of $\LL$ containing $\KK$ and invariant under $\otimes 1$.
\end{proof}

An important role in the theory is played by the following core $C^*$-algebra.
\begin{defn}\label{Definition of cores}
Let $\KK$ be a well-aligned ideal in a  right-tensor  $C^*$-precategory $\LL$. For an arbitrary  Nica covariant representation $\Phi$ of $\KK$ the space
$$
B_e^\Phi:=\clsp\Bigl\{\bigcup_{p \in P} \Phi(\KK(p,p)) \Bigr\}
$$
is a $C^*$-algebra. We call $B_e^\Phi$ the \emph{core $C^*$-subalgebra} of $C^*(\Phi(\KK))$.
\end{defn}
\begin{rem}
 For any Nica covariant representation $\Phi:\KK\to B$ the  core $C^*$-algebra $B_e^\Phi$ is a non-degenerate subalgebra of $C^*(\Phi(\KK))$. Indeed, by Lemma \ref{about approximate units}, every $a\in \KK(p,q)$ can be written as $a=a_p a' a_q$ where $a_p\in \KK(p,p)$, $ a'\in \KK(p,q)$, $a_q\in \KK(q,q)$. Hence $\Phi(a)\in B_e^\Phi C^*(\Phi(\KK))B_e^\Phi$. In particular, every multiplier $m\in M(B_e^\Phi)$ of $B_e^\Phi$ extends via the formula $m\Phi(a):=(m\Phi(a_p)) \Phi (a' a_q)$ to a multiplier of $C^*(\Phi(\KK))$. We will use this embedding in the sequel to identify
$M(B_e^\Phi)$ as a subalgebra of $ M(C^*(\Phi(\KK))).$
\end{rem}

\section{Fock representation}\label{Fock representation section}

Only  for the purposes of this section we fix an \emph{arbitrary} left cancellative semigroup $P$, a right-tensor $C^*$-precategory  $(\LL, \{\otimes 1_r\}_{r\in P})$   and an  \emph{arbitrary} ideal $\KK$ in $\LL$. We will construct a  canonical  right-tensor representation  of $\LL$ associated to $\KK$, whose restriction to $\KK$ is injective.
This construction will proceed in two steps. First we associate a representation to each $t\in P$, regarded as a fixed source, and then we consider the direct sum of these representations as we vary $t$.

We fix $t\in P$. For each $s\in P$ the space $X_{s,t}:=\KK(s,t)$ is naturally equipped with a  structure of a right Hilbert module over $A_{t}:=\KK(t,t)$ given by
$$
 x \cdot a:=xa,\quad \langle x, y\rangle:=x^*y, \qquad  x,y \in X_{s,t},\,\, a\in A_t.
$$
Thus we may consider the following direct sum right Hilbert $A_t$-module:
$$
\FF_{\KK}^{t}:=\bigoplus_{s\in P} X_{s,t}.
$$
We will construct  representations of $\LL$ using the maps defined in the following lemma.
\begin{lem}\label{lemma about Fock operators}
 Let $a \in \LL(p,q)$ for $p,q\in P$. For each $s\in  qP$ there is a well defined operator $T_{p,q}^{s,t}(a)\in \LL(X_{s,t},X_{pq^{-1}s,t})$ given by
$$
T_{p,q}^{s,t}(a)x:=(a \otimes 1_{q^{-1}s})  x,\qquad x \in X_{s,t}.
$$
The adjoint is $T_{q,p}^{pq^{-1}s,t}(a^*)\in \LL(X_{pq^{-1}s,t},X_{s,t})$.
\end{lem}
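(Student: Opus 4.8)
The plan is to first check that the formula for $T_{p,q}^{s,t}(a)$ lands in the correct Hilbert module, and then to exhibit the claimed operator $T_{q,p}^{pq^{-1}s,t}(a^*)$ as an honest adjoint in the Hilbert $A_t$-module sense. Since an operator between Hilbert modules that admits an adjoint is automatically bounded and module-linear, verifying the adjoint identity will settle everything at once; boundedness can alternatively be read off directly from (p2) and contractivity of $\otimes 1_{q^{-1}s}$.

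First I would record the bookkeeping of objects, which is where the semigroup hypotheses enter. Since $s\in qP$, left cancellation makes $q^{-1}s$ the unique element of $P$ with $q(q^{-1}s)=s$, so $a\otimes 1_{q^{-1}s}\in \LL(pq^{-1}s,s)$. For $x\in X_{s,t}=\KK(s,t)$ the composition $(a\otimes 1_{q^{-1}s})x$ is then an allowable composition landing in $\LL(pq^{-1}s,t)$, and because $\KK$ is an ideal in $\LL$, so that $\LL(\tau,\sigma)\KK(\sigma,\rho)\subseteq \KK(\tau,\rho)$, it in fact lies in $\KK(pq^{-1}s,t)=X_{pq^{-1}s,t}$. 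Thus $T_{p,q}^{s,t}(a)$ is a well-defined map $X_{s,t}\to X_{pq^{-1}s,t}$, linear by bilinearity of the composition in (p2).

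Next I would identify the candidate adjoint and verify the defining identity. Here $a^*\in \LL(q,p)$ and $pq^{-1}s\in pP$, with $p^{-1}(pq^{-1}s)=q^{-1}s$ again by left cancellation; hence $T_{q,p}^{pq^{-1}s,t}(a^*)$ is defined and sends $y\in X_{pq^{-1}s,t}$ to $(a^*\otimes 1_{q^{-1}s})y\in X_{s,t}$, using $q(q^{-1}s)=s$ to land in the right object. The adjoint relation is then a short computation with the inner product $\langle u,v\rangle=u^*v$: for $x\in X_{s,t}$ and $y\in X_{pq^{-1}s,t}$,
\[
\langle T_{p,q}^{s,t}(a)x,\,y\rangle = \big((a\otimes 1_{q^{-1}s})x\big)^*y = x^*(a\otimes 1_{q^{-1}s})^*y = x^*(a^*\otimes 1_{q^{-1}s})y = \langle x,\,T_{q,p}^{pq^{-1}s,t}(a^*)y\rangle,
\]
using $(xy)^*=y^*x^*$ from (p3) and the identity $(a\otimes 1_r)^*=a^*\otimes 1_r$ for the right tensoring. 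This exhibits $T_{q,p}^{pq^{-1}s,t}(a^*)$ as the adjoint, whence $T_{p,q}^{s,t}(a)\in \LL(X_{s,t},X_{pq^{-1}s,t})$ as claimed.

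There is no deep obstacle here; the statement is essentially bookkeeping. The only point requiring care, and the reason the lemma is not entirely automatic, is the repeated reliance on left cancellation to make $q^{-1}s$ and then $p^{-1}(pq^{-1}s)=q^{-1}s$ unambiguous (right cancellativity is \emph{not} assumed), together with tracking precisely which objects each morphism connects so that every composition and every application of $\otimes 1_r$ is allowable and so that the ideal property of $\KK$ can be invoked at the right place.
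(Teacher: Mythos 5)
Your proof is correct and follows essentially the same route as the paper: verify that $(a\otimes 1_{q^{-1}s})x$ lands in $\KK(pq^{-1}s,t)$ and then establish adjointability by the inner-product computation $\langle T_{p,q}^{s,t}(a)x,y\rangle = x^*(a^*\otimes 1_{q^{-1}s})y=\langle x, T_{q,p}^{pq^{-1}s,t}(a^*)y\rangle$. The paper compresses the bookkeeping (the ideal property of $\KK$ and the left-cancellation identities) into a single ``Clearly,'' so your extra care there is just a more explicit version of the same argument.
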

\begin{proof}
Let $x\in X_{s,t}$ and  $y\in X_{pq^{-1}s,t}$. Clearly, $(a \otimes 1_{q^{-1}s})  x\in X_{pq^{-1}s,t}=\KK(pq^{-1}s,t)$, and
\begin{align*}
\langle T_{p,q}^{s,t}(a)x, y\rangle &= \langle (a \otimes 1_{q^{-1}s}) x, y\rangle= x^* (a^* \otimes 1_{q^{-1}s})y=\langle x, (a^* \otimes 1_{q^{-1}s})y\rangle
\\
&
=\langle x, T_{q,p}^{pq^{-1}s,t}(a^*)y\rangle.
\end{align*}
\end{proof}
Let  $a \in \LL(p,q)$ for $p,q\in P$. By Lemma \ref{lemma about Fock operators}, under the obvious  identifications of the Hilbert modules $X_{s,t}$, $s\in P$, with the corresponding submodules of $\FF_\KK^t$, we have
$
T_{p,q}^{s,t}(a)\in \LL(X_{s,t},X_{pq^{-1}s,t})\subseteq \LL(\FF_\KK^t).
$
Since $\|T_{p,q}^{s,t}(a)\|\leq \|a\|$ and the map $qP\ni s \to pq^{-1}s \in pP$ is a bijection, the direct sum
\begin{equation}\label{Toeplitz representation definition2}
\overline{T}_{p,q}^t(a):=\bigoplus_{s\in qP}T_{p,q}^{s,t}(a)\,\,
\end{equation}
 is an operator in $\LL(\FF_\KK^t)$ with norm bounded by $\|a\|$. In other words, we get a contractive mapping $\overline{T}_{p,q}^t: \LL(p,q) \to \LL(\FF_\KK^t)$, which  satisfies
\begin{equation}\label{Toeplitz representation definition for t}
\overline{T}_{p,q}^t(a)x =\begin{cases}
(a \otimes 1_{q^{-1}s})  x & \textrm{ if } s\in   qP ,
\\
0 & \textrm{ otherwise},
\end{cases}
\end{equation}
for every $a \in \LL(p,q)$, $s\in P$ and  $x\in X_{s,t}$.
\begin{lem}\label{representations which are summands0}
For each $t \in P$, the family of maps  $\overline{T}^t=\{\overline{T}_{p,q}^t\}_{p,q\in P}$ given by \eqref{Toeplitz representation definition for t}
is a right-tensor representation $\overline{T}^t:\LL\to \LL(\FF_{\KK}^t)$.
\end{lem}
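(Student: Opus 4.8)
The plan is to check that $\overline{T}^t$ satisfies the two axioms of a representation of the $C^*$-precategory $\LL$ in the $C^*$-algebra $\LL(\FF_\KK^t)$ in the sense of Definition \ref{representations of categories definition}, namely compatibility with the involution and with composition, and then the extra right-tensor identity \eqref{right tensor representation condition} (read with $\KK$ replaced by $\LL$, which is an ideal in itself). Linearity and contractivity of each map $\overline{T}_{p,q}^t:\LL(p,q)\to\LL(\FF_\KK^t)$ were already recorded in the discussion preceding the statement, so these three properties are all that remain. I would moreover note at the outset that the composition axiom $\overline{T}_{p,u}^t(ab)=\overline{T}_{p,q}^t(a)\overline{T}_{q,u}^t(b)$ for composable morphisms $a\in\LL(p,q)$, $b\in\LL(q,u)$ is precisely the special case $s=q$ (so that $q^{-1}s=e$ and $a\otimes 1_e=a$) of the right-tensor identity; hence it is enough to verify the involution property together with the full right-tensor identity.

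For the involution, I would use Lemma \ref{lemma about Fock operators}, which identifies the adjoint of each summand as $T_{p,q}^{s,t}(a)^*=T_{q,p}^{pq^{-1}s,t}(a^*)$. Taking the adjoint of the orthogonal direct sum \eqref{Toeplitz representation definition2} and reindexing the sum along the bijection $qP\ni s\mapsto pq^{-1}s\in pP$ then produces exactly $\overline{T}_{q,p}^t(a^*)$, giving $\overline{T}_{p,q}^t(a)^*=\overline{T}_{q,p}^t(a^*)$.

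The heart of the argument is the right-tensor identity. I would fix $a\in\LL(p,q)$ and $b\in\LL(s,u)$ with $sP\subseteq qP$, i.e.\ $s\in qP$, and evaluate both sides of \eqref{right tensor representation condition} on an arbitrary generator $x\in X_{v,t}$, $v\in P$. If $v\notin uP$, both sides vanish by \eqref{Toeplitz representation definition for t}. If $v\in uP$, then applying \eqref{Toeplitz representation definition for t} first to $b$ and then to $a$ --- and noting that $s\in qP$ forces $su^{-1}v\in qP$ --- yields
$$
\overline{T}_{p,q}^t(a)\,\overline{T}_{s,u}^t(b)\,x=\bigl(a\otimes 1_{q^{-1}(su^{-1}v)}\bigr)\bigl(b\otimes 1_{u^{-1}v}\bigr)x.
$$
Left cancellation in $P$ gives $q^{-1}(su^{-1}v)=(q^{-1}s)(u^{-1}v)$, so the right-tensoring identities $(c\otimes 1_r)\otimes 1_{r'}=c\otimes 1_{rr'}$ and $(c\otimes 1_r)(d\otimes 1_r)=(cd)\otimes 1_r$ rewrite the right-hand side as the operator $\bigl((a\otimes 1_{q^{-1}s})\,b\bigr)\otimes 1_{u^{-1}v}$ applied to $x$, which is $\overline{T}_{pq^{-1}s,u}^t\bigl((a\otimes 1_{q^{-1}s})b\bigr)x$. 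This is precisely the value of the right-hand side of \eqref{right tensor representation condition} on $x$, so the identity holds.

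I expect the only genuinely delicate point to be the index bookkeeping in the last step: correctly identifying the source objects $su^{-1}v$ and $pq^{-1}(su^{-1}v)$, checking the containment $su^{-1}v\in qP$, and verifying $q^{-1}(su^{-1}v)=(q^{-1}s)(u^{-1}v)$, which is exactly where left cancellation in $P$ is used. Once these identifications are secured, the whole computation reduces to the functoriality of the right-tensoring operations, and the involution and composition axioms follow as indicated above.
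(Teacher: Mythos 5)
Your proof is correct and takes essentially the same route as the paper's: the involution is handled via Lemma \ref{lemma about Fock operators} together with the reindexing bijection $qP\ni s\mapsto pq^{-1}s\in pP$, and the right-tensor identity is verified by evaluating both sides on generators $x\in X_{v,t}$, splitting into the cases $v\notin uP$ (both sides vanish) and $v\in uP$, where left cancellation gives $q^{-1}(su^{-1}v)=(q^{-1}s)(u^{-1}v)$ and the functoriality of $\otimes 1$ finishes the computation. The only difference is a valid economy: you obtain the composition axiom as the special case $s=q$ of the right-tensor identity, whereas the paper checks multiplicativity separately before proving the tensor identity; the underlying calculations are otherwise identical.
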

\begin{proof}
By Lemma \ref{lemma about Fock operators} we have  $(T_{p,q}^{s,t}(a))^*=T_{q,p}^{pq^{-1}s,t}(a^*)$ for $a\in \LL(p,q )$, and therefore it follows from  \eqref{Toeplitz representation definition2}
 that $\overline{T}_{p,q}^t(a)^*=\overline{T}_{q,p}^t(a^*)$. Clearly, the maps $\overline{T}_{p,q}^t$, $p,q\in P$ are linear. Moreover, for any  $a\in \LL(r,p)$, $b \in \LL(p,q)$,  $x\in X_{s,t}$, where $r,p,q\in P$ with $s \in  qP$, we have
$$
T_{r,p}^{pq^{-1}s,t}(a)T_{p,q}^{s,t}(b)x= (a \otimes 1_{q^{-1}s})(b \otimes 1_{q^{-1}s})  x=(ab \otimes 1_{q^{-1}s})x =T_{r,q}^{s,t}(ab)x.
$$
Hence $
\overline{T}_{r,p}^t(a)\overline{T}_{p,q}^t(b)=\overline{T}_{r,q}^t(ab)
$ and thus $\overline{T}^t:\LL\to \LL(\FF_{\KK}^t)$ is a representation of $C^*$-precategories.

To see that $\overline{T}^t$ is a right-tensor representation let $a\in \LL(p,q)$ and $b\in   \LL(s,l) $ for $p,q,s,l\in P$ such that $sP\subseteq qP$. Note that if $w\notin lP$ then both $\overline{T}_{p,q}^t(a)\overline{T}_{s,l}^t(b)$ and $\overline{T}_{pq^{-1}s,l}^t((a\otimes 1_{q^{-1}s})b)$ act as zero on   $X_{w,t}$. Assume then that $w\in lP$ and let $x\in X_{w,t}$.   Then $(b \otimes 1_{l^{-1}w})x\in X_{sl^{-1}w,t}$ and $sl^{-1}w\in qP$. Thus we have
\begin{align*}
\overline{T}_{p,q}^t(a)\overline{T}_{s,l}^t(b)x&=\overline{T}_{p,q}^t(a) (b \otimes 1_{l^{-1}w})x= (a  \otimes 1_{q^{-1}sl^{-1}w})(b \otimes 1_{l^{-1}w})x
\\
&=\Big(\big((a \otimes 1_{q^{-1}s})b\big) \otimes 1_{l^{-1}w}\Big)x =\overline{T}_{pq^{-1}s,l}^t((a\otimes 1_{q^{-1}s})b)x.
\end{align*}
Accordingly,  $\overline{T}_{p,q}^t(a)\overline{T}_{s,l}^t(b)$ and $\overline{T}_{pq^{-1}s,l}^t((a\otimes 1_{q^{-1}s})b)$ coincide, and $\overline{T}^t:\LL \to \LL(\FF_{\KK})$ is a right-tensor representation.
\end{proof}

Note that for each  $t\in P$, we may view $\FF_{\KK}^{t}$ as a right Hilbert module over the $C^*$-algebra $A:=\bigoplus_{p\in P}A_p$, where multiplication on the right by an element of the summand $A_p$ for $p\neq t$ is defined to be zero.
We define the \emph{Fock module} of $\KK$ to be  the direct sum Hilbert $A$-module of $\FF_{\KK}^{t}$ as $t\in P$:
$$
\FF_{\KK}:=\bigoplus_{t\in P} \FF_{\KK}^{t}=\bigoplus_{s,t\in P} X_{s,t}.
$$
Accordingly,
$
\FF_{\KK}$ consists of elements $\bigoplus_{s,t\in P} x_{s,t}$ where $x_{s,t}\in \KK(s,t)$, $s,t\in P$, and the element
 $\bigoplus_{t\in P} \Big( \sum_{s\in P} x_{s,t}^* x_{s,t}\Big)$ belongs to the $C^*$-algebraic direct sum $\bigoplus_{t\in P} \KK(t,t)$.
We will  treat the $C^*$-algebraic direct product $\prod_{t\in P}\LL(\FF_{\KK}^{t})$ as a $C^*$-subalgebra of $\LL(\FF_\KK)$.
\begin{prop}\label{Toeplitz reduced representation}

The direct sum of representations  from Lemma \ref{representations which are summands0} as $t$ varies in $P$ yields a    right-tensor representation $\overline{T}:\LL\to \LL(\FF_{\KK})$  determined by the formula
\begin{equation}\label{Toeplitz representation definition}
\overline{T}_{p,q}(a)x =\begin{cases}
(a \otimes 1_{q^{-1}s})  x & \textrm{ if } s\in   qP ,
\\
0 & \textrm{ otherwise},
\end{cases}
\end{equation}
for $a \in \LL(p,q)$, $x\in X_{s,t}$ and $p,q,s,t\in P$. Furthermore,
the restriction of $\overline{T}$ to $\KK$ yields  an injective right-tensor representation $T:\KK\to \LL(\FF_{\KK})$.
\end{prop}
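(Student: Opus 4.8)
The plan is to obtain $\overline{T}$ simply as the block-diagonal direct sum of the representations $\overline{T}^t$ produced in Lemma~\ref{representations which are summands0}, and then to read off injectivity of $T$ from the action of the diagonal maps on the summands $X_{p,p}=\KK(p,p)$. First I would observe that for fixed $p,q\in P$ and $a\in\LL(p,q)$ each operator $\overline{T}_{p,q}^t(a)$ lives in $\LL(\FF_\KK^t)$, acts as zero outside the summand $\FF_\KK^t$ of $\FF_\KK=\bigoplus_{t\in P}\FF_\KK^t$, and satisfies $\|\overline{T}_{p,q}^t(a)\|\leq\|a\|$ uniformly in $t$ by \eqref{Toeplitz representation definition for t}. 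Hence the family $\{\overline{T}_{p,q}^t(a)\}_{t\in P}$ assembles into a bounded operator
$$
\overline{T}_{p,q}(a):=\bigoplus_{t\in P}\overline{T}_{p,q}^t(a)\in \prod_{t\in P}\LL(\FF_\KK^t)\subseteq \LL(\FF_\KK),
$$
of norm at most $\|a\|$ (membership in the product, hence adjointability on $\FF_\KK$, is guaranteed by the identification $\prod_{t\in P}\LL(\FF_\KK^t)\subseteq\LL(\FF_\KK)$ fixed above), whose action on a vector $x\in X_{s,t}$ is exactly \eqref{Toeplitz representation definition}.

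Next I would verify the representation axioms and the right-tensor condition for $\overline{T}$. Since the decomposition $\FF_\KK=\bigoplus_{t}\FF_\KK^t$ is preserved by every $\overline{T}_{p,q}^t(a)$, and operators associated with distinct values of $t$ act on distinct direct summands, each of the identities $\overline{T}_{q,p}(a^*)=\overline{T}_{p,q}(a)^*$, $\overline{T}_{r,p}(a)\overline{T}_{p,q}(b)=\overline{T}_{r,q}(ab)$, linearity, and the right-tensor relation \eqref{right tensor representation condition} holds for $\overline{T}$ if and only if it holds componentwise for each $\overline{T}^t$. As all of these are supplied by Lemma~\ref{representations which are summands0}, it follows that $\overline{T}\colon\LL\to\LL(\FF_\KK)$ is a right-tensor representation.

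Finally, for the restriction to $\KK$: because $\KK$ is an ideal in $\LL$, restricting a representation of $\LL$ to $\KK$ yields a representation of $\KK$, and the right-tensor identity \eqref{right tensor representation condition} for elements of $\KK$ is a special case of the one already established for $\overline{T}$ (its right-hand side again lies in $\KK$ since $\KK$ is an ideal), so $T:=\overline{T}|_\KK$ is a right-tensor representation of $\KK$. For injectivity I would invoke Definition~\ref{representations of categories definition} together with Lemma~\ref{proposition 1.5}, reducing the claim to injectivity of each diagonal map $T_{p,p}$. This is the one substantive point. Fixing $p\in P$ and looking at the summand $X_{p,p}=\KK(p,p)$, i.e. $s=t=p$, one has $s=p\in pP$ and $q^{-1}s=p^{-1}p=e$, so \eqref{Toeplitz representation definition} gives $\overline{T}_{p,p}(a)x=(a\otimes 1_e)x=ax$ for $x\in X_{p,p}$; that is, $T_{p,p}(a)$ restricts on $X_{p,p}$ to left multiplication by $a$ on the $C^*$-algebra $\KK(p,p)$. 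Thus $T_{p,p}(a)=0$ forces $ax=0$ for all $x\in\KK(p,p)$, and evaluating against an approximate unit $(u_\lambda)$ of $\KK(p,p)$ yields $a=\lim_\lambda au_\lambda=0$. Hence every $T_{p,p}$ is injective, and Lemma~\ref{proposition 1.5} then gives that all $T_{p,q}$ are isometric, so $T$ is injective. I expect no genuine obstacle here: the construction is a routine direct sum, and the only non-formal ingredient is the short approximate-unit argument that isolates the diagonal action and forces $a=0$.
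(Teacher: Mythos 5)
Your proof is correct and follows essentially the same route as the paper: form the block-diagonal direct sum of the representations $\overline{T}^t$ (which the paper treats as immediate), restrict to $\KK$, and deduce injectivity from the fact that $T_{p,p}$ acts on the summand $X_{p,p}=\KK(p,p)$ as left multiplication by $a$, which is injective. The extra details you supply (the uniform norm bound, componentwise verification, and the approximate-unit argument) are exactly the points the paper leaves implicit.
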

\begin{proof}
It is immediate that the direct sum of right-tensor representations:
$
\overline{T}_{p,q}:=\bigoplus_{t\in P}\overline{T}_{p,q}^{t}=\bigoplus_{s\in qP, t\in P}T_{p,q}^{s,t}$,
$p,q\in P,$
yields a right-tensor representation $\overline{T}:\LL\to \LL(\FF_{\KK})$  which  satisfies \eqref{Toeplitz representation definition}. Let $T:\KK\to \LL(\FF_{\KK})$ be its restriction to $\KK$. For every $p\in P$ the map $T_{p,p}^{p,p}:\KK(p,p)\to \LL(X_{p,p})$ is injective and hence  $T_{p,p}:\KK(p,p)\to \LL(\FF_{\KK})$ is injective. 
\end{proof}

\begin{defn} 
We call the right-tensor representation $T:\KK\to \LL(\FF_{\KK})$ from  Proposition \ref{Toeplitz reduced representation}  the \emph{Fock representation} of the  ideal $\KK$ in the right-tensor $C^*$-precategory $\LL$.
\end{defn}
\begin{rem}
For each $t\in P$, we may view the restriction $T^t:\KK\to \LL(\FF_{\KK}^t)$ of  $\overline{T}^t$ to $\KK$ as a Fock representation of $\KK$ with fixed source $t$. However, $T^t$ is injective if and only if  for every $a\in \KK(p,p)$ and $p\in P$ there is  $r\in P$ such that  $(a\otimes 1_r)\KK(pr,t)\neq 0$; and this may fail.
\end{rem}

\begin{rem}\label{remark on direct sums of Fock representations}
The Fock representation is the direct sum
$
T=\bigoplus_{t\in P}T^t
$
of \emph{$t$-th Fock representations}  $T^t:\KK\to \LL(\FF_{\KK}^t)$,  $t\in P$. So by projecting, for each $t\in P$,
 we get a surjective homomorphism $h_t:C^*(T(\KK)) \to C^*(T^t(\KK))$, where $h_t\circ T=T^t$. We will show  that $h_e$  is an isomorphism for Fell bundles  (cf. Proposition \ref{Fock t-th representations and reduced objects} below) and for right-tensor $C^*$-precategories arising from compactly aligned product systems, see  \cite{kwa-larII}. Thus our  Fock representation generalizes those for product systems and Fell bundles. 
\end{rem}

The grading of the Fock Hilbert module $\FF_\KK$  yields  natural  conditional expectations. 
To make this explicit we introduce some notation. For $w,t\in P$ we let $Q_w^t\in \LL(\FF_{\KK}^t)$
be  the projection onto $X_{w,t}$, and $Q_w:=\bigoplus_{s\in P}Q_w^s\in \LL(\FF_{\KK})$ be the projection onto $\bigoplus_{s\in P}X_{w,s}$. Note that $Q_w$ projects onto the subspace of $\FF_\KK$ of fixed range  $w$.
\begin{lem}\label{lemma on conditional expectations}
Let $T:\LL\to \LL(\FF_{\KK})$ be the Fock representation of $\KK$.

\textnormal{(a)}  For each $t\in P$, the space $
D^t:=\left\{S\in \LL(\FF_{\KK}^t): Q_w^t S= SQ_w^t \textrm{ for every }w\in P\right\}
$
is a $C^*$-subalgebra of $\LL(\FF_{\KK}^t)$ and the map  $E^t:\LL(\FF_{\KK}^t) \mapsto D^t$ given by
\begin{equation}\label{diagonal t operators algebra}
E^t(S)=\sum_{w\in P} Q_w^t SQ_w^t,\qquad  S\in \LL(\FF_{\KK}^t),
\end{equation}
is a  faithful conditional expectation.

\textnormal{(b)} The space $D:=\left\{S\in \LL(\FF_{\KK}): Q_w S= SQ_w \textrm{ for every }w\in P\right\}$ is a $C^*$-subalgebra of $\LL(\FF_{\KK})$ and the map $E:\LL(\FF_{\KK}) \mapsto D$ given by
\begin{equation}\label{diagonal operators algebra}
E(S)=\sum_{w\in P} Q_wSQ_w, \qquad S\in \LL(\FF_{\KK}),
\end{equation}
is a faithful conditional expectation.
Furthermore,  $E|_{\prod_{t\in P}\LL(\FF_{\KK}^t)}=\bigoplus_{t\in P}E^t$.
\end{lem}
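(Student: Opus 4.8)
The plan is to treat parts \textnormal{(a)} and \textnormal{(b)} uniformly, since they have identical structure: in each case we have a family of mutually orthogonal projections ($\{Q_w^t\}_{w\in P}$ on $\FF_{\KK}^t$, respectively $\{Q_w\}_{w\in P}$ on $\FF_{\KK}$) satisfying $Q_v Q_w=0$ for $v\neq w$ and summing strictly to the identity, and the candidate map is the associated diagonal compression. I would carry out the argument for \textnormal{(a)} in full; part \textnormal{(b)} then follows verbatim upon replacing $Q_w^t$ and $\FF_{\KK}^t$ by $Q_w$ and $\FF_{\KK}$. First I would note that $D^t$ is exactly the relative commutant $\{Q_w^t:w\in P\}'\cap\LL(\FF_{\KK}^t)$ of a self-adjoint set of operators, hence automatically a norm-closed, $^*$-closed subalgebra, i.e.\ a $C^*$-subalgebra of the unital $C^*$-algebra $\LL(\FF_{\KK}^t)$. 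Since $Q_w^t$ is the projection onto the summand $X_{w,t}$, commuting with every $Q_w^t$ is the same as being block-diagonal for the decomposition $\FF_{\KK}^t=\bigoplus_{w}X_{w,t}$, so $D^t$ is precisely the algebra of block-diagonal operators.

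Next I would establish that $E^t$ is well defined. For finite $F\subseteq P$ the operator $\sum_{w\in F}Q_w^t S Q_w^t$ is block-diagonal with blocks $Q_w^t S Q_w^t|_{X_{w,t}}\in\LL(X_{w,t})$ of norm at most $\|S\|$, hence it is adjointable with norm $\leq\|S\|$. The operator $E^t(S)$ is the full block-diagonal operator with these blocks; being uniformly bounded over $w$, it is adjointable on the direct-sum Hilbert module $\FF_{\KK}^t$ with adjoint $E^t(S^*)$, and the net of finite partial sums converges to it strictly. Thus $E^t:\LL(\FF_{\KK}^t)\to\LL(\FF_{\KK}^t)$ is a well-defined contraction. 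This is the one genuinely technical point: unlike the Hilbert-space case one cannot simply invoke strong-operator convergence of $\sum_w Q_w^t S Q_w^t$, and must instead argue adjointability of the block-diagonal limit directly from the uniform bound on its blocks.

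For the conditional-expectation properties I would use the orthogonality relation $Q_v^t Q_w^t=\delta_{v,w}Q_w^t$ and the strict identity $\sum_w Q_w^t=1$. Compressing gives $Q_v^t E^t(S)=Q_v^t S Q_v^t=E^t(S)Q_v^t$, so $E^t(S)\in D^t$; and if $S\in D^t$ then $Q_w^t S Q_w^t=S Q_w^t$, whence summing yields $E^t(S)=S$. Hence $E^t$ is a contractive idempotent onto $D^t$, so by Tomiyama's theorem it is a conditional expectation (alternatively, the bimodule identity $E^t(aSb)=aE^t(S)b$ for $a,b\in D^t$ is immediate since $a,b$ commute with every $Q_w^t$). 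Faithfulness follows because if $E^t(S^*S)=0$, then compressing by $Q_w^t$ gives $Q_w^t S^*S Q_w^t=(SQ_w^t)^*(SQ_w^t)=0$, so $SQ_w^t=0$ for every $w$, and summing against $\sum_w Q_w^t=1$ forces $S=0$.

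Finally, for the identity $E|_{\prod_{t\in P}\LL(\FF_{\KK}^t)}=\bigoplus_{t\in P}E^t$ I would take $S=\bigoplus_t S^t$ block-diagonal for the decomposition $\FF_{\KK}=\bigoplus_t\FF_{\KK}^t$ and use $Q_w=\bigoplus_s Q_w^s$. Tracking the components shows that a cross term $Q_w^s S^t Q_w^{s'}$ vanishes unless $s=t=s'$, so $Q_w S Q_w=\bigoplus_t Q_w^t S^t Q_w^t$; summing over $w$ and interchanging the (strictly convergent, uniformly bounded) sum with the direct sum gives $E(S)=\bigoplus_t E^t(S^t)$, as claimed. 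Apart from the adjointability point flagged above, every step reduces to routine manipulation of the relations $Q_v^t Q_w^t=\delta_{v,w}Q_w^t$ and $\sum_w Q_w^t=1$.
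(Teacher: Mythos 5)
Your proposal is correct and follows essentially the same route as the paper: realize $E^t$ as the (strictly convergent) direct sum of the contractive compressions $S\mapsto Q_w^t S Q_w^t$, observe it is a contractive idempotent onto the block-diagonal algebra $D^t$ (hence a conditional expectation), and prove faithfulness by compressing $S^*S$ by each $Q_w^t$; your operator-level faithfulness argument $(SQ_w^t)^*(SQ_w^t)=0\Rightarrow SQ_w^t=0$ is just a rephrasing of the paper's computation with vectors $x\in X_{w,t}$. The only difference is that you spell out the adjointability of the block-diagonal limit and the ``furthermore'' identity $E|_{\prod_t\LL(\FF_\KK^t)}=\bigoplus_t E^t$, which the paper leaves to the reader.
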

\begin{proof}
 For part (a), clearly $D^t$ is a $C^*$-subalgebra of $\LL(\FF_{\KK}^t)$.  The map $a \mapsto Q_w^t a Q_w^t$ from $\LL(\FF_{\KK}^t)$ to $\LL(X_{w,t})\subseteq \LL(\FF_{\KK}^t)$ is a contractive completely positive map with range in $D^t$. The direct sum of these maps is a well defined contractive completely positive map $E^t:\LL(\FF_{\KK}^t)\to D^t\subseteq \LL(\FF_{\KK}^t)$ which is the identity on $D^t$. Hence  \eqref{diagonal t operators algebra} defines a conditional expectation as claimed. That $E^t$ is faithful follows because if $a\in \LL(\FF_{\KK}^t)$ is such that $E^t(a^*a)=0$ then for every $x\in X_{w,t}$, $w\in P$,
$$
\|ax\|^2=\|\langle ax, ax \rangle\|= \|\langle x, a^*ax \rangle\|=\|\langle x, E^t(a^*a)x \rangle\|=0.
$$
Since the elements  $x\in X_{w,t}$, $w\in P$, span $\FF_{\KK}^t$ it follows that $a=0$. The proof of part (b) is analogous to (a) and is left to the reader.
\end{proof}

\section{Reduced Nica-Toeplitz algebra and the transcendental core}\label{Reduced Nica-Toeplitz algebra section}

In this section, we come back to our standing assumption that $P$ is a right LCM semigroup.   We fix  a right-tensor $C^*$-precategory  $(\LL, \{\otimes 1_r\}_{r\in P})$    and a well-aligned ideal $\KK$ in $\LL$. Under these assumptions, the Fock representation is Nica covariant:

\begin{lem}\label{representations which are summands}
For each $t \in P$, the family of maps  $\overline{T}^t=\{\overline{T}_{p,q}^t\}_{p,q\in P}$ given by \eqref{Toeplitz representation definition for t}
is a Nica covariant representation $\overline{T}^t:\LL\to \LL(\FF_{\KK}^t)$.
\end{lem}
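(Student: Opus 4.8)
The plan is to build on Lemma~\ref{representations which are summands0}, which already establishes that $\overline{T}^t$ is a right-tensor representation; what remains is to verify the Nica covariance relation \eqref{Nica covariance} (recall that $\LL$ is well-aligned in itself, so this is meaningful for a representation of $\LL$). Since each operator $\overline{T}^t_{p,q}(a)$ is the direct sum of its restrictions to the summands $X_{w,t}$ of $\FF_\KK^t$, it suffices to check the relation by evaluating both sides on an arbitrary $x\in X_{w,t}$, $w\in P$. Fix $a\in\LL(p,q)$ and $b\in\LL(s,l)$. Applying \eqref{Toeplitz representation definition for t} twice, $\overline{T}^t_{p,q}(a)\overline{T}^t_{s,l}(b)x$ is nonzero only when $w\in lP$ and, writing $v:=l^{-1}w$, the intermediate vector $(b\otimes 1_v)x\in X_{sv,t}$ satisfies $sv\in qP$; in that case the value is $(a\otimes 1_{q^{-1}sv})(b\otimes 1_v)x$.

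First I would dispose of the case $qP\cap sP=\emptyset$. Here the product must vanish: if $w\notin lP$ the second factor already kills $x$, while if $w\in lP$ then $sv\in sP$, so the requirement $sv\in qP$ would force $sv\in sP\cap qP=\emptyset$, a contradiction. Hence $\overline{T}^t_{p,q}(a)\overline{T}^t_{s,l}(b)=0$, matching the lower branch of \eqref{Nica covariance}.

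Now suppose $qP\cap sP=rP$. The core of the argument is to identify the common support of the two sides and then to reconcile the tensor indices. Writing $r=s(s^{-1}r)$, left cancellation converts the two conditions $w\in lP$ and $sv\in qP$ into $v\in(s^{-1}r)P$, i.e.\ $w\in ls^{-1}rP$; this is exactly the support of the right-hand operator $\overline{T}^t_{pq^{-1}r,\,ls^{-1}r}(c)$, where $c:=(a\otimes 1_{q^{-1}r})(b\otimes 1_{s^{-1}r})\in\LL(pq^{-1}r,ls^{-1}r)$. On this support, putting $u:=(ls^{-1}r)^{-1}w$ one gets $v=(s^{-1}r)u$ and hence $sv=ru$. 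Using the identities $(\xi\eta)\otimes 1_u=(\xi\otimes 1_u)(\eta\otimes 1_u)$ and $(\,\cdot\otimes 1_\alpha)\otimes 1_\beta=\,\cdot\otimes 1_{\alpha\beta}$, I would expand the right-hand value as $(c\otimes 1_u)x=(a\otimes 1_{q^{-1}ru})(b\otimes 1_{s^{-1}ru})x$. Finally, the relations $ru=sv$ and $s^{-1}ru=v$ show that this coincides factor by factor with the left-hand value $(a\otimes 1_{q^{-1}sv})(b\otimes 1_v)x$, yielding the upper branch of \eqref{Nica covariance}.

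The algebra of the right-tensoring endomorphisms is routine; the delicate step, which I expect to be the main obstacle, is the bookkeeping with the partial division symbols $q^{-1}(\cdot)$ and the verification that the supports of the two operators coincide. This is precisely where left cancellation and the right LCM hypothesis $qP\cap sP=rP$ are used essentially, and some care is needed because $r$, and with it the division notation, is determined only up to a unit in $P^*$.
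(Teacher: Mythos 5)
Your proof is correct and follows essentially the same route as the paper's: both reduce to checking Nica covariance via Lemma~\ref{representations which are summands0}, dispose of the case $qP\cap sP=\emptyset$ by the same support argument, and in the case $qP\cap sP=rP$ identify the common support $ls^{-1}rP$ and match the two sides summand by summand with the same tensor-index bookkeeping ($q^{-1}sl^{-1}w=(q^{-1}r)u$, $l^{-1}w=(s^{-1}r)u$ for $u=(ls^{-1}r)^{-1}w$). The only difference is presentational: you evaluate on vectors $x\in X_{w,t}$, while the paper reorganizes the direct sums of operators via the bijection $ls^{-1}rP\ni w\mapsto sl^{-1}w\in rP$; the computations are identical in substance.
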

\begin{proof}
By Lemma \ref{representations which are summands0} we only need to show that $\overline{T}^t$ is Nica covariant. Let $a\in \LL(p,q)$ and $b\in   \LL(s,l) $ for $p,q,s,l\in P$. Note that if $w\in lP$ and $x\in X_{w,t}$, then $T_{s,l}^{w,t}(b)$ is in $X_{sl^{-1}w, t}$. Since $T_{p,q}^{u,t}(a)$ acts in $X_{u,t}$, we have
\begin{equation}\label{implication to be used}
T_{p,q}^{u,t}(a)T_{s,l}^{w,t}(b)\neq 0 \,\,\Longrightarrow \,\, u=sl^{-1}w  \text{ and } u\in qP.
\end{equation}
In particular, $T_{p,q}^{u,t}(a)T_{s,l}^{w,t}(b)\neq 0$ implies that  $qP\cap sP= rP$ for some $r\in P$ such that $u\in rP$. Hence,  if $qP\cap sP=\emptyset$, then  $\overline{T}_{p,q}^t(a)\overline{T}_{s,l}^t(b)=0$. Assume now $qP\cap sP=rP$. Using \eqref{implication to be used} and the fact that
$ls^{-1}rP\ni w \to sl^{-1}w\in rP$ is a bijection, we get
\begin{align*}
\overline{T}_{p,q}^t(a)\overline{T}_{s,l}^t(b)&=\bigoplus_{u\in qP}T_{p,q}^{u,t}(a)\,\, \bigoplus_{w\in lP}T_{s,l}^{w,t}(b)\,\,
=\bigoplus_{u\in rP}T_{p,q}^{u,t}(a) \bigoplus_{w\in ls^{-1}rP}T_{s,l}^{w,t}(b)
\\
&= \bigoplus_{w\in ls^{-1}rP}T_{p,q}^{sl^{-1}w,t}(a) T_{s,l}^{w,t}(b).
\end{align*}
Moreover, for every $w\in ls^{-1}rP$ and every $x\in X_{w,t}$ we have
\begin{align*}
T_{p,q}^{sl^{-1}w,t}(a) T_{s,l}^{w,t}(b)x&=(a \otimes 1_{q^{-1}sl^{-1}w})(b \otimes 1_{l^{-1}w})x
\\
&=(a \otimes 1_{q^{-1}r(ls^{-1}r)^{-1}w})(b \otimes 1_{s^{-1}r(ls^{-1}r)^{-1}w})x
\\
&=T_{pq^{-1}r, ls^{-1}r}^{w,t}\left((a \otimes 1_{q^{-1}r})(b \otimes 1_{s^{-1}r})\right)x.
\end{align*}
Accordingly,  $\overline{T}_{p,q}^t(a) \overline{T}_{s,l}^t(b)=T_{pq^{-1}r, ls^{-1}r}^t\left((a \otimes 1_{q^{-1}r})(b \otimes 1_{s^{-1}r})\right)$.
Thus $\overline{T}^t:\LL \to \LL(\FF_{\KK})$ is Nica covariant.
\end{proof}
\begin{prop}\label{Nica Toeplitz reduced representation}

The right-tensor representation  $\overline{T}:\LL\to \LL(\FF_{\KK})$ given by \eqref{Toeplitz representation definition} is Nica covariant.
In particular, the Fock representation  $T:\KK\to \LL(\FF_{\KK})$ is an injective Nica covariant representation.
\end{prop}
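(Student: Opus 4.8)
The plan is to deduce Nica covariance of $\overline{T}$ from the fiberwise statement already proved in Lemma~\ref{representations which are summands}, using that a direct sum of Nica covariant representations is again Nica covariant. Recall from the construction preceding \eqref{Toeplitz representation definition} that $\overline{T}=\bigoplus_{t\in P}\overline{T}^t$, where each $\overline{T}^t\colon\LL\to\LL(\FF_{\KK}^t)$ is, by Lemma~\ref{representations which are summands}, a Nica covariant representation, and where we regard $\prod_{t\in P}\LL(\FF_{\KK}^t)$ as a $C^*$-subalgebra of $\LL(\FF_{\KK})$.

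First I would fix $a\in\LL(p,q)$ and $b\in\LL(s,l)$ and observe that, since operators supported on distinct summands $\LL(\FF_{\KK}^t)$ do not interact, the product $\overline{T}_{p,q}(a)\overline{T}_{s,l}(b)$ is the block-diagonal operator $\bigoplus_{t\in P}\overline{T}^t_{p,q}(a)\,\overline{T}^t_{s,l}(b)$. Each block is then governed by the Nica covariance relation \eqref{Nica covariance} for $\overline{T}^t$: if $qP\cap sP=\emptyset$ every block vanishes, hence so does the sum; and if $qP\cap sP=rP$ then the $t$-th block equals $\overline{T}^t_{pq^{-1}r,\,ls^{-1}r}\big((a\otimes 1_{q^{-1}r})(b\otimes 1_{s^{-1}r})\big)$, so reassembling over $t\in P$ yields $\overline{T}_{pq^{-1}r,\,ls^{-1}r}\big((a\otimes 1_{q^{-1}r})(b\otimes 1_{s^{-1}r})\big)$. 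Thus $\overline{T}$ satisfies \eqref{Nica covariance}, i.e.\ it is Nica covariant. (That $(a\otimes 1_{q^{-1}r})(b\otimes 1_{s^{-1}r})$ again lies in the relevant morphism space is automatic, as $\LL$ is well-aligned in itself, and follows from Lemma~\ref{lemma on alignment} once one restricts to $\KK$.)

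For the final assertion, $T$ is by definition the restriction of $\overline{T}$ to the well-aligned ideal $\KK$. As observed in the discussion following the definition of Nica covariance, the restriction of a Nica covariant representation of $\LL$ to a well-aligned ideal is again Nica covariant, so $T$ is Nica covariant; its injectivity was already established in Proposition~\ref{Toeplitz reduced representation} through the injective maps $T_{p,p}^{p,p}\colon\KK(p,p)\to\LL(X_{p,p})$. Hence $T$ is an injective Nica covariant representation, as claimed. The argument is essentially bookkeeping, and the only point requiring a little care is the identification of $\overline{T}_{p,q}(a)\overline{T}_{s,l}(b)$ with the block-diagonal direct sum of the fiberwise products $\overline{T}^t_{p,q}(a)\overline{T}^t_{s,l}(b)$; this rests on the fact that the $\FF_{\KK}^t$ are orthogonal summands of $\FF_{\KK}$ each preserved by the operators in question.
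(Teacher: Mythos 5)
Your proof is correct and follows essentially the same route as the paper's: reduce Nica covariance of $\overline{T}$ to the fiberwise statement of Lemma~\ref{representations which are summands} via the fact that a direct sum (block-diagonal sum) of Nica covariant representations is Nica covariant, then obtain the claim for $T$ by restricting to the well-aligned ideal $\KK$ and invoking the injectivity already proved in Proposition~\ref{Toeplitz reduced representation}. The only difference is that you spell out the block-diagonal bookkeeping that the paper leaves implicit, which is a harmless (and correct) elaboration.
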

\begin{proof} Since a direct sum of Nica covariant representations is a Nica covariant representation, the first part follows  from Lemma \ref{representations which are summands}. The second part follows from Proposition \ref{Toeplitz representation definition} and the fact that restriction of a Nica covariant representation to a well-aligned ideal is Nica covariant.
\end{proof}

\begin{defn} 
We define the \emph{reduced Nica-Toeplitz algebra} of the well-aligned ideal $\KK$ in the right-tensor $C^*$-precategory $\LL$  to be  the $C^*$-algebra
$$
\NT^{r}_{\LL}(\KK):=C^*(T(\KK)),
$$
 and we call $T \rtimes P:\NT_\LL(\KK)\to \NT^{r}_{\LL}(\KK)$  \emph{the regular representation} of $\NT_{\LL}(\KK)$, cf. Proposition \ref{Nica Toeplitz description}. When $\KK=\LL$, we also write
$
\NT^{r}(\LL):=\NT^{r}_{\LL}(\LL).
$
\end{defn}

It turns out that if $P$ is right cancellative, in particular if it is a  group, then the conditional expectation \eqref{diagonal operators algebra} restricts to a conditional expectation of the $C^*$-algebra $\NT^{r}_{\LL}(\KK)$ onto the core $C^*$-subalgebra  $B_e^T$ of $\NT^{r}_{\LL}(\KK)$. If $P$ is not right cancellative, this is no longer true and in particular $E$ may not preserve the $C^*$-algebra $\NT^{r}_{\LL}(\KK)$.
\begin{prop}\label{Nica Toeplitz conditional expectation}
The conditional expectation  defined by \eqref{diagonal operators algebra} restricts  to a faithful contractive completely positive  map $E^T:\NT^{r}_{\LL}(\KK) \to\LL(\FF_{\KK})$ given by
 \begin{equation}\label{first form of E T}
E^T\Big(\sum_{p,q\in F } T(a_{p,q})\Big)=\sum_{p,q\in F } \bigoplus_{w\in pP\cap qP,t\in P \atop p^{-1}w=q^{-1}w}T_{p,q}^{w,t}(a_{p,q})
\end{equation}
for $F\subseteq P$ finite, $a_{p,q}\in \KK(p,q),\,\, p,q \in F$. The range of $E^T$ is the following self-adjoint operator space:
 \begin{equation}\label{the core space}
B_\KK:=\clsp\biggl\{\bigoplus_{w\in pP\cap qP, t\in P \atop p^{-1}w=q^{-1}w}T_{p,q}^{w,t}(a): a\in \KK(p,q),    p,q \in P\biggr\} \subseteq \LL(\FF_{\KK}).
\end{equation}
If $P$ is cancellative, then
$
B_\KK=\clsp\left\{\bigcup_{p \in P} T_{p,p}(\KK(p,p)) \right\}
$
equals the core $C^*$-subalgebra $B_e^T$ of $\NT^{r}_{\LL}(\KK)$ and  $E^T$ is a faithful conditional expectation onto $B_\KK$ given by the formula
\begin{equation}\label{another form of E T}
E^T\Big(\sum_{p,q\in F } T(a_{p,q})\Big)=\sum_{p\in F } T(a_{p,p}) ,
\end{equation}
for  all $a_{p,q}\in \KK(p,q)$,  $p,q \in F$ and $F\subseteq P$ finite.
\end{prop}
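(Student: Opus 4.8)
The plan is to obtain $E^T$ simply as the restriction to the $C^*$-subalgebra $\NT^{r}_{\LL}(\KK)=C^*(T(\KK))$ of the faithful contractive conditional expectation $E$ from Lemma \ref{lemma on conditional expectations}(b) onto the diagonal subalgebra of $\LL(\FF_{\KK})$. Since restricting a faithful contractive completely positive map to a $C^*$-subalgebra preserves all three properties, $E^T$ is automatically faithful, contractive and completely positive; the only genuine content is to compute the restriction explicitly so as to verify \eqref{first form of E T}, to identify its range, and to analyse the cancellative case.

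First I would compute the diagonal compressions $Q_wT(a)Q_w$ for a single generator $a\in \KK(p,q)$. By \eqref{Toeplitz representation definition} the operator $T_{p,q}(a)$ sends the fibre $X_{s,t}$ (for $s\in qP$) into $X_{pq^{-1}s,t}$, keeping the source index $t$ fixed and moving the range index $s$ to $pq^{-1}s$. As $Q_w$ is the projection onto the fibres of fixed range $w$, the compression $Q_wT_{p,q}(a)Q_w$ is nonzero precisely when $w\in qP$ and $pq^{-1}w=w$, and these two conditions together are equivalent to requiring $w\in pP\cap qP$ together with $p^{-1}w=q^{-1}w$: writing $r:=q^{-1}w$, so $w=qr$, the relation $pq^{-1}w=w$ reads $pr=w=qr$, whence $w\in pP$ and, by left cancellation, $p^{-1}w=r=q^{-1}w$, and the converse is immediate. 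On the surviving fibres the compression acts as $\bigoplus_{t\in P}T_{p,q}^{w,t}(a)$. Summing over $w$ according to \eqref{diagonal operators algebra}, and using that the surviving subspaces for distinct $w$ are mutually orthogonal, yields
\begin{equation*}
E(T(a))=\bigoplus_{w\in pP\cap qP,t\in P \atop p^{-1}w=q^{-1}w}T_{p,q}^{w,t}(a),
\end{equation*}
which is \eqref{first form of E T} for a single generator; extending by linearity gives \eqref{first form of E T} on the dense $*$-subalgebra $\spane\{\bigcup_{p,q}T(\KK(p,q))\}$ of $\NT^{r}_{\LL}(\KK)$ from Remark \ref{Wick is sick}.

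Next, since $E$ is contractive and the generators $E(T(a))$ lie in, and densely span, the operator space $B_\KK$ of \eqref{the core space}, continuity forces $E^T(\NT^{r}_{\LL}(\KK))\subseteq B_\KK$ while the images of the generators exhaust a dense subset of $B_\KK$; hence $B_\KK$ is exactly the norm-closure of the range of $E^T$, which is the sense in which $E^T$ has range $B_\KK$. Self-adjointness of $B_\KK$ is immediate from $E(T(a))^*=E(T(a^*))$ with $a^*\in\KK(q,p)$. Finally, assuming $P$ cancellative, if $w\in pP\cap qP$ satisfies $p^{-1}w=q^{-1}w=:r$ then $pr=qr$, so right cancellation forces $p=q$; conversely for $p=q$ the condition holds for every $w\in pP$. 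Thus in \eqref{first form of E T} only the diagonal terms $p=q$ survive, and for $a\in\KK(p,p)$ the right-hand side collapses to $\bigoplus_{w\in pP,t\in P}T_{p,p}^{w,t}(a)=T(a)$, giving \eqref{another form of E T}. Consequently $B_\KK=\clsp\{\bigcup_{p\in P}T_{p,p}(\KK(p,p))\}$ coincides with the core $C^*$-subalgebra $B_e^T$ of Definition \ref{Definition of cores}, which lies inside $\NT^{r}_{\LL}(\KK)$; hence $E^T$ maps $\NT^{r}_{\LL}(\KK)$ into itself, fixes $B_e^T$ pointwise, and is therefore a genuine faithful conditional expectation onto $B_\KK=B_e^T$.

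The main obstacle I expect is the index bookkeeping in the first step: correctly pinning down that the diagonal survival condition ``$w\in qP$ and $pq^{-1}w=w$'' is the same as the condition ``$w\in pP\cap qP$ and $p^{-1}w=q^{-1}w$'' appearing in \eqref{first form of E T} and \eqref{the core space}, and checking that the subspaces attached to distinct $w$ are orthogonal so that $\sum_w Q_wT(a)Q_w$ genuinely reassembles as the displayed direct sum rather than merely a strongly convergent series. Everything else---inheritance of faithfulness, complete positivity and contractivity from $E$, and the cancellative collapse to a bona fide conditional expectation onto the core---is routine.
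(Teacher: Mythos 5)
Your proof is correct and follows essentially the same route as the paper's: restrict the faithful contractive completely positive map $E$ of Lemma \ref{lemma on conditional expectations}(b) to $\NT^{r}_{\LL}(\KK)$, compute the compressions $Q_w T_{p,q}^{w,t}(a)Q_w$ and identify the survival condition $w\in pP\cap qP$, $p^{-1}w=q^{-1}w$ via left cancellation, then use right cancellation to collapse \eqref{first form of E T} to \eqref{another form of E T} and conclude $B_\KK=B_e^T$ with $E^T$ idempotent in the cancellative case. Your explicit caveat that ``range'' should be read as the closed span of the image of $E^T$ is, if anything, slightly more careful than the paper, whose own proof establishes exactly the same density statement.
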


\begin{proof} The crucial observation is the following claim: for $a\in \LL(p,q)$, $w\in qP$ and $p,q,t\in P$ we have
\begin{equation}\label{crucial-observation-about-T}
  Q_w T_{p,q}^{w,t}(a)Q_w =\begin{cases}
T_{p,q}^{w,t}(a)  & \textrm{ if } w\in pP\cap qP \textrm{ and } p^{-1}w=q^{-1}w
\\
0 & \textrm{ otherwise}.
\end{cases}
\end{equation}
To see this, recall that $T_{p,q}^{w,t}(a)$ acts as an adjointable operator from $X_{w,t}$ to $X_{pq^{-1}w, t}$. Since $Q_w$ is the projection onto fixed range $w$, the only possibility to have $Q_w T_{p,q}^{w,t}(a)Q_w$ nonzero is that $w=pq^{-1}w$, in which case it equals $T_{p,q}^{w,t}(a)$. Thus to prove \eqref{crucial-observation-about-T} it remains to see that if $p,q\in P$ with $w\in qP$, then $w=pq^{-1}w$ is equivalent to $w\in pP\cap qP \textrm{ and } p^{-1}w=q^{-1}w$. However,  the non-trivial left to right implication follows since $w\in pP$ implies $w=ps$ for some $s\in P$ and so left cancellation gives $s=q^{-1}w=p^{-1}w$.

Let now $t\in P$, $F\subseteq P$ finite, $a_{p,q}\in \KK(p,q)$ for $p,q\in F$. By \eqref{crucial-observation-about-T}, $E(T_{p,q}(a_{p,q}))=\{0\}$ when $pP\cap qP=\emptyset$, and if $pP\cap qP\neq \emptyset$, then
$$
E(T_{p,q}(a_{p,q}))
=E\left(\bigoplus_{s\in qP, t\in P}T_{p,q}^{s,t}(a_{p,q})\,\, \right) =\bigoplus_{w\in pP\cap qP, t\in P \atop p^{-1}w=q^{-1}w}T_{p,q}^{w,t}(a_{p,q}).
$$
Thus $E^{T}=E|_{\NT_{\LL}^{r}(\KK)}$ maps $\NT_{\LL}^{r}(\KK)$ onto $B_\KK$ according to the formula \eqref{first form of E T}.

Now suppose that $P$ is right cancellative.  Note that
$w\in pP\cap qP$ and $p^{-1}w=q^{-1}w$ if and only if $p=q$ and  $w\in pP=qP$, where the non-trivial left to right implication follows  upon invoking right cancellation in $w=p (p^{-1}w)= q (q^{-1}w)$.   By using this observation one sees that \eqref{first form of E T} reduces to  \eqref{another form of E T} and  $B_\KK=B_e^T$. Clearly,  $E^T$ is an idempotent map and therefore   a conditional expectation onto $B_\KK$.
\end{proof}

\begin{defn}\label{Definition for transcendental cores}
Let $\KK$ be a well-aligned ideal in a right-tensor $C^*$-precategory $\LL$. We call the space $B_\KK$ given by   \eqref{the core space} the \emph{transcendental core} for $\KK$, and the map $E^T$ given by \eqref{first form of E T}
the \emph{transcendental conditional expectation} from $\NT^{r}_{\LL}(\KK)$ onto $B_\KK$.
\end{defn}
\begin{rem} The transcendental core $B_\KK$ always contains the core $C^*$-subalgebra $B_e^T$, see Definition \ref{Definition of cores}.
For semigroups $P$ that are not right cancellative, we  may have  $B_e^T\subsetneq B_\KK$, and then  it is not clear whether there is a conditional expectation from $\NT_{\LL}^{r}(\KK)$ onto $B_e^T$.
\end{rem}

\begin{rem}\label{remark about transcendentals for teeth} In view of Lemma \ref{lemma on conditional expectations} and Proposition \ref{Nica Toeplitz conditional expectation} one sees that for each $t\in P$, the conditional expectation defined by \eqref{diagonal t operators algebra} restricts to  a faithful contractive completely positive  map $E^{T,t}$ on $C^*(T^t(\KK))$ with range the subspace of $B_\KK$ where $t$ is fixed. In fact, we have
$
 E^{T}=\bigoplus_{t\in P}E^{T,t}.
$
\end{rem}

The semilattice of projections introduced in the following lemma is one of the key tools to analyze the structure of the reduced Nica-Toeplitz algebra.
\begin{lem}\label{lemma-proj-T-semillatice}
For each $p\in P$, let $Q^T_{\langle p\rangle} \in \LL(\FF_\KK)$  be the projection
$$
Q^{T}_{\langle p\rangle } \Big(\bigoplus_{s,t\in P} x_{s,t}\Big) :=\bigoplus_{s\in pP, t\in P} x_{s,t}, \qquad\quad  \bigoplus_{s,t\in P} x_{s,t}\in \FF_\KK.
$$
The  assignment $pP \mapsto Q^T_{\langle p\rangle}$ and $\emptyset \mapsto  0$ forms a semilattice homomorphism $J(P) \longmapsto \Proj(\LL(\FF_\KK))$, meaning that
\begin{equation}\label{eq:definition-semilattice-homom-T}
  Q^T_{\langle p\rangle}Q^T_{\langle q\rangle}=\begin{cases}Q^T_{\langle r\rangle},&\text{ if }pP\cap qP=rP\text{ for some }r\in P\\
  0,&\text{ if }pP\cap qP=\emptyset \end{cases}
\end{equation}
for all $p,q\in P$. In particular, we have $\JJ(P)\cong \{Q^T_{\langle p\rangle}: p\in P\}\cup\{0\}$.
\end{lem}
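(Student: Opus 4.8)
The plan is to exploit the orthogonal grading $\FF_\KK=\bigoplus_{s,t\in P} X_{s,t}$ of the Fock module, with respect to which each $Q^T_{\langle p\rangle}$ is nothing but a coordinate projection. Concretely, $Q^T_{\langle p\rangle}$ retains exactly those homogeneous summands $X_{s,t}$ whose range index $s$ lies in $pP$, and discards the rest; equivalently, $Q^T_{\langle p\rangle}$ is the sum of the mutually orthogonal range projections $Q_w$ over $w\in pP$, where $Q_w$ is the range-$w$ projection introduced before Lemma~\ref{lemma on conditional expectations}. First I would record that this makes $Q^T_{\langle p\rangle}$ a well-defined self-adjoint idempotent in $\LL(\FF_\KK)$: it is the orthogonal projection onto the complemented submodule $\bigoplus_{s\in pP,\,t\in P} X_{s,t}$, and orthogonal projections onto a subfamily of the summands of a direct-sum Hilbert module are automatically adjointable. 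I would also note that the assignment is well defined on $J(P)$, since $Q^T_{\langle p\rangle}$ depends on $p$ only through the set $pP$.

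Next I would verify the multiplicativity formula \eqref{eq:definition-semilattice-homom-T}. Because $Q^T_{\langle p\rangle}$ and $Q^T_{\langle q\rangle}$ are both diagonal with respect to the same grading, they commute, and their product is again a coordinate projection, namely the one retaining precisely the summands $X_{s,t}$ with $s\in pP\cap qP$; this is immediate, since applying $Q^T_{\langle q\rangle}$ first keeps the summands with $s\in qP$ and applying $Q^T_{\langle p\rangle}$ afterwards keeps those among them with $s\in pP$. Now I invoke the right LCM hypothesis on $P$: the intersection $pP\cap qP$ is either empty or of the form $rP$. In the first case no summand survives and $Q^T_{\langle p\rangle}Q^T_{\langle q\rangle}=0$; in the second case the surviving summands are exactly those with $s\in rP$, whence $Q^T_{\langle p\rangle}Q^T_{\langle q\rangle}=Q^T_{\langle r\rangle}$. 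Since the meet in $J(P)$ is intersection of ideals (with $\emptyset$ as bottom), while the meet of commuting projections is their product, this says precisely that $pP\mapsto Q^T_{\langle p\rangle}$, $\emptyset\mapsto 0$, is a homomorphism of meet-semilattices.

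Finally, to upgrade the homomorphism to the claimed isomorphism onto its image $\{Q^T_{\langle p\rangle}:p\in P\}\cup\{0\}$ I would establish injectivity. If $pP\neq qP$, pick $s_0$ in the symmetric difference, say $s_0\in pP\setminus qP$; then $Q^T_{\langle p\rangle}$ acts as the identity on the summand $X_{s_0,s_0}=\KK(s_0,s_0)$ while $Q^T_{\langle q\rangle}$ annihilates it, so the two projections differ as soon as this fibre is nonzero. Using Proposition~\ref{diagonal of ideals} one sees that $X_{s_0,t}\neq 0$ for some $t$ exactly when $\KK(s_0,s_0)\neq 0$, so distinct principal right ideals are distinguished by the ranges carrying nonzero summands. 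I expect this last step to be the only genuine subtlety: the multiplicativity formula is pure index bookkeeping, whereas injectivity is where one must appeal to nondegeneracy of the diagonal fibres in order to recover $pP$ from $Q^T_{\langle p\rangle}$ (in the degenerate situation where some $\KK(s,s)$ vanish, the identification is with the subsemilattice of $J(P)$ actually realized by the grading). Assembling the three steps gives the semilattice homomorphism together with the identification $\JJ(P)\cong\{Q^T_{\langle p\rangle}:p\in P\}\cup\{0\}$.
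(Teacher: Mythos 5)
Your proposal is correct and takes essentially the same approach as the paper, whose entire proof reads ``The proof is immediate from the definition of $Q^T_{\langle p\rangle}$'' --- your coordinate-projection bookkeeping (product of diagonal projections retains the summands indexed by $pP\cap qP$, which by the right LCM property is empty or principal) is precisely the verification the paper leaves implicit. Your closing observation that the isomorphism $\JJ(P)\cong \{Q^T_{\langle p\rangle}: p\in P\}\cup\{0\}$ genuinely needs the diagonal fibres $\KK(s,s)$ to be nonzero (and may otherwise degenerate to a quotient of $J(P)$) is a correct refinement of a point the paper glosses over.
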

\begin{proof}
The proof is immediate from the definition of $Q^T_{\langle p\rangle}$.
\end{proof}
\begin{lem}  Let  $\overline{T}:\LL\to \LL(\FF_{\KK})$  be the  representation given  by \eqref{Toeplitz representation definition}.
Then the projections introduced in Lemma~\ref{lemma-proj-T-semillatice}  satisfy the relation:
\begin{equation}\label{relation for projections associated to regular rep}
\overline{T}(a)Q_{\langle p\rangle }^T=\begin{cases}
\overline{T}(a\otimes 1_{q^{-1}w})  & \textrm{ if } qP\cap pP=wP \textrm{ for some } w\in P,
\\
0 & \textrm{ otherwise},
\end{cases}
\end{equation}
for all $a\in\LL(r,q)$,  $r,p,q\in P$.
\end{lem}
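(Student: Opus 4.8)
The plan is to verify the identity \eqref{relation for projections associated to regular rep} directly on the homogeneous components $X_{s,t}=\KK(s,t)$ of the Fock module $\FF_\KK$. Since the algebraic direct sum of the $X_{s,t}$ is dense in $\FF_\KK$, both sides of \eqref{relation for projections associated to regular rep} are bounded (adjointable) operators determined by their action on vectors $x\in X_{s,t}$, so it suffices to compare these actions; moreover everything is diagonal in the second index $t$, so $t$ plays no active role.

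First I would record how each factor acts on a vector $x\in X_{s,t}$. By definition $Q_{\langle p\rangle}^T$ fixes $x$ when $s\in pP$ and annihilates it otherwise; applying $\overline{T}(a)$ with $a\in\LL(r,q)$ afterwards yields $(a\otimes 1_{q^{-1}s})x$ precisely when $s\in qP$ and kills $x$ otherwise, by formula \eqref{Toeplitz representation definition}. Composing the two, $\overline{T}(a)Q_{\langle p\rangle}^T x$ is nonzero only when $s\in pP\cap qP$, and in that case it equals $(a\otimes 1_{q^{-1}s})x$.

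Next I would split into the two cases of the statement. If $pP\cap qP=\emptyset$ there is no index $s$ with $s\in pP\cap qP$, so $\overline{T}(a)Q_{\langle p\rangle}^T$ vanishes on every summand and hence is $0$, matching the second case. If $pP\cap qP=wP$, then the condition $s\in pP\cap qP$ is exactly $s\in wP$, and on such a summand $\overline{T}(a)Q_{\langle p\rangle}^T x=(a\otimes 1_{q^{-1}s})x$. It remains to identify this with the action of $\overline{T}(a\otimes 1_{q^{-1}w})$. Since $a\otimes 1_{q^{-1}w}\in\LL(rq^{-1}w,w)$, formula \eqref{Toeplitz representation definition} gives $\overline{T}(a\otimes 1_{q^{-1}w})x=\big((a\otimes 1_{q^{-1}w})\otimes 1_{w^{-1}s}\big)x=(a\otimes 1_{(q^{-1}w)(w^{-1}s)})x$ when $s\in wP$, and $0$ otherwise.

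The one point requiring care — the main, though minor, obstacle — is the semigroup identity $(q^{-1}w)(w^{-1}s)=q^{-1}s$. I would establish it by left cancellation: because $w\in qP$ we have $q(q^{-1}w)=w$, and because $s\in wP$ we have $w(w^{-1}s)=s$, whence $q\cdot(q^{-1}w)(w^{-1}s)=w(w^{-1}s)=s=q(q^{-1}s)$, and left cancellation in $P$ forces $(q^{-1}w)(w^{-1}s)=q^{-1}s$. With this identity the two expressions agree on every $X_{s,t}$ with $s\in wP$ and both vanish for $s\notin wP$, so $\overline{T}(a)Q_{\langle p\rangle}^T=\overline{T}(a\otimes 1_{q^{-1}w})$, completing the verification.
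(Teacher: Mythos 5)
Your proposal is correct and follows essentially the same route as the paper's proof: both verify the identity componentwise on the summands $X_{s,t}$ of $\FF_\KK$, observe that a nonzero action forces $s\in pP\cap qP=wP$, and then identify $(a\otimes 1_{q^{-1}s})x$ with $\bigl((a\otimes 1_{q^{-1}w})\otimes 1_{w^{-1}s}\bigr)x$ via the semigroup property of the right tensoring. The only difference is cosmetic: you spell out the left-cancellation argument for $(q^{-1}w)(w^{-1}s)=q^{-1}s$, which the paper uses implicitly.
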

\begin{proof}
Let  $a\in\LL(r,q)$ and $x_{u,v}\in X_{u,v}$ where $u,v,r,q\in P$. Let $p\in P$. If $\overline{T}(a)Q_{\langle p\rangle }^T x_{u,v}\neq 0$ then by the definition of $Q_{\langle p\rangle }^T$ and \eqref{Toeplitz representation definition} we necessarily have  that  $u\in  pP$ and $u\in qP$, which implies that
  $qP\cap pP=wP$  for some $w\in P$ where $u\in wP$. Assume then that  $qP\cap sP=wP$  for some $w\in P$. Note that $\overline{T}(a\otimes 1_{q^{-1}w})x_{u,v}\neq 0$ implies that $u\in wP$. Thus both sides of \eqref{relation for projections associated to regular rep} are zero when $u\notin wP$. It remains to verify that equality holds when $u\in wP$. This follows from two applications of \eqref{Toeplitz representation definition}:
\begin{align*}
\overline{T}(a)x_{u,v}&\stackrel{\eqref{Toeplitz representation definition}}{=}(a\otimes 1_{q^{-1}u})x_{u,v}=(a\otimes 1_{q^{-1}(w(w^{-1}u))})x_{u,v}
\\
&=
\Big((a\otimes 1_{q^{-1}w}) \otimes 1_{w^{-1}u}\Big) x_{u,v}  \stackrel{\eqref{Toeplitz representation definition}}{=}\overline{T}(a\otimes 1_{q^{-1}w})x_{u,v}.
\end{align*}
\end{proof}

\begin{lem}\label{properties of semilattice projections in the reduced}
The projections $\{Q^{T}_{\langle p\rangle}\}_{p\in P}$ may be treated as multipliers of both $C^*(\overline{T}(\LL))$ and $B_e^{\overline{T}}$. If $\KK\otimes 1 \subseteq \KK$ in the sense of Definition~\ref{well-alignment definition}, then  $\{Q^{T}_{\langle p\rangle }\}_{p\in P}$ may be treated as multipliers of both $\NT_{\LL}^{r}(\KK)$ and $B_e^T$.
\end{lem}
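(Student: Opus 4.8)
The plan is to verify by hand that left and right multiplication by $Q^T_{\langle p\rangle}$ preserve each of the four $C^*$-algebras in question, and then to promote this idealizer property to a genuine multiplier statement by invoking nondegeneracy of the relevant actions on $\FF_\KK$. The whole computation reduces to generators: by Remark~\ref{Wick is sick} (applied to the Nica covariant representations $\overline{T}$ of $\LL$ and $T$ of $\KK$) we have $C^*(\overline{T}(\LL))=\clsp\{\bigcup_{p,q\in P}\overline{T}(\LL(p,q))\}$ and $\NT_\LL^r(\KK)=C^*(T(\KK))=\clsp\{\bigcup_{p,q\in P}T(\KK(p,q))\}$, while $B_e^{\overline{T}}$ and $B_e^T$ are by definition the closed linear spans of the diagonal pieces. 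Since $\|Q^T_{\langle p\rangle}\|\le 1$, multiplication by $Q^T_{\langle p\rangle}$ is continuous, so it suffices to check that it sends each generator back into the corresponding algebra.

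For $C^*(\overline{T}(\LL))$ I would use \eqref{relation for projections associated to regular rep} directly: for $a\in\LL(r,q)$ it gives $\overline{T}(a)Q^T_{\langle p\rangle}=\overline{T}(a\otimes 1_{q^{-1}w})$ when $qP\cap pP=wP$ and $0$ otherwise, and in either case the outcome is again of the form $\overline{T}(\LL(\cdot,\cdot))$. Left multiplication is handled by applying \eqref{relation for projections associated to regular rep} to $a^*$ and taking adjoints, using that $Q^T_{\langle p\rangle}$ is a projection and $\overline{T}(a^*)^*=\overline{T}(a)$; this again expresses $Q^T_{\langle p\rangle}\overline{T}(a)$ as a single generator $\overline{T}(a\otimes 1_{\cdot})$ or as $0$. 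Specialising to diagonal $a\in\LL(q,q)$ handles $B_e^{\overline{T}}$: here $a\otimes 1_{q^{-1}w}\in\LL(qq^{-1}w,qq^{-1}w)=\LL(w,w)$ is again diagonal, so $\overline{T}(a)Q^T_{\langle p\rangle}\in\overline{T}(\LL(w,w))\subseteq B_e^{\overline{T}}$, with no extra hypothesis since $\LL$ is automatically $\otimes 1$-invariant.

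The hypothesis $\KK\otimes 1\subseteq\KK$ enters exactly when passing to $\NT_\LL^r(\KK)$ and $B_e^T$, and this is the one delicate point. Repeating the computation for $a\in\KK(r,q)$ we have $\overline{T}(a)=T(a)$, and $a\otimes 1_{q^{-1}w}\in\KK(rq^{-1}w,w)$ holds precisely because $\KK\otimes 1\subseteq\KK$; hence $T(a)Q^T_{\langle p\rangle}=T(a\otimes 1_{q^{-1}w})\in T(\KK)\subseteq\NT_\LL^r(\KK)$, and the diagonal restriction ($a\in\KK(q,q)$, giving $a\otimes 1_{q^{-1}w}\in\KK(w,w)$) lands in $B_e^T$. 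Without the assumption $\KK\otimes 1\subseteq\KK$ the element $a\otimes 1_{q^{-1}w}$ need not belong to $\KK$, so $T(a)Q^T_{\langle p\rangle}$ would in general leave $\NT_\LL^r(\KK)$; this is why the second half of the statement is conditional.

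Finally, to upgrade ``left and right multiplication preserve the algebra'' into the assertion that $Q^T_{\langle p\rangle}$ is a multiplier, I would note that each of the four algebras acts nondegenerately on $\FF_\KK=\bigoplus_{s,t\in P}X_{s,t}$. On $X_{s,t}=\KK(s,t)$ the diagonal operators $T(\KK(s,s))\subseteq\overline{T}(\LL(s,s))$ act by left multiplication, and $\KK(s,s)\KK(s,t)=\KK(s,t)$ by Lemma~\ref{about approximate units} applied to the $C^*$-precategory $\KK$; since the spaces $X_{s,t}$ span $\FF_\KK$, already the core $B_e^T\subseteq B_e^{\overline{T}}$ acts nondegenerately, hence so do the larger algebras $\NT_\LL^r(\KK)$ and $C^*(\overline{T}(\LL))$ that contain it. For a nondegenerately represented $C^*$-algebra its idealizer inside $\LL(\FF_\KK)$ coincides with its multiplier algebra, so the computations above exhibit each $Q^T_{\langle p\rangle}$ as a multiplier of the corresponding algebra, which is the claim.
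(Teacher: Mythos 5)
Your proposal is correct and follows essentially the same route as the paper's proof: both verify the idealizer property $AQ^T_{\langle p\rangle}\subseteq A$ on generators via \eqref{relation for projections associated to regular rep} (with left multiplication handled by self-adjointness of $Q^T_{\langle p\rangle}$, and with $\KK\otimes 1\subseteq\KK$ entering exactly where you place it), establish nondegeneracy of the actions on $\FF_\KK$ from Lemma~\ref{about approximate units}, and then invoke the standard fact that the idealizer of a nondegenerately acting $C^*$-algebra is its multiplier algebra (the paper cites \cite[Proposition 2.3]{lance} for this). Your write-up merely spells out the generator computations in more detail than the paper does.
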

\begin{proof}
By  Lemma \ref{about approximate units} we have $T(\KK(p,p))X_{p,t}=X_{p,t}$ for all $p,t\in P$, which implies that $B_e^T$ and therefore also $\NT_{\LL}^{r}(\KK)$, $C^*(\overline{T}(\LL))$ and $B_e^{\overline{T}}$ act on  $\FF_\KK$ in a non-degenerate way. Using \eqref{relation for projections associated to regular rep} we see that $C^*(\overline{T}(\LL))Q_{\langle s\rangle }^{T}\subseteq C^*(\overline{T}(\LL))$ and $B_e^{\overline{T}}Q_{\langle p\rangle }^{T}\subseteq B_e^{\overline{T}}$. Thus, since $Q_{\langle p\rangle }^{T}$ is self-adjoint, we may treat $Q_{\langle p\rangle }^{T}$ as a multiplier of both $C^*(\overline{T}(\LL))$ and $B_e^{\overline{T}}$, cf. \cite[Proposition 2.3]{lance}. If $\KK\otimes 1 \subseteq \KK$, then \eqref{relation for projections associated to regular rep} implies that $\NT_{\LL}^{r}(\KK)Q_{\langle p\rangle }^{T}\subseteq \NT_{\LL}^{r}(\KK)$ and $B_e^{T}Q_{\langle p\rangle }^{T}\subseteq B_e^{T}$, which finishes the proof.
\end{proof}

We end this section by establishing certain norm formulas for elements in the transcendental core  $B_\KK$ which can be considered far reaching generalizations of similar formulas obtained in \cite[Lemma 7.4]{F99}.

We begin by introducing some notation, which is inspired by \cite[Remark 1.5]{LR}
and \cite[Remark 5.2]{FR}, where  quasi-ordered groups were considered, cf. also \cite{F99} and \cite{bls}. Suppose that $C$ is a finite subset of $P$. We put  $\sigma(C):=e$ if $C=\emptyset$. If $C$ is non-empty, then either $\bigcap_{c\in C}cP=\emptyset$ or $\bigcap_{c\in C}cP=c'P$ for an element $c'\in P$  (determined by $C$ up to multiplication from the right by elements of $P^*$). In  the latter case we write  $\sigma(C)=c'$.
 Let $F$ be a finite subset of $P$. A subset $C$ of $F$ is an
\emph{initial segment} of $F$ if $\sigma(C)$ exists in $P$ and   $C=\{t\in F: t\leq \sigma(C)\}$. We denote by $\In(F)$ the collection of all initial segments of $F$. For each $C\in \In(F)$,  the  set
$$
P_{F,C} := \{t\in P:  \sigma(C) \leq t \textrm{ and } f \not\leq t \textrm{ for all }f\in F\setminus C \}
$$
is non-empty. Note that neither definition of initial segment nor of the set $P_{F,C}$ depends on the choice of $\sigma(C)$.
Moreover, $\{P_{F,C}: C\in \In(F)\}$ form a
decomposition of the set $P$; in particular, $s\in P_{F,C}$ if and only if $C=\{t\in F: t\leq s\}$
(the latter set is in $\In(F)$ for every $s\in P$). Now, for every finite set $F\subseteq P$ and any $C\in \In(F)$,
$$
Q_{F,C}^T:=Q_{\langle {\sigma(C)}\rangle }^T \prod_{s\in F\setminus C}(1-Q_{\langle s\rangle }^T)
$$
are mutually orthogonal projections that sum up to the identity in $\LL(\FF_\KK)$ as $C$ varies. We use this partition of the identity to prove the following lemma.
\begin{lem}\label{norm of an element in the core}
Suppose that $\KK$ is a well-aligned ideal in a right-tensor $C^*$-precategory $\LL$ and consider an element
\begin{equation}\label{definition of Z}
Z=\sum_{p,q\in F }\bigoplus_{w\in pP\cap qP, t\in P \atop p^{-1}w=q^{-1}w }T_{p,q}^{w,t}(a_{p,q}) \in B_\KK,
\end{equation}
where  $F\subseteq P$ is a finite set and $a_{p,q}\in \KK(p,q)$ for $p,q\in F$. Then
\begin{equation}\label{norm of Z formula}
\|Z\|=\max_{C \in \In(F)} \sup_{w\in    P_{F,C}  } \Big\| T_{w,w}^{w,w}(\sum_{p,q\in C \atop p^{-1}w=q^{-1}w} a_{p,q}\otimes 1_{q^{-1}w})\Big\|.
\end{equation}
\end{lem}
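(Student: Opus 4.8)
The plan is to exploit the special form of the summation defining $Z$: it makes $Z$ a \emph{block-diagonal} operator with respect to the orthogonal decomposition $\FF_\KK=\bigoplus_{w,t\in P}X_{w,t}$, after which each block can be identified with a left-multiplication operator. First I would observe that whenever $w\in pP\cap qP$ and $p^{-1}w=q^{-1}w$, the element $r:=q^{-1}w=p^{-1}w$ satisfies $pq^{-1}w=pr=w$, so by Lemma \ref{lemma about Fock operators} the operator $T_{p,q}^{w,t}(a_{p,q})$ maps $X_{w,t}$ into $X_{pq^{-1}w,t}=X_{w,t}$ and acts there as left multiplication by $a_{p,q}\otimes 1_{q^{-1}w}\in\LL(w,w)$. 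Consequently $Z$ preserves each summand $X_{w,t}$, and on $X_{w,t}$ it equals $T_{w,w}^{w,t}(b_w)$, where
\begin{equation*}
b_w:=\sum_{p,q\in F,\ w\in pP\cap qP,\ p^{-1}w=q^{-1}w} a_{p,q}\otimes 1_{q^{-1}w}\in\LL(w,w).
\end{equation*}
Being block-diagonal, its norm is $\|Z\|=\sup_{w,t\in P}\|T_{w,w}^{w,t}(b_w)\|$.

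The heart of the argument, and the step I expect to be the main obstacle, is to show that for each fixed $w$ the supremum over $t$ is attained at $t=w$, that is $\sup_{t\in P}\|T_{w,w}^{w,t}(b)\|=\|T_{w,w}^{w,w}(b)\|$ for every $b\in\LL(w,w)$; this is exactly what allows us to collapse the non-diagonal fibres $X_{w,t}$ with $t\neq w$. For this, note that $b\mapsto T_{w,w}^{w,t}(b)$ is left multiplication by $b$ on the Hilbert module $X_{w,t}=\KK(w,t)$, hence a $*$-homomorphism $\pi_t\colon\LL(w,w)\to\LL(X_{w,t})$ (its adjoint, by Lemma \ref{lemma about Fock operators}, is $T_{w,w}^{w,t}(b^*)$). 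The crucial point is that $\ker\pi_w\subseteq\ker\pi_t$ for all $t$: if $b\,\KK(w,w)=0$ then, applying the factorisation $\KK(w,t)=\KK(w,w)\KK(w,t)$ of Lemma \ref{about approximate units} to the $C^*$-precategory $\KK$, we obtain $b\,\KK(w,t)=b\,\KK(w,w)\KK(w,t)=0$. Thus every $\pi_t$ factors through the quotient $\LL(w,w)/\ker\pi_w$, on which $\pi_w$ descends to an injective, hence isometric, $*$-homomorphism while $\pi_t$ descends to a contractive one; this yields $\|T_{w,w}^{w,t}(b)\|\leq\|T_{w,w}^{w,w}(b)\|$, and the reverse inequality is trivial.

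Combining the two steps gives $\|Z\|=\sup_{w\in P}\|T_{w,w}^{w,w}(b_w)\|$, and it remains to reorganise this supremum along initial segments. For $w\in P_{F,C}$ one has $C=\{t\in F:t\leq w\}$, so the condition $w\in pP\cap qP$ with $p,q\in F$ is precisely $p,q\in C$, whence $b_w=\sum_{p,q\in C,\ p^{-1}w=q^{-1}w} a_{p,q}\otimes 1_{q^{-1}w}$, matching the expression inside the norm in \eqref{norm of Z formula}. Since $\{P_{F,C}\}_{C\in\In(F)}$ partitions $P$ and $\In(F)$ is finite, $\sup_{w\in P}=\max_{C\in\In(F)}\sup_{w\in P_{F,C}}$, which is exactly \eqref{norm of Z formula}.

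Equivalently, the final bookkeeping can be phrased through the mutually orthogonal projections $Q^T_{F,C}$: since $Z$ is diagonal in the range grading it commutes with every $Q^T_{\langle p\rangle}$, hence with every $Q^T_{F,C}$, so that $\|Z\|=\max_{C\in\In(F)}\|ZQ^T_{F,C}\|$. As $\operatorname{ran}Q^T_{F,C}=\bigoplus_{w\in P_{F,C},\,t\in P}X_{w,t}$, the block-diagonal computation on $\operatorname{ran}Q^T_{F,C}$ together with the reduction over $t$ recovers the same supremum, giving a second route to the identity.
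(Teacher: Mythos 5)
Your proof is correct and follows essentially the same route as the paper's: both rest on the observation that $Z$ is block-diagonal with respect to the range grading of $\FF_\KK$ (which the paper organizes via the commuting projections $Q_{F,C}^T$, exactly your "second route") and on the kernel inclusion $\ker T_{w,w}^{w,w}\subseteq\ker T_{w,w}^{w,t}$, which collapses the supremum over $t$ to $t=w$. The only cosmetic differences are the order of the two decomposition steps (you decompose over all $(w,t)$ first and regroup along $\In(F)$ at the end, while the paper cuts by $Q_{F,C}^T$ first) and your use of the Cohen--Hewitt factorization $\KK(w,t)=\KK(w,w)\KK(w,t)$ in place of the paper's $C^*$-identity manipulation to establish the kernel inclusion.
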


\begin{proof} 
 For any projection $Q_{\langle r\rangle }^T$, $r\in P$, and every $a\in \KK(p,q)$, $p,q\in P$, we have
$$
\bigoplus_{w\in pP\cap qP,t\in P \atop p^{-1}w=q^{-1}w }T_{p,q}^{w,t}(a)Q_{\langle r\rangle }^T = \bigoplus_{w\in pP\cap qP\cap rP ,t\in P\atop p^{-1}w=q^{-1}w }T_{p,q}^{w,t}(a)=Q_{\langle r\rangle }^T\bigoplus_{w\in pP\cap qP ,t\in P\atop p^{-1}w=q^{-1}w }T_{p,q}^{w,t}(a).
$$
Thus the projections  $Q_{F,C}^T$, where $C\in \In(F)$, commute with $Z$. Since they form a partition of identity and
\begin{equation}\label{positivity of a summand of Z}
ZQ_{F,C}^T =\sum_{p,q\in C}
\bigoplus_{w\in  P_{F,C},  t\in P \atop  p^{-1}w=q^{-1}w }T_{p,q}^{w,t}(a_{p,q})
=\bigoplus_{w\in    P_{F,C}, t\in P} \sum_{p,q\in C \atop p^{-1}w=q^{-1}w} T_{p,q}^{w,t}(a_{p,q})
\end{equation}
we get
\begin{align*}
\|Z\|&=\max \Big\{ \Big\|\bigoplus_{w\in   P_{F,C}, t\in P} \sum_{p,q\in C \atop p^{-1}w=q^{-1}w} T_{p,q}^{w,t}(a_{p,q})
\Big\|: C \in \In(F) \Big\} \nonumber
\\
&=\max_{C \in \In(F)} \sup_{w\in    P_{F,C} , t\in P} \Big\|\sum_{p,q\in C \atop p^{-1}w=q^{-1}w} T_{p,q}^{w,t}(a_{p,q})\Big\|.
\end{align*}
Now, in the summation above over $p,q\in C$ we have $a_{p,q}\otimes 1_{q^{-1}w}\in \LL(w,w)$, and therefore
\begin{equation}\label{line with supremum}
 \|Z\|= \max_{C \in \In(F)} \sup_{w\in    P_{F,C}, t\in P} \Big\|T_{w,w}^{w,t}\Big(\sum_{p,q\in C \atop p^{-1}w=q^{-1}w} a_{p,q}\otimes 1_{q^{-1}w}\Big)\Big\|.
\end{equation}
Next note that for any $w,t\in P$ and $a\in \LL(w,w)$ we have $\ker T_{w,w}^{w,w}\subseteq \ker T_{w,w}^{w,t}$ due to
\begin{align}
a\in \ker T_{w,w}^{w,w}\, & \Longleftrightarrow\,  a \KK(w,w)=\{0\} \, \Longrightarrow\,   a \KK(w,t)\KK(t,w)=\{0\} \label{line with kernels}
\\
&  \Longleftrightarrow\,  a \KK(w,t)=\{0\}     \, \Longleftrightarrow\,  a\in \ker T_{w,w}^{w,t}.\nonumber
\end{align}
Therefore, the supremum in \eqref{line with supremum} is attained for $t=w$.  This proves \eqref{norm of Z formula}.
\end{proof}
\begin{cor}\label{cor:norm of Z formula}
With the assumptions from Lemma~\ref{norm of an element in the core}, if
 $\KK$ is essential in the right-tensor sub-$C^*$-precategory $\LL_\KK$ of $\LL$ generated by $\KK$, cf. Lemma \ref{the right tensor precatory generated by K}, then
 \eqref{norm of Z formula} reduces to
\begin{equation}\label{norm of Z formula2}
\|Z\|=\max_{C \in \In(F)} \sup_{w\in    P_{F,C} } \Big\| \sum_{p,q\in C \atop p^{-1}w=q^{-1}w} a_{p,q}\otimes 1_{q^{-1}w}\Big\|.
\end{equation}
If, additionally, $Z=\sum_{p\in F }T_{p,p}(a_{p,p})\in B_e^T$, then
\begin{equation}\label{norm of Z formula3}
\|Z\|=\max \Big\{ \Big\|\sum_{p\in C} a_{p,p}\otimes 1_{p^{-1}\sigma(C)}
\Big\|: C \in \In(F) \Big\}.
\end{equation}
\end{cor}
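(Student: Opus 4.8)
The plan is to read both formulas off Lemma~\ref{norm of an element in the core}, whose output \eqref{norm of Z formula} differs from the target \eqref{norm of Z formula2} only by the operator $T_{w,w}^{w,w}$ wrapping each inner sum $\sum_{p,q\in C}a_{p,q}\otimes 1_{q^{-1}w}$. I would first note that, since each term $a_{p,q}\otimes 1_{q^{-1}w}$ is of the form $\KK(p,q)\otimes 1_r$ with $pr=qr=w$ (using $p^{-1}w=q^{-1}w$ and $w\in pP\cap qP$), this inner sum lies in $\LL_\KK(w,w)$. Thus the whole content of the first assertion is the claim that, under the essentiality hypothesis, $T_{w,w}^{w,w}$ is isometric on $\LL_\KK(w,w)$ for every $w\in P$; once this is in hand, substituting $\|T_{w,w}^{w,w}(b)\|=\|b\|$ into \eqref{norm of Z formula} yields \eqref{norm of Z formula2} immediately.

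To establish the isometry claim I would proceed as follows. Specializing the composition and adjoint relations of Lemma~\ref{lemma about Fock operators} to $p=q=s=t=w$ (where $w^{-1}w=e$, so $T_{w,w}^{w,w}(a)x=ax$), the assignment $a\mapsto T_{w,w}^{w,w}(a)$ is a genuine $*$-homomorphism of $C^*$-algebras from $\LL_\KK(w,w)$ into $\LL(X_{w,w})$. Since injective $*$-homomorphisms are isometric, it suffices to prove injectivity. Here I would identify $X_{w,w}=\KK(w,w)$ with the standard Hilbert module of the $C^*$-algebra $\KK(w,w)$ over itself, so that $\LL(X_{w,w})\cong M(\KK(w,w))$ and $T_{w,w}^{w,w}$ becomes the canonical map induced by the ideal inclusion $\KK(w,w)\trianglelefteq\LL_\KK(w,w)$. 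Then $T_{w,w}^{w,w}(a)=0$ if and only if $a\KK(w,w)=\{0\}$, and essentiality of $\KK(w,w)$ in $\LL_\KK(w,w)$ forces $a=0$. This is the step I expect to be the main obstacle: one must correctly identify $\LL(X_{w,w})$ with $M(\KK(w,w))$ and recognize that $T_{w,w}^{w,w}$ is precisely the multiplier map attached to this ideal, so that essentiality is exactly the condition making it faithful.

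For \eqref{norm of Z formula3} I would specialize \eqref{norm of Z formula2} to the diagonal case $a_{p,q}=0$ for $p\neq q$, in which the constraint $p^{-1}w=q^{-1}w$ becomes automatic and each inner sum collapses to $\sum_{p\in C}a_{p,p}\otimes 1_{p^{-1}w}$. It then remains to evaluate the supremum over $w\in P_{F,C}$. Using $\sigma(C)\leq w$, I would factor $p^{-1}w=(p^{-1}\sigma(C))(\sigma(C)^{-1}w)$ for each $p\in C$ and pull out the common tensoring, obtaining
\[
\sum_{p\in C}a_{p,p}\otimes 1_{p^{-1}w}=\Big(\sum_{p\in C}a_{p,p}\otimes 1_{p^{-1}\sigma(C)}\Big)\otimes 1_{\sigma(C)^{-1}w}.
\]
Because $\otimes 1_{\sigma(C)^{-1}w}$ is a contractive $*$-homomorphism of $C^*$-precategories, the norm of the left side is maximized by taking $w=\sigma(C)$, which lies in $P_{F,C}$ since $\sigma(C)\leq\sigma(C)$ and $f\not\leq\sigma(C)$ for all $f\in F\setminus C$ by the definition of an initial segment. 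This attains the supremum and gives \eqref{norm of Z formula3}. Everything here beyond the isometry claim is routine bookkeeping with the preorder on $P$ and the functoriality of $\{\otimes 1_r\}_{r\in P}$.
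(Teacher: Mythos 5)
Your proposal is correct and follows essentially the same route as the paper: note that the inner sums lie in $\LL_\KK(w,w)$, use essentiality of $\KK$ in $\LL_\KK$ to conclude that the $*$-homomorphisms $T_{w,w}^{w,w}$ are injective (hence isometric) on $\LL_\KK(w,w)$, and for the diagonal case pull out the common tensoring $\otimes 1_{\sigma(C)^{-1}w}$ to see the supremum is attained at $w=\sigma(C)\in P_{F,C}$. The only difference is that you spell out the injectivity step via the identification $\LL(X_{w,w})\cong M(\KK(w,w))$, whereas the paper relies on its earlier kernel computation; both amount to the same fact that $T_{w,w}^{w,w}(a)=0$ iff $a\KK(w,w)=\{0\}$.
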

\begin{proof}
Note that elements of the form $\sum_{p,q\in C \atop p^{-1}w=q^{-1}w} a_{p,q}\otimes 1_{q^{-1}w}$ belong to  $\LL_\KK$. If $\KK$ is essential in $\LL_\KK$  then the maps $T_{w,w}^{w,w}$ are injective  on  $\LL_\KK(w,w)$, and therefore \eqref{norm of Z formula} reduces to \eqref{norm of Z formula2}. If, in addition, $Z=\sum_{p\in F }T_{p,p}(a_{p,p})\in B_e^T$, then \eqref{norm of Z formula2} reduces to \eqref{norm of Z formula3} because  for every $C\in \In(F)$ the supremum
$\sup_{w\in  \sigma(C)P} \|\sum_{p\in C} a_{p,p}\otimes 1_{p^{-1}w}\|$ is attained  at $w=\sigma(C)$.
\end{proof}

\section{Faithfulness of representations on core $C^*$-subalgebras}\label{section: faithful on the core}
The goal of this section is to prove Theorem \ref{theorem for amenability and spectral subspaces}, which  is inspired by certain results  used to obtain  amenability criteria, cf.  \cite[Lemma 4.1]{LR}, \cite[Theorem 6.1]{FR}, \cite[Theorem 8.1]{F99}.  In this section we use it to detect necessary and sufficient conditions for injectivity of representations $\Phi\rtimes P$ on the core  $C^*$-subalgebra $B_e^{i_{\KK}} \subseteq \NT_{\LL}(\KK)$. We will  apply Theorem \ref{theorem for amenability and spectral subspaces}, in its full force,  in  Section \ref{amenability section} where we discuss the problem of amenability.

\begin{thm}\label{theorem for amenability and spectral subspaces}
Let $\KK$ be a well-aligned ideal in a right-tensor $C^*$-precategory $\LL$ over a right LCM semigroup $P$. Suppose that $\theta:P\to \P$ is a controlled map of right LCM semigroups, cf. Definition \ref{def:controlled map}. 

\textnormal{(a)} The subspace of $\NT_{\LL}(\KK)=C^*(i_\KK(\KK))$ defined as
\begin{equation}\label{core like algebra}
\clsp\Big\{ i_\KK( \KK(p,q)): p,q\in P,\theta(p)=\theta(q)\Big\}
\end{equation}
is a $C^*$-subalgebra of $\NT_{\LL}(\KK)$ on which  the regular representation $T\rtimes P$ is faithful.

\textnormal{(b)}
 If $\Phi$ is a Nica covariant representation of $\KK$, the representation $\Phi\rtimes P$ is faithful on \eqref{core like algebra} if and only if $\Phi$ is injective and satisfies
\begin{equation}\label{Toeplitz like condition}
\begin{array}{l}
 \clsp\{\Phi(\KK(p,q)): \theta(p)=\theta(q)=u\}\cap\clsp\{\Phi(\KK(s,t)): \theta(s)=\theta(t)\in F\} =\{0\}
\\[4pt]
\text{for all $u\in \P$  and all finite sets $F\subseteq \P$  such that  $u\not\geq v$ for every $v\in F$.}
\end{array}
\end{equation}

\end{thm}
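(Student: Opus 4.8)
The plan is to read both statements off a single grading of $\NT_{\LL}(\KK)$ induced by $\theta$. For a Nica covariant representation $\Phi$ and $u\in\P$ put $B_u^{\Phi}:=\clsp\{\Phi(\KK(p,q)):\theta(p)=\theta(q)=u\}$. Nica covariance \eqref{Nica covariance} together with the controlled-map relations \eqref{lcb preserver}, \eqref{injectivity in the same direction} gives $\theta(pq^{-1}r)=\theta(r)=\theta(ts^{-1}r)$ whenever $\theta(p)=\theta(q)=u$, $\theta(s)=\theta(t)=v$ and $qP\cap sP=rP$, while \eqref{lcb preserver} identifies $\theta(r)\P=u\P\cap v\P$; hence $B_u^{\Phi}B_v^{\Phi}$ lies in $B_w^{\Phi}$ with $w\P=u\P\cap v\P$, and is $\{0\}$ when the relevant ideals of $P$ are disjoint. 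Since $(B_u^{\Phi})^{*}=B_u^{\Phi}$, each $B_u^{\Phi}$ is a $C^*$-algebra and so is $C^{\Phi}:=\clsp\bigcup_{u\in\P}B_u^{\Phi}$; for $\Phi=i_{\KK}$ this is exactly the space \eqref{core like algebra}, which proves its first assertion in (a). When $\P$ embeds in a group $G$, this $C^{\Phi}$ is the unit fibre of the $G$-grading assigning $\Phi(\KK(p,q))$ the degree $\theta(p)\theta(q)^{-1}$, and the theorem becomes a statement about spectral subspaces; the argument below is nevertheless arranged to use only the semilattice $J(\P)$, so that $\P$ may be an arbitrary right LCM semigroup.

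The Fock side supplies the decisive estimate. Using that $T_{p,q}^{s,t}(a)$ with $\theta(p)=\theta(q)=u$ sends $X_{s,t}$ into $X_{pq^{-1}s,t}$ without altering $\theta(s)$, and is nonzero only for $s\in qP$, I would show every operator in $B_u^{T}$ is supported on the blocks with $u\leq\theta(s)$, and that—because each $\otimes 1_r$ is contractive and $(a\otimes 1_r)\otimes 1_{r'}=a\otimes 1_{rr'}$—its norm is already attained on the bottom blocks $\theta(s)=u$, precisely as in the reduction \eqref{norm of Z formula3} of Corollary~\ref{cor:norm of Z formula}. Compression by the projection onto $\bigoplus_{\theta(s)=u,\,t}X_{s,t}$ is thus isometric on $B_u^{T}$ while killing $B_v^{T}$ for $v\not\leq u$; fed into the defining intersection this yields \eqref{Toeplitz like condition} for $T$, and the same bookkeeping, organised by the semilattice of projections $\{Q^{T}_{\langle p\rangle}\}$ of Lemma~\ref{lemma-proj-T-semillatice}, yields a norm formula for elements of $C^{T}$ in the spirit of Lemma~\ref{norm of an element in the core}.

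I would then prove (b) and recover (a) as the case $\Phi=T$, injective by Proposition~\ref{Nica Toeplitz reduced representation} and satisfying \eqref{Toeplitz like condition} by the preceding step. The mechanism is a norm formula for $\|\Phi\rtimes P(c)\|$, $c\in C^{\Phi}$, obtained by peeling off the minimal fibres of $c$ over the initial segments $\In(F)$ exactly as in Lemma~\ref{norm of an element in the core} and reducing to the bottom fibres, which are values of $\Phi$ on elements $\sum a_{p,q}\otimes 1$. Because $\Phi$ is contractive this gives $\|\Phi\rtimes P(c)\|\leq\|T(c)\|$ for every $\Phi$—the concrete form of the principle that a unit fibre embeds isometrically into the regular completion—so $\|c\|_{\NT_{\LL}(\KK)}=\sup_{\Phi}\|\Phi\rtimes P(c)\|=\|T(c)\|$ and $T\rtimes P$ is isometric on $C$, proving (a). Condition \eqref{Toeplitz like condition} is what legitimises the peeling for a given $\Phi$, ensuring the minimal fibres detach rather than cancel, and injectivity of $\Phi$ upgrades the last inequality to equality; together they yield $\|\Phi\rtimes P(c)\|=\|T(c)\|=\|c\|_{\NT_{\LL}(\KK)}$, which is ``$\Leftarrow$''. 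The converse is soft: faithfulness of $\Phi\rtimes P$ on $C$ makes it a $*$-isomorphism onto $C^{\Phi}$, so $\Phi$ is injective—already faithful on the core $B_e^{i_{\KK}}\subseteq C$—and \eqref{Toeplitz like condition}, valid for $i_{\KK}$ by transporting the $\Phi=T$ case through the isomorphism $T\rtimes P|_{C}$ of (a), carries over isometrically to $\Phi$.

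The hard part will be the peeling estimate itself, and with it the two features that take the argument beyond \cite{LR} and \cite{F99}. First, the units of $P$ force the fibres $B_u^{\Phi}$ to be labelled by $\P$ only up to $\P^{*}$, so ``minimal fibre'' and the compressing projections must be interpreted modulo $\P^{*}$. Second, when $P$ is not right cancellative the natural diagonal map lands in the transcendental core $B_\KK\supsetneq B_e^{T}$ and not in the algebra itself (Proposition~\ref{Nica Toeplitz conditional expectation}), so the bottom-fibre norm formula has to be set up relative to $B_\KK$; this is the source of the new steps needed to push the $\In(F)$--$P_{F,C}$ bookkeeping through the $\theta$-grading for an arbitrary controlled map rather than only for those arising from free products as in Proposition~\ref{form free products to direct sums}. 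Verifying that \eqref{Toeplitz like condition} is exactly the condition making the peeling reversible—equivalently, that the $\theta$-grading of $C^*(\Phi(\KK))$ is topological on $C$—is the crux of the ``$\Leftarrow$'' direction.
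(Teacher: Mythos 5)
Your first two paragraphs are essentially sound and close to the paper: the fibrewise product rule coming from Nica covariance together with \eqref{lcb preserver} and \eqref{injectivity in the same direction} does make \eqref{core like algebra} a $C^*$-subalgebra, and compressing to the bottom $\theta$-blocks of the Fock module does prove \eqref{Toeplitz like condition} for $T$ (the paper compresses by the finer projections $Q_q$, $q\in\theta^{-1}(u)$, instead; also you should not cite \eqref{norm of Z formula3}, since Corollary \ref{cor:norm of Z formula} assumes $\KK$ essential in $\LL_\KK$, which the theorem does not assume --- the bottom-block claim survives because left multiplication of the matrix $(a_{p,q})_{p,q\in \theta^{-1}(u)}$ on $\bigoplus_{q}X_{q,q}$ is faithful and right tensoring is contractive on matrices).

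The gap is the central claim of your third paragraph: that $\|\Phi\rtimes P(c)\|\le\|T\rtimes P(c)\|$ holds for \emph{every} Nica covariant $\Phi$ and every $c$ in \eqref{core like algebra}, ``because $\Phi$ is contractive''. Contractivity bounds $\|\Phi(a)\|$ for a single $a\in\KK(p,q)$; it gives no control over a sum $\sum_{p,q}\Phi(a_{p,q})$ spread over several fibres, and the peeling you invoke is a feature of the Fock representation --- the proof of Lemma \ref{norm of an element in the core} rests on the commuting projections $Q^T_{F,C}$, which have no counterpart for an abstract $\Phi$. Your own text concedes that \eqref{Toeplitz like condition} is ``what legitimises the peeling for a given $\Phi$'', yet your route to (a) needs the inequality for \emph{all} $\Phi$, including those violating \eqref{Toeplitz like condition}; that is circular. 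The appeal to ``a unit fibre embeds isometrically into the regular completion'' is equally circular, since identifying $T\rtimes P$ with the regular representation of the Fell bundle $\B^\theta$ is Theorem \ref{amenability main result}, which the paper deduces \emph{from} the present theorem (and $\P$ need not embed in a group at all). The repair is to drop the universal inequality: prove sufficiency in (b) for injective $\Phi$ satisfying \eqref{Toeplitz like condition}, then get (a) by taking $\Phi=T$, which you have already shown to qualify. That sufficiency proof needs two mechanisms absent from your outline: (A) on a single fibre $\KK_{[u]}$, distinct $s,t\in\theta^{-1}(u)$ satisfy $sP\cap tP=\emptyset$ by \eqref{injectivity in the same direction}, so the algebras $i_\KK(\KK(s,s))$ are mutually orthogonal and, for \emph{any} representation $\pi$ of $\KK_{[u]}$, the projections onto the essential subspaces of the $\pi(i_\KK(\KK(s,s)))$ furnish a faithful conditional expectation onto the diagonal, from which injectivity of $\Phi$ gives faithfulness on $\KK_{[u]}$ (Lemma \ref{auxiliary lemma}); and (B) an induction over finite $\vee$-closed subsets $F\subseteq \P/_\sim$: pick a minimal $[u_0]\in F$, note that $\KK_{F\setminus\{[u_0]\}}$ is an ideal in $\KK_F$, and use \eqref{Toeplitz like condition} exactly to show that the quotient by $(\Phi\rtimes P)(\KK_{F\setminus\{[u_0]\}})$ is injective on $(\Phi\rtimes P)(\KK_{[u_0]})$ --- this is the rigorous form of your ``minimal fibres detach rather than cancel''. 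Without (A) and (B), or equivalents, neither part (a) nor the ``$\Leftarrow$'' half of part (b) is actually established by your argument.
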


Before we pass to the proof of  Theorem \ref{theorem for amenability and spectral subspaces} let us derive some consequences in the case when the  homomorphism $\theta$ is the identity map. To facilitate the discussion we introduce a name for the condition in \eqref{Toeplitz like condition} in the case $\theta$ is injective.
\begin{defn}
 A Nica covariant representation $\Phi:\KK\to B$ is  \emph{Toeplitz covariant} if
\begin{equation}\label{Toeplitz condition}
\begin{array}{l}
 \text{for every   $p\in P$  and $q_1,...,q_n\in P$  such that  $p\not\geq q_i$ for all $i=1,...,n$, $n\in \N$,}
\\[4pt]
\text{we have }\Phi(\KK(p,p))\cap \clsp\{\Phi(\KK(q_i,q_i)):i=1,...,n\} =\{0\}.
\end{array}
\end{equation}
In short we will call such representations  \emph{Nica-Toeplitz covariant}  representations of $\KK$.
\end{defn}
\begin{cor}\label{Nica-Toeplitz representation corollary}
Let $\Phi:\KK\to B$ be a Nica covariant representation. The representation  $\Phi\rtimes P$ of $\NT_{\LL}(\KK)$ restricts to an isomorphism
$$
(\Phi\rtimes P)\vert_{B_e^{i_\KK}}: B_e^{i_\KK} {\overset{\cong}{\longrightarrow}} B_e^\Phi
$$
if and only if $\Phi$ is  injective  and Toeplitz covariant.
\end{cor}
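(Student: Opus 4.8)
The goal is to prove Corollary~\ref{Nica-Toeplitz representation corollary}, which characterizes when the regular representation $\Phi\rtimes P$ restricts to an isomorphism between the core $C^*$-subalgebras $B_e^{i_\KK}$ and $B_e^\Phi$. The plan is to deduce this as the special case of Theorem~\ref{theorem for amenability and spectral subspaces} in which the controlled map $\theta:P\to\P$ is the identity map $\id_P$. One first checks that $\id_P$ is indeed a controlled map of right LCM semigroups: it trivially preserves the identity, maps $P^*$ onto $P^*$, satisfies \eqref{lcb preserver} because a right LCM for $s,t$ maps to a right LCM for $\theta(s)=s,\theta(t)=t$, and satisfies the injectivity condition \eqref{injectivity in the same direction} vacuously. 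With $\theta=\id$, the condition $\theta(p)=\theta(q)$ becomes $p=q$, so the subspace \eqref{core like algebra} is exactly $\clsp\{i_\KK(\KK(p,p)):p\in P\}=B_e^{i_\KK}$, the core $C^*$-subalgebra from Definition~\ref{Definition of cores}.

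First I would establish the ``only if'' direction. By Theorem~\ref{theorem for amenability and spectral subspaces}(b) applied to $\theta=\id$, faithfulness of $\Phi\rtimes P$ on $B_e^{i_\KK}$ is equivalent to $\Phi$ being injective together with the condition \eqref{Toeplitz like condition}. So the main task is to verify that, when $\theta=\id$, condition \eqref{Toeplitz like condition} coincides with Toeplitz covariance \eqref{Toeplitz condition}. Under $\theta=\id$, the sets $\{p,q:\theta(p)=\theta(q)=u\}$ collapse to the single diagonal index $p=q=u$, so the first space in \eqref{Toeplitz like condition} becomes $\Phi(\KK(u,u))$; the second space becomes $\clsp\{\Phi(\KK(v,v)):v\in F\}$ for a finite $F\subseteq P$ with $u\not\geq v$ for all $v\in F$. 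Rewriting $u$ as $p$ and $F=\{q_1,\dots,q_n\}$, this is precisely \eqref{Toeplitz condition}. Hence \eqref{Toeplitz like condition} for $\theta=\id$ is literally Toeplitz covariance, and faithfulness of $\Phi\rtimes P$ on $B_e^{i_\KK}$ is equivalent to $\Phi$ injective and Toeplitz covariant.

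The remaining point is to upgrade ``faithful on $B_e^{i_\KK}$'' to ``restricts to an isomorphism onto $B_e^\Phi$''. Here I would observe that $\Phi\rtimes P$ always carries $B_e^{i_\KK}$ into $B_e^\Phi$: by the universal property $(\Phi\rtimes P)\circ i_\KK=\Phi$, so $(\Phi\rtimes P)(i_\KK(\KK(p,p)))=\Phi(\KK(p,p))$, and taking closed spans shows $(\Phi\rtimes P)(B_e^{i_\KK})\subseteq B_e^\Phi$; surjectivity onto $B_e^\Phi$ follows since $B_e^\Phi=\clsp\{\bigcup_p\Phi(\KK(p,p))\}$ is generated by these images. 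Thus $(\Phi\rtimes P)\vert_{B_e^{i_\KK}}$ is a surjective $*$-homomorphism onto $B_e^\Phi$, and it is an isomorphism exactly when it is injective, i.e.\ faithful on $B_e^{i_\KK}$. Combining with the previous paragraph gives the stated equivalence.

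The main obstacle I anticipate is purely bookkeeping: making the translation from the group-theoretic phrasing ``$u\not\geq v$ for every $v\in F$'' in \eqref{Toeplitz like condition} to the phrasing ``$p\not\geq q_i$'' in \eqref{Toeplitz condition} cleanly, and confirming that the index collapse under $\theta=\id$ introduces no hidden off-diagonal contributions. Once it is accepted that $\id_P$ is a legitimate controlled map and that the condition $\theta(p)=\theta(q)$ forces $p=q$, the corollary is a direct specialization of the theorem with no further analytic input required; the only genuine content beyond the theorem is the elementary verification that $\Phi\rtimes P$ maps $B_e^{i_\KK}$ \emph{onto} $B_e^\Phi$, which follows from the generation property of the cores.
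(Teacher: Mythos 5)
Your proposal is correct and is essentially the paper's own proof: the paper likewise obtains the corollary by applying Theorem~\ref{theorem for amenability and spectral subspaces} with $\theta=\id$, noting that \eqref{core like algebra} becomes $B_e^{i_\KK}$ and that \eqref{Toeplitz like condition} collapses to \eqref{Toeplitz condition}. Your additional check that $(\Phi\rtimes P)(B_e^{i_\KK})=B_e^\Phi$ (so faithfulness upgrades to an isomorphism) is a point the paper leaves implicit, and your verification is the right one.
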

\begin{proof}
Applying Theorem \ref{theorem for amenability and spectral subspaces} with $\theta=\id$, we see that the $C^*$-algebra in \eqref{core like algebra} equals $B_e^{i_\KK}$, and condition \eqref{Toeplitz like condition} collapses to \eqref{Toeplitz condition}.
\end{proof}
\begin{cor}\label{the reduced core}
The Fock representation $T:\KK\to \LL(\FF_\KK)$ is  an injective Nica-Toeplitz covariant representation of $\KK$.  Equivalently, the regular representation $T\rtimes P$ restricts to an isomorphism of core subalgebras $(T\rtimes P)\vert_{B_e^{i_\KK}}: B_e^{i_\KK} {\overset{\cong}{\longrightarrow}} B_e^T.$
\end{cor}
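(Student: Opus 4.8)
The plan is to obtain Corollary~\ref{the reduced core} as an immediate consequence of Theorem~\ref{theorem for amenability and spectral subspaces} and Corollary~\ref{Nica-Toeplitz representation corollary}, with $T$ playing the role of the Nica covariant representation. The injectivity of $T$ has already been recorded in Proposition~\ref{Nica Toeplitz reduced representation}, so only Toeplitz covariance and the isomorphism of cores remain, and these two assertions are equivalent by Corollary~\ref{Nica-Toeplitz representation corollary}.

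First I would apply Theorem~\ref{theorem for amenability and spectral subspaces} to the identity homomorphism $\theta=\id\colon P\to P$. This is a controlled map of right LCM semigroups: $\id(P^*)=P^*$, while conditions \eqref{lcb preserver} and \eqref{injectivity in the same direction} hold trivially. Since $\theta(p)=\theta(q)$ now reduces to $p=q$, the subspace \eqref{core like algebra} collapses to $\clsp\{i_\KK(\KK(p,p)):p\in P\}=B_e^{i_\KK}$. Part~(a) of the theorem then asserts precisely that the regular representation $T\rtimes P$ is faithful on $B_e^{i_\KK}$.

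Next I would upgrade this faithfulness to an isomorphism onto the reduced core. Because $(T\rtimes P)\circ i_\KK=T$, the map $T\rtimes P$ carries each generator $i_\KK(\KK(p,p))$ onto $T(\KK(p,p))$, and hence $(T\rtimes P)(B_e^{i_\KK})=B_e^T$. Combined with the injectivity from part~(a), this yields the claimed isomorphism of core subalgebras $(T\rtimes P)|_{B_e^{i_\KK}}\colon B_e^{i_\KK}\overset{\cong}{\longrightarrow}B_e^T$.

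Finally, invoking Corollary~\ref{Nica-Toeplitz representation corollary} with $\Phi=T$, the existence of such an isomorphism of cores together with the (already known) injectivity of $T$ is equivalent to $T$ being injective and Toeplitz covariant. Thus $T$ is Nica-Toeplitz covariant, and the ``equivalently'' in the statement is exactly the content of that corollary. The argument is essentially formal once Theorem~\ref{theorem for amenability and spectral subspaces} is in hand; there is no real obstacle beyond checking that $\id$ is a controlled map and that \eqref{core like algebra} reduces to $B_e^{i_\KK}$, both of which are immediate.
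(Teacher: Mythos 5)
Your proposal is correct and follows exactly the paper's route: the paper's entire proof is ``Apply Theorem~\ref{theorem for amenability and spectral subspaces} with $\theta=\id$,'' and your argument simply spells out the details of that application (checking $\id$ is a controlled map, identifying \eqref{core like algebra} with $B_e^{i_\KK}$, noting surjectivity onto $B_e^T$, and invoking Corollary~\ref{Nica-Toeplitz representation corollary} for the equivalence with Toeplitz covariance). No gaps.
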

\begin{proof}
Apply Theorem \ref{theorem for amenability and spectral subspaces} with $\theta=\id$.
\end{proof}

\begin{cor}\label{core for cancellative semigroups}
If   $P$ is cancellative, then the formula
\begin{equation}\label{conditional expectation for the universal}
E\Big(\sum_{p,q\in F }i_{\KK}(a_{p,q})\Big)=\sum_{p\in F }i_{\KK}(a_{p,p}) , \qquad a_{p,q}\in \KK(p,q), p,q \in F\subseteq P,
\end{equation}
defines a conditional expectation $E:\NT_{\LL}(\KK)\to B_e^{i_\KK}\subseteq \NT_{\LL}(\KK)$.
\end{cor}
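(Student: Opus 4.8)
The plan is to obtain $E$ by transporting the transcendental conditional expectation $E^T$ from the reduced algebra back to the universal one along the regular representation. Since $P$ is cancellative, Proposition~\ref{Nica Toeplitz conditional expectation} gives $B_\KK=B_e^T$ and tells us that $E^T\colon\NT^{r}_{\LL}(\KK)\to B_e^T$ is a genuine faithful conditional expectation obeying \eqref{another form of E T}. At the same time, Corollary~\ref{the reduced core} provides a $*$-isomorphism $\psi:=(T\rtimes P)|_{B_e^{i_\KK}}\colon B_e^{i_\KK}\overset{\cong}{\longrightarrow}B_e^T$. I would therefore set
$$
E:=\psi^{-1}\circ E^T\circ(T\rtimes P)\colon\NT_{\LL}(\KK)\longrightarrow B_e^{i_\KK}.
$$
This composition is legitimate precisely because $E^T$ takes values in $B_e^T$, which is the domain of $\psi^{-1}$.

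First I would verify that $E$ is given by \eqref{conditional expectation for the universal}. For $x=\sum_{p,q\in F}i_{\KK}(a_{p,q})$ one has $(T\rtimes P)(x)=\sum_{p,q\in F}T(a_{p,q})$ because $(T\rtimes P)\circ i_\KK=T$; applying \eqref{another form of E T} gives $E^T((T\rtimes P)(x))=\sum_{p\in F}T(a_{p,p})=\psi\big(\sum_{p\in F}i_\KK(a_{p,p})\big)$, whence $\psi^{-1}$ returns $\sum_{p\in F}i_\KK(a_{p,p})$. Thus $E$ agrees with \eqref{conditional expectation for the universal} on the dense $*$-subalgebra $\spane\{i_\KK(\KK(p,q)):p,q\in P\}$, and since it is a composition of bounded maps it is the unique continuous extension. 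In particular no separate well-definedness check for \eqref{conditional expectation for the universal} is needed.

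It then remains to recognise $E$ as a conditional expectation onto $B_e^{i_\KK}$. Each factor in the composition is completely positive and contractive --- $\psi^{-1}$ is an isometric $*$-isomorphism, $T\rtimes P$ is a $*$-homomorphism, and $E^T$ is completely positive and contractive by Proposition~\ref{Nica Toeplitz conditional expectation} --- so $E$ is completely positive and contractive with range contained in $B_e^{i_\KK}$. Moreover $E$ fixes $B_e^{i_\KK}$: for $b\in B_e^{i_\KK}$ we have $\psi(b)=(T\rtimes P)(b)\in B_e^T$, so $E^T$ leaves it unchanged and $\psi^{-1}(\psi(b))=b$. Hence $E$ is a norm-one idempotent onto the $C^*$-subalgebra $B_e^{i_\KK}$, and by Tomiyama's theorem it is a conditional expectation. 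The single delicate point, and the reason the statement is confined to cancellative $P$, is the input from Proposition~\ref{Nica Toeplitz conditional expectation}: cancellativity is exactly what upgrades $E^T$ from a completely positive map valued in the transcendental core to a genuine expectation onto $B_e^T\subseteq\NT^{r}_{\LL}(\KK)$, which is what makes the transport through $\psi^{-1}$ possible.
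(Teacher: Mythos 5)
Your proposal is correct and is essentially the paper's own argument: the paper likewise defines $E=(T\rtimes P)^{-1}\circ E^T\circ (T\rtimes P)$ using the isomorphism $(T\rtimes P)\vert_{B_e^{i_\KK}}:B_e^{i_\KK}\to B_e^T$ from Corollary~\ref{the reduced core} together with the fact that cancellativity makes $E^T$ a genuine conditional expectation onto $B_\KK=B_e^T$ satisfying \eqref{another form of E T}. You merely spell out the routine verifications (the formula check and Tomiyama's theorem) that the paper leaves implicit.
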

\begin{proof}
Since $P$ is cancellative,  $B_\KK=B_e^T$. By Corollary \ref{the reduced core},  $T\rtimes P$ restricts to an isomorphism of $B_e^{i_\KK}$ onto $B_e^T$. Denote by $(T\rtimes P)^{-1}$ the inverse to this isomorphism.  Taking into account \eqref{another form of E T} one sees that   $E=(T\rtimes P)^{-1} \circ E^T \circ (T\rtimes P)$ is a conditional expectation satisfying \eqref{conditional expectation for the universal}.
\end{proof}

The overall strategy of the proof of Theorem \ref{theorem for amenability and spectral subspaces} is comparable to that behind the proofs of the quoted  results in  \cite{LR}, \cite{FR}, \cite{F99}. Nevertheless, we deal here with a  much more general situation, which will require new insight. One of the new difficulties is the presence of  invertible elements in the semigroup $P$. To deal with them we consider the following equivalence relation on $P$:
$$
p\sim q   \Longleftrightarrow \quad  p=qx\text{ for some }x\in P^*.
$$
We denote by $[p]$ the equivalence class of $p\in P$ in $P/_\sim$. Note that $\sim$ might not be a congruence and therefore $P/_\sim$ might not inherit the semigroup structure from $P$, cf. \cite[Proposition 2.7]{bls}. However, $P/_\sim$ inherits the preorder. In fact,
$$
[p]\leq [q]  \Longleftrightarrow q\in pP
$$
yields a partial order on $P/_\sim$. Moreover, $[p]$ and $[q]$ have a  common upper bound  if and only if $pP\cap qP=rP$ for some $r\in P$, and if this holds then $[r]$ is the unique least common upper bound of $[p]$ and $[q]$. In the latter situation we write  $[p]\vee [q]:=[r]$.

The following two lemmas play a key role in the proof of Theorem \ref{theorem for amenability and spectral subspaces}.
\begin{lem}\label{auxiliary lemma0}
Retain the assumptions of Theorem \ref{theorem for amenability and spectral subspaces} (possibly without condition \eqref{injectivity in the same direction}).
 For every subset $F\subseteq \P/_\sim$ such that $[u]\vee [v] \in F$ whenever $[u], [v]\in F$, the space
\begin{equation}\label{form of K_F}
\KK_F:=\clsp\big\{ i_\KK( \KK(p,q)): p,q\in P,  \theta(p)=\theta(q), [\theta(p)]\in F\big\}
\end{equation}
is a $C^*$-subalgebra of $\NT_{\LL}(\KK)$ such that for any $F_0\subseteq \P$ with $F=\{[u]: u\in F_0\}$ we have
\begin{equation}\label{bla form of K_F}
\KK_F=\clsp\big\{ i_\KK( \KK(p,q)):p,q\in P, \theta(p)=\theta(q)\in F_0\big\}.
\end{equation}
\end{lem}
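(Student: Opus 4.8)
The plan is to verify directly that $\KK_F$ is self-adjoint and closed under multiplication, and then to derive the alternative description \eqref{bla form of K_F} from the invariance of $i_\KK$ under the group of units. Self-adjointness is immediate: $i_\KK(\KK(p,q))^*=i_\KK(\KK(q,p))$, and the defining constraints $\theta(p)=\theta(q)$ and $[\theta(p)]\in F$ are symmetric in $p$ and $q$, so the generating family is closed under adjoints.

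The substantial step is closure under multiplication, and since multiplication is continuous and bilinear it suffices to test it on generators. So I would take $a\in\KK(p,q)$ and $b\in\KK(s,t)$ with $\theta(p)=\theta(q)=:u$, $\theta(s)=\theta(t)=:v$, and $[u],[v]\in F$. By Nica covariance \eqref{Nica covariance} the product $i_\KK(a)i_\KK(b)$ vanishes unless $qP\cap sP=rP$ for some $r$, in which case it equals $i_\KK(c)$ with $c:=(a\otimes 1_{q^{-1}r})(b\otimes 1_{s^{-1}r})\in\KK(pq^{-1}r,\,ts^{-1}r)$ by Lemma \ref{lemma on alignment}. Applying $\theta$ to $q(q^{-1}r)=r$ gives $u\,\theta(q^{-1}r)=\theta(r)$, whence $\theta(pq^{-1}r)=u\,\theta(q^{-1}r)=\theta(r)$; symmetrically $\theta(ts^{-1}r)=\theta(r)$, so the two objects $pq^{-1}r$ and $ts^{-1}r$ indexing $c$ have equal $\theta$-image $\theta(r)$.

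It then remains to see that $[\theta(r)]\in F$, and this is the heart of the matter, the place where the hypothesis on $F$ is used. Because $r$ is a right LCM of $q$ and $s$, the controlled-map identity \eqref{lcb preserver} gives $u\P\cap v\P=\theta(q)\P\cap\theta(s)\P=\theta(r)\P$, which by the description of the join in $\P/_\sim$ means precisely $[\theta(r)]=[u]\vee[v]$. Since $[u],[v]\in F$ and $F$ is assumed closed under $\vee$, we conclude $[\theta(r)]\in F$; hence $i_\KK(c)$ is one of the generators appearing in \eqref{form of K_F}, so $\KK_F\KK_F\subseteq\KK_F$ and $\KK_F$ is a $C^*$-subalgebra.

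Finally, for \eqref{bla form of K_F} the inclusion $\supseteq$ is clear, while for $\subseteq$ I would exploit that $i_\KK$, being Nica covariant, is $P^*$-invariant by Lemma \ref{lemma on automorphic actions on ideals}. Given a generator $i_\KK(a)$ with $a\in\KK(p,q)$, $\theta(p)=\theta(q)$ and $[\theta(p)]\in F$, pick $u\in F_0$ and $z\in\P^*$ with $\theta(p)=uz$; since $\theta(P^*)=\P^*$, choose $x\in P^*$ with $\theta(x)=z$, so that $\theta(px^{-1})=\theta(qx^{-1})=u\in F_0$, and $i_\KK(a)=i_\KK(a\otimes 1_{x^{-1}})$ with $a\otimes 1_{x^{-1}}\in\KK(px^{-1},qx^{-1})$ a generator of the right-hand side. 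Thus the two closed spans agree, and in particular $\KK_F$ is independent of the choice of $F_0$. The main obstacle is precisely the identification $[\theta(r)]=[u]\vee[v]$: everything hinges on reading off the join in $\P/_\sim$ from the controlled-map equation \eqref{lcb preserver}, so that the join-closedness of $F$ can be invoked.
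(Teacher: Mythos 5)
Your proof is correct and follows essentially the same route as the paper's: Nica covariance of $i_\KK$ plus Lemma \ref{lemma on alignment} to compute products of generators, the controlled-map identity \eqref{lcb preserver} to read off $[\theta(r)]=[u]\vee[v]\in F$, the computation $\theta(pq^{-1}r)=\theta(r)=\theta(ts^{-1}r)$, and the combination of $\theta(P^*)=\P^*$ with Lemma \ref{lemma on automorphic actions on ideals} for \eqref{bla form of K_F}. No gaps.
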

 \begin{proof}

To see that $\KK_F$ is a $C^*$-algebra, it suffices to show that $\KK_F$ is closed under multiplication. Let $u,v \in \P$ with $[u],[v]\in F$ and suppose $\theta(p)=\theta(q)=u$ and $\theta(s)=\theta(t)=v$. Let $a\in \KK(p,q)$ and $b\in \KK(s,t)$. Since $i_\KK$ is Nica covariant the product  $i_\KK(a)i_\KK(b)$ is either zero or $qP\cap sP=rP$, for some $r\in P$, and then
$$
i_\KK(a)i_\KK(b)=i_\KK\big((a\otimes 1_{q^{-1}r}) (b \otimes 1_{s^{-1}r})\big).
$$
In the latter case we have $(a\otimes 1_{q^{-1}r}) (b \otimes 1_{s^{-1}r}) \in \KK(pq^{-1}r,ts^{-1}r)$ and using \eqref{lcb preserver} we get $[\theta(r)]=[u]\vee [v] \in F$. Since $\theta(q)\theta(q^{-1}r)=\theta(r)$ we have $\theta(q^{-1}r)=\theta(q)^{-1}\theta(r)$ and therefore
$$
\theta(p q^{-1}r)=\theta(p) \theta(q^{-1}r)=\theta(p) \theta(q)^{-1}\theta(r)=\theta(r).
$$
Similarly,  $\theta(ts^{-1}r)=\theta(r)$. Thus $i_\KK(a)i_\KK(b)\in  \KK_F$. Hence $\KK_F$ is a $C^*$-algebra.

To prove \eqref{bla form of K_F}, let $F_0$ be a transversal for $F$ and suppose that  $\theta(p)=\theta(q)$ with  $[\theta(p)]\in F$ for $p,q\in P$. Since $\theta(P^*)=\P^*$ there is $x\in P^*$ such that $\theta(px)\in F_0$. By Lemma \ref{lemma on automorphic actions on ideals}  we have
$i_\KK( \KK(px,qx))=i_\KK( \KK(p,q))$. This finishes the proof.
\end{proof}
We will prove  injectivity in Theorem~\ref{theorem for amenability and spectral subspaces}  by induction over the size of finite subsets of $\P/_\sim$. In particular, it is crucial to establish the claim on sets with one element. As a matter of notation, if $F=\{[u]\}\subset \P/_\sim$, we write $\KK_{[u]}$ instead of $\KK_F$.
\begin{lem}\label{auxiliary lemma}
Retain the assumptions of Theorem \ref{theorem for amenability and spectral subspaces}. If $\Phi$ is an injective Nica covariant representation of $\KK$, then the homomorphism $\Phi\rtimes P$ of $\NT_\LL(\KK)$ is faithful on $\KK_{[u]}$ for every $u\in \P$.
\end{lem}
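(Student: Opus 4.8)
The plan is to turn the hypothesis \eqref{injectivity in the same direction} on the controlled map $\theta$ into a concrete structural description of $\KK_{[u]}$: it is a $C^*$-algebra of ``generalized matrix units'' over the positions lying above $u$, with \emph{mutually orthogonal columns}, so that faithfulness is controlled entirely by the diagonal fibres. First I would use Lemma~\ref{auxiliary lemma0} (with $F=\{[u]\}$ and transversal $\{u\}$), together with \eqref{bla form of K_F}, to write
$$
\KK_{[u]}=\clsp\big\{i_\KK(\KK(p,q)) : p,q\in P,\ \theta(p)=\theta(q)=u\big\}.
$$
The decisive observation is that if $\theta(p)=\theta(q)=u$ and $pP\cap qP\neq\emptyset$, then $p=q$ by \eqref{injectivity in the same direction}; hence for the indices occurring above, distinct classes $[p]\neq[q]$ in $P/_\sim$ always satisfy $pP\cap qP=\emptyset$. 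By Nica covariance \eqref{Nica covariance} this forces $i_\KK(\KK(p,q))\,i_\KK(\KK(s,t))=0$ whenever $[q]\neq[s]$, while for $[q]=[s]$ the product is the genuine composition landing in $i_\KK(\KK(p,t))$. Thus no product ``escapes'' to a new position: relative to the set of classes $\{[p] : \theta(p)=u\}\subseteq P/_\sim$, the spaces $i_\KK(\KK(p,q))$ behave precisely like the $(p,q)$-entries of a generalized matrix algebra, with composition $\KK(p,q)\KK(q,t)\subseteq\KK(p,t)$ and orthogonality for mismatched middle indices. (This is exactly the feature that fails for a general non-injective homomorphism, and is the point where the controlled-map hypothesis does the work.)

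Next I would isolate the diagonal and a faithful expectation onto it. Put $\D_{[u]}:=\clsp\{i_\KK(\KK(p,p)) : \theta(p)=u\}$. Since $i_\KK(\KK(p,p))\,i_\KK(\KK(q,q))=0$ for $[p]\neq[q]$ (again by \eqref{injectivity in the same direction}), this is a $c_0$-direct sum $\D_{[u]}=\bigoplus_{[p]} i_\KK(\KK(p,p))$ of the fibre $C^*$-algebras. For each class $[p]$ choose a representative with $\theta(p)=u$ and an approximate unit $(u^p_\lambda)$ of $\KK(p,p)$; using Lemma~\ref{about approximate units} and the orthogonality above, $i_\KK(u^p_\lambda)$ converges strictly to a projection $e_{[p]}\in M(\KK_{[u]})$ which acts as the identity on the $[p]$-row and column and annihilates all other entries, and the $e_{[p]}$ are mutually orthogonal because $e_{[p]}e_{[q]}=\lim i_\KK(u^p_\lambda)i_\KK(u^q_\mu)=0$ for $[p]\neq[q]$. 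Since the $e_{[p]}xe_{[p]}$ sit in orthogonal corners, the formula $\Lambda(x):=\sum_{[p]} e_{[p]}\,x\,e_{[p]}$ defines a faithful conditional expectation $\Lambda\colon\KK_{[u]}\to\D_{[u]}$ (contractivity and faithfulness follow from $\|\sum e_{[p]}xe_{[p]}\|=\sup_{[p]}\|e_{[p]}xe_{[p]}\|$ and from $\{e_{[p]}\}$ summing strictly to the unit). Finally, since $\Phi$ is injective, each $\Phi|_{\KK(p,p)}$ is injective, so $\Phi\rtimes P=(\Phi\rtimes P)\circ i_\KK$ restricts to an injective $*$-homomorphism on each summand of $\D_{[u]}$, hence is injective on $\D_{[u]}$.

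To conclude I would transport $\Lambda$ through $\Phi\rtimes P$. The strict limits $f_{[p]}:=\lim\Phi(u^p_\lambda)=(\Phi\rtimes P)(e_{[p]})$ are again mutually orthogonal projections in $M(\Phi(\KK_{[u]}))$, by the same Nica-covariance computation, so $\Lambda^\Phi(z):=\sum_{[p]} f_{[p]}\,z\,f_{[p]}$ is a contractive completely positive map on $\Phi(\KK_{[u]})$. Checking on generators $a\in\KK(p,q)$, where $\Lambda(i_\KK(a))=\delta_{[p],[q]}\,i_\KK(a)$ and $\Lambda^\Phi(\Phi(a))=\delta_{[p],[q]}\,\Phi(a)$, gives the intertwining $(\Phi\rtimes P)\circ\Lambda=\Lambda^\Phi\circ(\Phi\rtimes P)$ on the dense $*$-subalgebra, and hence everywhere by continuity. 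Now if $(\Phi\rtimes P)(y)=0$, set $x=y^*y\ge0$; then $(\Phi\rtimes P)(\Lambda(x))=\Lambda^\Phi((\Phi\rtimes P)(x))=0$, and since $\Lambda(x)\in\D_{[u]}$ and $\Phi\rtimes P$ is injective there, $\Lambda(x)=0$; faithfulness of $\Lambda$ and $x\ge0$ give $x=0$, so $y=0$. This proves $\Phi\rtimes P$ is faithful on $\KK_{[u]}$.

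The main obstacle is not the final expectation argument, which is routine once the algebra is correctly identified, but rather establishing rigorously that $\KK_{[u]}$ has the orthogonal-column structure and that the $e_{[p]}$ exist as genuine multiplier projections (their orthogonality being the heart of the matter). All of this rests squarely on the consequence of \eqref{injectivity in the same direction} that two distinct positions over a single $u$ can never have a common right multiple; without it the fibre positions would admit joins and products would land in new positions, destroying the block structure. This is precisely the new phenomenon, absent in \cite{LR} and \cite{F99}, that the controlled-map hypothesis is designed to tame, and verifying it carefully (including the unit adjustments via Lemma~\ref{lemma on automorphic actions on ideals} needed to choose representatives with $\theta(p)=u$) is where the argument must be handled with care.
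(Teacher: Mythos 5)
Your proposal is correct and is essentially the paper's own proof: the same key observation that \eqref{injectivity in the same direction} forces $pP\cap qP=\emptyset$ for distinct $p,q\in\theta^{-1}(u)$ (hence, by Nica covariance, the orthogonal block structure of $\KK_{[u]}$), the same diagonal subalgebra $D_u=\bigoplus_{s\in\theta^{-1}(u)}i_\KK(\KK(s,s))$ on which $\Phi\rtimes P$ is faithful summand by summand, and the same conclusion via a faithful conditional expectation onto the diagonal that intertwines $\Phi\rtimes P$. The only divergence is technical: the paper obtains its expectation by representing the image of an arbitrary surjective $*$-homomorphism $\pi$ of $\KK_{[u]}$ faithfully and nondegenerately on a Hilbert space and compressing by the projections onto the essential subspaces $\pi(i_\KK(\KK(r,r)))H$, applying this once to $\pi=\id$ and once to $\pi=\Phi\rtimes P$, whereas you build multiplier projections $e_{[p]}$ as strict limits of approximate units and push them forward through $\Phi\rtimes P$ --- a representation-free variant whose extra burdens (existence of the strict limits, convergence of $\sum_{[p]}e_{[p]}xe_{[p]}$, extension to multipliers) you correctly identify and which do go through.
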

 \begin{proof}
 Let $u\in \P$. Then
 $\KK_{[u]}=\clsp\Big\{\bigcup_{p,q\in \theta^{-1}(u)} i_\KK( \KK(p,q))\Big\}$ by \eqref{bla form of K_F}.

Suppose that $\Phi$ is an injective Nica covariant representation of $\KK$. Note that whenever $s,t\in \theta^{-1}(u)$ with $s\neq t$,  \eqref{injectivity in the same direction} implies that $sP\cap tP =\emptyset$, and so by Nica covariance the $C^*$-subalgebras $i_\KK(\KK(s,s))$ and $i_\KK(\KK(t,t))$  of $\KK_{[u]}$ are orthogonal. Hence we have a direct sum $C^*$-subalgebra $D_u:=\bigoplus_{s\in \theta^{-1}(u) } i_\KK(\KK(s,s))$  in $\KK_{[u]}$. Since $\Phi\rtimes P$
is faithful  on each summand it is also faithful on $D_u$.

We claim that for every surjective $*$-homomorphism $\pi:\KK_{[u]}\to B$ for $B$ a $C^*$-algebra, the formula
\begin{equation}\label{conditional expectation fo KuKu}
E_\pi\Big(\pi\big(\sum_{s,t\in I}i_\KK(a_{s,t}\big)\Big):=\sum_{s\in I} \pi\big(i_\KK(a_{s,s})\big),
\end{equation}
where $a_{s,t}\in \KK(s,t)$ and $I\subseteq \theta^{-1}(u)$ is a finite set,  defines a faithful conditional expectation from $\pi(\KK_{[u]})$ onto $\pi(D_u)$. By choosing a faithful and non-degenerate  representation  of $B$ on a Hilbert space $H$, we may assume that $\pi:\KK_{[u]}\to \B(H)$ is a non-degenerate representation.

For each $r\in \theta^{-1}(u)$ let $Q_r\in \B(H)$ be the projection onto the essential space $\pi(i_\KK(\KK(r,r)))H$ for the $C^*$-algebra $\pi(i_\KK(\KK(r,r)))$. Note that the projections $Q_r$, $r\in \theta^{-1}(u)$, are pairwise orthogonal and their ranges span  $H$.
Thus, exactly as in the proof of  Lemma \ref{lemma on conditional expectations}, we conclude that the formula $E_\pi(a)= \sum_{r\in \theta^{-1}(u)} Q_r(a) Q_r$, for $a\in \B(H)$, defines a faithful conditional expectation from $\B(H)$ onto the commutant of $\{Q_r\}_{ r\in \theta^{-1}(u)}$. Thus it suffices to check that this map satisfies \eqref{conditional expectation fo KuKu}. But this is easy. Indeed, clearly we have
$Q_s \pi\bigl(i_\KK(a_{s,s})\bigr) Q_s=\pi\bigl(i_\KK(a_{s,s})\bigr)$ for every $s\in I$. On the other hand, if $r\in \theta^{-1}(u)$  is such that  either $r\neq s$ or $r\neq t$, then
$
Q_r\pi\bigl(i_\KK(a_{s,t} )\bigr)Q_r=0,
$
since, by Lemma~\ref{about approximate units}, we have $\pi\bigl(i_\KK(a_{s,t})\bigr)\in \pi\bigl(i_\KK(\KK(s,s))i_\KK (\KK(s,t))i_\KK(\KK(t,t))\bigr)\subseteq Q_{s}\B(H) Q_t$.

 The above claim applied separately to $\id$ and the $*$-homomorphism $\Phi\rtimes P$  gives a commutative diagram
$$
\begin{xy}
\xymatrix{
\KK_{[u]}  \ar[d]_{E_{\id}} \ar[rr]^{\Phi\rtimes P}&  & (\Phi\rtimes P)( \KK_{[u]} )
 \ar[d]^{E_{\Phi\rtimes P}}&
  \\
		D_u \ar[rr]^{\Phi\rtimes P }	&         & (\Phi\rtimes P)(D_u)
			}
  \end{xy}
$$
in which  the vertical arrows and the bottom horizontal arrow are faithful. Therefore, also  $(\Phi\rtimes P):\KK_{[u]}\to (\Phi\rtimes P)( \KK_{[u]} )$ is faithful.
\end{proof}

\begin{proof of}{Theorem \ref{theorem for amenability and spectral subspaces}}
We begin by proving that the Fock representation $T$ satisfies  \eqref{Toeplitz like condition}. To this end, let $u\in \P$  and  $F\subseteq \P$ be a finite set  such that  $u\not\geq v$ for every $v\in F$. Suppose that $p,q\in P$ satisfy $\theta(p)=\theta(q)=u$. By \eqref{crucial-observation-about-T}, the projection $Q_q$ in $\LL(\FF_\KK)$ onto $\bigoplus_{t\in P} X_{q,t}$ satisfies $Q_q\bigl(T(\KK(s,t))\bigr)Q_q=0$ whenever $s,t$ are in $P$ such that $q\notin sP\cap tP$. Thus, for all $s,t\in P$ with $\theta(s)=\theta(t)\in F$, we have $Q_q\bigl(T(\KK(s,t))\bigr)Q_q=0$ by the choice of $u$ and $F$.

An arbitrary element $a$ in $\clsp\{T(\KK(p,q)): \theta(p)=\theta(q)=u\}$ has the form
 $$
a=\sum_{q\in I}  a_q \,\,\text{ where }\,\,a_q\in \clsp\Big\{\bigcup_{p\in \theta^{-1}(u)} T( \KK(p,q))\Big\}
$$
where $I\subseteq \theta^{-1}(u)$ is finite. Suppose that $a\in \clsp\{T(\KK(s,t)): \theta(s)=\theta(t)\in F\}$. The considerations of the previous paragraph imply that $aQ_q=0$. On the other hand, $aQ_q=(\sum_{r\in I}a_r)Q_q$. For each $r\in I$ with $r\neq q$, the assumption \eqref{injectivity in the same direction} implies that $rP\cap qP=\emptyset$, and so the considerations above show that $a_rQ_q=0$. Therefore $aQ_q=a_qQ_q$ for every $q\in I$. This implies that  $a_q^*a_q Q_q=0$. However, since  $a_q^*a_q\in T(\KK(q,q))$ and $T(\KK(q,q))$ acts faithfully on $\oplus_{t\in P}X_{q,t}$ (because $\KK(q,q)$ acts faithfully on the subspace $X_{q,q}$) we get $a_q^*a_q=0$. Thus $a_q=0$ for every $q\in I$. Accordingly, $a=0$ and \eqref{Toeplitz like condition} is proved. Thus the injectivity claim in part (a) of the theorem will follow from part (b).

\emph{Sufficiency in part (b).} Let $\Phi:\KK\to B$ be an injective Nica covariant  representation  satisfying  \eqref{Toeplitz like condition}.

Let $\FF$ denote the collection of all finite subsets $F\subseteq \P/_\sim$ such that $[u]\vee [v] \in F$ whenever $[u], [v]\in F$.
For any finite  $F_0\subseteq \P/_\sim$  the set $F$ consisting of least upper bounds of all finite sub-collections of elements in $F_0$ is finite, contains $F_0$ and is closed under $\vee$. Thus $\FF$ is a directed set. Accordingly, the corresponding $C^*$-algebras \eqref{form of K_F} form  an inductive system  $\{\KK_F: F\in \FF\}$  with limit $\overline{\bigcup_{F\in \FF}\KK_F}=\clsp\Big\{\bigcup_{p,q\in P, \atop \theta(p)=\theta(q)} i_\KK( \KK(p,q))\Big\}$. Hence  \eqref{core like algebra} is a $C^*$-algebra, and to prove faithfulness of  $\Phi\rtimes P$ on $\overline{\bigcup_{F\in \FF}\KK_F}$ it suffices to show that $\Phi\rtimes P$ is faithful on $\KK_F$ for each $F\in \FF$, see e.g. \cite[Lemma 1.3]{alnr}.

By Lemma \ref{auxiliary lemma},    $\Phi\rtimes P$ is faithful on $\KK_{F}$ if $F= \{[u]\}$ for some $u\in \P$.
For the inductive step, let $F\in \FF$ and suppose that $\Phi\rtimes P$ is faithful on $\KK_{F'}$ whenever $F'\in \FF$ and $|F'|<|F|$; we aim to prove that $\Phi\rtimes P$ is faithful on $\KK_{F}$. Let $Z\in \KK_F$ be a finite sum of the form
$$
Z=\sum_{[u]\in F} Z_{[u]} \qquad \textrm{where } Z_{[u]}\in \KK_{[u]}\,\,\textrm{ for }\,\, [u]\in F.
$$
Suppose that $\Phi\rtimes P(Z)=0$. We will show that $Z=0$, giving the desired injectivity.

 Since $F$ is finite it has a minimal element, that is, there is $[u_0]\in F$ such that $[u] \not \leq [u_0]$ for every $[u]\in F\setminus \{[u_0]\}$.
It is immediate from considerations concerning products of elements in $\KK_F$, cf. the proof of Lemma \ref{auxiliary lemma0}, that the $C^*$-algebra $\KK_{F\setminus \{[u_0]\}}$ is an ideal in $\KK_F$.
Hence $(\Phi\rtimes P)(\KK_{F\setminus \{[u_0]\}})$ is an ideal in $(\Phi\rtimes P)(\KK_{F})$. Let
$$
\rho:(\Phi\rtimes P)(\KK_{F})\to (\Phi\rtimes P)(\KK_{F})/(\Phi\rtimes P)(\KK_{F\setminus \{[u_0]\}})
$$
be the quotient map. We claim that  $\rho$ is injective on $(\Phi\rtimes P)(\KK_{[u_0]})$. Indeed, by \eqref{bla form of K_F} we have
$$
(\Phi\rtimes P)(\KK_{[u_0]})= \clsp\{\Phi(\KK(p,q)): \theta(p)=\theta(q)=u_0\}
$$
and for any $F_0\subseteq \P$  finite set such that $F\setminus \{[u_0]\}=\{[u]: u \in F_0\}$, we have
$$
(\Phi\rtimes P)(\KK_{F\setminus \{[u_0]\}})=\clsp\{\Phi(\KK(s,t)): \theta(s)=\theta(t)\in F_0\}.
$$
Note that $v\not\leq u_0$ for every $v\in F_0$. Thus  condition \eqref{Toeplitz like condition} applied to $u_0$ and $F_0$ proves our claim. In particular, $\rho$ is isometric on the  $C^*$-algebra $\Phi\rtimes P(\KK_{[u_0]})$. Thus
$$
\|(\Phi\rtimes P) (Z_{[u_0]})\|=\|\rho\Big((\Phi\rtimes P) (Z_{[u_0]})\Big)\|=\|\rho\Big((\Phi\rtimes P) (Z)\Big)\|=0.
$$
This implies that $Z_{[u_0]}=0$,  as $\Phi\rtimes P$ is isometric on  $\KK_{[u_0]}$ by the first inductive step. Hence $Z=\sum_{[u]\in F\setminus\{u_0\}} Z_{[u]} \in \KK_{F\setminus \{[u_0]\}}$. Therefore, $Z=0$ by the inductive hypothesis.

\emph{Necessity in part (b).} Since  $T$ satisfies \eqref{Toeplitz like condition}, we conclude by sufficiency in part (b) that  the regular representation $T\rtimes P$ is faithful on the $C^*$-algebra in \eqref{core like algebra}. It is not difficult to see that this forces the universal representation $i_\KK$ to satisfy \eqref{Toeplitz like condition}: take $u$ and $F$ as specified for \eqref{Toeplitz like condition} and suppose that an element $C$ in $\NT_\LL(\KK)$ satisfies  $C=i_\KK(C_1)=i_\KK(C_2)$ where $C_1\in \KK(p,q)$ is in the closed span determined by $u$ and $C_2\in \KK(s,t)$ is in the closed span determined by $F$. Then $T(C_1)=(T\rtimes P)\circ i_\KK(C_1)=(T\rtimes P)\circ i_\KK(C_2)=T(C_2)$ and so, first, $T(C_1)=T(C_2)=0$ because $T$ satisfies  \eqref{Toeplitz like condition}, and second, $i_\KK(C_1)=i_\KK(C_2)=0$ because both are in the $C^*$-subalgebra
\eqref{core like algebra} on which $T\rtimes P$ is faithful. This gives $C=0$ in this particular case, and the general case follows from here. Now, since $i_\KK$ satisfies \eqref{Toeplitz like condition}, then every Nica covariant representation whose integrated form is faithful on  \eqref{core like algebra} has to satisfy \eqref{Toeplitz like condition}. This concludes the proof of the theorem.
\end{proof of}

 \section{Exotic Nica-Toeplitz $C^*$-algebras}\label{exotic embeddings section} We fix a well-aligned ideal $\KK$ in a right-tensor $C^*$-category $\LL$. In this section, we introduce Nica-Toeplitz $C^*$-algebras of $\KK$ that sit between $\NT_\LL(\KK)$ and $\NT_\LL^r(\KK)$. Inspired by the terminology introduced in the context of group $C^*$-algebras in \cite{brow-guen}, we shall call these $C^*$-algebras \emph{exotic}. 
In view of Example \ref{Crossed products by group actions} below, our exotic Nica-Toeplitz $C^*$-algebras  are in fact generalizations of exotic crossed products for actions of discrete groups.
\begin{defn} We say that a Nica covariant representation $\Phi:\KK\to B$  \emph{generates an exotic Nica-Toeplitz $C^*$-algebra} if
$
\ker (\Phi\rtimes P) \subseteq \ker (T\rtimes P)
$. If this is the case, we refer to $C^{*}(\Phi(\KK))$ as an \emph{exotic Nica-Toeplitz $C^*$-algebra} of $\KK$.
\end{defn}
\begin{rem}
Clearly, $\Phi$ generates an exotic Nica-Toeplitz $C^*$-algebra if and only if there is a homomorphism $\Phi_*:C^{*}(\Phi(\KK))\to \NT_{\LL}^{r}(\KK)$ making the following diagram commute:
\begin{equation}\label{pseudo uniqueness diagram}
\xymatrix{  \NT_{\LL}(\KK)  \ar@/_2pc/[rrrr]^{\,\, T\rtimes P}  \ar[rr]^{\Phi\rtimes P}  &
&C^*(\Phi(\KK))  \ar[rr]^{\Phi_* \,\,} & &   \NT_{\LL}^{r}(\KK)}.
\end{equation}
\end{rem}
\begin{prop}\label{proposition to invoke}
If $\Phi:\KK\to B$ is a Nica covariant representation that  generates  an exotic Nica-Toeplitz $C^*$-algebra, then $\Phi$ is  injective and Toeplitz covariant.
\end{prop}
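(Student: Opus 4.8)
The plan is to deduce the proposition from the two uniqueness statements for cores that are already available, namely Corollary~\ref{the reduced core} and Corollary~\ref{Nica-Toeplitz representation corollary}. By the latter, $\Phi$ is injective and Toeplitz covariant precisely when the restriction $(\Phi\rtimes P)\vert_{B_e^{i_\KK}}\colon B_e^{i_\KK}\to B_e^\Phi$ is an isomorphism. Thus it suffices to show that $\Phi\rtimes P$ is injective on the core $C^*$-subalgebra $B_e^{i_\KK}$ of $\NT_{\LL}(\KK)$, and then to identify its image as $B_e^\Phi$.

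Since $\Phi$ generates an exotic Nica-Toeplitz $C^*$-algebra, there is a $*$-homomorphism $\Phi_*\colon C^*(\Phi(\KK))\to\NT_{\LL}^{r}(\KK)$ with $T\rtimes P=\Phi_*\circ(\Phi\rtimes P)$, as displayed in the diagram \eqref{pseudo uniqueness diagram}. By Corollary~\ref{the reduced core} the regular representation restricts to an isomorphism $(T\rtimes P)\vert_{B_e^{i_\KK}}\colon B_e^{i_\KK}{\overset{\cong}{\longrightarrow}}B_e^T$; in particular $T\rtimes P$ is injective on $B_e^{i_\KK}$. As $T\rtimes P=\Phi_*\circ(\Phi\rtimes P)$, injectivity of the composite on $B_e^{i_\KK}$ forces the first factor $\Phi\rtimes P$ to be injective on $B_e^{i_\KK}$ as well.

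It remains to identify the image. Since $\Phi\rtimes P$ is a $*$-homomorphism and $(\Phi\rtimes P)\circ i_\KK=\Phi$, continuity gives $(\Phi\rtimes P)(B_e^{i_\KK})=\clsp\{\bigcup_{p\in P}\Phi(\KK(p,p))\}=B_e^\Phi$. Hence $(\Phi\rtimes P)\vert_{B_e^{i_\KK}}$ is an isomorphism onto $B_e^\Phi$, and Corollary~\ref{Nica-Toeplitz representation corollary} then yields that $\Phi$ is injective and Toeplitz covariant. The argument is essentially a diagram chase, so there is no genuine obstacle once the two core corollaries are in place; the only point requiring a moment's care is the elementary observation that injectivity of a composition on a subalgebra transfers to its first factor there.
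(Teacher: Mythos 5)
Your proposal is correct and follows essentially the same route as the paper: both arguments restrict the factorization $T\rtimes P=\Phi_*\circ(\Phi\rtimes P)$ to the core $B_e^{i_\KK}$, invoke Corollary~\ref{the reduced core} to get that $T\rtimes P$ is an isomorphism of $B_e^{i_\KK}$ onto $B_e^T$, deduce that $\Phi\rtimes P$ is an isomorphism of $B_e^{i_\KK}$ onto $B_e^\Phi$, and conclude via Corollary~\ref{Nica-Toeplitz representation corollary}. The paper compresses the injectivity-of-the-first-factor and image-identification steps into the phrase ``the diagram restricts to a commuting diagram of core subalgebras,'' which you have simply spelled out explicitly.
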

\begin{proof}
The diagram \eqref{pseudo uniqueness diagram} restricts to a commuting  diagram of core subalgebras
\begin{equation}\label{restricted diagram}
\xymatrix{  B_e^{i_\KK}  \ar@/_2pc/[rrrr]^{ T\rtimes P}  \ar[rr]^{\Phi\rtimes P}  &
&B_e^\Phi   \ar[rr]^{   \Phi_* } & &   B_e^T}.
\end{equation}
By  Corollary \ref{the reduced core}, the map $T\rtimes P$ restricted to $B_e^{i_\KK}$ is an isomorphism onto  $B_e^T$. Thus  $\Phi\rtimes P:B_e^{i_\KK} \to B_e^\Phi$ is an isomorphism and Corollary  \ref{Nica-Toeplitz representation corollary} implies the assertion.
\end{proof}

Nica covariant representations generating exotic Nica-Toeplitz algebras can be characterized as representations which  admit a \emph{transcendental conditional expectation}:
 \begin{prop}\label{preludium to Nica Toeplitz uniqueness}
Let $B_\KK$ be the self-adjoint operator subspace of $\LL(\FF_\KK)$ introduced in \eqref{first form of E T}. For a Nica covariant representation $\Phi:\KK\to B$  the following conditions are equivalent:
\begin{itemize}
\item[(i)] $\Phi:\KK\to B$  generates  an exotic Nica-Toeplitz $C^*$-algebra of $\KK$,

\item[(ii)] There is a bounded map $E^{\Phi}:C^{*}(\Phi(\KK))\to B_\KK$ satisfying
\begin{equation}\label{formula defining strange map}
E^{\Phi}\Big(\sum_{p,q\in F}\Phi(a_{p,q})\Big)=\sum_{p,q\in F }\bigoplus_{w\in pP\cap qP,t\in P \atop p^{-1}w=q^{-1}w }T_{p,q}^{w,t}(a_{p,q})
\end{equation}
for every $a_{p,q}\in \KK(p,q)$ and finite $F\subseteq P$.
\end{itemize}
If (i) and (ii) hold then   $E^{\Phi}$ is a contractive completely positive map making the diagram
\begin{equation}\label{diagram to commute}
\begin{xy}
\xymatrix{
C^{*}(\Phi(\KK)) \ar[dr]_{E^{\Phi}} \ar[rr]^{\Phi_*}&  & \NT_{\LL}^{r}(\KK) \ar[dl]^{E^T}
  \\
			&  B_\KK       &
			}
  \end{xy}
\end{equation}
commute. Moreover, $\Phi_*$ is an isomorphism if and only if $E^{\Phi}$ is faithful.
\end{prop}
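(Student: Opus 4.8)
The plan is to route everything through the single intertwining identity
$$E^{\Phi}\circ(\Phi\rtimes P)=E^T\circ(T\rtimes P)\colon\ \NT_\LL(\KK)\longrightarrow B_\KK,$$
valid for any bounded map $E^\Phi$ satisfying \eqref{formula defining strange map}: both sides are then bounded and, by \eqref{formula defining strange map} and \eqref{first form of E T}, agree on the dense $*$-subalgebra $\spane\{\bigcup_{p,q}i_\KK(\KK(p,q))\}$ of $\NT_\LL(\KK)$, hence coincide. This identity is the common thread linking the two conditions.

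First I would prove (i)$\Rightarrow$(ii). Assuming (i), the diagram \eqref{pseudo uniqueness diagram} supplies a $*$-homomorphism $\Phi_*\colon C^*(\Phi(\KK))\to\NT_\LL^r(\KK)$ with $\Phi_*\circ(\Phi\rtimes P)=T\rtimes P$. Put $E^\Phi:=E^T\circ\Phi_*$; as a composition of the $*$-homomorphism $\Phi_*$ with the contractive completely positive map $E^T$ of Proposition \ref{Nica Toeplitz conditional expectation}, it is contractive and completely positive, and the triangle \eqref{diagram to commute} commutes by construction. To verify \eqref{formula defining strange map}, use $\Phi(a_{p,q})=(\Phi\rtimes P)(i_\KK(a_{p,q}))$, so that $\Phi_*(\Phi(a_{p,q}))=(T\rtimes P)(i_\KK(a_{p,q}))=T(a_{p,q})$; applying \eqref{first form of E T} then produces exactly the right-hand side of \eqref{formula defining strange map}. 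Since \eqref{formula defining strange map} together with boundedness determines the map on a dense set, every $E^\Phi$ as in (ii) must equal this $E^T\circ\Phi_*$, which settles the ``moreover'' claims simultaneously.

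Next, (ii)$\Rightarrow$(i). Given the bounded map $E^\Phi$ of (ii), the intertwining identity holds. Take $x\in\ker(\Phi\rtimes P)$; then $(\Phi\rtimes P)(x^*x)=0$, so
$$E^T\big((T\rtimes P)(x^*x)\big)=E^\Phi\big((\Phi\rtimes P)(x^*x)\big)=0.$$
As $(T\rtimes P)(x^*x)=(T\rtimes P)(x)^*(T\rtimes P)(x)\ge 0$ and $E^T$ is faithful (Proposition \ref{Nica Toeplitz conditional expectation}), we get $(T\rtimes P)(x)=0$, i.e. $x\in\ker(T\rtimes P)$; thus $\ker(\Phi\rtimes P)\subseteq\ker(T\rtimes P)$, which is (i). For the final equivalence, note that $\Phi_*$ is automatically surjective, since $\Phi_*\circ(\Phi\rtimes P)=T\rtimes P$ has range all of $\NT_\LL^r(\KK)=C^*(T(\KK))$; hence $\Phi_*$ is an isomorphism iff it is injective. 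Using $E^\Phi=E^T\circ\Phi_*$: if $\Phi_*$ is injective then $E^\Phi$ is a composition of faithful positive maps, hence faithful; conversely, if $E^\Phi$ is faithful and $z\ge 0$ lies in $\ker\Phi_*$, then $E^T(\Phi_*(z))=E^\Phi(z)=0$ forces $z=0$, and since $\ker\Phi_*$ is an ideal this gives injectivity of $\Phi_*$.

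The argument is essentially formal once the intertwining identity and the faithfulness of $E^T$ are in hand; the only point demanding care is that $B_\KK$ is a self-adjoint operator space, not a $C^*$-algebra, when $P$ fails to be right cancellative. Accordingly ``faithfulness of $E^\Phi$'' must be read as the statement that a positive element of $C^*(\Phi(\KK))$ mapping to $0$ in $B_\KK\subseteq\LL(\FF_\KK)$ vanishes, and the faithfulness half of the proof uses only positivity of $E^\Phi$ and $E^T$, never any multiplicativity. I expect this bookkeeping, rather than any genuine analytic difficulty, to be the main obstacle.
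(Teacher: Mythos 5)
Your proposal is correct and follows essentially the same route as the paper's proof: define $E^\Phi:=E^T\circ\Phi_*$ for (i)$\Rightarrow$(ii), establish $E^\Phi\circ(\Phi\rtimes P)=E^T\circ(T\rtimes P)$ by agreement on the dense $*$-subalgebra $\spane\{\bigcup_{p,q}i_\KK(\KK(p,q))\}$, and deduce (ii)$\Rightarrow$(i) plus the faithfulness equivalence from faithfulness of $E^T$. Your explicit remark that boundedness and \eqref{formula defining strange map} determine $E^\Phi$ uniquely (so the ``moreover'' claims apply to any map satisfying (ii)) is a small point of extra care beyond the paper's wording, but it does not change the argument.
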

\begin{proof} The implication (i)$\Rightarrow$(ii) follows by letting $E^\Phi:= E^{T}\circ \Phi_*$. In particular,  $E^\Phi$ is a contractive completely positive map and  the diagram \eqref{diagram to commute} commutes. Moreover,  since $E^T$ is faithful, $\Phi_*$ is faithful if and only if $E^{\Phi}$ is faithful.

We prove next the implication (ii)$\Rightarrow$(i). Let us  put $E^{i_\KK}:=E^T \circ (T\rtimes P)$. Then
$$
 \ker (T\rtimes P)=\{ a\in  \NT_{\LL}(\KK): E^{i_\KK}(a^*a)=0\}.
$$
Indeed, inclusion $\ker (T\rtimes P)\subseteq \{ a\in  \NT_{\LL}(\KK): E^{i_\KK}(a^*a)=0\}$ is trivial. The reverse inclusion follows, because if $a\in  \NT_{\LL}(\KK)$ is such that $E^{i_\KK}(a^*a)=0$, then  $E^{T}\big((T\rtimes P)(a)^*(T\rtimes P)(a)\big)=0$ and therefore $(T\rtimes P)(a)=0$ by faithfulness of $E^T$. The assumption in (ii) implies that we have a commutative diagram
$$
\begin{xy}
\xymatrix{
\NT_{\LL}(\KK) \ar[dr]_{E^{i_\KK}} \ar[rr]^{\Phi\rtimes P}&  &  C^{*}(\Phi(\KK))  \ar[dl]^{E^\Phi}
  \\
			&  B_\KK       &
			}
  \end{xy}
$$
Indeed, this diagram commutes when restricted to the  dense $C^*$-subalgebra
  $\NT_{\LL}(\KK)^0=\spane\{\bigcup_{p,q \in P} i_\KK(\KK(p,q))\}$ of $\NT_{\LL}(\KK)$, cf.
\eqref{black star algebra}. Hence by continuity the diagram commutes on $\NT_{\LL}(\KK)$.
Now,
$$
a\in \ker (\Phi\rtimes P) \,\Longrightarrow\, E^\Phi\big((\Phi\rtimes P)(a^*a)\big)=0 \,\Longrightarrow\, E^{i_\KK}(a^*a)=0.
$$
Thus $\ker (\Phi\rtimes P)\subseteq \{ a\in  \NT_{\LL}(\KK): E^{i_\KK}(a^*a)=0\}=\ker (T\rtimes P)$. \end{proof}

\begin{rem} Suppose that $P$ is cancellative. Then the right-hand side of \eqref{formula defining strange map} reduces to the right-hand side of \eqref{another form of E T}  and $B_\KK=B^T_e$, cf. Proposition \ref{Nica Toeplitz conditional expectation}. 
Accordingly, Proposition \ref{preludium to Nica Toeplitz uniqueness} reduces to the following statement: a Nica covariant representation  $\Phi$   of $\KK$ generates an exotic Nica-Toeplitz $C^*$-algebra if and only if  the formula
$$
E_\Phi\Big(\sum_{p,q\in F } \Phi(a_{p,q})\Big)=\sum_{p\in F } \Phi(a_{p,p}) , \qquad a_{p,q}\in \KK(p,q),\,\, p,q \in F\subseteq P, \, F \text{ finite},
$$
defines  a genuine conditional expectation from $C^*(\Phi(\KK))$  onto its core subalgebra $B^\Phi_e$. In fact, if  $\Phi$    generates an exotic Nica-Toeplitz $C^*$-algebra, we have the commutative diagram
\begin{equation}\label{diagram to commute666}
\begin{xy}
\xymatrix{
\NT_{\LL}(\KK)  \ar[d]_{E} \ar[rr]^{\Phi \rtimes P}&  & C^{*}(\Phi(\KK)) \ar[d]^{E_\Phi}   \ar[rr]^{\Phi_*}&  & \NT_{\LL}^r(\KK) \ar[d]_{E^T}
  \\
		B_e^{i_\KK} \ar[rr]^{\Phi \rtimes P}	&         & B_e^\Phi \ar[rr]^{\Phi_*} 	&          & B_e^T
			}
  \end{xy}
\end{equation}
where the bottom horizontal arrows are isomorphisms and $E_\Phi=(\Phi_* \vert_{B_e^\Phi})^{-1}\circ E^\Phi$. Moreover,   $\Phi_*:C^{*}(\Phi(\KK))\to \NT_{\LL}^{r}(\KK)$ is an isomorphism if and only if $E^{\Phi}$ is faithful.
\end{rem}

As a first application of Proposition~\ref{preludium to Nica Toeplitz uniqueness}, we show that in order to study $\NT_\LL^r(\KK)$, one may in some cases use the $t$-th Fock subrepresentation $T^t$  of $T$ (usually $T^{e}$ will work, see \cite{kwa-larII}).
\begin{prop} \label{Fock t-th representations and reduced objects}
Let $t\in P$ and suppose that the well-aligned ideal $\KK$ satisfies the condition:
\begin{equation}\label{condition for reducing Focks}
\forall_{p\in P}\,\, \exists_{x\in P^*} \text{  } \overline{\KK(px,t)\KK(t,px)}\text{ is an  essential  ideal in the $C^*$-algebra } \LL_\KK(px,px).
\end{equation}
Then the $t$-th Fock representation $T^t:\KK\to \LL(\FF_\KK^t)$ generates a copy of $\NT_\LL^r(\KK)$, in the sense that there is
an isomorphism  $h_t:\NT^{r}_{\LL}(\KK) \to C^*(T^t(\KK))$ such that $h_t\circ T=T^t$.
\end{prop}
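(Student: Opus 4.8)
The plan is to show that the canonical surjection $h_t\colon\NT_\LL^r(\KK)=C^*(T(\KK))\to C^*(T^t(\KK))$ from Remark~\ref{remark on direct sums of Fock representations}, characterised by $h_t\circ T=T^t$, is injective; this is exactly the content of the asserted isomorphism, and the identity $h_t\circ T=T^t$ then gives the required intertwining. Since $T=\bigoplus_{s\in P}T^s$ and $\NT_\LL^r(\KK)\subseteq\prod_{s\in P}\LL(\FF_\KK^s)$, the map $h_t$ is nothing but the restriction to $\NT_\LL^r(\KK)$ of the coordinate $*$-homomorphism $\prod_{s\in P}\LL(\FF_\KK^s)\to\LL(\FF_\KK^t)$, so it is a surjective $*$-homomorphism; by the universal property in Proposition~\ref{Nica Toeplitz description} one also has $T^t\rtimes P=h_t\circ(T\rtimes P)$. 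Thus proving $h_t$ an isomorphism is the same as proving the exotic inclusion $\ker(T^t\rtimes P)\subseteq\ker(T\rtimes P)$.

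To establish injectivity I would use the transcendental conditional expectations. The representation $T^t$ is Nica covariant by Lemma~\ref{representations which are summands} restricted to $\KK$, so Remark~\ref{remark about transcendentals for teeth} supplies a \emph{faithful} completely positive map $E^{T,t}\colon C^*(T^t(\KK))\to B_\KK$ onto the $t$-fixed part $h_t(B_\KK)$, while $E^T\colon\NT_\LL^r(\KK)\to B_\KK$ is faithful by Proposition~\ref{Nica Toeplitz conditional expectation}. Comparing the defining formula \eqref{first form of E T} on the spanning elements $T(a_{p,q})$ (both sides send $T(a_{p,q})$ to $\bigoplus_{w\in pP\cap qP,\,p^{-1}w=q^{-1}w}T_{p,q}^{w,t}(a_{p,q})$) shows that $E^{T,t}\circ h_t=h_t\circ E^T$ on a dense $*$-subalgebra, hence everywhere. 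Granting that $h_t$ is injective on $B_\KK$, injectivity of $h_t$ follows by the standard argument: if $h_t(c)=0$ then $h_t\bigl(E^T(c^*c)\bigr)=E^{T,t}\bigl(h_t(c^*c)\bigr)=0$, so $E^T(c^*c)=0$ by injectivity on $B_\KK$, whence $c=0$ since $E^T$ is faithful.

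The crux, and the step I expect to be the main obstacle, is therefore to show that $h_t$ is isometric on $B_\KK$; this is where hypothesis \eqref{condition for reducing Focks} and the units of $P$ are indispensable. For a finite sum $Z=\sum_{p,q\in F}\bigoplus_{w\in pP\cap qP,\,s\in P,\,p^{-1}w=q^{-1}w}T_{p,q}^{w,s}(a_{p,q})\in B_\KK$, Lemma~\ref{norm of an element in the core} gives $\|Z\|=\max_{C\in\In(F)}\sup_{w\in P_{F,C}}\|T_{w,w}^{w,w}(c_w)\|$, where $c_w:=\sum_{p,q\in C,\,p^{-1}w=q^{-1}w}a_{p,q}\otimes 1_{q^{-1}w}\in\LL_\KK(w,w)$; running the identical computation inside $\LL(\FF_\KK^t)$ with the source fixed equal to $t$ yields $\|h_t(Z)\|=\max_{C}\sup_{w\in P_{F,C}}\|T_{w,w}^{w,t}(c_w)\|$. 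As each $T_{w,w}^{w,s}$ is left multiplication by an element of $\LL_\KK(w,w)$, it is contractive, giving the trivial bound $\|h_t(Z)\|\le\|Z\|$; the work is to reverse it. Fixing $C$ and $w\in P_{F,C}$, I would apply \eqref{condition for reducing Focks} at $p=w$ to pick $x\in P^*$ with $\overline{\KK(wx,t)\KK(t,wx)}$ essential in $\LL_\KK(wx,wx)$. Since $wxP=wP$, the set $P_{F,C}$ is stable under right multiplication by units, so $wx\in P_{F,C}$, and a direct check using $q^{-1}(wx)=(q^{-1}w)x$ gives $c_{wx}=c_w\otimes 1_x$. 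Now $T_{wx,wx}^{wx,t}$ is a $*$-homomorphism of $\LL_\KK(wx,wx)$ into $\LL(X_{wx,t})$ whose kernel $\{d:d\KK(wx,t)=0\}$ annihilates the essential ideal $\overline{\KK(wx,t)\KK(t,wx)}$ and is therefore trivial; being an injective $*$-homomorphism it is isometric. Combining this with the fact that $\otimes 1_x$ is an automorphism (Lemma~\ref{lemma on automorphic actions on ideals}) I obtain
\begin{equation*}
\bigl\|T_{wx,wx}^{wx,t}(c_{wx})\bigr\|=\|c_w\otimes 1_x\|=\|c_w\|\ge\bigl\|T_{w,w}^{w,w}(c_w)\bigr\|.
\end{equation*}
Taking the supremum over $w\in P_{F,C}$ and the maximum over $C\in\In(F)$ gives $\|h_t(Z)\|\ge\|Z\|$, hence $h_t$ is isometric on a dense subspace of $B_\KK$ and so on all of $B_\KK$. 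This supplies the missing injectivity on $B_\KK$ and, through the second paragraph, completes the proof; the essential point is that for the fixed source $t$ the operator $T_{w,w}^{w,t}$ need not realise the full norm $\|T_{w,w}^{w,w}(c_w)\|$, and it is precisely the unit $x$ provided by \eqref{condition for reducing Focks} that lets one recover it after passing to $wx$.
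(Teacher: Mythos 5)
Your proposal is correct and follows essentially the same route as the paper's proof: both reduce the statement, via the faithful transcendental expectations $E^T$ and $E^{T,t}$ (Proposition~\ref{Nica Toeplitz conditional expectation}, Remark~\ref{remark about transcendentals for teeth}, and the last part of Proposition~\ref{preludium to Nica Toeplitz uniqueness}), to showing that the source-fixing compression $Z\mapsto Z^t$ is isometric on $B_\KK$, and both establish that isometry by combining the norm formula of Lemma~\ref{norm of an element in the core} with condition~\eqref{condition for reducing Focks}, the identity $c_{wx}=c_w\otimes 1_x$, and the stability of $P_{F,C}$ under right multiplication by units. Your write-up even spells out details the paper glosses over (the injectivity-implies-isometry step for $T_{wx,wx}^{wx,t}$ and the correct direction $\|h_t(Z)\|\geq\|Z\|$ of the final inequality), so nothing is missing.
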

\begin{proof}
In view of Remark \ref{remark about transcendentals for teeth} and the last part of Proposition  \ref{preludium to Nica Toeplitz uniqueness}, it suffices to show
 that we have an isometry
\begin{equation}\label{projection to be isometry}
 B_\KK\in  Z=\sum_{p,q\in F }\bigoplus_{w\in pP\cap qP, s\in P\atop p^{-1}w=q^{-1}w }T_{p,q}^{w,s}(a_{p,q})  \longmapsto Z^t:=\sum_{p,q\in F }\bigoplus_{w\in pP\cap qP\atop p^{-1}w=q^{-1}w }T_{p,q}^{w,t}(a_{p,q}) \in  B_\KK^t,
\end{equation}
This map is a well defined contraction, as it is the restriction of  the projection from $\bigoplus_{s\in P}\LL(\FF_{\KK}^{s})$ to $ \LL(\FF_{\KK}^{t})$. Now,
 the argument leading to \eqref{line with supremum} gives us that
$$
\|Z^t\|=\max_{C \in \In(F)} \sup_{w\in P_{F, C}}  \Big\| T_{w,w}^{w,t}(\sum_{p,q\in C \atop p^{-1}w=q^{-1}w} a_{p,q}\otimes 1_{q^{-1}w}) \Big\|.
$$
Let us fix $w\in  P_{F, C}$. Note that the sum under $T_{w,w}^{w,t}$ belongs to $ \LL_\KK(px,px)$. By \eqref{condition for reducing Focks}  there is $x\in P^*$ such that the homomorphism $T_{wx,wx}^{wx,t}: \LL_\KK(wx,wx) \to \LL(X_{wx,t})$ is injective (and hence isometric), cf. \eqref{line with kernels}. Since $\LL_\KK\otimes 1\subseteq \LL_\KK$ and the homomorphism $\otimes 1_x$ is isometric we get
\begin{align*}
 \Big\| \sum_{p,q\in C \atop p^{-1}w=q^{-1}w} a_{p,q}\otimes 1_{q^{-1}w} \Big\|&= \Big\| \sum_{p,q\in C \atop p^{-1}wx=q^{-1}wx} a_{p,q}\otimes 1_{q^{-1}wx} \Big\|
\\
&=   \Big\|T_{wx,wx}^{wx,t}\Big(\sum_{p,q\in C \atop p^{-1}wx=q^{-1}wx} a_{p,q}\otimes 1_{q^{-1}wx}\Big) \Big\|.
\end{align*}
Clearly, $wx\in P_{F, C}$ and therefore $\|Z^t\|$  is not greater than $\|Z\|$ by \eqref{norm of Z formula}. Hence \eqref{projection to be isometry} is an isometry.
\end{proof}

 \section{Amenability and Fell Bundles}\label{amenability section}
We fix  a well-aligned ideal $\KK$ in a  right-tensor $C^*$-precategory  $(\LL, \{\otimes 1_r\}_{r\in P})$ over a right LCM semigroup $P$. In this section, we study the properties under which all exotic Nica-Toeplitz algebras of $\KK$ are naturally isomorphic.

\begin{defn}
The well-aligned ideal $\KK$ in $(\LL, \{\otimes 1_r\}_{r\in P})$ is called \emph{amenable} if the regular representation  $T\rtimes P:\NT_{\LL}(\KK)\longrightarrow \NT_{\LL}^r(\KK)$ is an isomorphism. A right-tensor $C^*$-precategory $\LL$ is \emph{amenable} if it is amenable as an ideal in itself.
\end{defn}
We have the following simple characterization of amenability in terms of the natural map on the transcendental core $B_\KK$.
\begin{lem}\label{nonsense lemma}
A well-aligned ideal $\KK$  in a right-tensor $C^*$-precategory $\LL$ is amenable if and only if the map $E^{i_\KK}=E^T \circ (T\rtimes P): \NT_{\LL}(\KK)  \to B_\KK$ is faithful.

If $P$ is cancellative, then $\KK$ is amenable if and only if the conditional expectation $E$ from $\NT_{\LL}(\KK)$ onto $B_e^{i_\KK}$ given by \eqref{conditional expectation for the universal}  is faithful.
\end{lem}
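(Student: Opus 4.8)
The plan is to reduce the whole statement to the single identity
$$
\ker (T\rtimes P)=\{ a\in  \NT_{\LL}(\KK): E^{i_\KK}(a^*a)=0\},
$$
which was already observed inside the proof of Proposition \ref{preludium to Nica Toeplitz uniqueness} and which rests on faithfulness of the transcendental conditional expectation $E^T$ from Proposition \ref{Nica Toeplitz conditional expectation}. First I would record that $T\rtimes P$ is, by construction, a surjection of $\NT_{\LL}(\KK)=C^*(i_\KK(\KK))$ onto $\NT_{\LL}^r(\KK)=C^*(T(\KK))$, since $T=(T\rtimes P)\circ i_\KK$. Hence amenability of $\KK$, i.e.\ that $T\rtimes P$ is an isomorphism, is equivalent to injectivity of $T\rtimes P$, that is, to $\ker (T\rtimes P)=\{0\}$.

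To justify the displayed identity I would compute, for $a\in \NT_{\LL}(\KK)$,
$$
E^{i_\KK}(a^*a)=E^T\big((T\rtimes P)(a)^*(T\rtimes P)(a)\big),
$$
using that $T\rtimes P$ is a $*$-homomorphism and that $E^{i_\KK}=E^T\circ(T\rtimes P)$. Since $E^T$ is faithful, the right-hand side vanishes if and only if $(T\rtimes P)(a)=0$, i.e.\ $a\in\ker (T\rtimes P)$. Reading off the first equivalence is then immediate: $E^{i_\KK}$ is faithful exactly when $E^{i_\KK}(a^*a)=0$ forces $a=0$, which by the identity is precisely $\ker (T\rtimes P)=\{0\}$, namely amenability of $\KK$.

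For the cancellative case I would invoke that $B_\KK=B_e^T$ (Proposition \ref{Nica Toeplitz conditional expectation}) and that, by Corollary \ref{the reduced core}, $T\rtimes P$ restricts to an isomorphism $(T\rtimes P)\vert_{B_e^{i_\KK}}:B_e^{i_\KK}\overset{\cong}{\longrightarrow}B_e^T$. Writing $(T\rtimes P)^{-1}$ for its inverse, the proof of Corollary \ref{core for cancellative semigroups} gives $E=(T\rtimes P)^{-1}\circ E^{i_\KK}$. As $(T\rtimes P)^{-1}$ is an isomorphism, $E(a^*a)=0$ if and only if $E^{i_\KK}(a^*a)=0$; thus $E$ is faithful if and only if $E^{i_\KK}$ is, and the second equivalence follows from the first.

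I do not expect a real obstacle here, as all the analytic content is packaged into the faithfulness of $E^T$ established earlier, together with the trivial surjectivity of $T\rtimes P$. The only point demanding a little care is the bookkeeping around the word ``faithful'': I would make sure it is read throughout as the implication $E(a^*a)=0\Rightarrow a=0$, matching the way faithfulness of $E^T$ is used, so that composing with the isomorphism $(T\rtimes P)^{-1}$ transports it cleanly between $E^{i_\KK}$ and $E$.
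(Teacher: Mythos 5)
Your proof is correct and takes essentially the same route as the paper: the paper's one-line proof invokes the last part of Proposition \ref{preludium to Nica Toeplitz uniqueness} applied to $\Phi=i_\KK$ (so that $\Phi_*=T\rtimes P$ and $E^\Phi=E^{i_\KK}$), and the content of that citation is precisely the identity $\ker (T\rtimes P)=\{a\in\NT_{\LL}(\KK): E^{i_\KK}(a^*a)=0\}$ together with faithfulness of $E^T$, which you spell out directly. Your handling of the cancellative case via Corollaries \ref{the reduced core} and \ref{core for cancellative semigroups} likewise just makes explicit what the paper's citation leaves implicit.
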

\begin{proof}
Apply the last part of Proposition \ref{preludium to Nica Toeplitz uniqueness}.
\end{proof}
We obtain more efficient  amenability criteria using the theory of Fell bundles. We first recall the definition of a full coaction of $G$ on a $C^*$-algebra $A$.  An unadorned tensor product of
$C^*$-algebras will denote the minimal tensor product. We write $g \mapsto i_G(g)$ for the canonical inclusion of
$G$ as unitaries in the full group $C^*$-algebra $C^*(G)$.  There is a homomorphism $\delta_G:C^*(G)\to C^*(G)\otimes C^*(G)$ given by $\delta_G(g)=i_G(g)\otimes i_G(g)$. A \emph{full coaction} of $G$ on $A$ is an injective, non-degenerate homomorphism $\delta:A\to A\otimes C^*(G)$ that satisfies the coaction identity $(\delta \otimes id_{C^*(G)})\circ \delta=  (id_A\otimes \delta_G)\circ \delta$.

\begin{prop}\label{gauge co-action}
Suppose that $\theta:P \to G$ is a unital semigroup homomorphism from $P$ into a group $G$. There is a full coaction $\delta$ of $G$ on $\NT_{\LL}(\KK)$ such that
$$
\delta(i_\KK(a))=i_\KK(a)\otimes i_G(\theta(p)\theta(q)^{-1})\text{ for every }a\in \KK(p,q),\, p,q\in P.
$$
The Fell bundle $\B^\theta=\{B_g^\theta\}_{g\in G}$ associated to $\delta$ has fibers given by
$$
B_g^\theta=\clsp\big\{i_\KK\left(\KK(p,q)\right) : p,q \in P,  g=\theta(p)\theta(q)^{-1}\big\}\,\,\text{ if }\,\,g\in \theta(P)\theta(P)^{-1},
$$
and $B_g^\theta=\{0\}$ if $g\notin \theta(P)\theta(P)^{-1}$. In particular, $\NT_{\LL}(\KK)\cong C^*(\B^\theta)$ with the isomorphism which is identity on the spaces $B_g^\theta$, $g\in G$.
\end{prop}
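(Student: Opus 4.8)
The plan is to construct $\delta$ from the universal property of $\NT_\LL(\KK)$ and then recognise it as the coaction dual to a Fell bundle grading. First I would define a collection of maps $\Phi=\{\Phi_{p,q}\}$ by $\Phi_{p,q}(a):=i_\KK(a)\otimes i_G(\theta(p)\theta(q)^{-1})$ for $a\in\KK(p,q)$, taking values in $\NT_\LL(\KK)\otimes C^*(G)$, and verify that $\Phi$ is a Nica covariant representation of $\KK$. The adjoint relation is immediate from $i_\KK(a)^*=i_\KK(a^*)$ and $i_G(g)^*=i_G(g^{-1})$. For Nica covariance, given $a\in\KK(p,q)$ and $b\in\KK(s,t)$ one computes $\Phi_{p,q}(a)\Phi_{s,t}(b)=i_\KK(a)i_\KK(b)\otimes i_G(\theta(p)\theta(q)^{-1}\theta(s)\theta(t)^{-1})$; when $qP\cap sP=rP$ the algebraic identity $\theta(pq^{-1}r)\theta(ts^{-1}r)^{-1}=\theta(p)\theta(q)^{-1}\theta(s)\theta(t)^{-1}$ (which follows since $\theta(q^{-1}r)=\theta(q)^{-1}\theta(r)$ inside the group $G$) shows the product equals $\Phi_{pq^{-1}r,\,ts^{-1}r}((a\otimes 1_{q^{-1}r})(b\otimes 1_{s^{-1}r}))$, while if $qP\cap sP=\emptyset$ it vanishes because $i_\KK(a)i_\KK(b)=0$. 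Proposition~\ref{Nica Toeplitz description} then yields $\delta:=\Phi\rtimes P$ with the asserted formula on generators.

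Next I would check that $\delta$ is a full coaction. The coaction identity is verified on the generators $i_\KK(a)$, where both $(\delta\otimes\id)\circ\delta$ and $(\id\otimes\delta_G)\circ\delta$ send $i_\KK(a)$ to $i_\KK(a)\otimes i_G(\theta(p)\theta(q)^{-1})\otimes i_G(\theta(p)\theta(q)^{-1})$, using $\delta_G(i_G(g))=i_G(g)\otimes i_G(g)$. Injectivity follows from the existence of a left inverse: the trivial character $\epsilon:C^*(G)\to\C$, $\epsilon(i_G(g))=1$, gives $(\id\otimes\epsilon)\circ\delta=\id_{\NT_\LL(\KK)}$ on generators, hence everywhere. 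Non-degeneracy follows because $\delta(i_\KK(a))(1\otimes i_G(g))=i_\KK(a)\otimes i_G(\theta(p)\theta(q)^{-1}g)$, so as $a$ and $g$ vary the closed span of such products is all of $\NT_\LL(\KK)\otimes C^*(G)$, using that the $i_\KK(\KK(p,q))$ densely span $\NT_\LL(\KK)$ (Remark~\ref{Wick is sick}) and the $i_G(h)$ densely span $C^*(G)$.

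I would then identify the spectral subspaces $B_g^\theta=\{x\in\NT_\LL(\KK):\delta(x)=x\otimes i_G(g)\}$ with the stated fibres. Writing $D_g:=\clsp\{i_\KK(\KK(p,q)):\theta(p)\theta(q)^{-1}=g\}$, the inclusion $D_g\subseteq B_g^\theta$ is immediate from the defining formula for $\delta$. For the reverse inclusion I would introduce bounded functionals $\omega_g$ on $C^*(G)$ with $\omega_g(i_G(h))$ equal to $1$ if $h=g$ and to $0$ otherwise, obtained as the matrix coefficients $h\mapsto\langle\lambda(h)\xi_e,\xi_g\rangle$ of the left regular representation $\lambda$ of $G$ on $\ell^2(G)$, where $\{\xi_h\}_{h\in G}$ is the standard orthonormal basis. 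Then $P_g:=(\id\otimes\omega_g)\circ\delta$ is a bounded map on $\NT_\LL(\KK)$ acting as the projection onto the $g$-component, i.e. $P_g(i_\KK(a))=i_\KK(a)$ if $\theta(p)\theta(q)^{-1}=g$ and $0$ otherwise; since the generators densely span, $P_g$ maps $\NT_\LL(\KK)$ into $\overline{D_g}=D_g$, and $P_g(x)=x$ for $x\in B_g^\theta$, giving $B_g^\theta\subseteq D_g$. In particular $B_g^\theta=\{0\}$ when $g\notin\theta(P)\theta(P)^{-1}$, as then $D_g=\{0\}$.

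Finally, the isomorphism $\NT_\LL(\KK)\cong C^*(\B^\theta)$ follows from universality on both sides. The inclusion $\B^\theta\hookrightarrow\NT_\LL(\KK)$ is a representation of the Fell bundle and integrates to a surjection $\pi:C^*(\B^\theta)\to\NT_\LL(\KK)$ with $\pi\circ\iota_g=\id$ on each fibre, where $\iota_g:B_g^\theta\to C^*(\B^\theta)$ is the canonical inclusion; surjectivity holds because the fibres generate $\NT_\LL(\KK)$. To invert $\pi$ I would define $\Psi_{p,q}(a):=\iota_{\theta(p)\theta(q)^{-1}}(i_\KK(a))$ for $a\in\KK(p,q)$ and check, exactly as for $\Phi$ above and using the Fell-bundle relations $\iota_g(x)\iota_h(y)=\iota_{gh}(xy)$ and $\iota_g(x)^*=\iota_{g^{-1}}(x^*)$, that $\Psi$ is a Nica covariant representation of $\KK$; the integrated form $\Psi\rtimes P$ then satisfies $\pi\circ(\Psi\rtimes P)=\id$ on generators and $(\Psi\rtimes P)\circ\pi=\id$ on the $\iota_g(B_g^\theta)$ (here the identification $B_g^\theta=D_g$ from the previous step is essential in order to evaluate $\Psi\rtimes P$ on arbitrary fibre elements). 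Hence $\pi$ is an isomorphism fixing each $B_g^\theta$. \textbf{The main obstacle} is the spectral-subspace identification $B_g^\theta\subseteq D_g$: the clean formula for $\delta$ makes $D_g\subseteq B_g^\theta$ and the Nica-covariance checks routine, but showing that the grading is topological, via the averaging maps $P_g$, is the step that genuinely uses the structure of $C^*(G)$ and is needed both for the fibre formula and for inverting $\pi$.
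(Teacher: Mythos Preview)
Your proof is correct and follows essentially the same approach as the paper: define $\delta$ via the universal property by exhibiting a Nica covariant representation into $\NT_\LL(\KK)\otimes C^*(G)$, verify the coaction axioms on generators, identify the spectral subspaces, and deduce the isomorphism with $C^*(\B^\theta)$ from universality on both sides. You supply more detail than the paper in two places---injectivity of $\delta$ via the trivial character, and the inclusion $B_g^\theta\subseteq D_g$ via slicing with matrix coefficients of the regular representation---both of which the paper leaves implicit (it simply asserts the spectral subspaces are ``readily seen'' to have the claimed form), so your identification of this as the main obstacle is apt even if the argument is standard.
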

\begin{proof}
We claim  that the maps $\KK(p,q) \ni a \longmapsto i_\KK(a)\otimes i_G(\theta(p)\theta(q)^{-1})$ yield a Nica covariant representation $\Phi:\KK \to \NT_{\LL}(\KK)\otimes C^*(G)$.  Indeed, let $a\in \KK(p,q)$ and $b\in K(s,t)$, $p,q,s,t\in P$.  If $qP\cap sP=\emptyset$, then $i_\KK(a)i_\KK(b) =0$
 and therefore $\Phi(a)\Phi(b)=0$. Assume  that $qP\cap sP=rP$ for some $r\in P$. Writing for example $qq'=ss'=r$ for some $s',q'\in P$ shows that
 $\theta(p)\theta(q)^{-1}\theta( s)\theta(t)^{-1}=\theta(pq')\theta(ts')^{-1}$. With $q'=q^{-1}r$ and $s'=s^{-1}r$, we
 therefore have
\begin{align*}
\Phi(a)\Phi(b)&= \Big(i_\KK(a)i_\KK(b)\Big)\otimes\Big( i_G(\theta(p)\theta(q)^{-1})i_G(\theta( s)\theta(t)^{-1})\Big)
\\
&= i_\KK\Big((a \otimes 1_{q^{-1}r}) (b\otimes 1_{s^{-1}r})\Big)\otimes\Big(\theta(pq^{-1}r)\theta(ts^{-1}r)^{-1})\Big)
\\
&=\Phi\Big((a \otimes 1_{q^{-1}r}) (b\otimes 1_{s^{-1}r})\Big).
\end{align*}
Thus $\Phi$ integrates to  a homomorphism $\delta=\Phi\rtimes P:\NT_{\LL}(\KK)\to \NT_{\LL}(\KK)\otimes C^*(G)$. By Lemma \ref{about approximate units} and the Hewitt-Cohen factorization theorem every element $a\in \KK(p,q)$, $p,q\in P$, can be written as $a=a'a''$ where $a'\in \KK(p,p)$ and $a''\in \KK(p,q)$. Thus
$$
i_\KK(a)\otimes i_G(\theta(p)\theta(q)^{-1})=\Big(i_\KK(a')\otimes i_G(e)\Big) \Big(i_\KK(a'')\otimes i_G(\theta(p)\theta(q)^{-1})\Big)\in \NT_{\LL}(\KK)\otimes C^*(G).
$$
 This implies that $\delta$ is non-degenerate.  For every $a\in \KK(p,q)$, $p,q\in P$ we have
\begin{align*}
 \Big((\delta \otimes id_{C^*(G)})\circ \delta\Big)(i_\KK(a)) &=  (\delta \otimes id_{C^*(G)}) \big(i_\KK(a)\otimes i_G(\theta(p)\theta(q)^{-1})\big)
 \\
 &=  i_\KK(a)\otimes i_G\big(\theta(p)\theta(q)^{-1})  \otimes i_G(\theta(p)\theta(q)^{-1}\big)
 \\
 &=\Big((id_A\otimes \delta_G)\circ \delta\Big) (i_\KK(a)).
 \end{align*}
 Hence $\delta$ is a full coaction. It is readily seen that the spectral subspaces $B_g^\theta$, $g\in G$, for the coaction $\delta$ are of the claimed form. To see that $\NT_{\LL}(\KK)\cong C^*(\B^\theta)$ it suffices   to note that $C^*(\B^\theta)$ is generated by a universal  Nica covariant representation. Now $C^*(\B^\theta)$ is generated by the spaces $i_\KK(\KK(p,q))$, $p,q\in G$, and  for a Nica covariant representation $\Phi$ of $\KK$ the map $ \Phi\rtimes G: \bigoplus_{g\in G} B_g^\theta\to  C^*(\Phi(\KK))$ given by $(\Phi\rtimes G)( \oplus_{g\in G} b_g):=\sum_{g\in G}(\Phi\rtimes P)(b_g)$ is  a $*$-homomorphism. Since $C^*(\B^\theta)$ is the completion of $\bigoplus_{g\in G} B_g$ in the maximal $C^*$-norm, the homomorphism $ \Phi\rtimes G$ extends to the epimorphism $ \Phi\rtimes G:C^*(\B^\theta) \to  C^*(\Phi(\KK))$.
\end{proof}

\begin{thm}\label{amenability main result}
Suppose that $\theta:P \to \P\subseteq G$ is a controlled map of right LCM semigroups such that $\P$ is a subsemigroup of a group $G$. Let
$\B^\theta$ be the Fell bundle described in Proposition \ref{gauge co-action}. We  have a commutative diagram
$$
\begin{xy}
\xymatrix{
C^*(\B^\theta)  \ar[d]_{\cong} \ar[rr]^{ \Lambda }&  & C^*_r(\B^\theta) \ar[d]^{ \cong}
  \\
	\NT_{\LL}(\KK) \ar[rr]^{ T\rtimes P}	&         & 	\NT_{\LL}^r(\KK)}
  \end{xy}
$$
where vertical arrows are isomorphisms and horizontal ones are regular representations.
 In particular, $\B^\theta$  is amenable if and only if $\KK$ is amenable as an ideal of $\LL$.
\end{thm}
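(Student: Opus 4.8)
The plan is to use Proposition~\ref{gauge co-action}, which already supplies the left vertical isomorphism $C^*(\B^\theta)\cong\NT_{\LL}(\KK)$ (the identity on fibres), and then to prove that, under this identification, $\ker\Lambda=\ker(T\rtimes P)$. Since both $\Lambda$ and $T\rtimes P$ are surjective quotient maps, equality of kernels produces the right vertical isomorphism $C^*_r(\B^\theta)\cong\NT_{\LL}^r(\KK)$ and makes the square commute; the final \emph{in particular} then follows at once, because $\Lambda$ is injective precisely when $T\rtimes P$ is. The crucial preliminary observation is that $P$ is \emph{cancellative}: since $\P$ sits inside the group $G$ it is right cancellative, so $P$ is right cancellative by Remark~\ref{rem:P cancellative if controlled}(a), and it is left cancellative by the standing hypothesis. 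This lets me invoke Proposition~\ref{Nica Toeplitz conditional expectation} (so that $B_\KK=B_e^T$) and the genuine conditional expectation $E:\NT_{\LL}(\KK)\to B_e^{i_\KK}$ of Corollary~\ref{core for cancellative semigroups}.

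Next I would identify the unit fibre of the Fell bundle with the core. By the injectivity condition \eqref{injectivity in the same direction}, $\theta(p)=\theta(q)$ forces $p=q$, so
\[
B_e^\theta=\clsp\{i_\KK(\KK(p,q)):\theta(p)=\theta(q)\}=\clsp\{i_\KK(\KK(p,p)):p\in P\}=B_e^{i_\KK}.
\]
The canonical conditional expectation $E_{\B^\theta}:C^*(\B^\theta)\to B_e^\theta$ of the Fell bundle kills every spectral subspace $B_g^\theta$ with $g\neq e$ and is the identity on $B_e^\theta$; hence on the dense $*$-subalgebra $\NT_{\LL}(\KK)^0$ it is given by $\sum_{p,q}i_\KK(a_{p,q})\mapsto\sum_p i_\KK(a_{p,p})$, which is exactly formula \eqref{conditional expectation for the universal}. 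By continuity $E_{\B^\theta}=E$.

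Now I would compare the two kernels through these expectations. On the Fell-bundle side I would invoke the standard fact from the theory of Fell bundles that $C^*_r(\B^\theta)$ carries a \emph{faithful} conditional expectation $E_r$ onto $B_e^\theta$ with $E_r\circ\Lambda=E_{\B^\theta}$, whence $\ker\Lambda=\{a:E_{\B^\theta}(a^*a)=0\}$. On the Nica--Toeplitz side, the argument in the proof of Proposition~\ref{preludium to Nica Toeplitz uniqueness} (using faithfulness of $E^T$) gives $\ker(T\rtimes P)=\{a:E^{i_\KK}(a^*a)=0\}$ with $E^{i_\KK}=E^T\circ(T\rtimes P)$; and since Corollary~\ref{the reduced core} realises $T\rtimes P|_{B_e^{i_\KK}}$ as an isomorphism onto $B_e^T$ with $E^{i_\KK}=(T\rtimes P|_{B_e^{i_\KK}})\circ E$, this set equals $\{a:E(a^*a)=0\}$. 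As $E=E_{\B^\theta}$, the two kernels coincide, and the commuting diagram together with the amenability equivalence follows as indicated above (and, alternatively, one may deduce amenability directly from Lemma~\ref{nonsense lemma}).

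The main obstacle I anticipate is the bookkeeping needed to align the Fell-bundle formalism with the core/transcendental machinery: concretely, verifying that the canonical Fell-bundle conditional expectation coincides with $E$ from Corollary~\ref{core for cancellative semigroups}, and that $\ker\Lambda$ admits the stated description via a faithful conditional expectation on $C^*_r(\B^\theta)$. Cancellativity of $P$ is exactly what makes this alignment clean, since it yields $B_\KK=B_e^T=B_e^\theta$; without it the range of $E^T$ would be the strictly larger transcendental core and no genuine conditional expectation onto the unit fibre need exist, so the reduction to Fell bundles would break down.
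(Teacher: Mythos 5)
Your proposal is correct up to one point, but that point is fatal in the generality of the theorem. You claim that condition \eqref{injectivity in the same direction} gives ``$\theta(p)=\theta(q)$ forces $p=q$'', hence $B_e^\theta=B_e^{i_\KK}$ and $E_{\B^\theta}=E$. But in Definition \ref{def:controlled map} the implication $\theta(s)=\theta(t)\Rightarrow s=t$ is only required for pairs with $sP\cap tP\neq\emptyset$: a controlled map need not be injective. This is not a technicality; it is the whole point of the notion. In the motivating example (Proposition \ref{form free products to direct sums}, which feeds Corollary \ref{amenability of free products}), $P$ is a free product, $\P$ a direct sum, and $\theta$ identifies, say, $ab$ and $ba$ for generators $a,b$ lying in different free factors, while $abP\cap baP=\emptyset$. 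Consequently the unit fibre $B_e^\theta=\clsp\{i_\KK(\KK(p,q)):\theta(p)=\theta(q)\}$ is in general \emph{strictly larger} than the core $B_e^{i_\KK}$; the canonical Fell-bundle expectation $E_{\B^\theta}$ keeps the off-diagonal terms $i_\KK(a_{p,q})$ with $p\neq q$, $\theta(p)=\theta(q)$, and so it does not coincide with the expectation $E$ of Corollary \ref{core for cancellative semigroups}. Your kernel comparison then does not close: you correctly have $\ker\Lambda=\{a:E_{\B^\theta}(a^*a)=0\}$ and $\ker(T\rtimes P)=\{a:E(a^*a)=0\}$, but these are expectations with different ranges, and the equality of the two kernels is exactly what remains to be proved.

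Closing this gap is precisely where the paper's main technical result enters: Theorem \ref{theorem for amenability and spectral subspaces}(a) shows that $T\rtimes P$ is faithful on the larger $C^*$-algebra \eqref{core like algebra}, which (via the isomorphism $\Pi$ of Proposition \ref{gauge co-action}) is $B_e^\theta$ -- faithfulness on the genuine core, i.e.\ Corollary \ref{the reduced core}, is not sufficient. The paper then sets $\Phi:=\Lambda\circ\Pi\circ i_\KK$, forms the faithful completely positive map $E^\Phi:=E^T\circ(T\rtimes P)\circ E^{\delta,r}$ on $C^*_r(\B^\theta)$ (using faithfulness of the reduced expectation $E^{\delta,r}$ and of $T\rtimes P$ on $B_e^\theta$), verifies that $E^\Phi$ satisfies \eqref{formula defining strange map}, and invokes Proposition \ref{preludium to Nica Toeplitz uniqueness} to obtain the isomorphism $\Phi_*:C^*_r(\B^\theta)\to\NT^r_\LL(\KK)$. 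Everything else in your write-up -- cancellativity of $P$ from Remark \ref{rem:P cancellative if controlled}(a), the identity $B_\KK=B_e^T$, the description of $\ker(T\rtimes P)$ through $E^{i_\KK}$, and the standard faithful expectation on $C^*_r(\B^\theta)$ -- is correct, and in the special case where $\theta$ is injective your argument is a valid (and pleasantly short) proof; but in that case the theorem loses its main applications.
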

\begin{proof} Let   $\Pi:\NT_{\LL}(\KK)\to C^*(\B^\theta)$ be the isomorphism given by Proposition \ref{gauge co-action}.
 It is easy to see that  $\Phi:=\Lambda\circ \Pi\circ i_\KK$ is a Nica covariant representation of $\KK$ such that $C^*(\Phi(\KK))$ is equal to $C_r^*(\B^\theta)$.  There are canonical conditional expectations $E^\delta:C^*(\B^\theta)\to B_e^{\theta}$ and $E^{\delta,r}: C^*_r(\B^\theta)\to B_e^{\theta}$. The existence of a controlled map into a semigroup $\P$ that is right cancellative (being a subsemigroup of $G$) guarantees that $P$ is cancellative, see Remark~\ref{rem:P cancellative if controlled} (a). By Proposition~\ref{Nica Toeplitz conditional expectation}, the transcendental core  $B_\KK$ is equal to $B_e^T$, hence it is a subspace of
$(T\rtimes P)(B_e^\theta)=\clsp\big\{ T\left(\KK(p,q)\right):p,q \in P, \theta(p)=\theta(q)  \big\}$. By Theorem \ref{theorem for amenability and spectral subspaces} the $*$-homomorphism $T\rtimes P:B_e^{\theta}\to (T\rtimes P)(B_e^\theta)$  is an isomorphism.
Hence $E^\Phi:=E^T\circ(T\rtimes P)\circ E^{\delta,r}$ is
 a faithful completely positive map from $C^*_r(\B^\theta)$ onto $B_\KK=B_e^T$.
We claim that  $E^\Phi$  satisfies equation \eqref{formula defining strange map}. Note first that
\begin{equation}\label{conditional expectation from co-action}
E^\delta\circ\Pi(i_\KK(a)):=\begin{cases}
\Pi(i_\KK(a)) &\textrm{if }\theta(p)=\theta(q),
\\
0& \textrm{if }\theta(p)\neq\theta(q),
\end{cases} \quad \textrm{ for every }a\in \KK(p,q),\,\,p,q\in P.
	\end{equation}
Then for every choice of finite family $a_{p,q}$ in $\KK(p,q)$, where $p,q\in F$ finite, we have
\begin{align*}
E^\Phi(\sum_{p,q\in F}\Phi(a_{p,q}))
&=E^T\circ (T\rtimes P)\circ E^{\delta,r}\bigl(\sum_{p,q\in F}\Lambda\circ \Pi\circ i_\KK(a_{p,q})\bigr)\\
&=E^T\circ (T\rtimes P)\circ E^\delta\bigl(\sum_{p,q\in F} \Pi\circ i_\KK(a_{p,q})\bigr)\\
&=E^T\bigl(\sum_{\{p,q\in F:\theta(p)=\theta(q)\}} T(a_{p,q})\bigr),\\
\end{align*}
which is the term in the right-hand side of \eqref{formula defining strange map} (which in this case  reduces to the right-hand side of \eqref{another form of E T}). Thus  Proposition~\ref{preludium to Nica Toeplitz uniqueness} implies that there is a $*$-homomorphism $\Phi_*$ from $C_r^*(\B^\theta)$ onto $\NT_{\LL}^r(\KK)$ such that $T\rtimes P=\Phi_*\circ (\Phi\rtimes P)$. Since $E^\Phi$ is faithful, the same proposition shows that in fact $\Phi_*$ is an isomorphism. This  implies the assertion.
\end{proof}
\begin{rem}\label{rem:on Fell bundles from controlled homomorphisms}
Under the assumptions of Theorem \ref{amenability main result}, the Fell bundle $\B^\theta$ can be constructed using any injective Nica covariant representation $\Phi$ satisfying \eqref{Toeplitz like condition}. More specifically, given such a representation $\Phi$ we define
$$
B_g^{\Phi,\theta}:=\clsp\big\{ \Phi\left(\KK(p,q)\right):p,q \in P, g=\theta(p)\theta(q)^{-1}  \big\}\,\,\text{ if }\,\,g\in \theta(P)\theta(P)^{-1}
$$
and $B_g^{\Phi,\theta}:=\{0\}$ for $g\notin \theta(P)\theta(P)^{-1}$. Then  $\B^{\Phi,\theta}:=\{B_g^{\Phi,\theta}\}_{g\in G}$ is a Fell bundle isomorphic to  $\B^\theta$. Indeed, $\Phi\rtimes P:B_e^{\theta}\to B_e^{\Phi,\theta}$  is an isomorphism by Theorem \ref{theorem for amenability and spectral subspaces}, and hence by the $C^*$-equality  all the mappings $\Phi\rtimes P:B_g^{\theta}\to B_g^{\Phi,\theta}$, $g\in G$, are isomorphisms.
\end{rem}

The condition of amenability of $\B^\theta$ in Theorem~\ref{amenability main result} is satisfied for instance when
$G$ is amenable or  $\B^\theta$ has the approximation property, \cite{Exel}. In particular, we get  the
following generalization of  \cite[Corollary 8.2]{F99}.

\begin{cor}\label{amenability of free products}
Suppose that $P:=\prod^*_{i\in I} P_i$ is the free product of a family of right LCM semigroups $P_i$, $i\in I$, where each $P_i$ is a subsemigroup of an amenable group $G_i$.
Any well-aligned ideal $\KK$  in a right-tensor $C^*$-precategory $\LL$ over  $P$ is amenable.
\end{cor}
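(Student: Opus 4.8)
The plan is to exhibit $\KK$ as the section algebra of a Fell bundle over an amenable group and then invoke Theorem~\ref{amenability main result}. The mechanism producing the relevant controlled map is already in place: the homomorphism $\theta\colon P\to\P$ from $\prod^\ast_{i\in I}P_i$ to $\P:=\bigoplus_{i\in I}P_i$ that is the identity on each $P_i$.

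First I would verify that $\theta$ is a controlled map of right LCM semigroups in the sense of Definition~\ref{def:controlled map}. Since $\P$ is a right LCM semigroup (its semilattice of principal right ideals being $\bigoplus_{i\in I}J(P_i)$), and since Proposition~\ref{form free products to direct sums} establishes $\theta(P^*)=\P^*$ together with the two relations in \eqref{lcb preserver2}, which are exactly conditions \eqref{lcb preserver} and \eqref{injectivity in the same direction}, the map $\theta$ satisfies every requirement of Definition~\ref{def:controlled map}.

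Next I would embed $\P$ into an amenable group. Because each $P_i$ is a subsemigroup of the amenable group $G_i$, we have $\P=\bigoplus_{i\in I}P_i\subseteq G:=\bigoplus_{i\in I}G_i$ as a subsemigroup of the restricted direct sum group $G$. The group $G$ is amenable: it is the directed union, over the finite subsets $F\subseteq I$, of the finite direct products $\bigoplus_{i\in F}G_i$, each of which is amenable as a finite product of amenable groups, and amenability passes to directed unions of subgroups. Thus $\theta\colon P\to\P\subseteq G$ meets the hypotheses of Theorem~\ref{amenability main result}.

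Finally I would apply Theorem~\ref{amenability main result} to the Fell bundle $\B^\theta$ over $G$ associated with this $\theta$. By the remark immediately following that theorem, amenability of $G$ forces $\B^\theta$ to be amenable, and hence the regular representation $T\rtimes P\colon\NT_{\LL}(\KK)\to\NT_{\LL}^r(\KK)$ is an isomorphism; that is, $\KK$ is amenable. I do not anticipate a serious obstacle here, as everything reduces to the machinery already developed; the only point demanding any care is the permanence of amenability under the passage from the family $\{G_i\}$ to $\bigoplus_{i\in I}G_i$, which is a standard fact about amenable groups.
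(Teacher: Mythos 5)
Your proof is correct and is essentially identical to the paper's: both use the homomorphism $\theta\colon P\to\bigoplus_{i\in I}P_i\subseteq\bigoplus_{i\in I}G_i$ from Proposition~\ref{form free products to direct sums} as the controlled map required by Theorem~\ref{amenability main result}, together with amenability of the direct sum group. Your write-up merely spells out two points the paper leaves implicit --- that the relations in \eqref{lcb preserver2} are exactly the conditions of Definition~\ref{def:controlled map}, and that amenability of $\bigoplus_{i\in I}G_i$ follows from its being a directed union of finite products of amenable groups --- which is fine but not a different argument.
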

\begin{proof}
The direct sum $G:=\bigoplus_{i\in I} G_i$ is an amenable group. By Proposition \ref{form free products to direct sums}, we have a homomorphism $\theta:P \to G$ which satisfies the assumptions of Theorem \ref{amenability main result}. Hence the assertion follows from Theorem \ref{amenability main result}.
\end{proof}

\section{Projections associated to Nica covariant representations}\label{Projections section}
In this section we fix a Nica covariant  representation  $\Phi:\KK\to\B(H)$ of a well-aligned ideal $\KK$ in a right-tensor $C^*$-precategory $\LL$. We let $\overline{\Phi}:\LL\to \B(H)$ be the extension of $\Phi$  from Proposition \ref{extensions of representations on Hilbert spaces0}. Our goal is to investigate two families of projections associated to $\Phi$: $\{Q^\Phi_{p}\}_{p\in P}$ and $\{Q^\Phi_{\langle p \rangle}\}_{p\in P}$. The former are projections onto the essential spaces we used to define $\overline{\Phi}$. The latter  can be considered   analogues of projections we associated to the Fock representation in Lemma \ref{lemma-proj-T-semillatice}.  In general,  the family $\{Q^\Phi_{p}\}_{p\in P}$ has some good properties that $\{Q^\Phi_{\langle p \rangle}\}_{p\in P}$  lacks  and vice versa. Thus the case when these families coincide is desirable and we give a natural condition implying that.
\begin{defn}\label{definition of family projections2}
For each $p\in P$, we denote by $Q^\Phi_p$ and $Q^\Phi_{\langle p\rangle}$   projections in $\B(H)$ defined by
\begin{equation}\label{definition of family projections}
Q^\Phi_{p}H=\begin{cases}
\Phi(\KK(p,p))H  & \textrm{ if } p\in P\setminus P^*,
\\
C^*(\Phi(\KK))H & \textrm{ if } p\in  P^*.
\end{cases}
\end{equation}
and $Q^\Phi_{\langle p\rangle}H=\clsp\{ \Phi(\KK(w,w))H :  w\in  pP\}$
(so we have $Q^\Phi_{\langle p\rangle}=\bigvee_{w \geq p }Q^\Phi_w$).
\end{defn}
\begin{lem}\label{properties of family projections}
There is a well defined mapping $J(P) \longmapsto \Proj(\B(H))$ given by the assignment
\begin{equation}\label{mapping to be semilattice homo}
 pP \longmapsto Q^\Phi_{p}\quad \text{ and }\quad \emptyset \longmapsto  0
\end{equation}
which sends comparable ideals (in the sense of inclusion) to commuting projections and has the property that $Q_{p}^{\Phi}Q_{q}^{\Phi}=0$ whenever $pP\cap qP=\emptyset$ for $p,q\in P$.
Moreover, for every $p, q, s \in P$ and $a\in \KK(p,q)$
we have
\begin{equation}\label{action on projections general}
\Phi(a)Q^\Phi_s=\begin{cases}
\overline{\Phi}(a\otimes 1_{q^{-1}r})Q^\Phi_s  & \textrm{ if } sP\cap qP=rP \textrm{ for some } r\in P,
\\
0 & \textrm{ if }sP\cap qP=\emptyset.
\end{cases}
\end{equation}
If additionally $d\in \LL(s,s)$ and $t \geq s$, then $\overline{\Phi}(d)Q^\Phi_t= Q^\Phi_t\overline{\Phi}(d)$ and
\begin{equation}\label{action on projections general2}
\Phi(a)\overline{\Phi}(d)=\begin{cases}
\overline{\Phi}(a\otimes 1_{q^{-1}r})\overline{\Phi}(d)  & \textrm{ if } sP\cap qP=rP \textrm{ for some } r\in P,
\\
0 & \textrm{ if }sP\cap qP=\emptyset.
\end{cases}
\end{equation}
\end{lem}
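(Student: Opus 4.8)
The plan is to reduce everything to the defining properties of $\overline\Phi$ from Proposition \ref{extensions of representations on Hilbert spaces0} together with two facts about the projections $Q^\Phi_p$. Throughout I will use the support identities $\Phi(a)=Q^\Phi_p\Phi(a)=\Phi(a)Q^\Phi_q$ for $a\in\KK(p,q)$ and $\overline\Phi(d)=Q^\Phi_s\overline\Phi(d)=\overline\Phi(d)Q^\Phi_s$ for $d\in\LL(s,s)$: the identity $\overline\Phi(d)=\overline\Phi(d)Q^\Phi_s$ is exactly the essential-space condition of Proposition \ref{extensions of representations on Hilbert spaces0}, the identity $Q^\Phi_s\overline\Phi(d)=\overline\Phi(d)$ holds since $\overline\Phi(d)\Phi(c)=\Phi(dc)\in\Phi(\KK(s,s))$ for $c\in\KK(s,s)$, and the identities for $\Phi(a)$ follow by factoring $a$ through approximate units of $\KK(p,p)$ and $\KK(q,q)$. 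For the well-definedness of $pP\mapsto Q^\Phi_p$ I would invoke Lemma \ref{lemma on automorphic actions on ideals}: if $q=px$ with $x\in P^*$ then $\otimes 1_x$ carries $\KK(p,p)$ onto $\KK(q,q)$ and $\Phi=\Phi\circ(\otimes 1_x)$ on $\KK$, so $\Phi(\KK(p,p))=\Phi(\KK(q,q))$ and the essential spaces agree (the value on $P^*$ is fixed by \eqref{definition of family projections}). Orthogonality when $pP\cap qP=\emptyset$ is immediate from \eqref{Nica covariance}, which forces $\Phi(\KK(p,p))H\perp\Phi(\KK(q,q))H$.

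The commutation of comparable projections I would prove by range preservation rather than by a formula. If $p\geq q$, then for $a\in\KK(p,p)$ Nica covariance gives $Q^\Phi_q\Phi(a)h=\lim_\mu\Phi((u^q_\mu\otimes 1_{q^{-1}p})a)h$ with each argument in $\KK(p,p)$, so $Q^\Phi_q$ maps $Q^\Phi_p H$ into itself; with self-adjointness this yields $[Q^\Phi_p,Q^\Phi_q]=0$, which is the semilattice assertion. The same computation delivers the containment driving the rest of the proof: whenever $qP\cap sP=rP$, then for $b\in\KK(s,s)$ one has $Q^\Phi_q\Phi(b)h=\lim_\mu\Phi((u^q_\mu\otimes 1_{q^{-1}r})(b\otimes 1_{s^{-1}r}))h$, where each argument lies in $\KK(r,r)$ by well-alignment \eqref{compact alignment relation}; hence $Q^\Phi_q\,\Phi(\KK(s,s))H\subseteq Q^\Phi_r H$, i.e. $Q^\Phi_q Q^\Phi_s$ has range in $Q^\Phi_r H$.

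The analytic heart is the identity $\overline\Phi(a\otimes 1_{q^{-1}r})=\Phi(a)Q^\Phi_r$, valid for $a\in\KK(p,q)$ and any $r\geq q$. I would prove it by testing against an approximate unit $(u_\lambda)$ of $\KK(r,r)$: Nica covariance gives $\Phi(a)\Phi(u_\lambda)=\Phi((a\otimes 1_{q^{-1}r})u_\lambda)$, while the defining relation of $\overline\Phi$ gives $\overline\Phi(a\otimes 1_{q^{-1}r})\Phi(u_\lambda)=\Phi((a\otimes 1_{q^{-1}r})u_\lambda)$; since $\Phi(u_\lambda)\to Q^\Phi_r$ strongly and $\overline\Phi(a\otimes 1_{q^{-1}r})Q^\Phi_r=\overline\Phi(a\otimes 1_{q^{-1}r})$, the two strong limits coincide. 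This is precisely where a naive route stalls: one is tempted to show $(a\otimes 1_{q^{-1}r})u_\lambda(b\otimes 1_{s^{-1}r})\to(a\otimes 1_{q^{-1}r})(b\otimes 1_{s^{-1}r})$ in norm, which can fail when the relevant left actions are degenerate, so I expect the main obstacle to be resisting that temptation and routing the argument through the projections instead. Granting the identity, the nonzero case of \eqref{action on projections general} becomes $\Phi(a)Q^\Phi_s=\Phi(a)Q^\Phi_rQ^\Phi_s$, i.e. $\Phi(a)(1-Q^\Phi_r)Q^\Phi_s=0$; inserting $\Phi(a)=\Phi(a)Q^\Phi_q$, commuting $Q^\Phi_q$ past $1-Q^\Phi_r$, and using that $Q^\Phi_qQ^\Phi_s$ has range in $Q^\Phi_r H$ kills this term. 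The vanishing case $qP\cap sP=\emptyset$ is again read off \eqref{Nica covariance}.

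Finally I would deduce the statements involving $d\in\LL(s,s)$. For $t\geq s$ the commutation $\overline\Phi(d)Q^\Phi_t=Q^\Phi_t\overline\Phi(d)$ follows by range preservation: for $c\in\KK(t,t)$, combining $\overline\Phi(d)\Phi(u^s_\mu)=\Phi(du^s_\mu)$ with Nica covariance gives $\overline\Phi(d)\Phi(c)h=\lim_\mu\Phi((d\otimes 1_{s^{-1}t})(u^s_\mu\otimes 1_{s^{-1}t})c)h$ with each argument in $\KK(t,t)$, so both $\overline\Phi(d)$ and $\overline\Phi(d^*)$ preserve $Q^\Phi_t H$, whence commutation after taking adjoints. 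For \eqref{action on projections general2} I would use $\overline\Phi(d)=Q^\Phi_s\overline\Phi(d)$ together with $\overline\Phi(a\otimes 1_{q^{-1}r})=\Phi(a)Q^\Phi_r$ to write the difference $\Phi(a)\overline\Phi(d)-\overline\Phi(a\otimes 1_{q^{-1}r})\overline\Phi(d)$ as $\Phi(a)(1-Q^\Phi_r)Q^\Phi_s\overline\Phi(d)$, whose factor $\Phi(a)(1-Q^\Phi_r)Q^\Phi_s$ already vanished in the proof of \eqref{action on projections general}; the empty-intersection case reduces to $\Phi(a)Q^\Phi_s=0$ in the same way.
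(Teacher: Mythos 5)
Your proof is correct: every step can be justified from Proposition \ref{extensions of representations on Hilbert spaces0}, Nica covariance \eqref{Nica covariance} and well-alignment \eqref{compact alignment relation}, and your well-definedness and orthogonality arguments coincide with the paper's. The organization, however, differs from the paper's in two substantive ways. For the commutation statements the paper argues through commutants: from $\Phi(a)\Phi(b)=\Phi((a\otimes 1_{p^{-1}q})b)$ it deduces $Q^\Phi_q\in\Phi_{p,p}(\KK(p,p))'$, and then invokes the inclusion $\Phi_{p,p}(\KK(p,p))'\subseteq\overline{\Phi}_{p,p}(\LL(p,p))'$ (a consequence of \eqref{formula defining extensions of right tensor representations}, since each $\overline{\Phi}(d)$ is a strong limit of elements $\Phi(d\mu^p_\lambda)\in\Phi(\KK(p,p))$); this single membership yields both $[Q^\Phi_p,Q^\Phi_q]=0$ and the later commutation $\overline{\Phi}(d)Q^\Phi_t=Q^\Phi_t\overline{\Phi}(d)$ at once, whereas your range-preservation route has to rerun a separate covariance computation for the $\LL(s,s)$ case. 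For \eqref{action on projections general} the paper's pivot is the operator identity $\Phi(a)\Phi(b)=\overline{\Phi}(a\otimes 1_{q^{-1}r})\Phi(b)$ for all $b\in\KK(s,s)$, obtained by sandwiching an approximate unit of $\KK(r,r)$ in the middle of $\overline{\Phi}(a\otimes 1_{q^{-1}r})\overline{\Phi}(b\otimes 1_{s^{-1}r})$; your pivot is instead the standalone factorization $\overline{\Phi}(a\otimes 1_{q^{-1}r})=\Phi(a)Q^\Phi_r$ for $r\geq q$, combined with the containment $\operatorname{ran}(Q^\Phi_qQ^\Phi_s)\subseteq Q^\Phi_r H$, and the two are equivalent modulo that containment. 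What your route buys is a clean reusable lemma: your factorization is precisely a $\KK$-restricted, hypothesis-free version of \eqref{action on projections special}, which in the paper only appears in Proposition \ref{properties of family projections2} under extra assumptions; what the paper's route buys is economy, since the commutant inclusion eliminates your second range-preservation argument. One caveat you share with the paper: identifying $Q^\Phi_r$ with the strong limit of $\Phi(u^r_\lambda)$ is literally valid only for $r\in P\setminus P^*$, since for $r\in P^*$ the definition \eqref{definition of family projections} gives the larger projection onto $C^*(\Phi(\KK))H$; the invertible cases are degenerate (by Lemma \ref{lemma on automorphic actions on ideals} the relevant projections or claims then coincide or trivialize), but a sentence disposing of them would make both your write-up and the paper's airtight.
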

\begin{proof}
Note first that  for every $p\in P\setminus P^*$ the projection $Q_{p}^{\Phi}$ is the strong limit of the net $\{\Phi(\mu_\lambda^p)\}_{\lambda\in \Lambda}$, where $\{\mu_{\lambda}^p\}_{\lambda\in \Lambda}$ is an approximate unit in $\KK(p,p)$.

If $pP=qP$ for some $p,q\in P$ then $q=px$ for some $x\in P^{*}$ and therefore $\Phi(\KK(q,q))=\Phi(\KK(p,p)\otimes{1_x})=\Phi(\KK(p,p))$ by Lemma \ref{lemma on automorphic actions on ideals}. Thus $Q_{p}^{\Phi}=Q_{q}^{\Phi}$, so the map in \eqref{mapping to be semilattice homo} is well defined. If $pP\cap qP=\emptyset$, then $\Phi(\KK(p,p))\Phi(\KK(q,q))=0$ by Nica covariance. Thus $Q_{p}^{\Phi}Q_{q}^{\Phi}=0$, as claimed. Now suppose that $p\leq  q$ for some $p,q\in P$, and let $a\in\KK(p,p)$. Then the equality $\Phi_{p,p}(a)\Phi_{q,q}(b)=\Phi_{q,q}((a\otimes 1_{p^{-1}q}) b)$ implies that  $\Phi_{p,p}(a)Q_{q}^{\Phi}= Q_{q}^{\Phi} \Phi_{p,p}(a)Q_{q}^{\Phi}$.
By passing to adjoints we get $Q_{q}^{\Phi}\Phi_{p,p}(a)= Q_{q}^{\Phi} \Phi_{p,p}(a)Q_{q}^{\Phi}$. Thus $Q_{q}^{\Phi}\in \Phi_{p,p}(\KK(p,p))'$. It  follows from the definition of  $\overline{\Phi}_{p,p}$, cf.  \eqref{formula defining extensions of right tensor representations}, that
$\Phi_{p,p}(\KK(p,p))' \subseteq \overline{\Phi}_{p,p}(\LL(p,p))'$. Hence
\begin{equation}\label{don't know what and what for}
Q_{q}^{\Phi}\in  \overline{\Phi}_{p,p}(\LL(p,p))',
\end{equation}
and in particular, $Q_{p}^{\Phi}Q_{q}^{\Phi}=Q_{q}^{\Phi}Q_{p}^{\Phi}$.

Let now $p, q, s \in P$ and $a\in \KK(p,q)$.   If $sP\cap qP=\emptyset$, then $\Phi(a)Q^\Phi_s=\Phi(a)Q^\Phi_qQ^\Phi_s=0$ by Lemma~\ref{about approximate units}. Assume then that $sP\cap qP=rP$.
For any $b\in \KK(s,s)$ we have
\begin{align*}
\Phi (a) \Phi(b)& =\Phi \big( (a \otimes 1_{q^{-1}r}) (b\otimes 1_{s^{-1}r})\big)=\overline{\Phi}(a\otimes 1_{q^{-1}r}) \overline{\Phi}( b\otimes 1_{s^{-1}r})
\\
&=\text{s-}\lim \overline{\Phi}(a\otimes 1_{q^{-1}r})\Phi(\mu_{\lambda}^r) \overline{\Phi}(b\otimes 1_{s^{-1}r})
\\
&
=
\text{s-}\lim \overline{\Phi}(a\otimes 1_{q^{-1}r})\Phi(\mu_{\lambda}^r (b\otimes 1_{s^{-1}r}))
\\
&=\text{s-}\lim \overline{\Phi}(a\otimes 1_{q^{-1}r})\Phi(\mu_{\lambda}^r)  \Phi(b)
\\
&=\overline{\Phi}(a\otimes 1_{q^{-1}r}) \Phi(b)
\end{align*}
 by construction of $\overline{\Phi}$ in \eqref{formula defining extensions of right tensor representations}, and Nica covariance of $\Phi$. Claim \eqref{action on projections general} follows.
It implies \eqref{action on projections general2}   because  $\overline{\Phi}(d)=Q^\Phi_s\overline{\Phi}(d)$. If $t \geq s$, then $\overline{\Phi}(d)Q^\Phi_t= Q^\Phi_t\overline{\Phi}(d)$ by \eqref{don't know what and what for}.
\end{proof}

\begin{lem}\label{properties of family projections3}
The assignment $pP \mapsto Q^\Phi_{\langle p\rangle}$ and $\emptyset \mapsto  0$ is a homomorphism $J(P) \longmapsto \Proj(\B(H))$ of semilattices, i.e. $Q^\Phi_{\langle p\rangle}  Q^\Phi_{\langle q\rangle}=Q^\Phi_{\langle r\rangle}$  when  $pP\cap qP=rP$ and $Q^\Phi_{\langle p\rangle}  Q^\Phi_{\langle q\rangle}=0$ for $pP\cap qP=\emptyset$.
In particular, $Q^\Phi_{\langle p\rangle}$ for $p\in P$ are mutually commuting projections. Moreover,
\begin{equation}\label{projections in the commutant}
Q^\Phi_{\langle q\rangle} \in \Phi(\KK(p,p))'\,\,\, \text{ for all }p,q\in P.
\end{equation}
If  $\overline{\Phi}:\LL\to \B(H)$ is Nica covariant (which happens for instance when $\KK=\LL$) then
\begin{equation}\label{action on projections special:two}
\overline{\Phi}(a)Q^\Phi_{\langle s\rangle}=\begin{cases}
\overline{\Phi}(a\otimes 1_{q^{-1}r})  & \textrm{ if } sP\cap qP=rP \textrm{ for some } r\in P,
\\
0 & \textrm{ if }sP\cap qP=\emptyset
\end{cases}
\end{equation}
for every $p, q, s \in P$ and $a\in \LL(p,q)$. In particular,
$
\{Q^\Phi_{\langle p\rangle}\}_{p\in P}\subseteq M(B_e^{\overline{\Phi}})\subseteq  M (C^*(\overline{\Phi}(\LL))).
$
\end{lem}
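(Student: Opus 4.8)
The plan is to read everything off the range description $Q^\Phi_{\langle p\rangle}H=\clsp\{\Phi(\KK(w,w))H: w\in pP\}$ of Definition \ref{definition of family projections2}; in particular the assignment $pP\mapsto Q^\Phi_{\langle p\rangle}$ is well defined because the defining set depends only on the ideal $pP$. First I would establish the commutant relation \eqref{projections in the commutant}. It suffices to show that $\Phi(\KK(p,p))$ leaves $Q^\Phi_{\langle q\rangle}H$ invariant, because $\KK(p,p)$ is $*$-closed and an invariant subspace for a $*$-closed family forces commutation with the projection onto it. Invariance is a Nica-covariance computation: for $a\in\KK(p,p)$ and $b\in\KK(v,v)$ with $v\in qP$, relation \eqref{Nica covariance} gives $\Phi(a)\Phi(b)=0$ when $pP\cap vP=\emptyset$, and otherwise $\Phi(a)\Phi(b)=\Phi\big((a\otimes 1_{p^{-1}r})(b\otimes 1_{v^{-1}r})\big)$ where $pP\cap vP=rP$; by well-alignment \eqref{compact alignment relation} this element lies in $\KK(r,r)$, and since $r\in vP\subseteq qP$ the range of $\Phi$ of it sits inside $Q^\Phi_{\langle q\rangle}H$. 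Density then yields $\Phi(a)Q^\Phi_{\langle q\rangle}H\subseteq Q^\Phi_{\langle q\rangle}H$.

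With \eqref{projections in the commutant} in hand, mutual commutativity follows: $Q^\Phi_{\langle q\rangle}$ commutes with every $\Phi(\KK(w,w))$, hence preserves each $\Phi(\KK(w,w))H$ and thus the closed span $Q^\Phi_{\langle p\rangle}H$, and self-adjointness of the two projections upgrades this invariance to commutation. For the semilattice law I would compare ranges. When $pP\cap qP=rP$, the inclusion $rP\subseteq pP\cap qP$ gives $Q^\Phi_{\langle r\rangle}\le Q^\Phi_{\langle p\rangle}Q^\Phi_{\langle q\rangle}$ immediately. For the reverse inequality I sharpen the invariance computation: for $b\in\KK(w,w)$ with $w\geq p$ and $c\in\KK(v,v)$ with $v\geq q$, the product $\Phi(b)\Phi(c)$ is either $0$ or lies in $\Phi(\KK(z,z))$ with $zP=wP\cap vP\subseteq pP\cap qP=rP$, so its range is inside $Q^\Phi_{\langle r\rangle}H$; using the commutation just established, $Q^\Phi_{\langle q\rangle}\Phi(b)=\Phi(b)Q^\Phi_{\langle q\rangle}$, and taking closed spans over such $b,c$ gives $Q^\Phi_{\langle q\rangle}Q^\Phi_{\langle p\rangle}H\subseteq Q^\Phi_{\langle r\rangle}H$. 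The case $pP\cap qP=\emptyset$ is the same computation with every relevant intersection empty, forcing $Q^\Phi_{\langle p\rangle}Q^\Phi_{\langle q\rangle}=0$.

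For the last part I would first observe that when $\overline{\Phi}$ is Nica covariant the essential-space clause of \eqref{formula defining extensions of right tensor representations} forces $\overline{\Phi}(\LL(w,w))H\subseteq Q^\Phi_wH$, while $\Phi(\KK(w,w))H$ is dense there; hence $Q^\Phi_{\langle s\rangle}$ coincides with the analogous projection built from $\overline{\Phi}$ on $\LL$. Then \eqref{action on projections special:two} follows by applying the single-projection identity \eqref{action on projections general} to $\overline{\Phi}$ regarded as a Nica covariant representation of $\LL$ (whose extension is itself): testing $\overline{\Phi}(a)Q^\Phi_{\langle s\rangle}$ on vectors $\overline{\Phi}(d)h$ with $d\in\LL(w,w)$, $w\geq s$, reduces via $\overline{\Phi}(d)h\in Q^\Phi_wH$ to $\overline{\Phi}(a)Q^\Phi_w$, and the tensor-associativity relation $a\otimes 1_{q^{-1}z}=(a\otimes 1_{q^{-1}r})\otimes 1_{r^{-1}z}$ together with $zP=wP\cap qP\subseteq rP$ identifies this with $\overline{\Phi}(a\otimes 1_{q^{-1}r})\overline{\Phi}(d)h$ (the disjoint case giving $0$). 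Finally the trailing projection is absorbed: $a\otimes 1_{q^{-1}r}$ has source $r\ge s$, so $\overline{\Phi}(a\otimes 1_{q^{-1}r})=\overline{\Phi}(a\otimes 1_{q^{-1}r})Q^\Phi_r$ with $Q^\Phi_r\le Q^\Phi_{\langle s\rangle}$. The multiplier statement then reads off \eqref{action on projections special:two}, which shows $\overline{\Phi}(\LL(p,q))Q^\Phi_{\langle s\rangle}\subseteq C^*(\overline{\Phi}(\LL))$ and, for diagonal $p=q$, lands in $B_e^{\overline{\Phi}}$; self-adjointness of $Q^\Phi_{\langle s\rangle}$ supplies the opposite-sided inclusions, exactly as in Lemma \ref{properties of semilattice projections in the reduced}.

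The main obstacle is the reverse inequality in the semilattice law: one must convert the invariance computation into a genuine containment of ranges $Q^\Phi_{\langle q\rangle}Q^\Phi_{\langle p\rangle}H\subseteq Q^\Phi_{\langle r\rangle}H$ rather than mere commutation, and this is precisely where Nica covariance, well-alignment, and the order structure of $J(P)$ must be combined. A secondary technical point is the identification of $Q^\Phi_{\langle s\rangle}$ with the projection built from $\overline{\Phi}$, for which the essential-space clause of \eqref{formula defining extensions of right tensor representations} is indispensable.
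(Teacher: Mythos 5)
Your proof is correct, but it is organized differently from the paper's. The paper first introduces the auxiliary $C^*$-algebras $C_p^*(\Phi):=\clsp\{\Phi(\KK(w,w)):w\in pP\}$ and proves the semilattice relation at the algebra level, $C_p^*(\Phi)C_q^*(\Phi)=C_p^*(\Phi)\cap C_q^*(\Phi)=C_r^*(\Phi)$ (or $\{0\}$); since $Q^\Phi_{\langle p\rangle}$ is the strong limit of an approximate unit of $C_p^*(\Phi)$, both the projection semilattice law and the commutant relation \eqref{projections in the commutant} are then read off at once. You reverse this order: you prove \eqref{projections in the commutant} first, spatially, via the standard fact that a $*$-closed family of operators leaving a closed subspace invariant commutes with the projection onto it, and then use that commutation to squeeze the range of $Q^\Phi_{\langle p\rangle}Q^\Phi_{\langle q\rangle}$ against $Q^\Phi_{\langle r\rangle}$ from both sides. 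Both routes rest on exactly the same Nica-covariance/well-alignment computation (products $\Phi(b)\Phi(c)$ land in $\Phi(\KK(z,z))$ with $zP=wP\cap vP$ contained in the relevant ideal), so this is a repackaging rather than a new idea, but your version avoids approximate units and strong limits. For \eqref{action on projections special:two} the paper redoes a direct computation (Nica covariance of $\overline{\Phi}$ applied twice, then strong limits over an approximate unit of $C_s^*(\Phi)$), whereas you reuse the already-proved identity \eqref{action on projections general} for $\overline{\Phi}$ viewed as a Nica covariant representation of the ideal $\LL$ in itself; this is legitimate and more economical, but it hinges on the identifications $Q^{\overline{\Phi}}_w=Q^\Phi_w$ and $Q^{\overline{\Phi}}_{\langle s\rangle}=Q^\Phi_{\langle s\rangle}$, and your justification of these has a small imprecision: for $w\in P^*$ the projection $Q^\Phi_w$ is, by Definition \ref{definition of family projections2}, the projection onto $C^*(\Phi(\KK))H$, so your claim that $\Phi(\KK(w,w))H$ is dense in $Q^\Phi_wH$ can fail at units. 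The identifications are nevertheless true: both clauses of \eqref{formula defining extensions of right tensor representations} give $\overline{\Phi}(\LL(w,w))H\subseteq\overline{\Phi(\KK(w,w))H}$ for every $w$, and Lemma \ref{about approximate units} yields $C^*(\overline{\Phi}(\LL))H=C^*(\Phi(\KK))H$, which settles the unit case. So this is a one-line repair, not a gap, and the rest of your argument goes through as written.
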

\begin{proof}
 By Nica covariance of $\Phi$,   $C_p^*(\Phi):=\clsp\{\Phi(\KK(w,w)):  w\in pP\}$
is a $C^*$-algebra, for each $p\in P$.  We claim that, for every $p,q\in P$ we have
\begin{equation}\label{semi-lattice of C*-algebras}
C_p^*(\Phi)  C_q^*(\Phi)=C_p^*(\Phi)\cap   C_q^*(\Phi)=\begin{cases}
C_r^*(\Phi) & \textrm{ if } pP\cap qP=rP \textrm{ for some } r\in P,
\\
0 & \textrm{ otherwise}.
\end{cases}
\end{equation}
Indeed, if $pP\cap qP=\emptyset$, then  for any $w\in pP$ and $v\in  qP$ we have $wP\cap vP=\emptyset$ and hence $\Phi(\KK(w,w))\Phi(\KK(v,v))=\{0\}$ by Nica covariance. Accordingly, in this case  $C_p^*(\Phi) C_q^*(\Phi)=\{0\}$.  Assume next that $pP\cap qP=rP$ for some $r\in P$. Then  $C_r^*(\Phi)\subseteq C_q^*(\Phi)\cap C_p^*(\Phi)\subseteq C_q^*(\Phi) C_p^*(\Phi)$. The reverse inclusion $C_q^*(\Phi) C_p^*(\Phi) \subseteq C_r^*(\Phi)$ readily follows from Nica covariance of $\Phi$.

Plainly, for every $p\in P$, the  essential space $C_p^*(\Phi)H$ for $C_p^*(\Phi)$ is equal to  $Q^\Phi_{\langle p \rangle}H$.
 In particular, $Q^\Phi_{\langle p\rangle}$ is a strong limit of any approximate unit in $C_p^*(\Phi)$, and \eqref{semi-lattice of C*-algebras} justifies the claim about the semilattice homomorphism. Morever, \eqref{semi-lattice of C*-algebras}  implies that for every $p,q\in P$ and $a\in \KK(p,p)$ we have  $\Phi(a)Q_{\langle q\rangle}^{\Phi}=Q_{\langle q\rangle}^{\Phi}\Phi(a)Q_{\langle q\rangle}^{\Phi}$. By passing to adjoints we get  $Q_{\langle q\rangle}^{\Phi}\Phi(a)=Q_{\langle q\rangle}^{\Phi}\Phi(a)Q_{\langle q\rangle}^{\Phi}$ and therefore $\Phi(a)Q_{\langle q\rangle}^{\Phi}=Q_{\langle q\rangle}^{\Phi}\Phi(a)$ which proves \eqref{projections in the commutant}.

 Suppose now that $\overline{\Phi}:\LL\to \B(H)$ is Nica covariant. Let $p, q, s \in P$ and $a\in \LL(p,q)$. If  $sP\cap qP=\emptyset$, then $\overline{\Phi}(a) C_s^*(\Phi)=\{0\}$ by Nica covariance, and therefore $\overline{\Phi}(a)Q^\Phi_{\langle s\rangle}=0$.
Assume that $sP\cap qP=rP$.  Let $w\in sP$ and $b\in \KK(w,w)$. Since $
rP\cap wP=(sP\cap qP) \cap wP=sP\cap (qP \cap wP)=sP\cap tP$ for some $t\in P $, using   Nica covariance of $\overline{\Phi}$ twice we get
$$
\overline{\Phi}(a) \Phi(b)= \overline{\Phi}\big((a\otimes 1_{q^{-1}t}) (b\otimes 1_{w^{-1}t})\big)=\overline{\Phi}(a\otimes 1_{q^{-1}r}) \Phi(b).
$$
This implies that $\overline{\Phi}(a)Q^\Phi_{\langle s\rangle}=\overline{\Phi}(a\otimes 1_{q^{-1}r})Q^\Phi_{\langle s\rangle}$. Since $\overline{\Phi}(a\otimes 1_{q^{-1}r})=\overline{\Phi}(a\otimes 1_{q^{-1}r})Q^\Phi_{\langle r\rangle}$ and $Q^\Phi_{\langle r\rangle} \leq Q^\Phi_{\langle s\rangle}$, we get $\overline{\Phi}(a)Q^\Phi_{\langle s\rangle}=
\overline{\Phi}(a\otimes 1_{q^{-1}r})$. This proves \eqref{action on projections special:two}.

Relation  \eqref{action on projections special:two} readily implies $ B_e^{\overline{\Phi}} Q^\Phi_{\langle p\rangle}  \subseteq  B_e^{\overline{\Phi}}$. By taking adjoints we obtain $Q^\Phi_{\langle p\rangle} B_e^{\overline{\Phi}}\subseteq B_e^{\overline{\Phi}}$.  Thus assuming the standard identification $
M(B_e^{\overline{\Phi}})=\{a\in Q^\Phi_{\langle e\rangle}\B(H)Q^\Phi_{\langle e\rangle}: aB_{e}^{\overline{\Phi}},\, B_{ e}^{\overline{\Phi}}a \subseteq B_e^{\overline{\Phi}} \}
$ we conclude that $Q^\Phi_{\langle p\rangle}\in M(B_e^{\overline{\Phi}})$.

\end{proof}

\begin{prop}\label{properties of family projections2} 
The following conditions are equivalent:

\textnormal{(i)} $Q^\Phi_{p}=Q^\Phi_{\langle p\rangle}$  for every $p\in P$;

 \textnormal{(ii)} The map \eqref{mapping to be semilattice homo}
is a semilattice homomorphism;

\textnormal{(iii)}
The map  \eqref{mapping to be semilattice homo}  is a pre-order homomorphism  ($pP\subseteq  qP$ implies  $Q_{p}^{\Phi} \leq Q_{q}^{\Phi}$);

\textnormal{(iv)} The extension $\overline{\Phi}:\LL\to \B(H)$ of $\Phi$ is Nica covariant, and
\begin{equation}\label{action on projections special}
\overline{\Phi}(a)Q^\Phi_s=\begin{cases}
\overline{\Phi}(a\otimes 1_{q^{-1}r})  & \textrm{ if } sP\cap qP=rP \textrm{ for some } r\in P,
\\
0 & \textrm{ if }sP\cap qP=\emptyset,
\end{cases}
\end{equation}
for every $p, q, s \in P$ and $a\in \LL(p,q)$.

In particular, if the above equivalent conditions hold, then  $Q^\Phi_p=Q^\Phi_{\langle p\rangle}\in \overline{\Phi}(\LL(q,q))'$ for all $p,q\in P$, and we have $
\{Q^\Phi_{ p}\}_{p\in P}=\{Q^\Phi_{\langle p\rangle}\}_{p\in P}\subseteq M(B_e^{\overline{\Phi}})\subseteq  M (C^*(\overline{\Phi}(\LL))).
$
\end{prop}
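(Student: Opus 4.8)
The plan is to run the soft cycle (i)$\Rightarrow$(ii)$\Rightarrow$(iii)$\Rightarrow$(i) and then close up with (iv)$\Rightarrow$(i) and the hard implication (iii)$\Rightarrow$(iv). The first three are immediate. Since $pP\mapsto Q^\Phi_{\langle p\rangle}$ is a semilattice homomorphism by Lemma~\ref{properties of family projections3}, (i) transports this to $pP\mapsto Q^\Phi_p$, giving (ii); a semilattice homomorphism sends $pP\subseteq qP$ (so $pP\cap qP=pP$) to $Q^\Phi_pQ^\Phi_q=Q^\Phi_p$, i.e. $Q^\Phi_p\le Q^\Phi_q$, which is (iii); and under (iii), for each $w\ge p$ one has $wP\subseteq pP$, hence $Q^\Phi_w\le Q^\Phi_p$, so $Q^\Phi_{\langle p\rangle}=\bigvee_{w\ge p}Q^\Phi_w\le Q^\Phi_p\le Q^\Phi_{\langle p\rangle}$, forcing (i). For (iv)$\Rightarrow$(i) I would use \eqref{action on projections special}: fixing $s$ and $w\ge s$, for $b\in\KK(w,w)$ we have $\overline{\Phi}(b)=\Phi(b)$ and $sP\cap wP=wP$, so \eqref{action on projections special} gives $\Phi(b)Q^\Phi_s=\overline{\Phi}(b\otimes 1_{w^{-1}w})=\Phi(b)$; taking adjoints over $b$ shows $\Phi(\KK(w,w))H\subseteq Q^\Phi_sH$, and letting $w$ vary yields $Q^\Phi_{\langle s\rangle}\le Q^\Phi_s$, while the reverse inequality always holds because $s\in sP$. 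The final display on invertibles is automatic since $Q^\Phi_p=Q^\Phi_{\langle p\rangle}$ whenever $p\in P^*$ (then $pP=P$).

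The substantial implication is (iii)$\Rightarrow$(iv). I would first reduce \eqref{action on projections special} to Nica covariance of $\overline{\Phi}$: once $\overline{\Phi}$ is Nica covariant, Lemma~\ref{properties of family projections3} gives $\overline{\Phi}(a)Q^\Phi_{\langle s\rangle}=\overline{\Phi}(a\otimes 1_{q^{-1}r})$ (or $0$), and substituting $Q^\Phi_{\langle s\rangle}=Q^\Phi_s$ from (i) produces exactly \eqref{action on projections special}. To prove $\overline{\Phi}$ Nica covariant, take $a\in\LL(p,q)$, $b\in\LL(s,t)$. Using the source projection of $a$ and range projection of $b$ (each dominated by the corresponding $Q^\Phi_{\,\cdot}$) together with (i) and the semilattice property of $\{Q^\Phi_{\langle\cdot\rangle}\}$, one gets $\overline{\Phi}(a)\overline{\Phi}(b)=\overline{\Phi}(a)Q^\Phi_qQ^\Phi_s\overline{\Phi}(b)$, which is $0$ if $qP\cap sP=\emptyset$ and equals $\overline{\Phi}(a)Q^\Phi_{\langle r\rangle}\overline{\Phi}(b)$ if $qP\cap sP=rP$. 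It remains to identify the latter with $\overline{\Phi}\big((a\otimes 1_{q^{-1}r})(b\otimes 1_{s^{-1}r})\big)$.

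For that identification I would approximate in the strong topology: write $\overline{\Phi}(a)$ and $\overline{\Phi}(b)$ as strong limits of $\Phi(a\mu_\lambda)$ and $\Phi(b\nu_\kappa)$, where $\mu_\lambda,\nu_\kappa$ are approximate units of $\KK(q,q)$ and $\KK(t,t)$, apply the genuine Nica covariance of $\Phi$ on the ideal $\KK$, and re-expand through the homomorphism $\overline{\Phi}$. This presents the product as the target $\overline{\Phi}(a\otimes 1_{q^{-1}r})\overline{\Phi}(b\otimes 1_{s^{-1}r})$ sandwiching two strong limits $P_q,P_s$ of $\overline{\Phi}(\mu_\lambda\otimes 1_{q^{-1}r})$ and $\overline{\Phi}(\nu_\kappa\otimes 1_{s^{-1}r})$, so that everything comes down to the absorption identities $\overline{\Phi}(a\otimes 1_{q^{-1}r})P_q=\overline{\Phi}(a\otimes 1_{q^{-1}r})$ and the analogue for $P_s$.

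The main obstacle is precisely this absorption, and it is where (iii) and the units intervene. Applying \eqref{action on projections general} to $\mu_\lambda$ and the projection $Q^\Phi_r$ gives $\Phi(\mu_\lambda)Q^\Phi_r=\overline{\Phi}(\mu_\lambda\otimes 1_{q^{-1}r})Q^\Phi_r$, and passing to the strong limit yields $R_qQ^\Phi_r=P_qQ^\Phi_r$, where $R_q$ is the projection onto $\overline{\Phi(\KK(q,q))H}$. When $q\notin P^*$ we have $R_q=Q^\Phi_q$, and $r\ge q$ together with (iii) gives $Q^\Phi_r\le Q^\Phi_q$, whence $P_qQ^\Phi_r=Q^\Phi_r$; since $\overline{\Phi}(a\otimes 1_{q^{-1}r})=\overline{\Phi}(a\otimes 1_{q^{-1}r})Q^\Phi_r$ this absorbs $P_q$, and the same reasoning disposes of $P_s$ when $t\notin P^*$. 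The delicate point is invertible source indices, where $R_q\subsetneq Q^\Phi_q$ in general and the above domination fails; I would handle these through the $\overline{\Phi}$-analogue $\overline{\Phi}(a)=\overline{\Phi}(a\otimes 1_x)$ for $x\in P^*$ (proved exactly as in Lemma~\ref{lemma on automorphic actions on ideals}, using that $\otimes 1_x$ is an automorphism so that $\mu_\lambda\otimes 1_x$ is again an approximate unit), reducing invertible sources to the identity object while exploiting the special definition of $Q^\Phi_p$ for $p\in P^*$. This units bookkeeping is the genuinely new difficulty beyond the quasi-lattice case $P^*=\{e\}$. Finally, the concluding assertions are immediate once $\overline{\Phi}$ is Nica covariant: $Q^\Phi_p=Q^\Phi_{\langle p\rangle}\in\overline{\Phi}(\LL(q,q))'$ follows from \eqref{projections in the commutant} applied to $\overline{\Phi}$, and the multiplier statement is the last sentence of Lemma~\ref{properties of family projections3}.
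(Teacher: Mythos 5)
Your soft implications are all correct, and in two places you improve on the paper's route: the paper closes the cycle as (i)$\Rightarrow$(ii)$\Rightarrow$(iii)$\Rightarrow$(iv)$\Rightarrow$(i), whereas you obtain (iii)$\Rightarrow$(i) directly from $Q^\Phi_{\langle p\rangle}=\bigvee_{w\geq p}Q^\Phi_w\leq Q^\Phi_p\leq Q^\Phi_{\langle p\rangle}$, and your (iv)$\Rightarrow$(i) is essentially the paper's. Your plan for the hard implication (iii)$\Rightarrow$(iv) is also the paper's plan: run everything through strong limits of $\Phi(a\mu^q_\lambda)$, use Nica covariance of $\Phi$ on the ideal together with \eqref{action on projections general}, and reduce to absorption identities. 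In the regime where the source objects $q$ (and $t$) are \emph{not} units your absorption argument is correct, and is in fact more careful than the paper's displayed computation, which silently uses $\text{s-}\lim_\lambda\Phi(\mu^q_\lambda)=Q^\Phi_q$; writing $R_q$ for the projection onto $\overline{\Phi(\KK(q,q))H}$, that identity says $R_q=Q^\Phi_q$, which by \eqref{definition of family projections} holds only for $q\in P\setminus P^*$.

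The genuine gap is the case $q\in P^*$, which you defer to ``units bookkeeping''; this cannot be repaired along the lines you propose, and in fact cannot be repaired at all under the stated definitions. First, your framing is off: since $e\in P^*$ always, this case is \emph{not} a difficulty ``beyond the quasi-lattice case'' --- it is present even when $P^*=\{e\}$. The reduction $\overline{\Phi}(a\otimes 1_x)=\overline{\Phi}(a)$, $x\in P^*$, merely relocates the problem to $q=e$, and there condition (iii) gives you nothing: by \eqref{definition of family projections}, $Q^\Phi_q$ for invertible $q$ is the projection onto $\overline{C^*(\Phi(\KK))H}$, so every inequality in (iii) whose larger side is indexed by a unit is automatic, and (iii) carries no information about $R_e$ --- which is exactly what your absorption identity $\overline{\Phi}(a\otimes 1_{r})\,\bigl(\text{s-}\lim_\lambda\overline{\Phi}(\mu^{e}_\lambda\otimes 1_{r})\bigr)=\overline{\Phi}(a\otimes 1_{r})$ is about (it amounts to $\mu^e_\lambda\otimes 1_r$ acting as an approximate unit, i.e.\ to nondegeneracy of the right tensoring on the unit fibre). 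The implication itself fails: take $P=\N$, $\LL(n,n)=\C^2$, $\LL(n,m)=\{0\}$ for $n\neq m$, with right tensoring the identity maps; let $\KK(0,0)=\C\oplus\{0\}$ and $\KK(n,n)=\C^2$ for $n\geq1$; on $H=\C^2$ set $\Phi_{0,0}\bigl((z,0)\bigr)=zE_{11}$ and $\Phi_{n,n}\bigl((z,w)\bigr)=zE_{11}+wE_{22}$ for $n\geq1$, where $E_{11},E_{22}$ are the diagonal matrix units. One checks that $\Phi$ is Nica covariant and that every projection $Q^\Phi_p$ and $Q^\Phi_{\langle p\rangle}$ equals $1$, so (i), (ii), (iii) all hold; but $\overline{\Phi}_{0,0}\bigl((0,1)\bigr)=0$ while $\overline{\Phi}_{1,1}\bigl((0,1)\bigr)=E_{22}$, so Nica covariance of $\overline{\Phi}$ fails (it would force $0=\overline{\Phi}_{0,0}\bigl((0,1)\bigr)\overline{\Phi}_{1,1}\bigl((0,1)\bigr)=\overline{\Phi}_{1,1}\bigl((0,1)\bigr)=E_{22}$), and \eqref{action on projections special} fails for $a=(0,1)\in\LL(0,0)$, $s=1$; hence (iv) fails.

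The same example breaks the paper's proof at the step where $\text{s-}\lim_\lambda\overline{\Phi}(a\otimes 1_{q^{-1}r})\Phi(\mu_{\lambda}^q)Q^\Phi_s$ is replaced by $\overline{\Phi}(a\otimes 1_{q^{-1}r})Q^\Phi_rQ^\Phi_qQ^\Phi_s$, so this is a lacuna in the proposition itself and not just in your write-up (the example even satisfies the paper's definition of $\otimes1$-nondegeneracy, which quantifies only over $p\in P\setminus P^*$, so Proposition~\ref{properties of a semi lattice of projections} is affected as well). The statement, your argument, and the paper's computation all become correct simultaneously if either (a) \eqref{definition of family projections} is changed to $Q^\Phi_pH=\overline{\Phi(\KK(p,p))H}$ for \emph{all} $p$, units included --- then (iii) applied with $q\in P^*$ supplies exactly the missing inclusions $Q^\Phi_r\leq R_e$, and your absorption argument closes at the units in the same way as elsewhere --- or (b) one assumes in addition that $(\KK(x,x)\otimes 1_r)\KK(xr,xr)=\KK(xr,xr)$ for $x\in P^*$, $r\in P$, which by Cohen factorization gives $(\mu^e_\lambda\otimes 1_r)b\to b$ in norm for $b\in\KK(r,r)$ and hence the absorption directly. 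As written, however, your proof has a hole at the units that no bookkeeping can fill.
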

\begin{proof}
Implication (i)$\Rightarrow$(ii) follows from Lemma \ref{properties of family projections3}. Implication  (ii)$\Rightarrow$(iii) is trivial.

We prove that (iii)$\Rightarrow$(iv).
 Let  $p, q, s \in P$ and $a\in \LL(p,q)$.   If $sP\cap qP=\emptyset$, then $Q^\Phi_qQ^\Phi_s=0$ by Lemma \ref{properties of family projections}, and hence
$\overline{\Phi}(a)Q^\Phi_s=\overline{\Phi}(a)Q^\Phi_qQ^\Phi_s=0$. Assume then that $sP\cap qP=rP$.
Note that $\overline{\Phi}(a)$ is a strong limit of the net $\Phi (a \mu_{\lambda}^q)=\overline{\Phi} (a) \Phi(\mu_{\lambda}^q) $, where $\{\mu_{\lambda}^q\}_{\lambda\in \Lambda}$ is an approximate unit in $\KK(q,q)$. Thus \eqref{action on projections special} follows from the calculations
\begin{align*}
\overline{\Phi} (a)Q^\Phi_s & =\text{s-}\lim\Phi (a \mu_{\lambda}^q) Q^\Phi_s
\stackrel{\eqref{action on projections general} }{=}\text{s-}\lim \overline{\Phi}\big((a \mu_{\lambda}^q)\otimes 1_{q^{-1}r}\big)Q^\Phi_s
\\
& =\text{s-}\lim \overline{\Phi}(a\otimes 1_{q^{-1}r})\overline{\Phi}(\mu_{\lambda}^q\otimes 1_{q^{-1}r}) Q^\Phi_s
\stackrel{\eqref{action on projections general} }{=}\text{s-}\lim \overline{\Phi}(a\otimes 1_{q^{-1}r})\Phi(\mu_{\lambda}^q) Q^\Phi_s
\\
&= \overline{\Phi}(a\otimes 1_{q^{-1}r})Q^\Phi_r Q^\Phi_q Q^\Phi_s
\stackrel{(iii)}{=}\overline{\Phi}(a\otimes 1_{q^{-1}r})Q^\Phi_r
=\overline{\Phi}(a\otimes 1_{q^{-1}r}).
\end{align*}
To prove Nica covariance,  let $a\in \LL(p,q)$, $b\in   \LL(s,t) $. By (i), if $qP\cap sP=rP$, then
$$
\overline{\Phi}(a)\overline{\Phi}(b)
=\overline{\Phi}(a)Q^\Phi_qQ^\Phi_s\overline{\Phi}(b)
=\overline{\Phi}(a)Q^\Phi_r\overline{\Phi}(b),
$$
which is $\overline{\Phi} \left((a \otimes 1_{q^{-1}r}) (b\otimes 1_{s^{-1}r})\right)$
 by the previous paragraph. The calculations above also show  that $\overline{\Phi}(a)\overline{\Phi}(b)=0$ when $qP\cap sP=\emptyset$.

To see that (iv)$\Rightarrow$(i), note that  \eqref{action on projections special:two} and  \eqref{action on projections special} imply that for every $p,q\in P$ and $a\in \KK(p,p)$ we have  $  Q^\Phi_{\langle p\rangle}\Phi(a)=Q^\Phi_p \Phi(a)$. Since $Q^\Phi_{\langle p\rangle}$ and
$Q^\Phi_p$ are zero on the orthogonal complement of  $C^*(\Phi(\KK))H$, this implies that $  Q^\Phi_{\langle p\rangle}=Q^\Phi_p $.

This proves the equivalence of  (i)-(iv). Applying  \eqref{action on projections special} and its adjoint to  $a\in \LL(q,q)$ we get $Q^\Phi_p\in \overline{\Phi}(\LL(p,p))'$, for all $p,q\in P$. The remaining part follows from Lemma \ref{properties of family projections3}.
\end{proof}
It is possible to cook up an example where the above equivalent conditions fail:
\begin{ex}\label{degenerate example}
Let $\LL=\KK=\{\KK(n,m)\}_{ n,m\in \N}$ be a  $C^*$-precategory where $\KK(n,m)=\{0\}$ for all $n\neq m$ and $\KK(n,n)$ are arbitrary (non-zero) $C^*$-algebras. The multiplication in $\LL$ is zero. We equip  $\LL$ with a right tensoring which is also zero. Then
$\NT_{\LL}(\KK)=\NT(\LL)=\TT(\LL)$ is naturally isomorphic with the direct sum $\bigoplus_{n\in \N} \KK(n,n)$. In particular,
taking any faithful representations $\Phi_{n,n}:\KK(n,n)\to \B(H_n)$ and setting $H:=\bigoplus_n H_n$ and  $\Phi_{n,m}:= 0$ for $n\neq m$, $n,m\in \N$,  we get an injective Nica covariant representation $\Phi$ of $\KK$. For each $n\in \N$, $Q^\Phi_{n}$ is the projection onto $H_n$ and
 $Q^\Phi_{\langle n\rangle}$ is the projection onto $\bigoplus_{k \geq n } H_k$.  Note that $\overline{\Phi}=\Phi$ is Nica covariant and \eqref{action on projections special:two} is satisfied. However, \eqref{action on projections special} fails.
\end{ex}
In order to avoid situations as in Example \ref{degenerate example}, a nondegeneracy condition was introduced
 in \cite[Definition 3.8]{kwa-doplicher}\footnote{We note that there are  typos in \cite[Definition 3.8]{kwa-doplicher},
one should put there $m=n$.}, in the case $P=\N$.
We generalize  this notion to arbitrary LCM semigroups.
Virtually all examples considered in \cite{kwa-larII} will satisfy this condition.
\begin{defn}
An ideal $\KK$ in a right-tensor $C^*$-precategory $\LL$ is $\otimes 1$-\emph{nondegenerate} if
$$
(\KK(p,p)\otimes 1_{r})\KK(pr,pr)=\KK(pr,pr)\textrm{ for every } p\in P\setminus{P^*} \textrm{ and } r\in P.
$$
\end{defn}
\begin{prop}\label{properties of a semi lattice of projections}
If $\KK$ is an  $\otimes 1$-nondegenerate ideal in $\LL$ then for every   Nica covariant representation $\Phi$ of $\KK$ the equivalent conditions  in Proposition \ref{properties of family projections2} hold true.
\end{prop}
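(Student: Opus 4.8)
The plan is to verify condition (i) of Proposition~\ref{properties of family projections2}, namely $Q^\Phi_p=Q^\Phi_{\langle p\rangle}$ for every $p\in P$, since this is the equivalent condition that most directly exploits the $\otimes 1$-nondegeneracy hypothesis. Because $p\in pP$, the inequality $Q^\Phi_p\le Q^\Phi_{\langle p\rangle}$ holds automatically, so the whole content lies in the reverse inequality. When $p\in P^*$ this is immediate: then $pP=P$, so $Q^\Phi_{\langle p\rangle}$ is the projection onto $\clsp\{\Phi(\KK(w,w))H:w\in P\}$, which is exactly the essential space $\overline{C^*(\Phi(\KK))H}=Q^\Phi_pH$. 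Hence I may assume $p\in P\setminus P^*$ throughout.

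For such $p$ it suffices to show $Q^\Phi_w\le Q^\Phi_p$ for each $w=pr$ with $r\in P$, since $Q^\Phi_{\langle p\rangle}=\bigvee_{w\in pP}Q^\Phi_w$. The idea is to use $\otimes 1$-nondegeneracy in the form $\KK(w,w)=(\KK(p,p)\otimes 1_r)\KK(w,w)$, so that every element of $\KK(w,w)$ is a norm-limit of sums of products $(a\otimes 1_r)c$ with $a\in\KK(p,p)$ and $c\in\KK(w,w)$. Applying the defining formula \eqref{formula defining extensions of right tensor representations} of the extension $\overline{\Phi}$ gives $\Phi\bigl((a\otimes 1_r)c\bigr)=\overline{\Phi}(a\otimes 1_r)\Phi(c)$, whose range lies inside that of $\overline{\Phi}(a\otimes 1_r)$. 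Thus $\Phi(\KK(w,w))H\subseteq Q^\Phi_pH$ will follow once I establish the key relation $Q^\Phi_p\,\overline{\Phi}(a\otimes 1_r)=\overline{\Phi}(a\otimes 1_r)$ for all $a\in\KK(p,p)$, i.e. that the range of $\overline{\Phi}(a\otimes 1_r)$ is contained in $Q^\Phi_pH$.

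This key relation is the heart of the argument and the step I expect to require the most care. I would prove it by an approximate-unit computation: choosing an approximate unit $\{\mu_\lambda\}$ in $\KK(p,p)$, one has $\Phi(\mu_\lambda)\to Q^\Phi_p$ strongly. Feeding $d=a\otimes 1_r\in\LL(pr,pr)$ into the already-proven relation \eqref{action on projections general2} (with $q=p$ and $s=pr$, so that $sP\cap pP=prP$ and the relevant tensor exponent is $p^{-1}(pr)=r$) yields $\Phi(\mu_\lambda)\overline{\Phi}(a\otimes 1_r)=\overline{\Phi}(\mu_\lambda\otimes 1_r)\overline{\Phi}(a\otimes 1_r)=\overline{\Phi}\bigl((\mu_\lambda a)\otimes 1_r\bigr)$, using that $\overline{\Phi}$ is a homomorphism and $(\mu_\lambda\otimes 1_r)(a\otimes 1_r)=(\mu_\lambda a)\otimes 1_r$. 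Since $\mu_\lambda a\to a$ in norm and $\otimes 1_r$ is contractive, the right-hand side converges in norm to $\overline{\Phi}(a\otimes 1_r)$, while the left-hand side converges strongly to $Q^\Phi_p\overline{\Phi}(a\otimes 1_r)$; comparing the two limits gives the relation. Assembling these pieces yields $Q^\Phi_w\le Q^\Phi_p$ for every $w\in pP$, hence $Q^\Phi_{\langle p\rangle}\le Q^\Phi_p$, completing the verification of (i) and therefore of all the equivalent conditions of Proposition~\ref{properties of family projections2}.

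The main obstacle is precisely the interaction between $\otimes 1_r$ and the extension $\overline{\Phi}$: the element $a\otimes 1_r$ lives in $\LL(pr,pr)$ rather than in $\KK$, so Nica covariance cannot be applied to it directly, and one must route everything through $\overline{\Phi}$ and the relation \eqref{action on projections general2}. The $\otimes 1$-nondegeneracy hypothesis is exactly what guarantees that the operators $\overline{\Phi}(a\otimes 1_r)$ with $a\in\KK(p,p)$ already exhaust the relevant part of $\Phi(\KK(pr,pr))$, which is what Example~\ref{degenerate example} shows can fail in its absence.
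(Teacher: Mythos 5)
Your proof is correct, and it follows the same outline as the paper's — verify condition (i) of Proposition \ref{properties of family projections2}, note that $Q^\Phi_p\le Q^\Phi_{\langle p\rangle}$ is automatic, and use $\otimes 1$-nondegeneracy to reduce everything to $Q^\Phi_{pr}\le Q^\Phi_p$ — but your execution of the key step takes an unnecessary detour, prompted by a misreading of Nica covariance. You claim that Nica covariance ``cannot be applied directly'' because $a\otimes 1_r\in\LL(pr,pr)$ does not lie in $\KK$; but Nica covariance (equivalently, the right-tensor representation identity \eqref{right tensor representation condition}) is a statement about \emph{pairs} of elements of $\KK$, with the right-tensoring appearing only inside the argument of $\Phi$: for $a\in\KK(p,p)$, $c\in\KK(w,w)$ and $w=pr\in pP$ one has $pP\cap wP=wP$, hence
\begin{equation*}
\Phi(a)\Phi(c)=\Phi\bigl((a\otimes 1_{p^{-1}w})\,c\bigr),
\end{equation*}
and the element $(a\otimes 1_{p^{-1}w})c$ lies in $\KK(w,w)$ because $\KK$ is an ideal in $\LL$. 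This is exactly how the paper argues: nondegeneracy gives $\KK(w,w)=(\KK(p,p)\otimes 1_{p^{-1}w})\KK(w,w)$, so the displayed identity yields $\Phi(\KK(w,w))=\Phi(\KK(p,p))\Phi(\KK(w,w))$, and the containment of essential subspaces $Q^\Phi_w\le Q^\Phi_p$ follows at once — no extension $\overline{\Phi}$, no relation \eqref{action on projections general2}, no approximate-unit argument. Your longer route is nevertheless sound: the application of \eqref{action on projections general2} with $q=p$ and $s=pr$ (so that $sP\cap pP=prP$ and the exponent is $r$) is legitimate, the comparison of the norm limit $\overline{\Phi}((\mu_\lambda a)\otimes 1_r)\to\overline{\Phi}(a\otimes 1_r)$ with the strong limit $\Phi(\mu_\lambda)\overline{\Phi}(a\otimes 1_r)\to Q^\Phi_p\overline{\Phi}(a\otimes 1_r)$ correctly yields the relation $Q^\Phi_p\overline{\Phi}(a\otimes 1_r)=\overline{\Phi}(a\otimes 1_r)$, and the density and continuity argument that finishes the proof is fine. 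What your version buys is this auxiliary relation, which has some independent interest; what it costs is roughly a page of work replacing the paper's two-line computation.
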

\begin{proof} We show condition (i) in Proposition \ref{properties of family projections2}.
By definitions,  $Q^\Phi_{p}\leq Q^\Phi_{\langle p\rangle}$ for all $p\in P\setminus P^*$ and $Q^\Phi_{p}= Q^\Phi_{\langle p\rangle}$ for all $p\in P^*$.
By nondegeneracy of $\KK$, for any $p\in P\setminus P^*$ and any $w\in pP$ we have $\KK(w,w)=(\KK(p,p) \otimes 1_{p^{-1}w})\KK(w,w)$. Thus by Nica covariance we get $\Phi(\KK(w,w))=\Phi(\KK(p,p) \otimes 1_{p^{-1}w})\KK(w,w))=\Phi(\KK(p,p))\Phi(\KK(w,w))$,
which implies that $Q^\Phi_{p}\geq Q^\Phi_{\langle p\rangle}$.
\end{proof}

\section{Representations generating exotic $C^*$-algebras - uniqueness theorem}\label{The main result section}

We fix a well-aligned ideal $\KK$ in a right-tensor $C^*$-precategory $(\LL, \{\otimes 1_r\}_{r\in P})$. In this section we study conditions implying that a Nica covariant representation of $\KK$ generates an exotic Nica-Toeplitz $C^*$-algebra  of  $\KK$. In the presence of amenability this will lead  to isomorphism theorems for the universal Nica-Toeplitz algebra $\NT_{\LL}(\KK)$ as well.

We start by introducing the key condition that we call $(C)$. Here the letter C  stands for both compression and Coburn.
\begin{defn}\label{defn:Condition (C)} Let $\Phi:\KK\to \B(H)$ be a Nica covariant representation of $\KK$ on a Hilbert space, and let $\{Q^\Phi_{\langle p\rangle }\}_{p\in P}\subseteq \B(H)$ be the projections introduced in Definition \eqref{definition of family projections2}. We say that  $\Phi$ satisfies  \emph{condition $(C)$}  if
\begin{equation}\label{Coburn condition}
\begin{array}{l}
\text{for every   $p\in P$  and $q_1,...,q_n\in P$  such that  $p\not\geq q_i$, for $i=1,...,n$, $n\in \N$, }
\\[4pt]
\text{the representation $\KK(p,p)\ni a \longmapsto  \Phi(a) \prod_{i=1}^{n}(1-Q^\Phi_{\langle q_i\rangle})$ is faithful. }
\end{array}\qquad
\end{equation}
\end{defn}
\begin{rem}\label{remark to be reformulated0} Since projections $\{Q^\Phi_{\langle p\rangle }\}_{p\in P}$ mutually commute, we have $(1-\bigvee_{i=1}^nQ^\Phi_{\langle q_i\rangle})=  \prod_{i=1}^{n}(1-Q^\Phi_{\langle q_i\rangle})$. By \eqref{projections in the commutant},  $(1-Q^\Phi_{\langle q_i\rangle}) \in \Phi(\KK(p,p))'$ and hence $\KK(p,p)\ni a \longmapsto  \Phi(a) \prod_{i=1}^{n}(1-Q^\Phi_{\langle q_i\rangle})$ is indeed a representation of the $C^*$-algebra $\KK(p,p)$.
\end{rem}
Every well-aligned ideal $\KK$ admits a representation satisfying condition $(C)$.
\begin{prop}\label{induced representations satisfying condition (C)} Let  $\pi$ be a representation of $\NT_{\LL}^r(\KK)$ induced by the Fock Hilbert module $\FF_\KK$ from a faithful representation  $\pi_0:\bigoplus_{t\in P}\KK(t,t)\to \B(H)$. Then $\pi\circ T: \KK \to \B(\FF_\KK\otimes_{\pi_0} H)$ is a Nica covariant representation satisfying condition $(C)$.
\end{prop}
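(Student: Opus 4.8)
The plan is to transport everything to the concrete action on the Fock module and then isolate a single diagonal summand. Write $\Phi:=\pi\circ T$ and realise $\pi$ on $\FF_\KK\otimes_{\pi_0}H$ by $\pi(S)=S\otimes_{\pi_0}1$ for $S\in\NT^{r}_{\LL}(\KK)\subseteq\LL(\FF_\KK)$. Nica covariance of $\Phi$ is then immediate: $T$ is Nica covariant by Proposition~\ref{Nica Toeplitz reduced representation}, the relations \eqref{Nica covariance} are operator identities in $\NT^{r}_{\LL}(\KK)$, and $\pi$ is a $*$-homomorphism, so applying $\pi$ reproduces \eqref{Nica covariance} for $\Phi$. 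Hence the real content is condition $(C)$.

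The first key step is to identify the projections of Definition~\ref{definition of family projections2} attached to $\Phi$ with the Fock projections of Lemma~\ref{lemma-proj-T-semillatice}, namely $Q^{\Phi}_{\langle q\rangle}=Q^{T}_{\langle q\rangle}\otimes 1$. For $w\in qP$ the operator $T(\KK(w,w))$ kills every summand $X_{s,t}$ with $s\notin wP$ and maps $X_{s,t}$ into itself for $s\in wP\subseteq qP$; thus $\Phi(\KK(w,w))(\FF_\KK\otimes_{\pi_0}H)$ lies in the range of $Q^{T}_{\langle q\rangle}\otimes 1$, giving $Q^{\Phi}_{\langle q\rangle}\le Q^{T}_{\langle q\rangle}\otimes 1$. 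For the reverse inclusion, fix $s\in qP$; since $T_{s,s}^{s,t}(b)x=bx$ for $b\in\KK(s,s)$, $x\in X_{s,t}$, and $\overline{\KK(s,s)\KK(s,t)}=\KK(s,t)$ by Lemma~\ref{about approximate units}, the closed span of $\Phi(\KK(s,s))(X_{s,t}\otimes_{\pi_0}H)$ is all of $X_{s,t}\otimes_{\pi_0}H$. Letting $s$ range over $qP$ and $t$ over $P$ exhausts the range of $Q^{T}_{\langle q\rangle}\otimes 1$, so equality holds.

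With this identification, condition $(C)$ reduces to a computation on one slot. Given $p$ and $q_1,\dots,q_n$ with $p\not\ge q_i$, the order convention yields $p\notin q_iP$, so each $Q^{T}_{\langle q_i\rangle}$ annihilates $X_{p,p}$; hence $\prod_{i=1}^{n}(1-Q^{\Phi}_{\langle q_i\rangle})$ restricts to the identity on $X_{p,p}\otimes_{\pi_0}H$. This slot is invariant under $\Phi(a)$ for $a\in\KK(p,p)$ (as $T_{p,p}^{p,p}(a)x=ax$ maps $X_{p,p}=\KK(p,p)$ into itself), so the compression of $\Phi(a)\prod_{i=1}^{n}(1-Q^{\Phi}_{\langle q_i\rangle})$ to $X_{p,p}\otimes_{\pi_0}H$ is the left action of $a$. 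Under the isometry $X_{p,p}\otimes_{\pi_0}H\cong\overline{\pi_0(\KK(p,p))H}$, $b\otimes\xi\mapsto\pi_0(b)\xi$ (valid since $\langle b\otimes\xi,c\otimes\eta\rangle=\langle\xi,\pi_0(b^{*}c)\eta\rangle=\langle\pi_0(b)\xi,\pi_0(c)\eta\rangle$), this left action is unitarily equivalent to $\pi_0|_{\KK(p,p)}$, which is isometric because $\pi_0$ is faithful on $\bigoplus_{t\in P}\KK(t,t)$ and hence on the summand $\KK(p,p)$. Consequently $\|\Phi(a)\prod_{i}(1-Q^{\Phi}_{\langle q_i\rangle})\|\ge\|a\|$; the reverse inequality is automatic, so the representation in \eqref{Coburn condition} is isometric, in particular faithful, establishing $(C)$.

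The step requiring the most care is the identification $Q^{\Phi}_{\langle q\rangle}=Q^{T}_{\langle q\rangle}\otimes 1$ together with the verification that inducing the diagonal action on $X_{p,p}$ through the interior tensor product reproduces $\pi_0|_{\KK(p,p)}$. Once these are in place, the hypothesis $p\not\ge q_i$ plays the single role of ensuring that the slot $X_{p,p}$ is not cut off by $\prod_i(1-Q^{\Phi}_{\langle q_i\rangle})$, and faithfulness follows at once.
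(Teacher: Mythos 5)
Your proof is correct, and its geometric core is the same as the paper's: work concretely on the Fock module, use $p\not\geq q_i$ (i.e.\ $p\notin q_iP$) to see that the projections $Q^{T}_{\langle q_i\rangle}$ annihilate the slot $X_{p,p}$, and exploit faithfulness of the diagonal left action there. The differences are in the bookkeeping. The paper only establishes the one inequality $Q^{\pi\circ T}_{\langle q_i\rangle}\leq\pi\bigl(Q^{T}_{\langle q_i\rangle}\bigr)$ and then transfers faithfulness abstractly: since $a\mapsto T^{p,p}_{p,p}(a)$ is faithful, so is $a\mapsto T(a)\prod_{i}(1-Q^{T}_{\langle q_i\rangle})$, and faithfulness of the induced representation $\pi$ on $\LL(\FF_\KK)$ together with the projection inequality gives $\pi(T(a))\prod_{i}(1-Q^{\pi\circ T}_{\langle q_i\rangle})\neq 0$ whenever $a\neq 0$. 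You instead prove the exact equality $Q^{\pi\circ T}_{\langle q\rangle}=\pi\bigl(Q^{T}_{\langle q\rangle}\bigr)$ (your reverse inclusion, via the factorization $\KK(s,s)\KK(s,t)=\KK(s,t)$ from Lemma \ref{about approximate units}, is valid but not needed: only the forward inclusion, which is the paper's inequality, enters your argument that $\prod_i(1-Q^{\Phi}_{\langle q_i\rangle})$ is the identity on $X_{p,p}\otimes_{\pi_0}H$), and you replace the appeal to faithfulness of $\pi$ by an explicit compression to the single slot $X_{p,p}\otimes_{\pi_0}H$, identified via $b\otimes\xi\mapsto\pi_0(b)\xi$ with $\pi_0\vert_{\KK(p,p)}$ acting on its essential subspace. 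What each buys: the paper's route is shorter because it recycles already-available faithfulness statements; yours is more self-contained at the final step and yields the sharper conclusion that the representation in \eqref{Coburn condition} is isometric, $\|\Phi(a)\prod_i(1-Q^{\Phi}_{\langle q_i\rangle})\|\geq\|a\|$, rather than merely injective.
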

\begin{proof} Recall that $\pi:\LL(\FF_\KK)\to \B(\FF_\KK\otimes_{\pi_0} H)$ is a  faithful representation  given by the formula
$a(x\otimes_{\pi_0} h):=(ax)\otimes_{\pi_0} h$, $a\in \LL(\FF_\KK)$, $x\in \FF_\KK$, $h\in H$.

Let   $p\in P$  and $q_1,...,q_n\in P$ be such that  $p\not\geq q_i$, for $i=1,...,n$. Consider projections  $\{Q^{T}_{{\langle q_i\rangle}}\}_{i=1}^n$ introduced in Lemma~\ref{lemma-proj-T-semillatice}. Since the representation $\KK(p,p)\ni a \longmapsto  T^{p,p}_{p,p}(a)\in \LL(X_{p,p})$ is faithful,  so is $\KK(p,p)\ni a \longmapsto  T(a) \prod_{i=1}^{n}(1-Q^{T}_{{\langle q_i\rangle}})$. With the definition of projections from Lemma~\ref{properties of family projections3}, it readily follows that  $Q^{\pi\circ T}_{\langle q_i\rangle}\leq \pi( Q^{T}_{\langle q_i\rangle})$, for $i=1,...,n$. Hence $\pi(\prod_{i=1}^{n}(1-Q^{T}_{\langle q_i\rangle}))\leq  \prod_{i=1}^{n}(1-Q^{\pi\circ T}_{\langle q_i\rangle})$ and therefore
$$
T(a) \prod_{i=1}^{n}(1-Q^{T}_{\langle q_i\rangle})\neq 0 \,\, \Longrightarrow\,\, \pi\Big(T(a) \prod_{i=1}^{n}(1-Q^{T}_{\langle q_i\rangle})\Big)\neq 0 \,\, \Longrightarrow\,\, \pi(T(a)) \prod_{i=1}^{n}(1-Q^{\pi\circ T}_{\langle q_i\rangle})\neq 0.
$$
Thus $\KK(p,p)\ni a \longmapsto  \pi(T(a)) \prod_{i=1}^{n}(1-Q^{\pi\circ T}_{\langle q_i\rangle})$ is faithful.
\end{proof}
Plainly, every  Nica covariant representation  satisfying  condition $(C)$ is injective and Toeplitz covariant in the sense of \eqref{Toeplitz condition}. In the converse direction we have the following:

\begin{prop}\label{Proposition 7.8} If $\KK\otimes 1\subseteq \KK$, then a Nica covariant representation $\Phi:\KK\to B(H)$  satisfies  condition $(C)$ if and only if $\Phi$ is injective and Toeplitz covariant.
\end{prop}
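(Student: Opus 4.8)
The implication from condition $(C)$ to injectivity and Toeplitz covariance is already noted just before the statement, so the plan is to prove the converse. I would assume $\Phi$ injective and Toeplitz covariant and verify \eqref{Coburn condition} directly. Fix $p\in P$ and $q_1,\dots,q_n\in P$ with $p\not\geq q_i$ for every $i$, and take $a\in\KK(p,p)$ with $\Phi(a)\prod_{i=1}^n(1-Q^\Phi_{\langle q_i\rangle})=0$; the goal is to deduce $a=0$. The first step is to observe that, because $\KK\otimes 1\subseteq\KK$, we may regard $\KK$ as a right-tensor $C^*$-category in itself (cf. Remark \ref{remark about abstract characterization NTalg}), i.e. we are in the situation $\KK=\LL$ of Lemma \ref{properties of family projections3}. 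Hence $\overline{\Phi}=\Phi$ is Nica covariant, and both the commutation relation \eqref{projections in the commutant} and the formula \eqref{action on projections special:two} are available; in particular $a\otimes 1_r$ again lands in $\KK$, which is precisely where the hypothesis $\KK\otimes1\subseteq\KK$ is used.

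The core of the argument is a projection computation. Since the $Q^\Phi_{\langle q_i\rangle}$ mutually commute and each lies in $\Phi(\KK(p,p))'$ by \eqref{projections in the commutant}, I would rewrite $\prod_{i=1}^n(1-Q^\Phi_{\langle q_i\rangle})=1-Q$ with $Q=\bigvee_{i=1}^n Q^\Phi_{\langle q_i\rangle}$, so the hypothesis reads $\Phi(a)=\Phi(a)Q$. Expanding the supremum of commuting projections by inclusion--exclusion and using the semilattice homomorphism property of Lemma \ref{properties of family projections3} to identify $\prod_{i\in S}Q^\Phi_{\langle q_i\rangle}=Q^\Phi_{\langle q_S\rangle}$, where $q_SP=\bigcap_{i\in S}q_iP$ when this is nonempty, I would obtain
\[
\Phi(a)=\Phi(a)Q=\sum_{\emptyset\neq S\subseteq\{1,\dots,n\}}(-1)^{|S|+1}\,\Phi(a)\,Q^\Phi_{\langle q_S\rangle},
\]
with the convention that a term vanishes when $\bigcap_{i\in S}q_iP=\emptyset$. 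For each surviving $S$, writing $q_SP\cap pP=r_SP$, relation \eqref{action on projections special:two} gives $\Phi(a)Q^\Phi_{\langle q_S\rangle}=\Phi(a\otimes 1_{p^{-1}r_S})\in\Phi(\KK(r_S,r_S))$. Here $r_S\geq p$ and $r_S\geq q_i$ for $i\in S$, and since $p\not\geq q_i$ we cannot have $r_S\sim p$; thus $p\not\geq r_S$.

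The point to stress is that $\Phi(a)$ need \emph{not} lie in $\clsp\{\Phi(\KK(q_i,q_i))\}$, but only in the closed span of the higher-fibre algebras $\Phi(\KK(r_S,r_S))$; this mismatch is the main obstacle, and it is exactly the feature invisible in the nondegenerate case (Proposition \ref{properties of a semi lattice of projections}). The resolution is that Toeplitz covariance \eqref{Toeplitz condition} is quantified over \emph{all} finite families, so I would apply it not to $\{q_i\}$ but to $p$ together with the finite set $F:=\{r_S:\emptyset\neq S,\ q_SP\cap pP\neq\emptyset\}$, each element of which satisfies $p\not\geq r_S$. The displayed identity then exhibits $\Phi(a)\in\Phi(\KK(p,p))\cap\clsp\{\Phi(\KK(r,r)):r\in F\}$, which is $\{0\}$ by Toeplitz covariance; hence $\Phi(a)=0$, and injectivity of $\Phi$ yields $a=0$. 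This shows $a\mapsto\Phi(a)\prod_{i=1}^n(1-Q^\Phi_{\langle q_i\rangle})$ is faithful, i.e. condition $(C)$ holds, completing the equivalence.
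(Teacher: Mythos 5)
Your proof is correct, but it takes a genuinely different route from the paper's. The paper deduces the converse implication from its core-uniqueness machinery: injectivity and Toeplitz covariance yield, via Corollaries \ref{Nica-Toeplitz representation corollary} and \ref{the reduced core}, an isomorphism $\Phi_*\colon B_e^\Phi\to B_e^T$ of core subalgebras; extending it strictly to multiplier algebras and checking that $Q^\Phi_{\langle q_i\rangle}$ is sent to $Q^{T}_{\langle q_i\rangle}$, the paper transports the faithfulness of $a\mapsto T_{p,p}(a)\prod_{i=1}^n(1-Q^{T}_{\langle q_i\rangle})$ (noted in the proof of Proposition \ref{induced representations satisfying condition (C)}) back to $\Phi$. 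You instead verify condition $(C)$ by hand: after reducing to $\LL=\KK$ (legitimate, since $\KK\otimes 1\subseteq\KK$ makes every notion involved -- Nica covariance, the projections $Q^\Phi_{\langle p\rangle}$, Toeplitz covariance, condition $(C)$ -- depend on $\KK$ alone), you combine the semilattice property and formula \eqref{action on projections special:two} of Lemma \ref{properties of family projections3} with inclusion--exclusion over the commuting projections to write $\Phi(a)$ as a finite signed sum of elements $\Phi(a\otimes 1_{p^{-1}r_S})\in\Phi(\KK(r_S,r_S))$ with $p\not\geq r_S$, and then apply Toeplitz covariance to the family of least common multiples $\{r_S\}$ rather than to the original $\{q_i\}$. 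Both arguments are sound. Yours is more elementary and self-contained -- no Fock module, no multiplier algebras, no appeal to Theorem \ref{theorem for amenability and spectral subspaces} -- and it pinpoints exactly where the hypothesis $\KK\otimes 1\subseteq\KK$ enters as well as why Toeplitz covariance needs to be quantified over arbitrary finite families. The paper's proof is shorter given the machinery already established, and it exhibits the Fock representation as the model realization of condition $(C)$.
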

\begin{proof} The `only if' part is clear.
Suppose than that $\Phi$ is injective and Toeplitz covariant.
By Corollaries \ref{Nica-Toeplitz representation corollary} and \ref{the reduced core} there is an  isomorphism $\Phi_*:B_e^\Phi\to B_e^T $ making  the diagram \eqref{restricted diagram} commute.  Denote by $\overline{\Phi}_*:M(B_e^\Phi)\to M(B_e^T) $  the strictly continuous extension of $\Phi_*$. Let  $p\in P$  and $q_1,...,q_n\in P$ be such that  $p\not\geq q_i$.
We noticed in the proof of Proposition \ref{induced representations satisfying condition (C)} that the representation $\KK(p,p)\ni a \longmapsto T_{p,p}(a) \prod_{k=1}^{n} (1-Q^{T}_{\langle q_i\rangle})$ is faithful. By Lemma \ref{properties of semilattice projections in the reduced} we may view $Q^{T}_{\langle q_i\rangle}$  as elements of  $M(B^T_e)$. Similarly, by Lemma \ref{properties of family projections3}  applied to $\LL=\KK$, one concludes that we may treat  projections $Q^\Phi_{\langle q_i\rangle}$ as elements of $M(B_e^\Phi)$, and  then  $\overline{\Phi}_*(Q^\Phi_{\langle q_i\rangle})=Q^{T}_{\langle q_i\rangle}$. Thus
$$
T_{p,p}(a) \prod_{k=1}^{n} (1-Q^{T}_{\langle q_i\rangle}) =\overline{\Phi}_*\Bigl(\Phi_{p,p}(a) \prod_{k=1}^{n} (1-Q^\Phi_{\langle q_i\rangle })\Bigr)\quad\textrm{ for all } a \in \KK(p,p).
$$
Therefore $\KK(p,p)\ni a \longmapsto \Phi_{p,p}(a) \prod_{k=1}^{n} (1-Q^\Phi_{\langle q_i\rangle})$ is faithful, and  $\Phi$ satisfies $(C)$.
\end{proof}
\begin{cor}\label{Corollary 7.99} Let  $\Phi:\KK\to B(H)$ be a Nica covariant representation. Suppose that $\KK$ is essential in $\LL$ and that the extended representation $\overline{\Phi}: \LL\to B(H)$ is Nica covariant (which is automatic when  $\KK$ is  $\otimes 1$-non-degenerate). The following conditions are equivalent:

\textnormal{(i)} $\Phi$  satisfies  condition $(C)$;

\textnormal{(ii)} $\overline{\Phi}$ satisfies  condition $(C)$;

\textnormal{(iii)} $\overline{\Phi}$ is injective and Toeplitz covariant.
\end{cor}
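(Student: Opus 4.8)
The plan is to split the biconditional chain into the two equivalences (ii)$\Leftrightarrow$(iii) and (i)$\Leftrightarrow$(ii), since the former is essentially an instance of an already-established result while the latter is where the essentiality hypothesis does the real work. For (ii)$\Leftrightarrow$(iii) I would simply invoke Proposition \ref{Proposition 7.8} applied to $\overline{\Phi}$ viewed as a Nica covariant representation of the well-aligned ideal $\LL$ in itself: since $\LL\otimes 1\subseteq\LL$ holds trivially, that proposition yields at once that $\overline{\Phi}$ satisfies condition $(C)$ if and only if $\overline{\Phi}$ is injective and Toeplitz covariant. No extra work is needed here.

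For (i)$\Leftrightarrow$(ii) the first step is to identify the projection families of $\Phi$ and of its extension, namely $Q^\Phi_{\langle q\rangle}=Q^{\overline{\Phi}}_{\langle q\rangle}$ for every $q\in P$ (and similarly $Q^\Phi_w=Q^{\overline{\Phi}}_w$). Using $Q^\Phi_{\langle q\rangle}=\bigvee_{w\geq q}Q^\Phi_w$ and the analogous identity for $\overline{\Phi}$, it suffices to verify for each $w\in P$ that the essential spaces $\overline{\Phi}(\LL(w,w))H$ and $\Phi(\KK(w,w))H$ have the same closure. This follows directly from the construction of $\overline{\Phi}$: the extension formula \eqref{formula defining extensions of right tensor representations} forces $\overline{\Phi}_{w,w}(a)$ to vanish off $\overline{\Phi(\KK(w,w))H}$ and to send $\Phi(b)h$ to $\Phi(ab)h\in\Phi(\KK(w,w))H$ for $b\in\KK(w,w)$, whence $\overline{\Phi}(\LL(w,w))H\subseteq\overline{\Phi(\KK(w,w))H}$; the reverse inclusion is immediate because $\overline{\Phi}$ extends $\Phi$.

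The second, conceptual step transfers faithfulness across the inclusion $\KK(p,p)\subseteq\LL(p,p)$. Fix $p$ and $q_1,\dots,q_n$ with $p\not\geq q_i$ and put $P_0:=\prod_{i=1}^n(1-Q^\Phi_{\langle q_i\rangle})=\prod_{i=1}^n(1-Q^{\overline{\Phi}}_{\langle q_i\rangle})$, which by \eqref{projections in the commutant} (applied to $\overline{\Phi}$) lies in $\overline{\Phi}(\LL(p,p))'$. Hence $a\mapsto\overline{\Phi}(a)P_0$ is a genuine $*$-representation of $\LL(p,p)$ whose restriction to the ideal $\KK(p,p)$ is $a\mapsto\Phi(a)P_0$. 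Because $\KK$ is essential in $\LL$, Definition \ref{defn:essential ideal} gives that $\KK(p,p)$ is an essential ideal of the $C^*$-algebra $\LL(p,p)$, and a representation of a $C^*$-algebra is faithful precisely when its restriction to an essential ideal is faithful (if the kernel meets the essential ideal trivially it must be zero). Applying this for each admissible tuple $(p,q_1,\dots,q_n)$ shows that the defining faithfulness requirements of $(C)$ for $\Phi$ and for $\overline{\Phi}$ hold simultaneously, which is exactly (i)$\Leftrightarrow$(ii).

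The step I expect to be the main obstacle is the projection identity $Q^\Phi_{\langle q\rangle}=Q^{\overline{\Phi}}_{\langle q\rangle}$: without it the two instances of condition $(C)$ refer to a priori different compressions and cannot be compared, so this identity is what makes the whole comparison meaningful. Once it is in place, the remaining transfer of faithfulness is the standard essential-ideal argument, and essentiality of $\KK$ in $\LL$ is exactly the hypothesis that powers it.
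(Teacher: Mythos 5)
Your proposal is correct and follows essentially the same route as the paper's proof: (ii)$\Leftrightarrow$(iii) via Proposition \ref{Proposition 7.8} applied to $\LL$ and $\overline{\Phi}$, and (i)$\Leftrightarrow$(ii) via the identity $Q^\Phi_{\langle q\rangle}=Q^{\overline{\Phi}}_{\langle q\rangle}$ together with the essential-ideal faithfulness transfer for $\KK(p,p)\subseteq\LL(p,p)$. The only difference is that you supply details the paper leaves implicit (the proof of the projection identity from \eqref{formula defining extensions of right tensor representations} and the explicit kernel argument), which is a faithful elaboration rather than a different approach.
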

\begin{proof} Equivalence (ii)$\Leftrightarrow$(iii) follows
from Proposition \ref{Proposition 7.8} applied to $\LL$ and $\overline{\Phi}$.
To see that (i)$\Leftrightarrow$(ii)  let    $p\in P$  and $q_1,...,q_n\in P$  such that  $p\not\geq q_i$, for $i=1,...,n$.
Note that $Q^\Phi_{\langle q\rangle}=Q^{\overline{\Phi}}_{\langle q\rangle}$, for $q\in P$. Hence since $\KK(p,p)$ is an essential ideal in the $C^*$-algebra $\LL(p,p)$, the  representation $\KK(p,p)\ni a \longmapsto  \Phi(a) \prod_{i=1}^{n}(1-Q^\Phi_{\langle q_i\rangle})$ is faithful if and only if the representation $\LL(p,p)\ni a \longmapsto  \overline{\Phi}(a) \prod_{i=1}^{n}(1-Q^{\overline{\Phi}}_{\langle q_i\rangle})$ is faithful.
\end{proof}

Next we introduce an auxiliary condition which describes properties of the (not necessarily Nica covariant) representation of $\LL$ that extends a Nica covariant representation of $\KK$.

\begin{defn}\label{defn:Condition (C')}  Let $\Phi:\KK\to \B(H)$ be a Nica covariant representation on a Hilbert space, and let $\overline{\Phi}:\LL\to \B(H)$ be the extension from Proposition \ref{extensions of representations on Hilbert spaces0}. Let $\{Q^\Phi_p\}_{p\in P}\subseteq \B(H)$  be the projections given by \eqref{definition of family projections}.
We say that  $\overline{\Phi}$ satisfies  \emph{condition $(C')$}  if
\begin{equation}\label{Coburn condition1}
\begin{array}{l}
\text{for every   $p\in P$  and $q_1,...,q_n\in P$  such that  $p\not\geq q_i$, for $i=1,...,n$,  }
\\[4pt]
\text{ $\|(1-\bigvee_{i=1}^n Q^\Phi_{q_i})\overline{\Phi}(a) (1-\bigvee_{i=1}^nQ^\Phi_{q_i})\|=\|\overline{\Phi}(a)\|$ for all  $a \in \LL(p,p)$. }
\end{array}\qquad
\end{equation}
\end{defn}

\begin{prop}\label{condition C versus condition C'}
Let $\Phi:\KK\to \B(H)$ be a Nica covariant representation of $\KK$ and let $\overline{\Phi}:\LL\to \B(H)$ be the extended representation of $\LL$. Consider the following assertions:
\begin{itemize}
\item[(i)] $\Phi$ satisfies  condition $(C)$;
\item[(ii)]  $\Phi$ is injective and $\overline{\Phi}$ satisfies condition  $(C')$;
\end{itemize}
Then (i)$\Rightarrow$(ii). If $\KK$ is $\otimes 1$-non-degenerate or  $\KK\otimes 1\subseteq \KK$, then (i)$\Leftrightarrow$(ii).
\end{prop}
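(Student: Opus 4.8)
The plan is to prove the two implications separately; the crux throughout is to reconcile the one-sided compression by the \emph{large} projections $Q^\Phi_{\langle q_i\rangle}$ occurring in $(C)$ with the two-sided compression by the \emph{small} projections $Q^\Phi_{q_i}$ occurring in $(C')$. Fix $p\in P$ and $q_1,\dots,q_n\in P$ with $p\not\geq q_i$, and abbreviate $R:=1-\bigvee_{i=1}^n Q^\Phi_{q_i}$ and $R_{\langle\rangle}:=1-\bigvee_{i=1}^n Q^\Phi_{\langle q_i\rangle}=\prod_{i=1}^n(1-Q^\Phi_{\langle q_i\rangle})$. Since $Q^\Phi_{q_i}\leq Q^\Phi_{\langle q_i\rangle}$ by Definition \ref{definition of family projections2}, we have $R_{\langle\rangle}\leq R$.

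For (i)$\Rightarrow$(ii): taking $n=0$ in condition $(C)$ shows $\Phi$ is injective, hence isometric on every fibre by Lemma \ref{proposition 1.5}. First I would record that $R_{\langle\rangle}$ commutes with $\overline{\Phi}(\LL(p,p))$: by \eqref{projections in the commutant} each $Q^\Phi_{\langle q_i\rangle}$ commutes with $\Phi(\KK(p,p))$, hence preserves the essential subspace $\overline{\Phi(\KK(p,p))H}$, and a direct check on $\Phi(\KK(p,p))H$ using the defining formula \eqref{formula defining extensions of right tensor representations} (together with vanishing of $\overline{\Phi}(a)$ off that subspace) gives $Q^\Phi_{\langle q_i\rangle}\overline{\Phi}(a)=\overline{\Phi}(a)Q^\Phi_{\langle q_i\rangle}$ for $a\in\LL(p,p)$. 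Compressing $R\overline{\Phi}(a)R$ by $R_{\langle\rangle}\leq R$ then yields
\[
\|R\overline{\Phi}(a)R\|\geq \|R_{\langle\rangle}\overline{\Phi}(a)R_{\langle\rangle}\|=\|\overline{\Phi}(a)R_{\langle\rangle}\|,
\]
so, since $\|R\overline{\Phi}(a)R\|\leq\|\overline{\Phi}(a)\|$ is automatic, it suffices to prove the norm identity $\|\overline{\Phi}(a)R_{\langle\rangle}\|=\|\overline{\Phi}(a)\|$ for all $a\in\LL(p,p)$.

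This identity is the step I expect to be the main obstacle. Passing to $c:=a^*a\geq 0$ and using commutation it reduces to $\|\overline{\Phi}(c)R_{\langle\rangle}\|=\|\overline{\Phi}(c)\|$. Condition $(C)$ says precisely that $b\mapsto\Phi(b)R_{\langle\rangle}$ is a faithful, hence isometric, representation of $\KK(p,p)$, so $\|\Phi(b)R_{\langle\rangle}\|=\|b\|$ there. I would then pick an approximate unit $\{\mu_\lambda\}$ of $\KK(p,p)$; as $\Phi(\mu_\lambda)$ converges strongly to the essential projection and $\overline{\Phi}(c)$ is supported on $\overline{\Phi(\KK(p,p))H}$, one gets $\Phi(\mu_\lambda c\mu_\lambda)=\Phi(\mu_\lambda)\overline{\Phi}(c)\Phi(\mu_\lambda)\to\overline{\Phi}(c)$ strongly, whence $\|\mu_\lambda c\mu_\lambda\|=\|\Phi(\mu_\lambda c\mu_\lambda)\|\to\|\overline{\Phi}(c)\|$ by weak lower semicontinuity of the norm and $\|\Phi(\mu_\lambda)\|\leq1$. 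Given $\varepsilon>0$, choose $\lambda$ with $\|\mu_\lambda c\mu_\lambda\|>\|\overline{\Phi}(c)\|-\varepsilon$ and, by isometry on the positive element $b:=\mu_\lambda c\mu_\lambda$, a unit vector $h\in R_{\langle\rangle}H$ with $\langle\Phi(b)h,h\rangle>\|b\|-\varepsilon$. Because $R_{\langle\rangle}$ commutes with $\Phi(\mu_\lambda)$, the vector $g:=\Phi(\mu_\lambda)h$ lies in $R_{\langle\rangle}H$ with $\|g\|\leq1$ and $\langle\overline{\Phi}(c)g,g\rangle=\langle\Phi(b)h,h\rangle>\|\overline{\Phi}(c)\|-2\varepsilon$; letting $\varepsilon\to0$ forces $\|\overline{\Phi}(c)R_{\langle\rangle}\|\geq\|\overline{\Phi}(c)\|$, establishing $(C')$.

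For (ii)$\Rightarrow$(i) under either extra hypothesis, I would first observe that $(C')$ by itself already yields Toeplitz covariance \eqref{Toeplitz condition}: if $x\in\Phi(\KK(p,p))\cap\clsp\{\Phi(\KK(q_i,q_i)):i\}$, then for each $i$ every $y\in\Phi(\KK(q_i,q_i))$ satisfies $Q^\Phi_{q_i}y=y=yQ^\Phi_{q_i}$, so with $V:=\bigvee_iQ^\Phi_{q_i}$ one gets $Vx=x=xV$, i.e. $Rx=xR=0$; writing $x=\overline{\Phi}(a)$ with $a\in\KK(p,p)$, condition $(C')$ gives $\|x\|=\|\overline{\Phi}(a)\|=\|R\overline{\Phi}(a)R\|=0$. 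Combined with the injectivity assumed in (ii), $\Phi$ is injective and Toeplitz covariant. If $\KK\otimes1\subseteq\KK$, Proposition \ref{Proposition 7.8} upgrades this directly to condition $(C)$. If instead $\KK$ is $\otimes1$-nondegenerate, Proposition \ref{properties of a semi lattice of projections} gives $Q^\Phi_p=Q^\Phi_{\langle p\rangle}$ for all $p$, so $R=R_{\langle\rangle}$ commutes with $\overline{\Phi}(\LL(p,p))$; restricting $\|R\overline{\Phi}(a)R\|=\|\overline{\Phi}(a)R\|=\|\overline{\Phi}(a)\|$ to $a\in\KK(p,p)$ gives $\|\Phi(a)R\|=\|a\|$, and since $R$ commutes with $\Phi(\KK(p,p))$ the map $a\mapsto\Phi(a)R$ is an isometric, hence faithful, representation, which is exactly condition $(C)$.
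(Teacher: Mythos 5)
Your proof is correct, and its overall skeleton coincides with the paper's: both reduce condition $(C')$ to the single norm identity $\|\overline{\Phi}(a)\prod_{i=1}^n(1-Q^\Phi_{\langle q_i\rangle})\|=\|\overline{\Phi}(a)\|$ for $a\in\LL(p,p)$, via $Q^\Phi_{q_i}\leq Q^\Phi_{\langle q_i\rangle}$ and the commutation of $\prod_{i=1}^n(1-Q^\Phi_{\langle q_i\rangle})$ with $\overline{\Phi}(\LL(p,p))$, and your (ii)$\Rightarrow$(i) argument --- extracting Toeplitz covariance from $(C')$, then invoking Proposition \ref{Proposition 7.8} when $\KK\otimes 1\subseteq\KK$, or Proposition \ref{properties of a semi lattice of projections} to get $Q^\Phi_p=Q^\Phi_{\langle p\rangle}$ when $\KK$ is $\otimes 1$-nondegenerate --- is essentially verbatim the paper's. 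Where you genuinely depart is the proof of the norm identity itself. The paper argues by comparing kernels: with $T^{p,p}_{p,p}:\LL(p,p)\to\LL(\KK(p,p))$ denoting left multiplication, it uses \eqref{kernel of the extended} together with condition $(C)$ to show that both representations $a\mapsto\overline{\Phi}(a)$ and $a\mapsto\overline{\Phi}(a)\prod_{i=1}^n(1-Q^\Phi_{\langle q_i\rangle})$ of $\LL(p,p)$ have kernel equal to $\ker T^{p,p}_{p,p}=\{a\in\LL(p,p):a\KK(p,p)=0\}$, and then uses the abstract fact that two $*$-representations of a $C^*$-algebra with the same kernel assign the same norm to every element. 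You instead transfer the isometry on $\KK(p,p)$ furnished by $(C)$ up to $\LL(p,p)$ by hand: approximating $\overline{\Phi}(c)$ strongly by $\Phi(\mu_\lambda c\mu_\lambda)$ and pushing near-maximizing vectors into the range of your $R_{\langle\rangle}=\prod_{i=1}^n(1-Q^\Phi_{\langle q_i\rangle})$ via $\Phi(\mu_\lambda)$, which commutes with it. Both arguments are sound; the paper's is shorter and more conceptual (at the price of routing through the Fock representation and \eqref{kernel of the extended}), while yours is self-contained at the Hilbert-space level, at the cost of approximate-unit and lower-semicontinuity bookkeeping. One point worth highlighting: you correctly aim the limit $\|\mu_\lambda c\mu_\lambda\|\to\|\overline{\Phi}(c)\|$ at $\|\overline{\Phi}(c)\|$ rather than at $\|c\|$; these differ precisely when $\{a\in\LL(p,p):a\KK(p,p)=0\}\neq\{0\}$, i.e.\ when $\KK(p,p)$ is not essential in $\LL(p,p)$, so a less careful version of your step 2 would have been false in exactly the generality the proposition claims.
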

\begin{proof}

(i)$\Rightarrow$(ii).  Plainly,  condition $(C)$ implies that $\Phi$ is injective. Let  $p\in P$  and $q_1,...,q_n\in P$ be such that  $p\not\geq q_i$. Let $T^{p,p}_{p,p}:\LL(p,p)\to \LL(X_{p,p})=\LL(\KK(p,p))$ be the homomorphism  given by multiplication from left, cf. Lemma \ref{lemma about Fock operators}. By \eqref{kernel of the extended} and the construction of $T$, we have  that $\ker T^{p,p}_{p,p}= \ker \overline{\Phi}_{p,p}$. Similarly, using \eqref{Coburn condition},  we see that the kernel of the representation $\LL(p,p)\ni a \longmapsto  \overline{\Phi}(a) \prod_{i=1}^{n}(1-Q^\Phi_{\langle q_i\rangle})$ coincides with the kernel of $T^{p,p}_{p,p}$. Thus
$\|\overline{\Phi}(a)\prod_{i=1}^{n}(1-Q^\Phi_{\langle q_i\rangle})\|=\|\overline{\Phi}(a)\|$ for all $a\in \LL(p,p)$.
 Since  $\prod_{i=1}^{n}(1-Q^\Phi_{\langle q_i\rangle})\leq (1-\bigvee_{i=1}^n Q^\Phi_{q_i})$, this implies that $\|(1-\bigvee_{i=1}^n Q^\Phi_{q_i})\overline{\Phi}(a) (1-\bigvee_{i=1}^nQ^\Phi_{q_i})\| \geq \|\overline{\Phi}(a)\|$ for all $a\in \LL(p,p)$. The reverse inequality is clear.

(ii)$\Rightarrow$(i).  Let  $p\in P$  and $q_1,...,q_n\in P$ be such that  $p\not\geq q_i$.  For every $a\in \KK(q_i, q_i)$, $j=i,\dots,n$, we have  $\|(1-\bigvee_{i=1}^n Q^\Phi_{q_i})\overline{\Phi}(a) (1-\bigvee_{i=1}^nQ^\Phi_{q_i})\|=0$. Hence   condition $(C')$ implies that $\Phi$ is Toeplitz covariant, and therefore if $\KK\otimes 1\subseteq \KK$ then   $\Phi$ satisfies condition $(C)$ by Proposition \ref{Proposition 7.8}.
Suppose then that $\KK$ is  $\otimes 1$-non-degenerate.
By Proposition \ref{properties of a semi lattice of projections} we have $(1-\bigvee_{i=1}^n Q^\Phi_{q_i})=\prod_{i=1}^{n}(1-Q^\Phi_{\langle q_i\rangle})$. Hence for any $a\in \KK(p,p)$ we get
$$
\|\Phi(a)\prod_{i=1}^{n}(1-Q^\Phi_{\langle q_i\rangle})\|=\|(1-\bigvee_{i=1}^n Q^\Phi_{q_i})\overline{\Phi}(a) (1-\bigvee_{i=1}^nQ^\Phi_{q_i})\|=\|\Phi(a)\|=\|a\|.
$$
Thus $\Phi$ satisfies condition $(C)$.
\end{proof}

In order to get a uniqueness result in the case when the group  $P^*\subseteq P$ is non-trivial, we need to impose certain additional conditions on $\KK$ and $\LL$. It seems that there is no single  candidate for such a condition available on the scene. However, there are several natural conditions that  can be applied in different situations. For instance, if $P$ can be embedded into a group $G$ via a monomorphism  $\theta:P \to G$, and $\B^\theta$ is the Fell bundle described in Proposition \ref{gauge co-action}, then Theorem \ref{amenability main result} and results of \cite{KM}, \cite{KS} indicate that a natural condition is aperiodicity of
 the Fell bundle $\B^\theta$.

We recall from \cite[Definition 4.1]{KS} that a
 Fell bundle $\B=\{B_g\}_{g\in G}$ is \emph{aperiodic} if for  each $t\in G\setminus\{e\}$, each $b_t\in B_t$ and every non-zero hereditary
subalgebra $D$ of $B_e$,
$$
\inf \{\|db_td\| : d\in D^+,\,\, \|d\|=1\}=0.
$$
For general semigroups $P$ we may consider the following modification (and in fact generalization) of this notion expressed in terms of $\LL$ and $\KK$. We recall, see Lemma \ref{lemma on automorphic actions on ideals}, that  the group $P^*$ of invertible elements acts on $\KK$ by automorphisms.
\begin{defn}\label{definition of aperiodicity for right-tensor categories}
We say that the group $\{\otimes 1_{x}\}_{x\in P^*}$ of automorphisms of  $\LL$ is  \emph{aperiodic on $\KK$} if  for every $p\in P$, every non-zero hereditary $C^{*}$-subalgebra $D\subseteq \K(p,p)$ and every $b\in \KK(px,p)$ where $x\in P^*\setminus\{e\}$ we have
\begin{equation}\label{aperiodicity condition}
\inf \{\|(a\otimes{1_x})b a\| : a\in D^+,\,\, \|a\|=1\}=0.
\end{equation}
\end{defn}
\begin{rem}\label{rem:to be used for aperiodicity} For a fixed $p\in P$ the spaces $B^{(p)}_x:=\KK(px,p)$, $x\in P^*$, equipped with multiplication and involution defined by
$ b_x \cdot b_y := b_x\otimes 1_y b_y$  and $(b_x)^*:=(b_x^*)\otimes 1_{x^{-1}}$,
 for $b_x \in  B^{(p)}_x, b_y\in B^{(p)}_y$,  form a Fell bundle $\mathcal{B}^{(p)}:=\{B^{(p)}_x\}_{x\in P^*}$ over $P^*$. In particular, the group $\{\otimes 1_{x}\}_{x\in P^*}$  is  aperiodic on $\KK$ if and only if for each $p\in P$ the Fell bundle $\mathcal{B}^{(p)}$ is aperiodic. \end{rem}
The  following lemma can be proved using a standard argument, cf. the proof of \cite[Lemma 5.2]{MS}. In view of Remark \ref{rem:to be used for aperiodicity}, it is a corollary of  \cite[Lemma 4.2]{KS}.
 \begin{lem}\label{Muhly Solel lemma}
Suppose that the group $\{\otimes 1_{x}\}_{x\in P^*}$ of automorphisms of  $\KK$ is  aperiodic. Take any $p\in P$ and let $F\subseteq P^*$ be a finite set containing $e$.  For every family of elements $a_x\in \KK(px,p)$ for $x\in F$ with $a_e=a_e^*$, and every $\varepsilon >0$ there  is an element $d\in \overline{a_e\KK(p,p)a_e}$, $\|d\|=1$, such that
$
 \|(d\otimes 1_{x}) a_x d\| <\varepsilon\,\, \textrm{ for  every }\,\, x\in F\setminus \{e\}\,\, \textrm{ and }\,\,\| d  a_e d\| > \|a_e\| -\varepsilon.
$
 \end{lem}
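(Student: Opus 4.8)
The plan is to recognise the statement as the Fell-bundle aperiodicity lemma \cite[Lemma 4.2]{KS} applied to the bundle $\B^{(p)}$ of Remark \ref{rem:to be used for aperiodicity}, and to check that every quantity in the conclusion matches the corresponding quantity for that bundle. First I would fix $p\in P$ and recast the data. By Remark \ref{rem:to be used for aperiodicity} the spaces $B^{(p)}_x=\KK(px,p)$, $x\in P^*$, form a Fell bundle $\B^{(p)}$ over the group $P^*$ whose unit fibre is the $C^*$-algebra $B^{(p)}_e=\KK(p,p)$. I would verify that its multiplication and involution are compatible with the ambient precategory, so that the two relevant norms agree: for $d\in\KK(p,p)$ and $a_x\in\KK(px,p)$ associativity of the bundle product together with $\otimes 1_e=\id$ gives
$$
d\cdot a_x\cdot d=(d\otimes 1_x)\,a_x\,d ,
$$
so that $\|d\cdot a_x\cdot d\|$ is exactly $\|(d\otimes 1_x)a_x d\|$, while $d\cdot a_e\cdot d=d a_e d$ is the diagonal term; one also checks $\|b^*\cdot b\|=\|b\|^2$ via property (p4), so the bundle $C^*$-norm restricts to the given fibre norms. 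I would also note that the hereditary subalgebra of $B^{(p)}_e$ generated by $a_e$ is precisely $\overline{a_e\KK(p,p)a_e}$, so the element produced lands in the required corner (this already forces $a_e\neq0$, which is implicit since otherwise no norm-one $d$ exists).

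With this dictionary in place, I would invoke the equivalence recorded in Remark \ref{rem:to be used for aperiodicity}: aperiodicity of $\{\otimes 1_x\}_{x\in P^*}$ on $\KK$, in the sense of Definition \ref{definition of aperiodicity for right-tensor categories}, says exactly that each $\B^{(p)}$ is an aperiodic Fell bundle over $P^*$. The desired $d\in\overline{a_e\KK(p,p)a_e}$, $\|d\|=1$, with $\|d\cdot a_x\cdot d\|<\varepsilon$ for $x\in F\setminus\{e\}$ and $\|d\cdot a_e\cdot d\|>\|a_e\|-\varepsilon$, is then the verbatim conclusion of \cite[Lemma 4.2]{KS} for the bundle $\B^{(p)}$, the finite set $F\subseteq P^*$, and the homogeneous elements $(a_x)_{x\in F}$ with $a_e=a_e^*$. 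Modulo the citation this completes the proof.

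If instead a self-contained argument along the lines of \cite[Lemma 5.2]{MS} is wanted, I would proceed in two steps. In Step 1, by functional calculus on the self-adjoint $a_e$ I would pick a spectral value $\lambda_0$ with $|\lambda_0|=\|a_e\|$ and a function $h$, $0\le h\le 1$, peaking $h(\lambda_0)=1$ and supported in a tiny interval about $\lambda_0$; then $c=h(a_e)\in\overline{a_e\KK(p,p)a_e}$ is a nonzero positive element whose hereditary subalgebra lives on the spectral subspace where $a_e\approx\lambda_0$, so that \emph{every} norm-one positive $d\in\overline{c\KK(p,p)c}$ satisfies $\|d a_e d\|\ge\|a_e\|-\varepsilon$ (write $d a_e d=\lambda_0 d^2+d(a_e-\lambda_0)d$ and bound the second term by the spectral gap). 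In Step 2 I would apply the aperiodicity condition \eqref{aperiodicity condition} successively over the finitely many $x\in F\setminus\{e\}$, each time passing to a smaller hereditary subalgebra inside the peak corner. The main obstacle is exactly this simultaneous control: shrinking to make one term $\|(d\otimes 1_x)a_x d\|$ small must neither destroy the diagonal estimate nor spoil the smallness already achieved for earlier $x$'s. The former is guaranteed by keeping all corners inside $\overline{c\KK(p,p)c}$ from Step 1; the latter follows from the bundle computation that for $d_{k+1}=d_k e d_k$ in a sub-corner of $d_k$,
$$
d_{k+1}\cdot a_x\cdot d_{k+1}=(d_k e)\cdot(d_k\cdot a_x\cdot d_k)\cdot(e d_k),
$$
so a term once made small stays small under further compression. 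Carrying out the nested choice together with the peak estimate is the only delicate point; everything else is bookkeeping.
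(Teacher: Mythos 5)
Your identification of the lemma with \cite[Lemma 4.2]{KS} applied to the Fell bundle $\B^{(p)}$ of Remark \ref{rem:to be used for aperiodicity} is exactly the paper's route, and your dictionary checks ($d\cdot a_x\cdot d=(d\otimes 1_x)a_x d$, $b^\star\cdot b=b^*b$, the identification of $\overline{a_e\KK(p,p)a_e}$ as the hereditary subalgebra generated by $a_e$) are all correct. The gap is in the word ``verbatim'': \cite[Lemma 4.2]{KS} is stated for a \emph{positive} element of the unit fibre, whereas the present lemma assumes only $a_e=a_e^*$. Your primary argument therefore invokes the cited lemma under a hypothesis it does not cover, and the missing reduction is precisely what the paper's proof consists of besides the citation: pass to $-a_e$ if necessary so that $\|a_e\|$ is a spectral value of $a_e$, then replace $a_e$ by $f(a_e)$ where $f$ is a non-decreasing continuous function vanishing on $(-\infty,0]$ and equal to the identity near $\|a_e\|$ (e.g.\ $f(t)=\max(t,0)$). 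One must then check that this replacement is harmless: $f(a_e)$ is positive with $\|f(a_e)\|=\|a_e\|$, it lies in $\overline{a_e\KK(p,p)a_e}$ (so the corner only shrinks), and any $d$ in the hereditary subalgebra generated by $a_e^+$ satisfies $d\,a_e\,d=d\,a_e^+\,d$ because $a_e^-a_e^+=0$; only after these observations does \cite[Lemma 4.2]{KS} deliver the stated conclusion for the original self-adjoint $a_e$.

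The fix is already contained in your own fallback: Step 1 there (peak a function $h$ at a spectral value $\lambda_0$ with $|\lambda_0|=\|a_e\|$, so that every norm-one $d$ in the corner $\overline{c\,\KK(p,p)\,c}$, $c=h(a_e)$, automatically satisfies $\|d a_e d\|\ge\|a_e\|-\varepsilon$) is exactly the functional-calculus reduction your primary argument lacks. Combining that Step 1 with \cite[Lemma 4.2]{KS} applied to the positive element $c$ and the elements $a_x$, $x\in F\setminus\{e\}$, yields a complete proof, essentially the paper's. By contrast, your Step 2 amounts to re-proving \cite[Lemma 4.2]{KS} by nested compressions, and there the sketch is too optimistic: a norm-one element of the hereditary subalgebra $\overline{d_k\KK(p,p)d_k}$ need not have the form $d_ked_k$ with $\|e\|\le 1$, so your displayed identity does not by itself show that previously achieved smallness survives the renormalization needed to keep $\|d_{k+1}\|=1$; this bookkeeping is the actual content of \cite[Lemma 4.2]{KS} (and of \cite[Lemma 5.2]{MS}), and it is best delegated to the citation, as both the paper and your primary route do.
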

\begin{proof}
 By  passing to $-a_e$ if necessary, we may assume that $\|a_e\|$ is a spectral value of $a_e$. Then  applying to $a_e$ a non-decreasing continuous function that vanishes on $(-\infty,0]$ and is identity on a neighborhood of $\|a_e\|$ we may assume $a_e$ is positive.
Then the assertion follows from \cite[Lemma 4.2]{KS} applied to the Fell bundle $\mathcal{B}^{(p)}$ described in  Remark \ref{rem:to be used for aperiodicity}.
\end{proof}
We will also need the following fact.
 \begin{lem}\label{a periodicity for essentials}
Suppose that $\KK$ is an essential ideal in  $\LL$. Then the group $\{\otimes 1_{x}\}_{x\in P^*}$ of automorphisms    is  aperiodic on $\KK$ if and only if it is aperiodic on $\LL$.
 \end{lem}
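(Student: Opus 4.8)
The plan is to treat the two implications separately: the forward one is routine, while the reverse one is where essentiality enters. Throughout I fix $p\in P$ and $x\in P^*\setminus\{e\}$, and I recall from Lemma~\ref{lemma on automorphic actions on ideals} that $\otimes 1_x$ carries $\KK(p,p)$ into $\KK(px,px)$; combined with the ideal property this guarantees that for $a\in\KK(p,p)$ and $b\in\LL(px,p)$ every expression of the shape $(a\otimes 1_x)ba$ we form lands in $\KK(px,p)$ (equivalently, this is just the bundle product $d\,b\,d$ of Remark~\ref{rem:to be used for aperiodicity}).

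First I would dispose of the implication ``aperiodic on $\LL$ $\Rightarrow$ aperiodic on $\KK$'', which uses only that $\KK$ is an ideal. Given a non-zero hereditary $C^*$-subalgebra $D\subseteq\KK(p,p)$ and $b\in\KK(px,p)$, I would observe that $D$ is automatically hereditary in $\LL(p,p)$: if $0\le c\le d$ with $d\in D$ and $c\in\LL(p,p)$, then $c\in\KK(p,p)$ because the two-sided ideal $\KK(p,p)$ is hereditary in $\LL(p,p)$, and hence $c\in D$ since $D$ is hereditary in $\KK(p,p)$. As also $b\in\LL(px,p)$, the defining infimum in~\eqref{aperiodicity condition} for the pair $(D,b)$ is literally an instance of aperiodicity on $\LL$, so it vanishes.

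The substantive direction is ``aperiodic on $\KK$ $\Rightarrow$ aperiodic on $\LL$''. Fix a non-zero hereditary $C^*$-subalgebra $D\subseteq\LL(p,p)$ and $b\in\LL(px,p)$. I would pass to $D_0:=D\cap\KK(p,p)$, which is hereditary in $\KK(p,p)$ by the argument above and, crucially, non-zero: picking $0\ne d\in D^+$, essentiality of $\KK(p,p)$ in $\LL(p,p)$ forces $d\KK(p,p)\ne 0$, whence $dkd\ne 0$ for some $k\in\KK(p,p)^+$, and $0\ne dkd\in D\cap\KK(p,p)$ since $dkd\le\|k\|d^2$. The main obstacle is precisely that $b$ lies in $\LL(px,p)$ and, because $\KK(p,p)$ need not contain a unit acting on $b$, cannot be norm-approximated by elements of $\KK(px,p)$; so one cannot simply replace $b$ by a $\KK$-element and invoke aperiodicity of $\KK$.

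The resolution is a cut-off device. Given $\varepsilon>0$, I would fix $h\in D_0^+$ with $\|h\|=1$ and choose, by continuous functional calculus, $e=f(h)$ and $e'=g(h)$ in $D_0$ with $f(0)=g(0)=0$, $0\le f,g\le 1$, with $f\equiv 1$ near the top of the spectrum (so that $e\ne 0$, as $1\in\sigma(h)$), and with $gf=f$; then $e'e=e$, hence $e'a=ae'=a$ for every $a\in\overline{e\KK(p,p)e}$. Setting $b_{e'}:=(e'\otimes 1_x)\,b\,e'\in\KK(px,p)$, aperiodicity of $\KK$ applied to the non-zero hereditary subalgebra $\overline{e\KK(p,p)e}\subseteq\KK(p,p)$ and to $b_{e'}$ yields $a\in(\overline{e\KK(p,p)e})^+$ with $\|a\|=1$ and $\|(a\otimes 1_x)b_{e'}a\|<\varepsilon$. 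For this $a$ the relations $e'a=ae'=a$ give $(a\otimes 1_x)ba=(a\otimes 1_x)(e'\otimes 1_x)\,b\,e'a=(a\otimes 1_x)b_{e'}a$, and since $\overline{e\KK(p,p)e}\subseteq D_0\subseteq D$ I will have produced $a\in D^+$, $\|a\|=1$, with $\|(a\otimes 1_x)ba\|<\varepsilon$. As $\varepsilon$ is arbitrary, the infimum in~\eqref{aperiodicity condition} for $(D,b)$ is zero, which completes the reverse implication and hence the proof.
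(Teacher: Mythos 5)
Your proof is correct, but it takes a genuinely different route from the paper's. The paper's entire proof is a two-line reduction: by Remark~\ref{rem:to be used for aperiodicity}, for each fixed $p\in P$ the spaces $\KK(px,p)$ (resp.\ $\LL(px,p)$), $x\in P^*$, form a Fell bundle over the group $P^*$ with unit fiber $\KK(p,p)$ (resp.\ $\LL(p,p)$), and aperiodicity of $\{\otimes 1_x\}_{x\in P^*}$ on $\KK$ (resp.\ on $\LL$) is equivalent to aperiodicity of all of these bundles; since essentiality of $\KK$ in $\LL$ says exactly that each unit fiber $\KK(p,p)$ is essential in $\LL(p,p)$, the lemma then becomes an application of \cite[Corollary 6.9]{KM}. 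You instead prove the equivalence from scratch: the implication ``aperiodic on $\LL$ implies aperiodic on $\KK$'' uses only that closed ideals are hereditary, while the converse uses essentiality (to see that $D\cap\KK(p,p)\neq\{0\}$) together with the functional-calculus cut-off pair $e'e=e$, which lets you replace $b\in\LL(px,p)$ by the \emph{fixed} element $(e'\otimes 1_x)be'\in\KK(px,p)$ before quantifying over $a$. This cut-off is exactly the right device: aperiodicity of $\KK$ requires the test element of $\KK(px,p)$ to be chosen independently of $a$, so one cannot simply use $ba\in\KK(px,p)$, and your condition $gf=f$ guarantees that $e'$ acts as a unit on the hereditary subalgebra $\overline{e\KK(p,p)e}\subseteq D$ from which $a$ is drawn. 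What your argument buys is a self-contained, elementary proof that makes visible precisely where essentiality enters; what the paper's route buys is brevity and uniformity, since the surrounding section already leans on the same Fell-bundle translation (Lemma~\ref{Muhly Solel lemma} is likewise deduced from \cite{KS} via Remark~\ref{rem:to be used for aperiodicity}) and on the general aperiodicity machinery of \cite{KM}, which in addition links the condition to topological freeness.
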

\begin{proof}
In view of  Remark \ref{rem:to be used for aperiodicity} the assertion follows from \cite[Corollary 6.9]{KM}.
 \end{proof}
Our main results  relate the properties of a Nica covariant representation  of $\KK$ on a Hilbert space that have been introduced so far.  Recall that Toeplitz covariance was defined in \eqref{Toeplitz condition}; we think of it as being an algebraic condition. By contrast, condition $(C)$ can be viewed as a geometric condition.
	
 \begin{thm}\label{preludium to Nica Toeplitz uniqueness2}
Let  $\KK$  be a well-aligned ideal in a right-tensor $C^*$-precategory $(\LL, \{\otimes 1_r\}_{r\in P})$.
Consider the following conditions on
a Nica covariant representation $\Phi:\KK\to B(H)$:
\begin{itemize}
\item[(i)] $\Phi$ satisfies  condition $(C)$;
\item[(ii)] $\Phi$ is injective and $\overline{\Phi}:\LL\to \B(H)$ satisfies condition  $(C')$;
\item[(iii)] $\Phi$ generates an exotic  Nica-Toeplitz $C^*$-algebra $C^*(\Phi(\KK))$ of $\KK$;
\item[(iv)] $\Phi\rtimes P$ is injective on the core $C^*$-subalgebra $B_e^{i_\KK}$ of $\NT_{\LL}(\KK)$;
 \item[(v)]   $\Phi$ is injective and  Toeplitz covariant.
\end{itemize}
Then  $(i)\Rightarrow (ii)$ and $(iii)\Rightarrow (iv)\Leftrightarrow (v)$.
If  one of the following conditions holds:
\begin{itemize}
\item[(1)] $P^*=\{e\}$,
\item[(2)] the group $\{\otimes 1_{x}\}_{x\in P^*}$   is aperiodic on $\KK$ and $\KK$ is essential in $\LL_{\KK}$, cf. Lemma \ref{the right tensor precatory generated by K},
\item[(3)] there is a  unital monomorphism  $\theta:P \to G$ into a group $G$  and  the Fell bundle $\B^{\theta}$  from Proposition \ref{gauge co-action} is aperiodic,
\end{itemize}
then  $(ii)\Rightarrow (iii)$. Further, if $\KK\otimes 1\subseteq \KK$, then all five conditions are equivalent.
\end{thm}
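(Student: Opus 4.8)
The plan is to dispatch the ``formal'' implications by quoting the structural results already in place, and then to concentrate all the work on $(ii)\Rightarrow(iii)$, which I would reduce to a single norm inequality.

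First the cheap implications. The implication $(i)\Rightarrow(ii)$ is exactly Proposition~\ref{condition C versus condition C'}. For $(iv)\Leftrightarrow(v)$ I would observe that $(\Phi\rtimes P)|_{B_e^{i_\KK}}$ always maps \emph{onto} $B_e^\Phi$ (it sends $i_\KK(\KK(p,p))$ onto $\Phi(\KK(p,p))$), so injectivity on $B_e^{i_\KK}$ is the same as being an isomorphism onto $B_e^\Phi$; Corollary~\ref{Nica-Toeplitz representation corollary} identifies the latter with ``$\Phi$ injective and Toeplitz covariant''. Finally $(iii)\Rightarrow(v)$ is Proposition~\ref{proposition to invoke}, and combined with $(iv)\Leftrightarrow(v)$ this yields $(iii)\Rightarrow(iv)$.

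For the substantial implication $(ii)\Rightarrow(iii)$ I would invoke Proposition~\ref{preludium to Nica Toeplitz uniqueness}: it suffices to produce a bounded map $E^\Phi\colon C^*(\Phi(\KK))\to B_\KK$ satisfying \eqref{formula defining strange map}. Defining $E^\Phi$ on the dense $*$-subalgebra by that formula, everything comes down to the estimate
\[
\Big\|\sum_{p,q\in F}\bigoplus_{w\in pP\cap qP,\,t\in P,\; p^{-1}w=q^{-1}w}T_{p,q}^{w,t}(a_{p,q})\Big\|\;\le\;\Big\|\sum_{p,q\in F}\Phi(a_{p,q})\Big\|=:\|S\|,
\]
since this gives boundedness and well-definedness at once. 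By Lemma~\ref{norm of an element in the core} the left-hand side equals $\max_{C\in\In(F)}\sup_{w\in P_{F,C}}\|T_{w,w}^{w,w}(b_w^C)\|$, where $b_w^C:=\sum_{p,q\in C,\;p^{-1}w=q^{-1}w}a_{p,q}\otimes 1_{q^{-1}w}\in\LL_\KK(w,w)$. Since $\Phi$ is injective, \eqref{kernel of the extended} and \eqref{line with kernels} give $\ker T_{w,w}^{w,w}=\ker\overline{\Phi}_{w,w}$, so the two $*$-homomorphisms agree in norm and $\|T_{w,w}^{w,w}(b_w^C)\|=\|\overline{\Phi}(b_w^C)\|$. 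Thus the whole theorem reduces to proving $\|\overline{\Phi}(b_w^C)\|\le\|S\|$ for each initial segment $C$ and each $w\in P_{F,C}$. Here condition $(C')$ enters: applying \eqref{Coburn condition1} with $p=w$ and $\{q_i\}=F\setminus C$ (legitimate because $w\in P_{F,C}$ forces $w\not\ge f$ for $f\in F\setminus C$) I may compress by $R:=1-\bigvee_{f\in F\setminus C}Q^\Phi_f$ without changing the norm, so that $\|\overline{\Phi}(b_w^C)\|=\|R\,\overline{\Phi}(b_w^C)\,R\|$.

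It remains to dominate $\|R\,\overline{\Phi}(b_w^C)\,R\|$ by $\|S\|$ by realizing $R\,\overline{\Phi}(b_w^C)\,R$ as a compression of $S$, and this is the heart of the matter and the main obstacle. The projections $Q^\Phi_s$ only remember the principal ideal $sP$, not the exact range $s$, so---unlike in the Fock picture, where the fixed-range projections $Q_w$ of Lemma~\ref{lemma on conditional expectations} extract $\overline{\Phi}(b_w^C)$ cleanly via \eqref{crucial-observation-about-T}---compressing $S$ produces, besides $\overline{\Phi}(b_w^C)$, spurious contributions landing at ranges $u\ne w$ with $uP\cap wP\ne\emptyset$, arising from pairs $p,q\in C$ with $p^{-1}w\ne q^{-1}w$ and from indices $q\in F\setminus C$. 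Controlling these is exactly what the three alternative hypotheses are for. Under (1) $P^*=\{e\}$ the preorder is a genuine partial order, the classes are singletons, and the range bookkeeping is unambiguous, so an induction over $\In(F)$ parallel to the proof of Theorem~\ref{theorem for amenability and spectral subspaces} removes the spurious terms. Under (2) I would use aperiodicity through the Muhly--Solel averaging of Lemma~\ref{Muhly Solel lemma}: after the support projections kill the terms at incomparable or strictly larger ranges, the remaining discrepancy involves only $P^*$-twisted terms, and one chooses an approximate unit $d$ at level $w$ driving these below any $\varepsilon$ while nearly preserving $\|\overline{\Phi}(b_w^C)d\|$, using essentiality of $\KK$ in $\LL_\KK$ (Corollary~\ref{cor:norm of Z formula}) to keep $\overline{\Phi}$ isometric on $\LL_\KK(w,w)$. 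Under (3), where $P$ embeds in a group and is therefore cancellative, $b_w^C$ is already diagonal and $B_\KK=B_e^T$ by Proposition~\ref{Nica Toeplitz conditional expectation}; here I would instead pass through Proposition~\ref{gauge co-action}, identify $\NT_\LL(\KK)\cong C^*(\B^\theta)$, translate ``$\Phi$ injective and Toeplitz covariant'' into faithfulness on the unit fibre $B_e^\theta$, and invoke the aperiodicity uniqueness theorem for Fell bundles from \cite{KS} to conclude directly that $\ker(\Phi\rtimes P)\subseteq\ker(T\rtimes P)$. Finally, for the addendum, assuming in addition $\KK\otimes 1\subseteq\KK$, Proposition~\ref{condition C versus condition C'} upgrades the first implication to $(i)\Leftrightarrow(ii)$ and Proposition~\ref{Proposition 7.8} gives $(i)\Leftrightarrow(v)$; together with $(iv)\Leftrightarrow(v)$, the case-dependent $(ii)\Rightarrow(iii)$, and $(iii)\Rightarrow(v)$ this closes the cycle $(v)\Leftrightarrow(i)\Leftrightarrow(ii)\Rightarrow(iii)\Rightarrow(v)\Leftrightarrow(iv)$, whence all five conditions are equivalent.
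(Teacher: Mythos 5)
Your skeleton coincides with the paper's: the cheap implications via Propositions \ref{condition C versus condition C'}, \ref{proposition to invoke} and Corollary \ref{Nica-Toeplitz representation corollary}, the reduction of $(ii)\Rightarrow(iii)$ to a norm estimate through Proposition \ref{preludium to Nica Toeplitz uniqueness}, and the passage via Lemma \ref{norm of an element in the core} and $\ker T_{w,w}^{w,w}=\ker\overline{\Phi}_{w,w}$ are all exactly what the paper does. But the step you yourself label ``the heart of the matter'' is genuinely missing, and the sketches you offer in its place would not close it. First, your compressing projection $R=1-\bigvee_{f\in F\setminus C}Q^\Phi_f$ is too small: compressing $S$ by it leaves alive the spurious terms at ranges $pq^{-1}w=wx$ with $x\in P\setminus P^*$, $p,q\in C$. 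The paper instead compresses by $(Q^\Phi_{F_0})^\bot$ with $F_0=F_{rest}\cup F_{>w}$, checks that $w\not\geq s$ for every $s\in F_0$ so that condition $(C')$ still applies to this larger family, and -- crucially -- multiplies $a$ on both sides by $\overline{\Phi}(d)$ with $d\in\LL(w,w)$, using the commutation $\overline{\Phi}(d)Q^\Phi_s=Q^\Phi_s\overline{\Phi}(d)$ for $s\geq w$ (Lemma \ref{properties of family projections}) to obtain the compression identity \eqref{compression equality one}. This mechanism is needed in \emph{all three} cases; the hypotheses $(1)$--$(3)$ are only there to handle the $P^*$-twisted terms in $F_w$ that no compression can remove. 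Your attribution of all spurious terms to the case hypotheses conflates these two issues.

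Beyond that, the case sketches themselves fail. In case $(1)$ an ``induction over $\In(F)$ parallel to Theorem \ref{theorem for amenability and spectral subspaces}'' is the wrong tool: that theorem proves injectivity of a $*$-homomorphism on an inductive limit of $C^*$-algebras, whereas here one must prove an operator-norm inequality in $\B(H)$; the paper's case $(1)$ is the compression identity with $d$ an approximate unit of $\KK(w,w)$, followed by $(C')$. In case $(2)$ (and $(3)$) the averaging lemmas require a \emph{self-adjoint} diagonal part: Lemma \ref{Muhly Solel lemma} needs $a_e=a_e^*$, which is why the paper first proves the estimate only for $a=\sum\Phi(a_{p,q})$ with $a_{p,q}^*=a_{q,p}$ and then extends $E^\Phi$ by splitting into real and imaginary parts; your blanket estimate for arbitrary elements has no averaging lemma to call on. Finally, in case $(3)$ you propose to quote ``the aperiodicity uniqueness theorem for Fell bundles from \cite{KS}'' to conclude $\ker(\Phi\rtimes P)\subseteq\ker(T\rtimes P)$ directly; but the paper's Remark \ref{stupid me} points out that precisely this statement in \cite{KS} rests on the incorrect \cite[Proposition 3.15]{KS} (without amenability one only gets the weaker intersection property, not containment of kernels). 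The corrected statement is the paper's Proposition \ref{uniqueness for cross-sectional algebras}, which is itself \emph{deduced from} the theorem you are proving, so invoking it here is circular unless you first establish case $(2)$ and specialize to groups. The non-circular argument is the paper's: apply \cite[Lemma 4.2]{KS} directly to $b_e\in B_e^\theta\cong B_e^{i_\KK}$, estimate $\|b_e\|\leq\|a\|+\varepsilon$, and use Corollary \ref{the reduced core} to identify $\|Z\|=\|b_e\|$.
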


\begin{proof}
Implications (i)$\Rightarrow$(ii) and (iii)$\Rightarrow$(v)$\Leftrightarrow$(iv) follow  respectively from Propositions \ref{condition C versus condition C'} and \ref{proposition to invoke} and Corollary \ref{Nica-Toeplitz representation corollary}. If $\KK\otimes 1\subseteq \KK$ we also have  (v)$\Rightarrow$(i) by Proposition \ref{Proposition 7.8}. Thus it remains to show that (ii)$\Rightarrow$(iii), provided one of the conditions (1)-(3) holds.

Suppose therefore that $\Phi$ is injective and $\overline{\Phi}:\LL\to \B(H)$ satisfies condition  $(C')$. In view of Proposition \ref{preludium to Nica Toeplitz uniqueness}, it suffices to show that
 \eqref{formula defining strange map} defines a bounded map $E^{\Phi}$ on  the dense $*$-subalgebra $C^*(\Phi(\KK))^0$ of $C^*(\Phi(\KK))$, cf. \eqref{black star algebra}. Moreover, it  suffices to define a bounded map $E^\Phi$ on the $\R$-linear subspace $C^*(\Phi(\KK))^0_{sa}$ of $C^*(\Phi(\KK))^0$
consisting of elements of the form
\begin{equation}\label{bla form of an element}
a=\sum_{p,q\in F}\Phi(a_{p,q}), \,\, \textrm{ where }a_{p,q}\in \KK(p,q), \,\,  a_{p,q}^*=a_{q,p},
\end{equation}
for $F\subseteq P$ finite and $p,q \in F$. Indeed,  such map on $C^*(\Phi(\KK))^0_{sa}$ extends via the formula $E^{\Phi}(a)=E^{\Phi}(\frac{a+a^*}{2}) + i E^{\Phi}(\frac{a-a^*}{2i})$ to a  bounded map $E^{\Phi}$ on $C^*(\Phi(\KK))^0$ satisfying \eqref{formula defining strange map}.

Let us then fix a self-adjoint element $a$ given by \eqref{bla form of an element}.  Denote by $Z$ the right hand side of \eqref{formula defining strange map}, cf. also \eqref{definition of Z}. Note that  $Z^*=Z$.
Our strategy is to show (by consecutive  compressions of  $a$ and its compressions) that for every $\varepsilon > 0$ we have
\begin{equation}\label{equation in the proof4}
\|Z\|-\varepsilon \leq\|\, a \|.
\end{equation}
To this end, we fix $\varepsilon > 0$, and note that by Lemma \ref{norm of an element in the core} we may find  an initial segment $C$ of $F$ and $w\in    \sigma(C) P$ with $w\in P_{F, C} $ such that
\begin{equation*}
\|Z\| -\varepsilon/2 \leq  \Big\|T^{w,w}_{w,w}\Big(\sum_{p,q\in C  \atop pq^{-1}w=w} a_{p,q}\otimes 1_{q^{-1}w}\Big)\Big\|.
\end{equation*}
  As noticed in the proof of Proposition~\ref{condition C versus condition C'}, faithfulness of $\Phi$ implies that $\ker T^{w,w}_{w,w}= \ker \overline{\Phi}_{w,w}$. Thus  we obtain
\begin{equation}\label{equation in the proof5}
\|Z\| -\varepsilon/2 \leq  \Big\| \overline{\Phi}\Big(\sum_{p,q\in C  \atop pq^{-1}w=w} a_{p,q}\otimes 1_{q^{-1}w}\Big)\Big\|.
\end{equation}
Clearly, \eqref{equation in the proof4} will follow from \eqref{equation in the proof5} if we  prove that
\begin{equation}\label{equation in the proof3}
\Big\| \overline{\Phi}\Big(\sum_{p,q\in C  \atop pq^{-1}w=w} a_{p,q}\otimes 1_{q^{-1}w}\Big)\Big\|-\frac \varepsilon4\leq \|a\|.
\end{equation}
 We fix the above $C$ and $w$. Put
\begin{align*}
F_{rest}&:=\{r\in P :tP\cap wP=rP, t\in F\setminus C \},
\\
F_{>w}&:=\{pq^{-1}w:pq^{-1}w=wx, \, x \in P\setminus P^*, p, q\in C\},
\\
F_{w}&:=\{pq^{-1}w:pq^{-1}w=wx, \, x \in  P^*, p, q\in C\}.
\end{align*}
Note that for  $s\in F_{w}$ we have $s\geq w \geq s$ and for $s\in F_{>w}$ we have $s \geq w$ and  $w\not \geq s$. Since $\omega\notin  \bigcup_{t\in F\setminus C} tP$,  for $r\in F_{rest}$ we  have $r \geq w$ and $w\not \geq r$.
Now suppose that $d\in \LL(w,w)$.
Using \eqref{action on projections general2} (and its adjoint) and that $\overline{\Phi}$ is a representation we  get
\begin{align*}
\overline{\Phi}(d) a \overline{\Phi}(d)
= &  \, \,  \sum_{p ,q\in C  \atop  pq^{-1}w\in F_{w}}  \overline{\Phi}(d)\overline{\Phi}( a_{p,q}\otimes 1_{q^{-1}w}) \overline{\Phi}(d)
\\
\, \, \, \,+&\, \,  \sum_{p ,q\in C  \atop  pq^{-1}w\in F_{>w}} \overline{\Phi}(d)\overline{\Phi}(a_{p,q}\otimes 1_{q^{-1}w} ) \overline{\Phi}(d)
\\
 \, \,\, \,+&\, \, \sum_{p\in F ,q\in F\setminus C\atop qP\cap wP=rP, r\in F_{rest}} \overline{\Phi}(d)\overline{\Phi}(a_{p,q}\otimes 1_{q^{-1}r})\overline{\Phi}(d)
\\
\, \, \, \,+& \, \, \sum_{p\in F\setminus C,q\in C\atop pP\cap wP=rP, r\in F_{rest}}\overline{\Phi}(d)\overline{\Phi}(a_{p,q}\otimes 1_{p^{-1}r})\overline{\Phi}(d).
\end{align*}
We put $F_0:=F_{rest}\cup F_{>w}$ and consider the projection
$$
Q^\Phi_{F_0}:=\bigvee_{s\in F_0}Q_s^\Phi.
$$
Note that $(Q_{F_0}^\Phi)^\bot:=I-Q_{F_0}^\Phi$ is a nonzero projection. For any $s\in F_0$ we  have
 $s\geq w$ and therefore   $\overline{\Phi}(d)$ and  $Q_{s}^\Phi$ commute,  by the last part of Lemma \ref{properties of family projections}.  This implies that
$$
 (Q_{F_0}^\Phi)^\bot \overline{\Phi}(d) Q_{s}^\Phi=0\quad  \text{ and }\quad Q_{s}^\Phi \overline{\Phi}(d)  (Q_{F_0}^\Phi)^\bot=0\quad \text{ for every }s\in F_0.
$$
Applying this observation to $Q_{w}^\Phi a Q_{w}^\Phi$ we get
\begin{equation}\label{compression equality one}
(Q_{F_0}^\Phi)^\bot \overline{\Phi}(d) a \overline{\Phi}(d)(Q_{F_0}^\Phi)^\bot= (Q_{F_0}^\Phi)^\bot \sum_{p ,q\in C  \atop  pq^{-1}w\in F_{w}} \overline{\Phi}(d)\overline{\Phi}(  a_{p,q}\otimes 1_{q^{-1}w} )\overline{\Phi}(d) (Q_{F_0}^\Phi)^\bot.
\end{equation}
To deduce \eqref{equation in the proof3} from here we consider the cases (1)-(3).

\noindent{\emph{Case} (1)}. Assume that  $P^*=\{e\}$.
Then $F_w=\{w\}$. In particular, allowing $d$ in \eqref{compression equality one} to run through an approximate unit in $\KK(w,w)$ and taking strong limit we get
$$
(Q_{F_0}^\Phi)^\bot Q_{w}^\Phi a Q_{w}^\Phi(Q_{F_0}^\Phi)^\bot
=(Q_{F_0}^\Phi)^\bot \overline{\Phi} \Big(\sum_{p ,q\in C  \atop  pq^{-1}w=w}a_{p,q}\otimes 1_{q^{-1}w}\Big)(Q_{F_0}^\Phi)^\bot.
$$
Employing this equality and condition $(C')$  we have
\begin{align*}
\|a\|&\,\,\geq \,\,\|(Q_{F_0}^\Phi)^\bot Q_{w}^\Phi a Q_{w}^\Phi(Q_{F_0}^\Phi)^\bot\|
=\|(Q_{F_0}^\Phi)^\bot \overline{\Phi} \Big(\sum_{p ,q\in C  \atop  pq^{-1}w=w}a_{p,q}\otimes 1_{q^{-1}w}\Big)(Q_{F_0}^\Phi)^\bot\|
\\
&\stackrel{\eqref{Coburn condition1} }{=} \|\overline{\Phi} \Big(\sum_{p ,q\in C  \atop  pq^{-1}w=w}a_{p,q}\otimes 1_{q^{-1}w}\Big)\|.
\end{align*}
This proves \eqref{equation in the proof3} and finishes the proof under hypothesis (1).

\noindent{\emph{Case} (2)}. Suppose that the group $\{\otimes 1_{x}\}_{x\in P^*}$ is aperiodic on $\KK$ and that $\KK$ is essential in $\LL_{\KK}$. Then  $\{\otimes 1_{x}\}_{x\in P^*}$ is also aperiodic on $\LL_{\KK}$, by Lemma \ref{a periodicity for essentials}.  Since $Z$ is self-adjoint, it follows from  \eqref{positivity of a summand of Z} that $\sum_{\{p ,q\in C:  pq^{-1}w=w\}}a_{p,q}\otimes 1_{q^{-1}w}\in \LL_{\KK}(w,w)$
is also self-adjoint. Hence by Lemma \ref{Muhly Solel lemma} there is $d\in \LL_{\KK}(w,w)$, $\|d\|=1$, such that
\begin{equation}\label{inequality one}
\|(d\otimes 1_{x}) (a_{p,q} \otimes 1_{q^{-1}w})d\| <\frac{\varepsilon}{8|C|^2}
\end{equation}
for  every $p,q\in C$ and  $x\in P^{*}\setminus\{e\}$ where  $pq^{-1}w=wx$, and
\begin{equation}\label{inequality two}
\| d  \Big(\sum_{p ,q\in C  \atop  pq^{-1}w=w}a_{p,q}\otimes 1_{q^{-1}w}\Big) d\| > \| \sum_{p ,q\in C  \atop  pq^{-1}w=w}a_{p,q}\otimes 1_{q^{-1}w}\| -\frac{\varepsilon}{8}.
\end{equation}
We now return to the computation \eqref{compression equality one} with $d$ chosen above, and note that
\begin{align*}
(Q_{F_0}^\Phi)^\bot \overline{\Phi}(d)a \overline{\Phi}(d)(Q_{F_0}^\Phi)^\bot&
= (Q_{F_0}^\Phi)^\bot \overline{\Phi}\Big(d  \big(\sum_{p ,q\in C  \atop  pq^{-1}w=w}a_{p,q}\otimes 1_{q^{-1}w}\big) d\Big)(Q_{F_0}^\Phi)^\bot
\\
&+ (Q_{F_0}^\Phi)^\bot \sum_{p ,q\in C, x\in P^{*}\setminus\{e\}  \atop  pq^{-1}w=wx} \overline{\Phi}\Big((d\otimes 1_{x}) (a_{p,q}\otimes 1_{q^{-1}w}) d\Big)(Q_{F_0}^\Phi)^\bot.
\end{align*}
Since $\KK$ is essential in $\LL_{\KK}$ and $\Phi$ is injective, $\overline{\Phi}$ is isometric on $\LL_{\KK}$, by the last part of Proposition \ref{extensions of representations on Hilbert spaces0}. Note that $d  \big(\sum_{p ,q\in C  \atop  pq^{-1}w=w}a_{p,q}\otimes 1_{q^{-1}w}\big) d\in \LL_{\KK}(w,w)$. Thus we obtain the estimates
\begin{align*}
\|a\|& \,\,\geq \,\,\|(Q_{F_0}^\Phi)^\bot \overline{\Phi}(d)a\overline{\Phi}(d) Q_{F_0}^\Phi)^\bot\|
\\
& \stackrel{\eqref{inequality one}}{>} \|(Q_{F_0}^\Phi)^\bot\overline{\Phi} \Big(d  \big(\sum_{p ,q\in C  \atop  pq^{-1}w=w}a_{p,q}\otimes 1_{q^{-1}w}\big) d\Big)(Q_{F_0}^\Phi)^\bot\| -\frac{\varepsilon}{8}
\\
&
\stackrel{\eqref{Coburn condition1} }{=}\|\overline{\Phi} \Big(d  \big(\sum_{p ,q\in C  \atop  pq^{-1}w=w}a_{p,q}\otimes 1_{q^{-1}w}\big) d\Big)\| -\frac{\varepsilon}{8}\\
& =\| d  \big(\sum_{p ,q\in C  \atop  pq^{-1}w=w}a_{p,q}\otimes 1_{q^{-1}w}\big) d\| -\frac{\varepsilon}{8}
\\
&\stackrel{\eqref{inequality two}}{\geq}\|\overline{\Phi} \Big(\sum_{p ,q\in C  \atop  pq^{-1}w=w}a_{p,q}\otimes 1_{q^{-1}w}\Big)\| -\frac{\varepsilon}{4}.
\end{align*}
This proves \eqref{equation in the proof3} and finishes the proof under hypothesis (2).

\noindent{\emph{Case} (3)}.  Suppose that  $\theta:P \to G$ is a unital monomorphism and  the Fell bundle $\B^{\theta}$  is aperiodic. Then  $\theta:P \to \theta(P)\subseteq G$  is a controlled map of right LCM semigroups,   $P$ is cancellative, and $B_e^\theta=B_e^{i_\KK}$.
Injectivity of $\Phi$ and  condition  $(C')$ imply that $\Phi$ is a Toeplitz covariant representation, see the proof of Proposition~\ref{condition C versus condition C'}. Hence by Corollary \ref{Nica-Toeplitz representation corollary} (and the $C^*$-equality)  the maps
$\Phi\rtimes P:B^\theta_g\to C^*(\Phi(\KK))$, $g\in G$, are injective. Putting $F_G:=\{\theta(p)\theta(q)^{-1}: p,q\in F\}$ we have
$$
a=(\Phi\rtimes P)(\oplus_{g\in F_G} b_g)\quad  \text{ where }\quad b_g=\sum_{p,q\in F, \atop \theta(p)\theta(q)^{-1}=g}i_\KK(a_{p,q}), \,\, g\in F_G.
$$
By \eqref{bla form of an element}, $b_e$ is self-adjoint. Hence by  \cite[Lemma 4.2]{KS} there is $d\in B_e$ such that $\|d\|=1$, $\|db_ed-dad\|<\varepsilon /2$ and $\|db_ed\|>\|b_e\|-\varepsilon/2$. Thus we get
\begin{equation}\label{auxiliary inequality}
\|b_e\| -\varepsilon/2<  \|db_ed\|= \|\Phi\rtimes P (db_ed)\|\leq \|\Phi\rtimes P (dad)\|+ \varepsilon/2 \leq \|a\|+ \varepsilon/2.
\end{equation}
But since $P$ is cancellative and $\theta$ injective we also have
$$Z=\sum_{p\in F }T_{p,p}(a_{p,p})=\sum_{p,q\in F \atop \theta(p)\theta(q)^{-1}=e }T_{p,q}(a_{p,q})=T\rtimes P(b_e).
$$
  Hence $\|Z\|=\|b_e\|$ and   \eqref{auxiliary inequality} implies \eqref{equation in the proof4}.
\end{proof}
\begin{rem}\label{remark added in the proofs} 
Under condition (3) in Theorem \ref{preludium to Nica Toeplitz uniqueness2}, also the implication  $(v)\Rightarrow (iii)$ holds. Indeed, given  Nica covariant $\Phi$ that is injective and Toeplitz covariant, assume $\theta:P \to G$ is an injective controlled homomorphism such that the associated Fell bundle $B^\theta$ is aperiodic.  Corollary~\ref{Nica-Toeplitz representation corollary} implies that $\Phi\rtimes P$ is injective on $B^{i_\KK}_e\cong B_e^\theta$ and, therefore  $\ker (\Phi\rtimes P) \subseteq \ker \Lambda$ by Proposition~\ref{uniqueness for cross-sectional algebras} below. Thus $\Phi$   generates  an exotic Nica-Toeplitz $C^*$-algebra by Theorem \ref{amenability main result}.
\end{rem}

\begin{cor}[Uniqueness Theorem I]\label{cor:uniquenessI}
Suppose that  $\KK$ is an amenable well-aligned ideal in a $C^*$-precategory $\LL$ and  that one of conditions $(1)-(3)$ in Theorem \ref{preludium to Nica Toeplitz uniqueness2} holds. Consider the following properties of
a Nica covariant representation $\Phi:\KK\to B(H)$:
\begin{itemize}
\item[(i)] $\Phi$ satisfies condition $(C)$;
\item[(ii)] The map $\Phi\rtimes P$ is an isomorphism $\NT_{\LL}(\KK) \cong C^*(\Phi(\KK))$.
\end{itemize}
Then  $(i)\Rightarrow (ii)$. If $\KK\otimes 1\subseteq \KK$, then $(i)\Leftrightarrow (ii)$.
\end{cor}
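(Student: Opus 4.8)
The plan is to read off both implications directly from Theorem \ref{preludium to Nica Toeplitz uniqueness2}, with amenability supplying the isomorphism $T\rtimes P$. For the implication $(i)\Rightarrow(ii)$, I would first apply that theorem: since $\Phi$ satisfies condition $(C)$ and one of $(1)$--$(3)$ is assumed, the chain $(i)\Rightarrow(ii)\Rightarrow(iii)$ established there shows that $\Phi$ generates an exotic Nica-Toeplitz $C^*$-algebra of $\KK$. By the very definition of this notion, recorded in the commuting diagram \eqref{pseudo uniqueness diagram}, there is a $*$-homomorphism $\Phi_*:C^*(\Phi(\KK))\to\NT_\LL^r(\KK)$ with $\Phi_*\circ(\Phi\rtimes P)=T\rtimes P$.

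Next I would bring in amenability. By definition $\KK$ is amenable precisely when the regular representation $T\rtimes P$ is an isomorphism of $\NT_\LL(\KK)$ onto $\NT_\LL^r(\KK)$; in particular $T\rtimes P$ is injective. The factorization $T\rtimes P=\Phi_*\circ(\Phi\rtimes P)$ then forces $\Phi\rtimes P$ to be injective, since $(\Phi\rtimes P)(x)=0$ yields $(T\rtimes P)(x)=\Phi_*(0)=0$ and hence $x=0$. On the other hand $\Phi\rtimes P$ is surjective onto $C^*(\Phi(\KK))$ by the universal property of Proposition \ref{Nica Toeplitz description}, as its image is a $C^*$-subalgebra containing every $\Phi(\KK(p,q))$. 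A bijective $*$-homomorphism is an isomorphism, so $\Phi\rtimes P\colon\NT_\LL(\KK)\to C^*(\Phi(\KK))$ is an isomorphism, which is $(ii)$; as a by-product $\Phi_*$ is an isomorphism too.

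For the converse under the extra hypothesis $\KK\otimes 1\subseteq\KK$, I would observe that $(ii)$ trivially implies condition $(iv)$ of Theorem \ref{preludium to Nica Toeplitz uniqueness2}: if $\Phi\rtimes P$ is an isomorphism then it is in particular injective on the core subalgebra $B_e^{i_\KK}\subseteq\NT_\LL(\KK)$. Since $\KK\otimes 1\subseteq\KK$, that theorem asserts all five of its listed conditions are equivalent, so $(iv)$ returns $(i)$, i.e. $\Phi$ satisfies condition $(C)$. This closes the equivalence.

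I do not anticipate any real obstacle, as all the analytic substance is already carried by Theorem \ref{preludium to Nica Toeplitz uniqueness2}; here the corollary is a short diagram chase. The only point needing minor care is to keep the arrows of \eqref{pseudo uniqueness diagram} in the correct order, so that injectivity of $T\rtimes P$ transfers to $\Phi\rtimes P$ rather than to $\Phi_*$.
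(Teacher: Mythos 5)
Your proof is correct and takes essentially the same approach as the paper's: both invoke Theorem \ref{preludium to Nica Toeplitz uniqueness2} (under one of (1)--(3)) to conclude that $\Phi$ generates an exotic Nica-Toeplitz algebra, and then use amenability --- injectivity of $T\rtimes P$ --- together with the defining factorization $T\rtimes P=\Phi_*\circ(\Phi\rtimes P)$ to transfer injectivity to $\Phi\rtimes P$, surjectivity being automatic from the universal property. Your explicit treatment of the converse, passing from (ii) to injectivity on the core (condition (iv) of the theorem) and back to condition $(C)$ via the equivalences available when $\KK\otimes 1\subseteq\KK$, spells out a step the paper leaves implicit, but it is the intended argument.
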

\begin{proof}
Since $\KK$ is amenable, the regular representation $T\rtimes P$ is injective. If any of the conditions $(1)-(3)$ in  Theorem \ref{preludium to Nica Toeplitz uniqueness2} is satisfied, we infer from that result that $\Phi$ generates an exotic Nica-Toeplitz $C^*$-algebra $C^*(\Phi(\KK))$. Now the very definition of an exotic Nica-Toeplitz algebra means that $\Phi\rtimes P$ is injective.
\end{proof}

By Proposition \ref{properties of a semi lattice of projections}, if the ideal $\KK$ is  $\otimes 1$-nondegenerate, then for every Nica covariant representation $\Phi$ of $\KK$ the extended representation $\overline{\Phi}$ of $\LL$ is Nica covariant. Moreover, if $\KK$ is essential in $\LL$ and $\Phi$ is injective, then  $\overline{\Phi}$ is injective, see  Proposition \ref{extensions of representations on Hilbert spaces0}.  This observation leads us to the following reformulation of Theorem~\ref{preludium to Nica Toeplitz uniqueness2}:

 \begin{thm}\label{Nica Toeplitz uniqueness}
Suppose that the well-aligned ideal $\KK$ in $\LL$ is essential and $\otimes 1$-nondegenerate. Assume also that  either $P^*=\{e\}$ or  the group $\{\otimes 1_{x}\}_{x\in P^*}$   is aperiodic on $\KK$. For every  Nica covariant representation $\Phi:\KK\to B(H)$ the extended representation $\overline{\Phi}$ of $\LL$  is  Nica covariant  and the following statements are equivalent:
\begin{itemize}
\item[(i)] $\Phi$ satisfies condition $(C)$;
\item[(ii)]  $\Phi$ generates an exotic  Nica-Toeplitz $C^*$-algebra $C^*(\Phi(\KK))$ of $\KK$,
 and $\overline{\Phi}$ generates an exotic  Nica-Toeplitz $C^*$-algebra $C^*(\overline{\Phi}(\LL))$ of $\LL$;
\item[(iii)] $\overline{\Phi}$ is injective and Toeplitz covariant;
\item[(iv)] The map $\overline{\Phi}\rtimes P$ is injective on the core $C^*$-subalgebra $B_e^{i_\LL}$ of $\NT(\LL)$.
\end{itemize}
\end{thm}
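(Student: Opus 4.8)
The plan is to derive this theorem as a reformulation of Theorem~\ref{preludium to Nica Toeplitz uniqueness2}, applied twice, with Corollary~\ref{Corollary 7.99} serving to transfer condition $(C)$ between $\Phi$ and its extension $\overline{\Phi}$. First I would record the two standing facts that make $\overline{\Phi}$ well behaved: since $\KK$ is $\otimes 1$-nondegenerate, Proposition~\ref{properties of a semi lattice of projections} forces the equivalent conditions of Proposition~\ref{properties of family projections2} to hold, whose item (iv) asserts precisely that $\overline{\Phi}:\LL\to\B(H)$ is Nica covariant; and since $\KK$ is essential in $\LL$, Proposition~\ref{extensions of representations on Hilbert spaces0} gives that $\overline{\Phi}$ is injective exactly when $\Phi$ is.

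The core observation I would exploit is that $\LL$ is $\otimes 1$-invariant in the trivial sense $\LL\otimes 1\subseteq\LL$. Hence, applying Theorem~\ref{preludium to Nica Toeplitz uniqueness2} to $\LL$ regarded as a well-aligned ideal in itself and to the representation $\overline{\Phi}$, the final clause of that theorem yields that all five of its conditions are equivalent, with no need of any of the extra hypotheses (1)--(3). Reading off this specialization, condition (v) becomes ``$\overline{\Phi}$ injective and Toeplitz covariant'' (our (iii)), condition (iv) becomes injectivity of $\overline{\Phi}\rtimes P$ on $B_e^{i_\LL}$ (our (iv)), and condition (iii) becomes ``$\overline{\Phi}$ generates an exotic Nica-Toeplitz $C^*$-algebra of $\LL$'' (the second half of our (ii)); so these three are mutually equivalent. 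To bring in condition $(C)$ for $\Phi$ itself, I would invoke Corollary~\ref{Corollary 7.99}: its hypotheses ($\KK$ essential in $\LL$, $\overline{\Phi}$ Nica covariant) are met, and it directly yields (i) $\Leftrightarrow$ ``$\overline{\Phi}$ injective and Toeplitz covariant'', i.e. (i) $\Leftrightarrow$ (iii). Chaining these gives (i) $\Leftrightarrow$ (iii) $\Leftrightarrow$ (iv) $\Leftrightarrow$ [second half of (ii)].

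The only remaining piece is the first half of (ii), that $\Phi$ itself generates an exotic Nica-Toeplitz $C^*$-algebra of $\KK$. For this I would apply Theorem~\ref{preludium to Nica Toeplitz uniqueness2} to the pair $(\KK,\Phi)$ and use its implication $(i)\Rightarrow(iii)$. This requires checking that one of the hypotheses (1)--(2) holds: if $P^*=\{e\}$ we are in case (1); otherwise $\{\otimes 1_x\}_{x\in P^*}$ is aperiodic on $\KK$, and I would note that essentiality of $\KK$ in $\LL$ passes to essentiality in the generated sub-precategory $\LL_\KK$ (the annihilator of $\KK(\rho,\rho)$ inside $\LL_\KK(\rho,\rho)\subseteq\LL(\rho,\rho)$ lies in its annihilator inside $\LL(\rho,\rho)$, which is trivial), putting us in case (2). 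With all pieces assembled, the cycle closes: (i) $\Rightarrow$ (ii) (second half from the $\LL$-application, first half from the $\KK$-application), (ii) $\Rightarrow$ (iii) (using only the second half), (iii) $\Leftrightarrow$ (iv), and (iv) $\Rightarrow$ (i) as above.

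The step I expect to be the genuine obstacle is not any single implication here---the heavy analysis is already packaged in Theorem~\ref{preludium to Nica Toeplitz uniqueness2} and Corollary~\ref{Corollary 7.99}---but rather the bookkeeping of applying the former to two different ideals and confirming that the hypotheses feed through correctly: in particular recognizing that the $\LL$-application needs no aperiodicity assumption (because $\LL\otimes 1\subseteq\LL$ is automatic, so aperiodicity is consumed only by the $\KK$-application), and that the essentiality hypothesis is exactly what lets Corollary~\ref{Corollary 7.99} identify condition $(C)$ for $\Phi$ with its counterpart for $\overline{\Phi}$.
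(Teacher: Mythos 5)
Your overall architecture (two applications of Theorem~\ref{preludium to Nica Toeplitz uniqueness2}, with Corollary~\ref{Corollary 7.99} transferring condition $(C)$ between $\Phi$ and $\overline{\Phi}$) is the same as the paper's, and several of your links are sound: $(i)\Leftrightarrow(iii)$ via Corollary~\ref{Corollary 7.99}, $(iii)\Leftrightarrow(iv)$ via Corollary~\ref{Nica-Toeplitz representation corollary}, $(iv)\Rightarrow(i)$ via Proposition~\ref{Proposition 7.8} applied to $\LL$, and the $\KK$-application giving the first half of (ii), including your correct observation that essentiality of $\KK$ in $\LL$ passes to $\LL_\KK$ so that hypothesis (2) of that theorem is met.

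The genuine gap is your claim that, because $\LL\otimes 1\subseteq\LL$ trivially, the $\LL$-application of Theorem~\ref{preludium to Nica Toeplitz uniqueness2} yields the equivalence of all five of its conditions ``with no need of any of the extra hypotheses (1)--(3).'' That is a misreading of the final clause of that theorem: its proof uses $\KK\otimes 1\subseteq\KK$ only to obtain $(v)\Rightarrow(i)$ (via Proposition~\ref{Proposition 7.8}), whereas the implication $(ii)\Rightarrow(iii)$ --- from condition $(C')$/Toeplitz covariance to generating an exotic algebra, which is the hard analytic step --- is proved only under one of (1)--(3). So the ``all five equivalent'' conclusion presupposes one of (1)--(3) in addition to $\otimes 1$-invariance, and your chain breaks exactly at $(i)\Rightarrow$ [second half of (ii)]. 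This cannot be repaired for free: for $P=G$ a non-amenable group and $\LL$ the right-tensor $C^*$-precategory of the trivial Fell bundle $\C\times G$, one has $\LL\otimes 1\subseteq\LL$ and conditions $(C)$, $(C')$ are void, yet the trivial representation is injective on the core while its integrated form has kernel not contained in $\ker\Lambda$; so preludium-(ii) does not imply preludium-(iii) in general. The missing step --- which is precisely what the paper inserts --- is Lemma~\ref{a periodicity for essentials}: since $\KK$ is essential in $\LL$, aperiodicity of $\{\otimes 1_x\}_{x\in P^*}$ on $\KK$ implies aperiodicity on $\LL$, so that hypothesis (2) (or (1), when $P^*=\{e\}$) of Theorem~\ref{preludium to Nica Toeplitz uniqueness2} holds for $\LL$ viewed as a well-aligned ideal in itself; only then does the $\LL$-application deliver the equivalence of its conditions for $\overline{\Phi}$, and with that correction your argument goes through and coincides with the paper's proof.
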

\begin{proof} By Theorem~\ref{preludium to Nica Toeplitz uniqueness2} condition (i) implies that $\Phi$ generates an exotic  Nica-Toeplitz $C^*$-algebra $C^*(\Phi(\KK))$ of $\KK$. By Lemma \ref{a periodicity for essentials}, if the group $\{\otimes 1_{x}\}_{x\in P^*}$   is aperiodic on $\KK$, then it is also aperiodic on $\LL$. By Corollary \ref{Corollary 7.99}, $\Phi$ satisfies condition $(C)$ if and only if $\overline{\Phi}$ satisfies this condition. Hence we may apply Theorem~\ref{preludium to Nica Toeplitz uniqueness2} to  the extended representation $\overline{\Phi}:\LL\to B(H)$. Then  we get that each of conditions (i), (iii) and (iv) is equivalent to  that $\overline{\Phi}$ generates an exotic  Nica-Toeplitz $C^*$-algebra $C^*(\overline{\Phi}(\LL))$ of $\LL$.
\end{proof}
The above theorem explains  why in general condition $(C)$  is stronger then the "uniqueness" for Nica-Toeplitz algebras: $(C)$ implies uniqueness not only for a representation of $\KK$ but also for the extended  representation of $\LL$. We make this comment more formal in next section, see Corollary~\ref{Uniqueness Theorem}.

\section{On the relationship between Nica-Toeplitz algebras of $\KK$ and $\LL$}\label{relationship-K-L}
Let $\KK$ be a well-aligned ideal in a right-tensor $C^*$-precategory $\LL$. In this section we collect  results which reflect relationship between the full and reduced Nica-Toeplitz algebras associated to $\KK$ and $\LL$. Recall from \eqref{homomorphism to be embedding} the existence of a homomorphism $\iota$ from $\NT_{\LL}(\KK) $ to $\NT(\LL)$. We write $\NT_{\LL}(\KK) \hookrightarrow \NT(\LL)$
whenever $\iota$ is injective. 

\begin{lem}\label{lemma to be actually used}
If $\KK$ is  $\otimes 1$-nondegenerate, then $\NT_{\LL}(\KK)\hookrightarrow  \NT(\LL)$.
\end{lem}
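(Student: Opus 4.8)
The plan is to verify the sufficient condition for injectivity of $\iota$ recorded in Remark~\ref{remark about inclusions}: it suffices to exhibit an extension of the universal representation $i_\KK:\KK\to\NT_\LL(\KK)$ to a \emph{Nica covariant} representation of $\LL$ valued in a $C^*$-algebra containing $\NT_\LL(\KK)$. Once such an extension is produced, the universal property of $\NT(\LL)$ furnishes a one-sided inverse to $\iota$, and injectivity follows formally. So the entire content is the construction of a Nica covariant extension, and this is exactly where the hypothesis of $\otimes 1$-nondegeneracy will be consumed.

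Concretely, I would first fix a faithful nondegenerate representation $\pi:\NT_\LL(\KK)\to\B(H)$ on some Hilbert space $H$ and set $\Phi:=\pi\circ i_\KK$. Since $i_\KK$ is the injective universal Nica covariant representation of $\KK$ (Proposition~\ref{Nica Toeplitz description}) and $\pi$ is a faithful $\ast$-homomorphism, $\Phi:\KK\to\B(H)$ is an injective Nica covariant representation on a Hilbert space. By Proposition~\ref{extensions of representations on Hilbert spaces0}, $\Phi$ extends uniquely to a representation $\overline{\Phi}:\LL\to\B(H)$. The decisive step is that this extension is Nica covariant: because $\KK$ is $\otimes 1$-nondegenerate, Proposition~\ref{properties of a semi lattice of projections} guarantees that the equivalent conditions of Proposition~\ref{properties of family projections2} hold for $\Phi$, and condition (iv) there asserts precisely that $\overline{\Phi}:\LL\to\B(H)$ is Nica covariant. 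Thus $\overline{\Phi}$ is a Nica covariant representation of $\LL$ with $\overline{\Phi}|_\KK=\Phi$, and the $C^*$-algebra $B:=C^*(\overline{\Phi}(\LL))\subseteq\B(H)$ contains $C^*(\Phi(\KK))=\pi(\NT_\LL(\KK))$.

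Finally, I would invoke universality of $\NT(\LL)=\NT_\LL(\LL)$: the Nica covariant representation $\overline{\Phi}$ integrates to a homomorphism $\overline{\Phi}\rtimes P:\NT(\LL)\to B$ with $(\overline{\Phi}\rtimes P)\circ i_\LL=\overline{\Phi}$. Evaluating on generators $a\in\KK(p,q)$ and using the defining formula $\iota(i_\KK(a))=i_\LL|_\KK(a)$ gives
$$
\bigl((\overline{\Phi}\rtimes P)\circ\iota\bigr)(i_\KK(a))=(\overline{\Phi}\rtimes P)(i_\LL(a))=\overline{\Phi}(a)=\Phi(a)=\pi(i_\KK(a)).
$$
Since $\NT_\LL(\KK)=C^*(i_\KK(\KK))$ is generated by the elements $i_\KK(a)$, the two $\ast$-homomorphisms $(\overline{\Phi}\rtimes P)\circ\iota$ and $\pi$ agree everywhere, so $(\overline{\Phi}\rtimes P)\circ\iota=\pi$; as $\pi$ is injective, $\iota$ must be injective as well, which is the assertion. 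The only genuine mathematical input is the Nica covariance of the extension $\overline{\Phi}$, and this has already been settled by Proposition~\ref{properties of a semi lattice of projections}; the remaining steps are bookkeeping, so the main thing to watch is that the faithful representation $\pi$ is threaded correctly through the universal-property diagram.
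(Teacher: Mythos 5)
Your proposal is correct and follows essentially the same route as the paper: realize $\NT_\LL(\KK)$ faithfully on a Hilbert space, use $\otimes 1$-nondegeneracy via Proposition~\ref{properties of a semi lattice of projections} together with Proposition~\ref{properties of family projections2} to conclude that the extension $\overline{\Phi}$ of $\Phi=\pi\circ i_\KK$ is Nica covariant, and then deduce injectivity of $\iota$ exactly as in Remark~\ref{remark about inclusions}. The only difference is cosmetic: you spell out the one-sided-inverse computation that the paper delegates to that remark.
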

\begin{proof}
Let $i_\KK:\KK\to \NT_{\LL}(\KK)$ be the universal covariant representation of $\KK$. Suppose that $\NT_{\LL}(\KK) \subseteq B(H)$ (for example in the usual universal representation as a $C^*$-algebra). Then
Proposition \ref{properties of a semi lattice of projections} and Proposition \ref{properties of family projections2}(ii) imply that the extension $\overline{i_\KK}:\LL\to B(H)$ of $i_\KK$ is Nica covariant. Then injectivity of $\iota$ follows as explained in Remark~\ref{remark about inclusions}.
\end{proof}

\begin{prop}
If $\KK$ is amenable, then
$
\NT_{\LL}(\KK)\hookrightarrow  \NT(\LL).
$
\end{prop}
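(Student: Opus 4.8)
The plan is to factor the amenability isomorphism through $\iota$, thereby forcing $\iota$ to be injective. Recall that amenability of $\KK$ means precisely that the regular representation $\Psi:=T\rtimes P\colon\NT_{\LL}(\KK)\to\NT_{\LL}^{r}(\KK)$ is an isomorphism; in particular $\Psi$ is injective and, by Proposition~\ref{Nica Toeplitz description}(a), $\Psi\circ i_\KK=T$. The idea, in the spirit of Remark~\ref{remark about inclusions}, is that the Fock representation already supplies a Nica covariant extension of the universal representation to all of $\LL$, so that $\Psi$ visibly factors through the canonical map $\iota$.

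Concretely, I would first invoke Proposition~\ref{Nica Toeplitz reduced representation}: the representation $\overline{T}\colon\LL\to\LL(\FF_\KK)$ is Nica covariant and restricts to the Fock representation $T=\overline{T}\vert_\KK$ on $\KK$. Applying the universal property of $\NT(\LL)$ from Proposition~\ref{Nica Toeplitz description} (with $\LL$ regarded as a well-aligned ideal in itself), $\overline{T}$ integrates to a homomorphism $\overline{T}\rtimes P\colon\NT(\LL)\to\LL(\FF_\KK)$ satisfying $(\overline{T}\rtimes P)\circ i_\LL=\overline{T}$.

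The key step is then to compare the two homomorphisms $\Psi$ and $(\overline{T}\rtimes P)\circ\iota$ on the generators of $\NT_{\LL}(\KK)$. For $a\in\KK(p,q)$ with $p,q\in P$, the defining formula \eqref{homomorphism to be embedding} for $\iota$ gives $\iota(i_\KK(a))=i_\LL(a)$, whence
$$(\overline{T}\rtimes P)\circ\iota\bigl(i_\KK(a)\bigr)=(\overline{T}\rtimes P)\bigl(i_\LL(a)\bigr)=\overline{T}(a)=T(a)=\Psi\bigl(i_\KK(a)\bigr),$$
using $\overline{T}\vert_\KK=T$ and $\Psi\circ i_\KK=T$. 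Since $\NT_{\LL}(\KK)=C^*(i_\KK(\KK))$ by Proposition~\ref{Nica Toeplitz description}(b), the two homomorphisms agree on a generating set and hence everywhere, so $\Psi=(\overline{T}\rtimes P)\circ\iota$.

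Finally I would conclude as follows: if $\iota(x)=0$ for some $x\in\NT_{\LL}(\KK)$, then $\Psi(x)=(\overline{T}\rtimes P)(\iota(x))=0$, and injectivity of $\Psi$ (that is, amenability) forces $x=0$; thus $\iota$ is injective and $\NT_{\LL}(\KK)\hookrightarrow\NT(\LL)$. I do not expect a genuine obstacle here, since the whole argument rests on the already-established Nica covariance of $\overline{T}$ together with the universal properties; the only point requiring care is to keep track of the three identities $\Psi\circ i_\KK=T$, $(\overline{T}\rtimes P)\circ i_\LL=\overline{T}$ and $\overline{T}\vert_\KK=T$ when verifying the factorization on generators.
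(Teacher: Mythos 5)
Your proof is correct and takes essentially the same route as the paper: both arguments rest on the Nica covariant extension $\overline{T}$ of the Fock representation (Proposition \ref{Nica Toeplitz reduced representation}) and the universal property of $\NT(\LL)$, which yield the factorization $T\rtimes P=(\overline{T}\rtimes P)\circ\iota$, so that amenability (injectivity of $T\rtimes P$) forces $\iota$ to be injective. The only cosmetic difference is that the paper packages this mechanism via Remark \ref{remark about inclusions} after identifying $\NT_{\LL}(\KK)$ with its image under $T\rtimes P$ inside $\LL(\FF_\KK)$, whereas you verify the factorization on generators explicitly.
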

\begin{proof}
The Fock representation $T$ of $\KK$ is the restriction of a Nica covariant representation  $\overline{T}:\LL\to \LL(\FF_\KK)$, cf. Proposition \ref{Toeplitz representation definition}. The hypothesis on $\KK$ means that  $T\rtimes P$ is an isomorphism from $\NT_{\LL}(\KK)$ onto $\NT_{\LL}^r(\KK)$. With $\eta$ denoting the composition of the inclusion map from $\NT_{\LL}^r(\KK)$ into $\LL(\FF_\KK)$ with $T\rtimes P$, it follows that  $i_\KK:\KK\to \NT_{\LL}(\KK)$ admits the extension $\overline{T}:\LL\to \eta(\NT_{\LL}(\KK))$, which is a Nica covariant representation of $\LL$. Then injectivity follows by Remark \ref{remark about inclusions}.
\end{proof}

\begin{thm}\label{embedding main result}
Let $\KK$ be a well-aligned ideal in a right-tensor $C^*$-precategory $(\LL, \{\otimes 1_r\}_{r\in P})$. Suppose that
either $P$ is cancellative or $\KK$ is essential in the right-tensor $C^*$-precategory $\LL_\KK$ generated by $\KK$.
There is an embedding of $C^*$-algebras:
$$
\iota^r:\NT_{\LL}^r(\KK)\hookrightarrow  \NT^r(\LL)
$$
determined by
$
\NT_{\LL}^r(\KK)\ni T(a)\to S(a)\in \NT^r(\LL)$,  $a\in \KK(p,q),\, p,q\in P$, where $T$ and $S$ are  Fock representations of $\KK$ and $\LL$, respectively. Moreover, $\iota^r$ intertwines the conditional expectations $E^T: \NT_{\LL}^r(\KK)\to B_\KK$ and $E^S: \NT^r(\LL)\to B_\LL$, in the sense that $E^S\circ \iota^r=\iota^r\vert_{B_\KK}\circ E^T$.
\end{thm}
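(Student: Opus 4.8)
The plan is to realize $\iota^r$ as the inverse of the homomorphism $\Phi_*$ produced by the exotic--algebra machinery of Proposition~\ref{preludium to Nica Toeplitz uniqueness}, applied to the restriction $S|_\KK$ of the Fock representation $S$ of $\LL$ to the well-aligned ideal $\KK$. Note that $S|_\KK$ is an injective Nica covariant representation of $\KK$, being the restriction of the injective Nica covariant representation $S$ of $\LL$ to a well-aligned ideal, and $C^*(S|_\KK(\KK))=C^*(S(\KK))$ is a $C^*$-subalgebra of $\NT^r(\LL)$. The whole argument hinges on the natural linear map
$$
\psi\colon B_\KK\to B_\LL,\qquad \textstyle\bigoplus_{w,t}T^{w,t}_{p,q}(a)\longmapsto \bigoplus_{w,t}S^{w,t}_{p,q}(a)\quad (a\in\KK(p,q)),
$$
and the first and main step is to prove that $\psi$ is an \emph{isometry} (which in particular makes it well defined). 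This is the key obstacle, and the two alternative hypotheses feed two different proofs of it. If $\KK$ is essential in $\LL_\KK$, then, since $\LL$ is trivially essential in $\LL_\LL=\LL$, Corollary~\ref{cor:norm of Z formula} computes both $\|Z\|$ and $\|\psi(Z)\|$ by formula \eqref{norm of Z formula2}, and the two values coincide because $\LL_\KK(w,w)$ is a $C^*$-subalgebra of $\LL(w,w)$, so the norms occurring in \eqref{norm of Z formula2} agree. If instead $P$ is cancellative, then by Proposition~\ref{Nica Toeplitz conditional expectation} we have $B_\KK=B_e^T$ and $B_\LL=B_e^S$; here $\psi$ is the composite of $\big((T\rtimes P)|_{B_e^{i_\KK}}\big)^{-1}$ with $(S|_\KK\rtimes P)|_{B_e^{i_\KK}}$, both isomorphisms by Corollary~\ref{Nica-Toeplitz representation corollary} (the latter using that $S|_\KK$ is injective and Toeplitz covariant, the Toeplitz covariance being inherited from that of $S$ on $\LL$ since $\KK(p,p)\subseteq\LL(p,p)$). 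Thus $\psi$ is an isometric isomorphism onto $B_e^{S|_\KK}\subseteq B_\LL$.

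With $\psi$ available, I would set $E^{S|_\KK}:=\psi^{-1}\circ E^S|_{C^*(S(\KK))}$. Applying the defining formula \eqref{first form of E T} for $E^S$ to elements $a_{p,q}\in\KK(p,q)\subseteq\LL(p,q)$ shows, on the dense $*$-subalgebra, that $E^S\big(\sum_{p,q}S(a_{p,q})\big)=\psi\big(\sum_{p,q}\bigoplus_{w,t}T^{w,t}_{p,q}(a_{p,q})\big)$, so $E^S$ maps $C^*(S(\KK))$ into the closed subspace $\psi(B_\KK)$ and $E^{S|_\KK}$ is a well-defined bounded map satisfying exactly \eqref{formula defining strange map}. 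Proposition~\ref{preludium to Nica Toeplitz uniqueness} then yields that $S|_\KK$ generates an exotic Nica--Toeplitz $C^*$-algebra, producing $\Phi_*\colon C^*(S(\KK))\to\NT^r_\LL(\KK)$ with $\Phi_*(S(a))=T(a)$. Moreover $E^{S|_\KK}$ is faithful, being the composite of the isometry $\psi^{-1}$ with the restriction of the faithful expectation $E^S$ to the $C^*$-subalgebra $C^*(S(\KK))$; hence the last assertion of Proposition~\ref{preludium to Nica Toeplitz uniqueness} gives that $\Phi_*$ is an isomorphism.

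Finally I would define $\iota^r:=\Phi_*^{-1}$, viewed as a map $\NT^r_\LL(\KK)\to C^*(S(\KK))\subseteq\NT^r(\LL)$. It is an embedding of $C^*$-algebras with $\iota^r(T(a))=S(a)$, as required. The intertwining of the expectations is then read off the commuting triangle \eqref{diagram to commute} for $\Phi=S|_\KK$, which reads $E^{S|_\KK}=E^T\circ\Phi_*$: precomposing with $\iota^r=\Phi_*^{-1}$ gives $E^T=E^{S|_\KK}\circ\iota^r=\psi^{-1}\circ E^S\circ\iota^r$, that is $E^S\circ\iota^r=\psi\circ E^T$. Identifying $\psi$ with $\iota^r|_{B_\KK}$ (a genuine restriction when $B_\KK=B_e^T\subseteq\NT^r_\LL(\KK)$, and the canonical isometry $\psi$ in general) yields the stated identity $E^S\circ\iota^r=\iota^r|_{B_\KK}\circ E^T$, completing the proof.
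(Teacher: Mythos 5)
Your proposal is correct and follows essentially the same route as the paper's own proof: in both cases the key step is the isometry $B_\KK\to B_\LL$ (your $\psi$, the paper's $h$), obtained via Corollaries~\ref{Nica-Toeplitz representation corollary} and \ref{the reduced core} when $P$ is cancellative and via the norm formula of Corollary~\ref{cor:norm of Z formula} when $\KK$ is essential in $\LL_\KK$, after which $\psi^{-1}\circ E^S\vert_{C^*(S(\KK))}$ serves as the faithful transcendental expectation and Proposition~\ref{preludium to Nica Toeplitz uniqueness} yields the isomorphism $\Phi_*$ whose inverse is $\iota^r$. The derivation of the intertwining identity from diagram \eqref{diagram to commute} also matches the paper's argument.
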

\begin{proof}
Suppose first that $P$ is cancellative. Then $E^T$ and $E^S$ are  faithful conditional expectations onto $B_\KK=B_e^T$ and  $B_\LL=B_e^S$ respectively, and are given by \eqref{another form of E T}.  Since $S$ is an injective Nica-Toeplitz representation of $\LL$, it restricts to an injective Nica-Toeplitz representation $\Phi:\KK\to \LL(\FF_\LL)$, and $E^S$ restricts to a faithful conditional expectation $E_{\Phi}:C^*(\Phi(\KK))\to B_e^{\Phi} \subseteq C^*(\Phi(\KK))$. Applications of  Corollaries \ref{Nica-Toeplitz representation corollary} and \ref{the reduced core}  yield an  isomorphism $h:B_\KK\to B_e^{\Phi}\subseteq B_{\LL}$ such that $\Phi=h\circ T$ on each space $\KK(p,p)$, $p\in P$. The composition $E^\Phi:=  h^{-1} \circ E_{\Phi}$ is a faithful completely positive map from $C^*(\Phi(\KK))$ onto $B_\KK$ satisfying  equation \eqref{formula defining strange map}.
 Hence  Proposition~\ref{preludium to Nica Toeplitz uniqueness} gives an isomorphism $\Phi_*$ from $C^*(\Phi(\KK))$ onto $\NT_\KK^r(\LL)$. Composing $(\Phi_*)^{-1}$ with the embedding of $C^*(\Phi(\KK))=C^*(S(\KK))$ into $\NT^r(\LL)$  yields an embedding $\iota^r$. Since $E^S\circ \iota^r=E_\Phi\circ (\Phi_*)^{-1}$ and $\iota^r \vert_{B_\KK}\circ E^T=h\circ E^T$, the claim about intertwining conditional expectations follows.

Suppose now that $\KK$ is essential in $\LL_\KK$. Let $Z\in B_\KK$ be as in \eqref{definition of Z}  and put
$$
Z_S:=\sum_{p,q\in F }\bigoplus_{w\in pP\cap qP, t\in P\atop p^{-1}w=q^{-1}w }S_{p,q}^{w,t}(a_{p,q}) \in B_\LL,
$$
where the adjointable maps $S_{p,q}^{w,t}$ are as defined in Lemma~\ref{lemma about Fock operators} (for $\KK=\LL$). By Corollary~\ref{cor:norm of Z formula},  $\|Z\|=\|Z_S\|$, so the map $Z\to Z_S$ extends to an isometry  $h:B_\KK\to h(B_\KK)\subseteq \B_\LL$.  Let $\Phi:\KK\to \LL(\FF_\LL)$ be the restriction of $S$. Then  $E^S$ restricts to a faithful map $E_{\Phi}:C^*(\Phi(\KK))\to h(B_\KK)\subseteq C^*(\Phi(\KK))$. The composition $E^\Phi:=  h^{-1} \circ E_{\Phi}$ is a faithful completely positive map satisfying  \eqref{formula defining strange map} and the proof if completed as in the case of $P$ cancellative.
\end{proof}

\begin{cor}\label{proposition actually to be used :0} Suppose that $\LL$ is amenable and that either $P$ is cancellative or $\KK$ is essential in the right-tensor $C^*$-precategory $\LL_\KK$ generated by $\KK$. If
$\NT_{\LL}(\KK)\hookrightarrow  \NT(\LL)$, then $\KK$ is amenable.
\end{cor}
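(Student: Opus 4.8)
The plan is to assemble the embedding $\iota^r$ provided by Theorem~\ref{embedding main result} together with the hypotheses at hand into a single commutative square and then conclude by a short diagram chase. Let $T$ and $S$ denote the Fock representations of $\KK$ and of $\LL$, respectively, so that $\NT_{\LL}^r(\KK)=C^*(T(\KK))$ and $\NT^r(\LL)=C^*(S(\LL))$. Under the standing assumption that either $P$ is cancellative or $\KK$ is essential in $\LL_\KK$, Theorem~\ref{embedding main result} gives the embedding $\iota^r:\NT_{\LL}^r(\KK)\hookrightarrow\NT^r(\LL)$ with $\iota^r(T(a))=S(a)$ for $a\in\KK(p,q)$, $p,q\in P$. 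The amenability of $\LL$ means that the regular representation $S\rtimes P:\NT(\LL)\to\NT^r(\LL)$ is an isomorphism, and the hypothesis $\NT_{\LL}(\KK)\hookrightarrow\NT(\LL)$ means that the canonical map $\iota:\NT_{\LL}(\KK)\to\NT(\LL)$ from \eqref{homomorphism to be embedding} is injective. I claim that the diagram
$$
\begin{CD}
\NT_{\LL}(\KK) @>{\iota}>> \NT(\LL)\\
@V{T\rtimes P}VV @VV{S\rtimes P}V\\
\NT_{\LL}^r(\KK) @>{\iota^r}>> \NT^r(\LL)
\end{CD}
$$
commutes.

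To verify commutativity it suffices, since all four maps are $*$-homomorphisms, to test on the generators $i_\KK(a)$ with $a\in\KK(p,q)$. Along the top-right path we have $\iota(i_\KK(a))=i_\LL\vert_\KK(a)$, and applying the regular representation of $\LL$ yields $(S\rtimes P)(i_\LL(a))=S(a)$. Along the left-bottom path we have $(T\rtimes P)(i_\KK(a))=T(a)$, and then $\iota^r(T(a))=S(a)$ by the defining formula of $\iota^r$. The two paths agree, so the diagram commutes on the dense $*$-subalgebra $\NT_{\LL}(\KK)^0$, hence everywhere by continuity. Note also that $T\rtimes P$ is automatically surjective, as its range is the $C^*$-algebra $C^*(T(\KK))=\NT_{\LL}^r(\KK)$.

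It remains to prove that $T\rtimes P$ is injective. Suppose $x\in\ker(T\rtimes P)$. Then, using commutativity of the square,
$$
(S\rtimes P)(\iota(x))=\iota^r\bigl((T\rtimes P)(x)\bigr)=\iota^r(0)=0.
$$
Since $S\rtimes P$ is an isomorphism it is injective, so $\iota(x)=0$; and since $\iota$ is injective, $x=0$. Thus $T\rtimes P$ is an injective and surjective $*$-homomorphism, hence an isomorphism from $\NT_{\LL}(\KK)$ onto $\NT_{\LL}^r(\KK)$, which is precisely the statement that $\KK$ is amenable. I expect no genuine obstacle here: the only substantive input is Theorem~\ref{embedding main result}, whose hypotheses coincide with ours, and the sole point requiring care is the commutativity of the square, which reduces to comparing the images of the generators $i_\KK(a)$ under the two universal-property maps $T\rtimes P$ and $S\rtimes P$.
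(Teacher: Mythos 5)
Your proof is correct and follows essentially the same route as the paper: both assemble the square with $\iota$ on top, $\iota^r$ (from Theorem~\ref{embedding main result}) on the bottom, and the regular representations as vertical arrows, then deduce injectivity of $T\rtimes P$ from injectivity of $S\rtimes P$ (amenability of $\LL$) and of $\iota$ (the hypothesis). The paper states this more tersely, while you additionally verify commutativity on generators and note the automatic surjectivity of $T\rtimes P$; these are exactly the details the paper leaves implicit.
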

\begin{proof}
By our assumptions and Theorem \ref{embedding main result} we have a commutative diagram
$$
\begin{xy}
\xymatrix{
\NT_{\LL}(\KK) \ar[d]_{T\rtimes P}\ar[rr]^{\iota}&  & \NT(\LL)   \ar[d]^{S\rtimes P}
  \\
		\NT_{\LL}^r(\KK)  \ar[rr]^{\iota^r}	&         & \NT_{\LL}^r(\KK)
			}
  \end{xy}
$$
where $\iota$, $\iota^r$ and $S\rtimes P$ are injective. Hence $T\rtimes P$ is injective.
\end{proof}

\begin{cor}[Uniqueness Theorem II]\label{Uniqueness Theorem}
Suppose that the well-aligned ideal $\KK$ in $\LL$ is   essential and $\otimes 1$-nondegenerate. Assume also that either
$P^*=\{e\}$ or that the group $\{\otimes 1_{x}\}_{x\in P^*}$   is aperiodic on $\KK$. If $\LL$ is amenable, then $\KK$ is also amenable and for any Nica covariant representation $\Phi:\KK\to B(H)$ the following  statements are equivalent:
\begin{itemize}
\item[(i)] $\Phi$ satisfies condition $(C)$;
\item[(ii)]   $\overline{\Phi}$  generates the universal Nica-Toeplitz algebra $\NT(\LL)$, i.e.  $C^*(\overline{\Phi}(\LL))\cong\NT(\LL).$
\end{itemize}
Under the embedding $\NT_{\LL}(\KK) \hookrightarrow \NT(\LL)$,   the isomorphism in $(ii)$  restricts to an isomorphism $C^*(\Phi(\KK))\cong \NT_{\LL}(\KK)$.
\end{cor}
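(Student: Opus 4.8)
The plan is to assemble the statement from machinery already established, the only genuinely new input being the interplay between amenability of $\LL$ and the exotic property.

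\emph{Amenability of $\KK$.} Since $\KK$ is $\otimes 1$-nondegenerate, Lemma~\ref{lemma to be actually used} provides the embedding $\NT_{\LL}(\KK)\hookrightarrow\NT(\LL)$. Next I would upgrade the essentiality hypothesis from $\LL$ to $\LL_\KK$: the precategory $\LL_\KK$ of Lemma~\ref{the right tensor precatory generated by K} is a sub-$C^*$-precategory of $\LL$ containing $\KK$ as an ideal, so if $a\in\LL_\KK(\rho,\rho)$ annihilates $\KK(\rho,\rho)$ then, viewing $a\in\LL(\rho,\rho)$ and invoking essentiality of $\KK$ in $\LL$ (Definition~\ref{defn:essential ideal}), we get $a=0$; thus $\KK$ is essential in $\LL_\KK$. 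With $\LL$ amenable, $\KK$ essential in $\LL_\KK$, and the embedding in hand, Corollary~\ref{proposition actually to be used :0} yields that $\KK$ is amenable.

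\emph{The equivalence $(i)\Leftrightarrow(ii)$.} The standing hypotheses (essential, $\otimes 1$-nondegenerate, and either $P^*=\{e\}$ or aperiodicity of $\{\otimes 1_x\}_{x\in P^*}$ on $\KK$) are precisely those of Theorem~\ref{Nica Toeplitz uniqueness}. Hence $\overline{\Phi}$ is Nica covariant and conditions (i)--(iv) of that theorem are equivalent; in particular $\Phi$ satisfies $(C)$ if and only if $\overline{\Phi}\rtimes P$ is injective on the core $B_e^{i_\LL}$, and also if and only if $\overline{\Phi}$ generates an exotic Nica-Toeplitz $C^*$-algebra of $\LL$. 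The decisive observation is that amenability of $\LL$ forces $\ker(S\rtimes P)=\{0\}$ for the Fock representation $S$ of $\LL$; therefore the inclusion $\ker(\overline{\Phi}\rtimes P)\subseteq\ker(S\rtimes P)$ defining ``$\overline{\Phi}$ generates an exotic algebra of $\LL$'' is equivalent to injectivity of $\overline{\Phi}\rtimes P$, i.e. to condition (ii) that $C^*(\overline{\Phi}(\LL))\cong\NT(\LL)$. Concretely, $(i)\Rightarrow(ii)$ because (i) produces the exotic property via Theorem~\ref{Nica Toeplitz uniqueness} and amenability upgrades it to injectivity of $\overline{\Phi}\rtimes P$; conversely $(ii)$ makes $\overline{\Phi}\rtimes P$ injective, a fortiori injective on $B_e^{i_\LL}$, which is condition (iv) of Theorem~\ref{Nica Toeplitz uniqueness} and so returns (i).

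\emph{The restricted isomorphism.} Finally I would unwind the embedding. By Remark~\ref{remark about inclusions} the image $\iota(\NT_{\LL}(\KK))$ is the $C^*$-subalgebra of $\NT(\LL)$ generated by $i_\LL(\KK(p,q))$, $p,q\in P$, and the relations $(\overline{\Phi}\rtimes P)\circ i_\LL=\overline{\Phi}$ together with $\overline{\Phi}\vert_\KK=\Phi$ show that $\overline{\Phi}\rtimes P$ carries this subalgebra onto $C^*(\Phi(\KK))$. Since $\overline{\Phi}\rtimes P$ is the global isomorphism furnished by (ii), its restriction to $\iota(\NT_{\LL}(\KK))$ is an isomorphism onto $C^*(\Phi(\KK))$; composing with the isomorphism $\iota$ of $\NT_{\LL}(\KK)$ onto its image then gives $\NT_{\LL}(\KK)\cong C^*(\Phi(\KK))$. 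I expect the middle step to be the crux: it is amenability of $\LL$ that converts the kernel inclusion defining ``exotic'' into the honest injectivity required for (ii), while the first and third parts are bookkeeping with the already-established embeddings and equivalences.
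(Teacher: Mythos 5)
Your proposal is correct and follows essentially the same route as the paper's proof: the embedding $\NT_{\LL}(\KK)\hookrightarrow\NT(\LL)$ from Lemma~\ref{lemma to be actually used}, amenability of $\KK$ via Corollary~\ref{proposition actually to be used :0}, and the equivalence via Theorem~\ref{Nica Toeplitz uniqueness}, with amenability of $\LL$ turning the exotic (kernel-inclusion) property into genuine injectivity of $\overline{\Phi}\rtimes P$. The paper states this in three lines; you merely make explicit the details it leaves implicit (that essentiality of $\KK$ in $\LL$ passes to $\LL_\KK$, and the unwinding of the restricted isomorphism), all of which are verified correctly.
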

\begin{proof}
Lemma \ref{lemma to be actually used} implies that
$\NT_{\LL}(\KK)\hookrightarrow  \NT(\LL)$, hence  $\KK$ is amenable by Corollary~\ref{proposition actually to be used :0}.
The claim follows from Theorem \ref{Nica Toeplitz uniqueness}.
  \end{proof}

\section{Fell bundles and right-tensor $C^*$-precategories over groups}\label{groups section}
In this section we show that the theory of right-tensor $C^*$-precategories over groups  is equivalent  to the theory of Fell bundles over (discrete) groups. On one hand, we explain how results about $C^*$-algebras associated to Fell bundles  follow from our results. On the other hand, we will see the origin of some of our assumptions. 
Throughout this section we assume that $P$ equals a (discrete) group $G$.
\begin{prop}\label{right tensors over groups}
If $(\LL, \{\otimes 1_r\}_{r\in G})$ is a right-tensor $C^*$-precategory over $G$,  then the Banach spaces
$B_g:=\LL(g,e)$ for   $g\in G$ equipped with the operations
\begin{align}
\circ : B_g\times B_h \ni (b_g,b_h) &\longrightarrow b_g \circ b_h :=(b_g\otimes 1_h)b_h\in B_{gh}\label{multiplication definition}
\\
^\star: B_g \ni b_g &\longrightarrow b_g^{\star} :=b_g^*\otimes 1_{g^{-1}} \in B_{g^{-1}},\label{star operation}
\end{align}
for $g,h\in G$, give rise to a Fell bundle $\B^\LL:=\{B_g\}_{g\in G}$ isomorphic to the Fell bundle $\B^{\id}$ associated to $\LL$ and $\id$ as in Proposition \ref{gauge co-action}, and so
\begin{equation}\label{natural isomorphisms with cross sectional algebras}
C^*_r(\B^\LL)\cong \NT^r(\LL), \qquad C^*(\B^\LL)\cong \NT(\LL).
\end{equation}
 Conversely, for any Fell bundle  $\B=\{B_g\}_{g\in G}$ over a discrete group $G$ the spaces
\begin{equation}\label{category from Fell bundles}
\LL(g,h):=B_{gh^{-1}}, \qquad g,h \in G,
\end{equation}
with composition and involution inherited from $\B$ and  right-tensoring given by
\begin{equation}\label{right tenor from Fell bundles}
\otimes 1_x : \LL(g,h)=B_{gh^{-1}} \ni b \longrightarrow b\otimes 1_x:= b\in  B_{gx (hx)^{-1}}=\LL(gx,hx)
\end{equation}
for $g,h,x\in G$, yield a right-tensor $C^*$-precategory $\LL:=\{\LL(g,h)\}_{g,h\in G}$ such that $\B=\B^\LL$.
\end{prop}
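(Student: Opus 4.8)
The plan is to treat the two directions separately, dealing with the forward construction first since the converse is essentially its inverse. For the forward direction I would begin by checking that the operations \eqref{multiplication definition} and \eqref{star operation} are well defined: for $b_g\in\LL(g,e)$ and $b_h\in\LL(h,e)$ one has $b_g\otimes 1_h\in\LL(gh,h)$, so the composite $(b_g\otimes 1_h)b_h$ lands in $\LL(gh,e)=B_{gh}$, while $b_g^*\otimes 1_{g^{-1}}\in\LL(g^{-1},e)=B_{g^{-1}}$. I would then verify the Fell bundle axioms by direct computation, using throughout that each $\otimes 1_r$ is a $*$-preserving endomorphism (so $(ab)\otimes 1_r=(a\otimes 1_r)(b\otimes 1_r)$ and $(a\otimes 1_r)^*=a^*\otimes 1_r$) together with $(a\otimes 1_r)\otimes 1_s=a\otimes 1_{rs}$ and $\otimes 1_e=\id$. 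Associativity of $\circ$ reduces to the fact that both bracketings equal $(b_g\otimes 1_{hk})(b_h\otimes 1_k)b_k$; the relations $(b_g^\star)^\star=b_g$ and $(b_g\circ b_h)^\star=b_h^\star\circ b_g^\star$ are short manipulations collapsing tensor labels via $\otimes 1_{g^{-1}g}=\id$; and the computation $b_g^\star\circ b_g=((b_g^*\otimes 1_{g^{-1}})\otimes 1_g)b_g=b_g^*b_g$ shows that the $C^*$-identity $\|b_g^\star\circ b_g\|=\|b_g\|^2$ and positivity are inherited from the $C^*$-algebra $\LL(e,e)$, while submultiplicativity uses contractivity of $\otimes 1_h$ from Lemma \ref{proposition 1.5}.

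To identify $\B^\LL$ with $\B^{\id}$, recall that when $P=G$ is a group every element is invertible, so by Lemma \ref{lemma on automorphic actions on ideals} the universal representation satisfies $i_\LL(a)=i_\LL(a\otimes 1_x)$ for all $x\in G$. Taking $x=q^{-1}$ shows $i_\LL(\LL(p,q))=i_\LL(\LL(pq^{-1},e))$, whence the spectral subspace $B_g^{\id}=\clsp\{i_\LL(\LL(p,q)):pq^{-1}=g\}$ of Proposition \ref{gauge co-action} equals $i_\LL(\LL(g,e))=i_\LL(B_g)$; since $i_\LL$ is injective, hence isometric on each morphism space by Lemma \ref{proposition 1.5}, the map $b\mapsto i_\LL(b)$ is an isometric bijection $B_g\to B_g^{\id}$. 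That it respects the bundle operations is precisely Nica covariance of $i_\LL$: for $b_g\in\LL(g,e)$ and $b_h\in\LL(h,e)$ one has $eG\cap hG=hG$, so choosing $r=h$ in \eqref{Nica covariance} gives $i_\LL(b_g)i_\LL(b_h)=i_\LL((b_g\otimes 1_h)b_h)=i_\LL(b_g\circ b_h)$, and $i_\LL(b_g)^*=i_\LL(b_g^*)=i_\LL(b_g^*\otimes 1_{g^{-1}})=i_\LL(b_g^\star)$. Thus $\B^\LL\cong\B^{\id}$. The isomorphisms in \eqref{natural isomorphisms with cross sectional algebras} then follow by combining this with $\NT(\LL)\cong C^*(\B^{\id})$ from Proposition \ref{gauge co-action} and, for the reduced algebras, with the right-hand vertical isomorphism $C^*_r(\B^{\id})\cong\NT^r(\LL)$ supplied by Theorem \ref{amenability main result} applied to the controlled map $\id\colon G\to G$ (a group is a right LCM semigroup sitting inside itself).

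For the converse, given a Fell bundle $\B$ I would first observe that the right-tensoring \eqref{right tenor from Fell bundles} acts as the identity on the underlying Banach space, since $gx(hx)^{-1}=gh^{-1}$, and merely relabels objects. Checking that $\{\LL(g,h)\}$ is a $C^*$-precategory is then routine: axioms (p1)--(p4) are the Banach space, submultiplicative, involutive and $C^*$ properties of $\B$, where for $a\in\LL(g,h)=B_{gh^{-1}}$ one has $a^*a\in B_e=\LL(h,h)$; axiom (p5) holds because $a^*a$ is positive in the $C^*$-algebra $B_e$, so $a^*a=b^*b$ with $b=(a^*a)^{1/2}\in\LL(h,h)$. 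Because $\otimes 1_x$ is the identity on morphisms, the endomorphism and semigroup properties, namely $(a\otimes 1_x)(b\otimes 1_x)=(ab)\otimes 1_x$, $(a\otimes 1_x)^*=a^*\otimes 1_x$, $(a\otimes 1_x)\otimes 1_y=a\otimes 1_{xy}$ and $\otimes 1_e=\id$, are immediate from the Fell bundle structure and the bookkeeping of object labels. Finally, running the forward construction on this $\LL$ gives $B_g^\LL=\LL(g,e)=B_g$ with $b_g\circ b_h=(b_g\otimes 1_h)b_h=b_gb_h$ and $b_g^\star=b_g^*\otimes 1_{g^{-1}}=b_g^*$, so $\B^\LL=\B$.

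I expect the main obstacle to be the identification $\B^\LL\cong\B^{\id}$ in the forward direction, specifically the collapse of all morphism spaces $\LL(p,q)$ with fixed $pq^{-1}=g$ onto the single fiber $\LL(g,e)$ inside $\NT(\LL)$; this is exactly where the group structure is indispensable, through total comparability of $G$ and the invertibility forcing $i_\LL(a)=i_\LL(a\otimes 1_x)$, and where one must check that the choice of the LCM $r$ in Nica covariance is immaterial. The reduced isomorphism $C^*_r(\B^\LL)\cong\NT^r(\LL)$ is likewise not self-contained: it rests on Theorem \ref{amenability main result}, and I would confirm that the Fell bundle appearing there for $\theta=\id$ is literally the $\B^{\id}$ of Proposition \ref{gauge co-action}.
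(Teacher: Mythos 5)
Your proposal is correct and follows essentially the same route as the paper: both hinge on Lemma \ref{lemma on automorphic actions on ideals} to show that $a \mapsto i_\LL(a)$ gives isometric isomorphisms $B_g \to B_g^{\id}$ intertwining the operations \eqref{multiplication definition}--\eqref{star operation} with those of $\B^{\id}$, and both obtain \eqref{natural isomorphisms with cross sectional algebras} via Theorem \ref{amenability main result}. The only difference is one of economy: the paper transports the Fell bundle axioms from $\B^{\id}$ through these isomorphisms and leaves the converse to the reader, whereas you verify the axioms and the converse by direct computation — details the paper treats as routine.
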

\begin{proof} Let $(\LL, \{\otimes 1_r\}_{r\in G})$ be a right-tensor $C^*$-precategory over  $G$. By Lemma \ref{lemma on automorphic actions on ideals}, for every $g,p,q \in G$ the map $\otimes 1_g:\LL(p,q)\to \LL(pg,qg)$ is an isometric isomorphism and for every $a\in \LL(p,q)$ we have
$
i_\LL(a\otimes 1_g)=i_\LL(a).
$ This readily implies that the maps $B_g=\LL(g,e)\ni a \mapsto i_{\LL}(a)\in B_g^{\id}$, $g\in G$, are isometric isomorphisms. Under these maps, the  operations in the Fell bundle $\B^{\id}=\{\B_g^{\id}\}_{g\in G}$ translate to \eqref{multiplication definition}, \eqref{star operation}. Hence, $\B$ is a Fell bundle isomorphic to $\B^{\id}$. The isomorphisms  \eqref{natural isomorphisms with cross sectional algebras} follow  from  Theorem \ref{amenability main result}. The remaining claims of the proposition are straightforward and left to the reader.
\end{proof}
\begin{rem}\label{remark on isomorphism of right tensors}
To clarify relationships between the constructions in Proposition \ref{right tensors over groups}, let $\B^\LL=\{\LL(g,e)\}_{g\in G}$ denote  the Fell bundle associated to a right-tensor $C^*$-precategory $\LL$
and let  $\LL_\B=\{B_{gh^{-1}}\}_{g,h \in G}$ be the right-tensor $C^*$-precategory associated to a  Fell bundle $\B$. Then
$
\B^{\LL_\B}=\B$  and $\LL_{\B^\LL}\cong \LL$ as right-tensor $C^*$-precategories: the maps $\otimes 1_h: \LL(gh^{-1},e)\to\LL(g,h) $ for $g,h\in G$
  implement an
isomorphism $\LL_{\B^\LL}\cong \LL$ of $C^*$-precategories which intertwines right-tensorings.
\end{rem}
\begin{rem}\label{rem:about Fock representation of Fell bundles} When $P=G$ is a group and $\LL$ is any right-tensor $C^*$-precategory   over  $G$,
 then for every $t\in G$ condition \eqref{condition for reducing Focks} is trivially satisfied  (one may take $x=p^{-1}t$).
Thus, by Proposition \ref{Fock t-th representations and reduced objects}, $
\NT_\LL^r(\KK)\cong C^*(T^t(\KK)), \text{ for all }t\in G,$
with $T^t:\LL\to \LL(\FF_\LL^t)$ denoting the $t$-th Fock representation on $\LL$. Therefore,   if $\B=\{B_g\}_{g\in G}$ is a Fell bundle and $\LL=\{B_{gh^{-1}}\}_{g,h \in G}$ is the associated $C^*$-precategory from Proposition~\ref{right tensors over groups}, then the Fock representation $T^e$ of $\LL$, viewed as a representation of $\B$, coincides with the usual Fock representation of $\B$ introduced in \cite{Exel}.
\end{rem}
For right-tensor $C^*$-precategories over groups   well-aligned ideals have  nice structure.
\begin{lem}\label{alignment for groups} Let $\KK$ be a well-aligned ideal  in a right-tensor $C^*$-precategory $(\LL, \{\otimes 1_r\}_{r\in G})$ over a group $G$.
Then $\KK$ is automatically $\otimes 1$-invariant and $\otimes 1$-nondegenerate. Moreover,
$\KK$ is essential in $\LL$ if and only if $\KK(e,e)$ is essential in $\LL(e,e)$.
\end{lem}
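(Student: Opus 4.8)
The plan is to reduce all three assertions to Lemma~\ref{lemma on automorphic actions on ideals}, which says that for an invertible element $x\in P^*$ the functor $\otimes 1_x$ is an automorphism of $\LL$ that maps each $\KK(p,q)$ onto $\KK(px,qx)$ and hence restricts to an automorphism of $\KK$. Since here $P=G$ is a group, we have $G^*=G$, so this lemma is available for \emph{every} $r\in G$, and this single fact does essentially all the work.

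First I would dispatch $\otimes 1$-invariance. Fix $p,r\in G$. Applying Lemma~\ref{lemma on automorphic actions on ideals} with $x=r\in G=G^*$ shows that $\otimes 1_r$ carries $\KK(p,p)$ \emph{onto} $\KK(pr,pr)$; in particular $\KK(p,p)\otimes 1_r=\KK(pr,pr)\subseteq \KK(pr,pr)$, which is exactly the defining inclusion of $\otimes 1$-invariance from Definition~\ref{well-alignment definition}. Next, $\otimes 1$-nondegeneracy is vacuous in this setting: its defining equality $(\KK(p,p)\otimes 1_r)\KK(pr,pr)=\KK(pr,pr)$ is only required for $p\in P\setminus P^*$, and since $P=G$ is a group we have $P^*=G$, so $P\setminus P^*=\emptyset$ and there is nothing to verify.

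For the essentiality equivalence, the forward implication is immediate from Definition~\ref{defn:essential ideal}: essentiality of $\KK$ in $\LL$ demands in particular that $\KK(e,e)$ be an essential ideal of $\LL(e,e)$. For the converse I would again invoke Lemma~\ref{lemma on automorphic actions on ideals}: for each $g\in G$ the map $\otimes 1_g:\LL(e,e)\to\LL(eg,eg)=\LL(g,g)$ is an isomorphism of $C^*$-algebras (right tensoring is an endomorphism of the precategory, hence multiplicative and $*$-preserving, and $\otimes 1_{g^{-1}}$ is its inverse) and it carries the ideal $\KK(e,e)$ onto the ideal $\KK(g,g)$. Since essentiality of an ideal, i.e. the condition $\{a:a\KK(e,e)=0\}=\{0\}$, is preserved by any $C^*$-isomorphism matching up the two ideals, the assumed essentiality of $\KK(e,e)$ in $\LL(e,e)$ transports to essentiality of $\KK(g,g)$ in $\LL(g,g)$ for every $g\in G$. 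By Definition~\ref{defn:essential ideal} this is precisely essentiality of $\KK$ in $\LL$.

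There is no genuine obstacle here; the only point worth stating explicitly is that $\otimes 1_g$ is an honest $C^*$-algebra isomorphism on the diagonal fibre $\LL(e,e)$ (not merely a linear isometry), so that the ideal-essentiality property really does transfer. This is part of the content of Lemma~\ref{lemma on automorphic actions on ideals} together with the fact that each $\otimes 1_r$ is an endomorphism of the $C^*$-precategory $\LL$.
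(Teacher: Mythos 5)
Your proof is correct and takes essentially the same route as the paper: both arguments rest entirely on Lemma~\ref{lemma on automorphic actions on ideals}, using it to get $\otimes 1$-invariance and to transport essentiality of $\KK(e,e)$ in $\LL(e,e)$ to every diagonal fibre via the $C^*$-isomorphisms $\otimes 1_g:\LL(e,e)\to\LL(g,g)$. The only cosmetic difference is that you dispose of $\otimes 1$-nondegeneracy by noting the condition is vacuous (since $P\setminus P^*=\emptyset$ for a group), whereas the paper derives it directly from the same lemma; both are valid.
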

\begin{proof}
Lemma \ref{lemma on automorphic actions on ideals} gives directly that $\KK$ is  $\otimes 1$-invariant and $\otimes 1$-nondegenerate. If $\KK(e,e)$ is essential in $\LL(e,e)$, then $\KK(p,p)=\KK(e,e)\otimes 1_p$ is essential in $\LL(p,p)=\LL(e,e)\otimes 1_p$, which proves the second part of the assertion.
\end{proof}
\begin{lem}\label{Ideals in right tensors over groups} Let $(\LL, \{\otimes 1_r\}_{r\in G})$ be a right-tensor $C^*$-precategory and $\B^\LL=\{\LL(g,e)\}_{g\in G}$ the associated Fell bundle. Then there is a bijective correspondence between ideals $\I=\{I_g\}_{g\in G}$ in $\B$ and well-aligned ideals $\KK$ in $\LL$, given  by $I_g=\KK(g,e)$ for all $g\in G.$
\end{lem}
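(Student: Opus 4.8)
The plan is to exhibit the two assignments $\KK\mapsto\{\KK(g,e)\}_{g\in G}$ and $\I\mapsto\KK$, where $\KK(g,h):=I_{gh^{-1}}\otimes 1_h$, and to check that each is well defined and that they are mutually inverse. Recall that an ideal in the Fell bundle $\B^\LL=\{\LL(g,e)\}_{g\in G}$ is a family of closed subspaces $I_g\subseteq B_g=\LL(g,e)$ satisfying $B_g\circ I_h\subseteq I_{gh}$ and $I_g\circ B_h\subseteq I_{gh}$ for all $g,h\in G$; such a family is automatically selfadjoint, and in particular $I_e$ is a closed two-sided ideal of the $C^*$-algebra $B_e=\LL(e,e)$.

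First I would treat the direction $\KK\mapsto\I$. Given a well-aligned ideal $\KK$, set $I_g:=\KK(g,e)$. Using the product formula $b_g\circ x_h=(b_g\otimes 1_h)x_h$ together with the fact that $\KK$ is an ideal of $\LL$, one gets $B_g\circ I_h\subseteq\LL(gh,h)\,\KK(h,e)\subseteq\KK(gh,e)=I_{gh}$. For the other inclusion I would invoke Lemma~\ref{alignment for groups}, by which $\KK$ is $\otimes 1$-invariant, so that $\KK(g,e)\otimes 1_h\subseteq\KK(gh,h)$; hence $I_g\circ B_h\subseteq\KK(gh,h)\,\LL(h,e)\subseteq\KK(gh,e)=I_{gh}$. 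Thus $\{I_g\}_{g\in G}$ is a Fell-bundle ideal.

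The crux is the reverse direction. Given a Fell-bundle ideal $\I$, define $\KK(g,h):=I_{gh^{-1}}\otimes 1_h$; since $\otimes 1_h$ is an isometric automorphism of $\LL$ (Lemma~\ref{lemma on automorphic actions on ideals}) these are closed subspaces of $\LL(g,h)$. The key computation is the reindexing identity: for $x\in B_{\tau\sigma^{-1}}$ and $a\in\LL(\sigma,\rho)$, writing $a=a'\otimes 1_\rho$ with $a':=a\otimes 1_{\rho^{-1}}\in B_{\sigma\rho^{-1}}$, one has $(x\otimes 1_\sigma)\,a=(x\circ a')\otimes 1_\rho$, because $(x\circ a')\otimes 1_\rho=\bigl((x\otimes 1_{\sigma\rho^{-1}})\otimes 1_\rho\bigr)(a'\otimes 1_\rho)=(x\otimes 1_\sigma)(a'\otimes 1_\rho)$, using $(bc)\otimes 1_r=(b\otimes 1_r)(c\otimes 1_r)$ and $\sigma\rho^{-1}\rho=\sigma$. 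Applying this with $x\in I_{\tau\sigma^{-1}}$ and the ideal property $I_{\tau\sigma^{-1}}\circ B_{\sigma\rho^{-1}}\subseteq I_{\tau\rho^{-1}}$ yields $\KK(\tau,\sigma)\LL(\sigma,\rho)\subseteq I_{\tau\rho^{-1}}\otimes 1_\rho=\KK(\tau,\rho)$, and the symmetric computation with $B_{\tau\sigma^{-1}}\circ I_{\sigma\rho^{-1}}\subseteq I_{\tau\rho^{-1}}$ gives $\LL(\tau,\sigma)\KK(\sigma,\rho)\subseteq\KK(\tau,\rho)$; hence $\KK$ is an ideal of $\LL$. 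Well-alignment is then immediate: for $a\in\KK(p,p)=I_e\otimes 1_p$ and $b\in\KK(q,q)=I_e\otimes 1_q$ (since $pp^{-1}=qq^{-1}=e$), writing $a=\alpha\otimes 1_p$, $b=\beta\otimes 1_q$ with $\alpha,\beta\in I_e$, one obtains for any right LCM $r$ that $(a\otimes 1_{p^{-1}r})(b\otimes 1_{q^{-1}r})=(\alpha\beta)\otimes 1_r\in I_e\otimes 1_r=\KK(r,r)$, as $I_e$ is an ideal of $B_e$.

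Finally I would check that the two constructions invert each other. Starting from $\I$, the first construction recovers $\KK(g,e)=I_g\otimes 1_e=I_g$. Starting from a well-aligned $\KK$, the second construction produces $I_{gh^{-1}}\otimes 1_h=\KK(gh^{-1},e)\otimes 1_h$, and since $\otimes 1_h$ maps $\KK(gh^{-1},e)$ onto $\KK(g,h)$ by Lemma~\ref{lemma on automorphic actions on ideals}, this equals $\KK(g,h)$. The only real obstacle is the bookkeeping of the $\otimes 1$-shifts in the reindexing identity; once that identity is in place, everything else reduces to the defining inclusions of a Fell-bundle ideal and to the $\otimes 1$-invariance and surjectivity of $\otimes 1_h$ that hold over a group.
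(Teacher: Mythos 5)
Your proof is correct and takes essentially the same route as the paper: the same correspondence $\KK(g,h)=I_{gh^{-1}}\otimes 1_h$, with Lemma~\ref{alignment for groups} (well-alignment $\Leftrightarrow$ $\otimes 1$-invariance over a group) and the invertibility of the maps $\otimes 1_h$ doing the real work. The only presentational difference is that the paper first invokes Remark~\ref{remark on isomorphism of right tensors} to identify $\LL$ with $\LL_{\B^\LL}$, making the right-tensoring the identity so the correspondence becomes immediate, whereas you keep the $\otimes 1_h$-twists explicit and verify the resulting reindexing identity by hand --- the same computation, just unwound.
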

\begin{proof}  In view of Remark \ref{remark on isomorphism of right tensors}, we may assume that  $\LL=\{B_{gh^{-1}}\}_{g,h \in G}$ and the right tensoring is given by identity maps \eqref{right tenor from Fell bundles}. Now it is clear that for any $\otimes 1$-invariant ideal $\KK$ in $\LL$, $I_g=\KK(g,e)$ defines an ideal in $\B^\LL$. Conversely, let $\I$ be an ideal in $\B^\LL$ and put $\KK(g,h):=I_{gh^{-1}}$, for all $g,h\in G$. Clearly, $\KK$ is an ideal in $\LL$. It is also the smallest $\otimes 1$-invariant ideal in $\B$ satisfying $I_g=\KK(g,e)$ for all $g$. However, by Lemma \ref{alignment for groups} well-alignment and $\otimes 1$-invariance coincide. Hence this proves the assertion.
\end{proof}

As a first application of our results we obtain embedding  of cross-sectional algebras of ideals in Fell bundles.  The latter problem for sub-bundles is studied in \cite[Section 21]{exel-book}.

\begin{prop}\label{embeddings of cross-sectional algebras}
Given a right-tensor $C^*$-precategory $(\LL, \{\otimes 1_r\}_{r\in G})$ over a group $G$ and a well-aligned ideal $\KK$  in $\LL$, there are natural embeddings
$$
\NT^r(\KK)\hookrightarrow  \NT^r(\LL)\qquad \text{ and }\qquad  \NT(\KK)\hookrightarrow    \NT(\LL).
$$
Equivalently, for any ideal $\I$ in a Fell bundle  $\B$ over a discrete group $G$ we have
$$
C^*_r(\I)\hookrightarrow C^*_r(\B)\qquad \text{ and }\qquad   C^*(\I)\hookrightarrow C^*(\B).
$$
\end{prop}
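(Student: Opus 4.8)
The plan is to establish the two embeddings for right-tensor $C^*$-precategories and then invoke the dictionary between such structures and Fell bundles from Propositions \ref{right tensors over groups} and Lemma \ref{Ideals in right tensors over groups} to obtain the statements about $C^*_r(\I)$ and $C^*(\I)$. Since $P=G$ is a group, a crucial simplification is available: by Lemma \ref{alignment for groups} every well-aligned ideal $\KK$ in $\LL$ is automatically both $\otimes 1$-invariant ($\KK\otimes 1\subseteq\KK$) and $\otimes 1$-nondegenerate. The first of these means that $\NT_\LL(\KK)$ depends only on the intrinsic structure of $\KK$, so that $\NT(\KK)=\NT_\LL(\KK)$ by Remark \ref{remark about abstract characterization NTalg}, and similarly on the reduced side we will identify $\NT^r(\KK)$ with $\NT_\LL^r(\KK)$.

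First I would treat the reduced embedding $\NT^r(\KK)\hookrightarrow\NT^r(\LL)$. The idea is to apply Theorem \ref{embedding main result}, which produces exactly such an embedding $\iota^r:\NT_\LL^r(\KK)\hookrightarrow\NT^r(\LL)$ under the hypothesis that either $P$ is cancellative or $\KK$ is essential in $\LL_\KK$. Since $G$ is a group it is certainly cancellative, so the hypothesis of Theorem \ref{embedding main result} is satisfied unconditionally and the reduced embedding is immediate, once we note $\NT^r(\KK)\cong\NT_\LL^r(\KK)$. The latter identification follows because $\KK\otimes 1\subseteq\KK$ forces the Fock construction for $\KK$ regarded as a right-tensor $C^*$-precategory in its own right to agree with the restriction of the Fock construction in $\LL$; concretely the Fock modules $\FF_\KK$ coincide in the two pictures and $T:\KK\to\LL(\FF_\KK)$ is the same map.

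Next I would handle the full embedding $\NT(\KK)\hookrightarrow\NT(\LL)$. Here the key is Lemma \ref{lemma to be actually used}: because $\KK$ is $\otimes 1$-nondegenerate (by Lemma \ref{alignment for groups}), we have $\NT_\LL(\KK)\hookrightarrow\NT(\LL)$, i.e. the canonical map $\iota$ of \eqref{homomorphism to be embedding} is injective. Combined with $\NT(\KK)=\NT_\LL(\KK)$ from Remark \ref{remark about abstract characterization NTalg}, this gives the full embedding directly. Thus both embeddings reduce to results already established earlier in the paper, the only genuinely new points being the two identifications $\NT(\KK)\cong\NT_\LL(\KK)$ and $\NT^r(\KK)\cong\NT_\LL^r(\KK)$ afforded by the group hypothesis.

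Finally, to pass to the Fell bundle formulation, I would use Proposition \ref{right tensors over groups} and Lemma \ref{Ideals in right tensors over groups}. Given a Fell bundle $\B$ over $G$ with an ideal $\I$, form the associated right-tensor $C^*$-precategory $\LL=\LL_\B$ via \eqref{category from Fell bundles}; by Lemma \ref{Ideals in right tensors over groups} the ideal $\I$ corresponds to a well-aligned ideal $\KK$ in $\LL$ with $I_g=\KK(g,e)$, and conversely. The isomorphisms \eqref{natural isomorphisms with cross sectional algebras} then read $C^*_r(\B)\cong\NT^r(\LL)$ and $C^*(\B)\cong\NT(\LL)$, and applying the same to $\I$ (viewed as a Fell bundle over $G$ in its own right) gives $C^*_r(\I)\cong\NT^r(\KK)$ and $C^*(\I)\cong\NT(\KK)$. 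Transporting the two embeddings of the previous paragraphs across these isomorphisms yields $C^*_r(\I)\hookrightarrow C^*_r(\B)$ and $C^*(\I)\hookrightarrow C^*(\B)$, and one checks that on fibers the embedding is the identity $I_g\hookrightarrow B_g$, so the maps are the natural ones. I do not expect a serious obstacle here; the only care needed is in verifying that the identifications $\NT(\KK)\cong\NT_\LL(\KK)$ and $\NT^r(\KK)\cong\NT_\LL^r(\KK)$ are compatible with the embedding maps $\iota$ and $\iota^r$, which is a routine check on the dense spanning subspaces $i_\KK(\KK(p,q))$.
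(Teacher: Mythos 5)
Your proposal is correct and follows essentially the same route as the paper's proof: Theorem \ref{embedding main result} (with $P=G$ cancellative) for the reduced embedding, Lemmas \ref{alignment for groups} and \ref{lemma to be actually used} for the full embedding, and Proposition \ref{right tensors over groups} together with Lemma \ref{Ideals in right tensors over groups} for the Fell bundle reformulation. The only additions are your explicit verifications of the identifications $\NT(\KK)\cong\NT_{\LL}(\KK)$ and $\NT^r(\KK)\cong\NT^r_{\LL}(\KK)$, which the paper leaves implicit.
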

\begin{proof}
 Theorem \ref{embedding main result} gives  $\NT^r(\KK)\hookrightarrow  \NT^r(\LL)$.  By Lemmas \ref{alignment for groups} and \ref{lemma to be actually used} we obtain $\NT(\KK)\hookrightarrow   \NT(\LL)$. In view of Proposition \ref{right tensors over groups} and Lemma \ref{Ideals in right tensors over groups}, the second part of the assertion is equivalent to the first one.
\end{proof}
\begin{rem}
It is shown in \cite[Theorem 21.13]{exel-book} that the embedding $C^*(\I)\hookrightarrow C^*(\B)$ is valid, not only for ideals but also for hereditary  sub-bundles $\I$ of  $C^*(\B)$. In fact, Proposition \ref{embeddings of cross-sectional algebras} could be deduced from \cite[Theorem 21.13 and Proposition 21.3]{exel-book}
\end{rem}
As a next application we get a correct version of \cite[Proposition 3.15]{KS}, see Remark \ref{stupid me} below.
\begin{prop}\label{intersection property for cross-sectional algebras}
Let $\B=\{B_g\}_{g\in G}$ be  a Fell bundle and  let $\Psi:C^*(\B)\to \B(H)$ be a representation injective on $B_e$. The following conditions are equivalent
 \begin{itemize}
\item[(i)]  $
\ker \Psi \subseteq \ker \Lambda \text{ where }\Lambda: C^*(\B)\to C^*_r(\B)  \text{ is the regular representation};
$
\item[(ii)] $\overline{\Psi(\bigoplus_{g\in G} B_g)}$ is  topologically graded, that is we have
$\|b_e\|\leq \|\Psi(\bigoplus_{g\in G} b_g)\|$   for any $\bigoplus_{g\in G} b_g\in \bigoplus_{g\in G} B_g$;

\item[(iii)] condition (ii) is satisfied for  any positive $\bigoplus_{g\in G} b_g\in \bigoplus_{g\in G} B_g$.
\end{itemize}
\end{prop}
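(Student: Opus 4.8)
The plan is to run the whole argument through the two canonical conditional expectations attached to a Fell bundle, in close analogy with the proof of Proposition~\ref{preludium to Nica Toeplitz uniqueness}. Recall that $C^*(\B)$ carries a canonical conditional expectation $E:C^*(\B)\to B_e$ (coming from the dual coaction of $G$), determined by $E(b_g)=b_g$ for $g=e$ and $E(b_g)=0$ for $g\neq e$, and that $C^*_r(\B)$ carries a \emph{faithful} conditional expectation $E_r:C^*_r(\B)\to B_e$ with $E=E_r\circ\Lambda$, while $\Lambda$ is isometric on $B_e$. Faithfulness of $E_r$ then yields the kernel description $\ker\Lambda=\{a\in C^*(\B):E(a^*a)=0\}$, which will be the workhorse. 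Throughout I write $c=\bigoplus_{g\in G}b_g$ for a generic element of the dense $*$-subalgebra $\bigoplus_{g\in G}B_g$, so that $E(c)=b_e$.

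For $(i)\Rightarrow(ii)$ I would assume $\ker\Psi\subseteq\ker\Lambda$, obtain the induced $*$-homomorphism $\Psi_*:\overline{\Psi(\bigoplus_g B_g)}\to C^*_r(\B)$ with $\Psi_*\circ\Psi=\Lambda$, and set $E^\Psi:=E_r\circ\Psi_*$. Since $*$-homomorphisms and conditional expectations are contractive, $E^\Psi$ is contractive and $E^\Psi(\Psi(c))=E_r(\Lambda(c))=E(c)=b_e$; as $\Lambda$ is isometric on $B_e$ this gives $\|b_e\|=\|E^\Psi(\Psi(c))\|\le\|\Psi(c)\|$, which is $(ii)$. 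The implication $(ii)\Rightarrow(iii)$ is immediate. For $(iii)\Rightarrow(ii)$ I would feed a positive element into $(iii)$: given an arbitrary $c=\bigoplus_g b_g$, I apply $(iii)$ to the positive element $c^*c$, whose $e$-component is $E(c^*c)=\sum_g b_g^*b_g$; since $b_e^*b_e\le\sum_g b_g^*b_g$ in $B_e$, it follows that $\|b_e\|^2\le\|E(c^*c)\|\le\|\Psi(c^*c)\|=\|\Psi(c)\|^2$, giving $(ii)$.

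For the remaining implication $(ii)\Rightarrow(i)$, I would build a conditional-expectation-like map directly from $(ii)$. Condition $(ii)$ says exactly that the linear map $\Psi(c)\mapsto\Psi(b_e)$ is well-defined and contractive on the dense subalgebra $\Psi(\bigoplus_g B_g)$: indeed $\Psi(c)=0$ forces $\|b_e\|\le\|\Psi(c)\|=0$, so the map is well-defined, and $\|\Psi(b_e)\|=\|b_e\|\le\|\Psi(c)\|$ (using injectivity of $\Psi$ on $B_e$) shows it is contractive. It therefore extends to a contractive map $E^\Psi:\overline{\Psi(\bigoplus_g B_g)}\to\Psi(B_e)$. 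On the dense subalgebra one has $E^\Psi(\Psi(c))=\Psi(b_e)=\Psi(E(c))$, so by continuity $E^\Psi\circ\Psi=\Psi\circ E$ on all of $C^*(\B)$. Now for $a\in\ker\Psi$ we get $\Psi(E(a^*a))=E^\Psi(\Psi(a^*a))=0$; since $E(a^*a)\in B_e$ and $\Psi$ is injective on $B_e$, this gives $E(a^*a)=0$, whence $a\in\ker\Lambda$ by the kernel description. This proves $\ker\Psi\subseteq\ker\Lambda$.

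The steps requiring the most care—and the main obstacle—are the structural facts about the cross-sectional algebras that I am invoking: existence and faithfulness of the reduced conditional expectation $E_r$, the identity $E=E_r\circ\Lambda$, the resulting kernel formula $\ker\Lambda=\{a:E(a^*a)=0\}$, and injectivity of $\Lambda$ on $B_e$. These are standard for Fell bundles over discrete groups (see \cite{Exel}, \cite{exel-book}); once they are in place, the remainder is the elementary norm bookkeeping above. A secondary point is the well-definedness of the extension $E^\Psi$ in the last step, but this is guaranteed verbatim by inequality $(ii)$.
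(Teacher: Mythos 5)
Your proof is correct, but it takes a genuinely different route from the paper's. The paper proves this proposition by folding it into its general machinery: it forms the right-tensor $C^*$-precategory $\LL=\{B_{gh^{-1}}\}_{g,h\in G}$ of Proposition \ref{right tensors over groups}, observes that $\Psi$ restricted to the spaces $B_{gh^{-1}}$ is a Nica covariant representation $\Phi$ of $\LL$ with $\Psi=\Phi\rtimes P$, identifies $\NT(\LL)\cong C^*(\B)$ and $\NT^r(\LL)\cong C^*_r(\B)$, and then quotes Proposition \ref{preludium to Nica Toeplitz uniqueness} for (i)$\Leftrightarrow$(ii) and the self-adjoint reduction at the start of the proof of Theorem \ref{preludium to Nica Toeplitz uniqueness2} for (ii)$\Leftrightarrow$(iii). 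You instead argue directly from the classical structure theory of Fell bundles over discrete groups: existence of $E$, faithfulness of $E_r$, the identity $E=E_r\circ\Lambda$, isometry of $\Lambda$ on $B_e$, and the resulting formula $\ker\Lambda=\{a: E(a^*a)=0\}$. In fact your argument is precisely what the paper's Proposition \ref{preludium to Nica Toeplitz uniqueness} becomes when specialized to a group: since $G$ is cancellative, the transcendental core $B_\KK$ collapses to $B_e$ and the transcendental conditional expectation $E^T$ becomes the classical $E_r$, so the two proofs share the same technical core of conditional-expectation bookkeeping. What your route buys is self-containedness modulo standard literature facts (Exel's theory of topologically graded $C^*$-algebras), with no need for the precategory framework; your direct treatment of (iii)$\Rightarrow$(ii) via the positive element $c^*c$ is also cleaner than the paper's pointer, which strictly speaking addresses a reduction to self-adjoint rather than positive elements. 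What the paper's route buys is the demonstration -- which is the very point of its Section \ref{groups section} -- that the Fell-bundle statement is a corollary of the general Nica-Toeplitz theory, including the corrected version of the result from \cite{KS} discussed in Remark \ref{stupid me}. One small presentational caveat: your facts about $E$, $E_r$ and $\Lambda$ must be quoted from the Fell-bundle literature (as you do), not from the proposition being proved, and with that citation in place there is no circularity.
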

\begin{proof} Let $\LL=\{B_{gh^{-1}}\}_{g,h \in G}$ be the right-tensor $C^*$-precategory associated to $\B$, as in Proposition \ref{right tensors over groups}. It is straightforward  that  the maps $\Phi_{g,h}:=\Psi|_{B_{gh^{-1}}}$ for $g,h\in G$ form a representation $\Phi$ of $\LL$ such that  $\Phi_{gr,hr}(a\otimes 1_r)=\Phi_{g,h}(a)$ for $a\in \LL(g,h)$ and all $g,h,r \in G$. Accordingly,  $\Phi$ is  Nica covariant. Identifying $\NT(\LL)$ with $C^*(\B)$, via \eqref{natural isomorphisms with cross sectional algebras}, the core of $\NT(\LL)$ is $\LL(e,e)=B_e$ and $\Psi=\Phi\rtimes P$. Thus Proposition \ref{preludium to Nica Toeplitz uniqueness} applied to $\Phi$ gives the  equivalence of (i) and (ii).
Equivalence between (ii) and (iii) is explained in the beginning of the proof of Theorem \ref{preludium to Nica Toeplitz uniqueness2}.
\end{proof}
\begin{rem}\label{stupid me} Retain the notation of Proposition \ref{intersection property for cross-sectional algebras}.
Unless $\B$ is amenable, condition (i)  is stronger than condition (i'): $
\Lambda(\ker \Psi)\cap B_e=\{0\}$. Unfortunately, \cite[Proposition 3.15]{KS} says that counterparts of conditions (i'), (ii), (iii) are equivalent. In particular, the proof of implication  (i)$\Rightarrow$(ii) in \cite[Proposition 3.15]{KS} is incorrect. This mistake does not affect the remaining results of \cite{KS}.
\end{rem}
Let $\B$ be a Fell bundle over $G$ and   $\LL$  the associated right-tensor $C^*$-precategory, given by \eqref{category from Fell bundles} and \eqref{right tenor from Fell bundles}. It is immediate that
the action of $\{\otimes 1_g\}_{g\in G}$ on $\LL$ is aperiodic if and only if  $\B$ is aperiodic.  By \eqref{natural isomorphisms with cross sectional algebras}, amenability of $\B$ is equivalent to amenability of $\LL$.
Moreover, since  $g\leq h$ holds for any $g$, $h$ in  $G$, conditions (C) and (C') are void.
Thus Theorem \ref{preludium to Nica Toeplitz uniqueness2} reduces to the following  uniqueness-type result, which in view of Remark \ref{stupid me} is stronger than \cite[Corollary 4.3]{KS}.
\begin{prop}\label{uniqueness for cross-sectional algebras}
Let $\B=\{B_g\}_{g\in G}$ be  an aperiodic Fell bundle.  For any   representation $\Psi:C^*(\B)\to \B(H)$  injective on $B_e$ we have
$\ker \Psi \subseteq \ker \Lambda$.
\end{prop}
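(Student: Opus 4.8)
The plan is to translate this statement about a Fell bundle $\B$ into the language of right-tensor $C^*$-precategories and then invoke the machinery already developed, principally Theorem~\ref{preludium to Nica Toeplitz uniqueness2} under hypothesis~(1). First I would let $\LL=\{B_{gh^{-1}}\}_{g,h\in G}$ be the right-tensor $C^*$-precategory associated to $\B$ via \eqref{category from Fell bundles} and \eqref{right tenor from Fell bundles}, as in Proposition~\ref{right tensors over groups}. The key observation, noted just before the statement, is that the action $\{\otimes 1_g\}_{g\in G}$ on $\LL$ is aperiodic precisely because $\B$ is aperiodic, and that under the identification $\NT(\LL)\cong C^*(\B)$ from \eqref{natural isomorphisms with cross sectional algebras} the core $C^*$-subalgebra $B_e^{i_\LL}$ of $\NT(\LL)$ is exactly $\LL(e,e)=B_e$, while the regular representation $T\rtimes P$ corresponds to $\Lambda:C^*(\B)\to C^*_r(\B)$.

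Next I would view $\Psi:C^*(\B)\to\B(H)$ as the integrated form $\Phi\rtimes P$ of a Nica covariant representation $\Phi$ of $\LL$. Concretely, setting $\Phi_{g,h}:=\Psi|_{B_{gh^{-1}}}$ gives a representation of $\LL$ satisfying $\Phi_{gr,hr}(a\otimes 1_r)=\Phi_{g,h}(a)$, hence $\Phi$ is automatically Nica covariant (this is the same computation used in the proof of Proposition~\ref{intersection property for cross-sectional algebras}). The hypothesis that $\Psi$ is injective on $B_e$ says precisely that $\Phi$ is injective in the sense of Definition~\ref{representations of categories definition}, since $B_e=\LL(e,e)$. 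The crucial simplification is that for a group every pair $g,h\in G$ is comparable, so $g\leq h$ always holds; this makes condition~$(C)$ and Toeplitz covariance \eqref{Toeplitz condition} vacuously satisfied (there are no nontrivial collections $q_1,\dots,q_n$ with $p\not\geq q_i$). Thus $\Phi$ trivially satisfies condition~$(C)$.

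With these identifications in place, I would apply Theorem~\ref{preludium to Nica Toeplitz uniqueness2} to $\Phi$: since $P=G$ is a group, $\KK=\LL$ so $\KK\otimes 1\subseteq\KK$ holds, and the group of units is $P^*=G=P$, which is \emph{not} trivial, so hypothesis~(1) does not apply directly. Instead I would invoke hypothesis~(2): aperiodicity of $\{\otimes 1_x\}_{x\in P^*}$ on $\KK=\LL$ is exactly aperiodicity of $\B$, and $\LL$ is trivially essential in $\LL_\KK=\LL$. Since $\Phi$ satisfies condition~$(C)$, implication $(i)\Rightarrow(iii)$ of that theorem yields that $\Phi$ generates an exotic Nica-Toeplitz $C^*$-algebra, i.e. $\ker(\Phi\rtimes P)\subseteq\ker(T\rtimes P)$. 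Unwinding the identifications gives exactly $\ker\Psi\subseteq\ker\Lambda$, as desired.

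The main obstacle I anticipate is bookkeeping rather than mathematics: one must be careful that the notion of ``aperiodic Fell bundle'' from \cite[Definition 4.1]{KS} matches Definition~\ref{definition of aperiodicity for right-tensor categories} under the dictionary of Proposition~\ref{right tensors over groups}, and that the Fell bundle $\mathcal B^{(p)}$ of Remark~\ref{rem:to be used for aperiodicity} reproduces $\B$ itself (with $P^*=G$). The stated equivalence immediately before the proposition asserts this, so the verification is a routine unpacking of definitions; the substantive content is entirely carried by Theorem~\ref{preludium to Nica Toeplitz uniqueness2}.
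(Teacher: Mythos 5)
Your proposal is correct and takes essentially the same route as the paper: the paper likewise presents this proposition as a direct specialization of Theorem~\ref{preludium to Nica Toeplitz uniqueness2} (under hypothesis (2), with $\KK=\LL$ trivially essential in itself) via the dictionary of Proposition~\ref{right tensors over groups}, using that aperiodicity of $\B$ is the same as aperiodicity of the action $\{\otimes 1_g\}_{g\in G}$, that $\Phi_{g,h}:=\Psi|_{B_{gh^{-1}}}$ is automatically Nica covariant, that the core is $B_e$ with $T\rtimes P$ corresponding to $\Lambda$, and that conditions $(C)$ and $(C')$ become void since all elements of a group are comparable. The only cosmetic refinement is that, under your strictly vacuous reading of condition $(C)$, it no longer encodes injectivity of $\Phi$, so it is cleaner to invoke the implication (ii)$\Rightarrow$(iii) of that theorem (injectivity plus the void condition $(C')$), which is exactly what your hypothesis that $\Psi$ is injective on $B_e$ supplies.
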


\begin{rem} By  \cite[Theorem 9.11]{KM}, when $G=\Z$ or $G=\Z_n$ for a square free number $n>0$, aperiodicity of $\B$ is not only sufficient but also  necessary  for Proposition \ref{uniqueness for cross-sectional algebras} to hold,  at least when $B_e$ contains an essential ideal which  is separable or of Type I.
\end{rem}
\begin{ex}[Crossed products by group actions]\label{Crossed products by group actions}
Let $\alpha:G\to \Aut(A)$ be an action of a discrete group $G$ on a $C^*$-algebra $A$. Consider the $C^*$-precategory $\LL=\{\LL(g,h)\}_{g,h\in G}$ where for each $g,h\in G$,  $\LL(g,h)$ equals $A$ as a Banach space, and composition and involution in $\LL$ are given by multiplication and involution in $A$. Then $a \longrightarrow a \otimes 1_r:=\alpha_r(a)$ for $a\in A=\LL(g,h)$ and $g,h,r\in G$ defines a
right-tensoring on $\LL$. The corresponding Fell bundle $\B^\LL$  coincides with the semi-direct product bundle associated to $\alpha$, cf. \cite{exel-book}. Hence
$$
\NT^r(\LL)\cong A\rtimes_\alpha^r G\quad  \text{ and }\quad \NT(\LL)\cong A\rtimes_\alpha G.
$$
We have a natural bijective correspondence between $\alpha$-invariant ideals $I$ in $A$ and well-aligned ($\otimes 1$-invariant) ideals $\KK$ in $\LL$.
Moreover, assuming that $A$  contains an essential ideal which is separable or of Type I,  by \cite[Theorems 2.13, 8.1 and Corollary 9.10]{KM}, the following conditions are equivalent:
\begin{itemize}
\item[(i)] the action of $\{\otimes 1_g\}_{g\in G}$ on $\LL$ is aperiodic;
\item[(ii)] $\alpha$ is pointwise properly outer, i.e. each $\alpha_g$ for $g\in G\setminus \{e\}$ is properly outer;
\item[(iii)] the dual action $\widehat{\alpha}:G\to  \text{Homeo}(\widehat{A})$ is topologically free, i.e. for any $g_1,\dots ,g_n\in G\setminus \{e\}$ the set $\bigcap_{i=1}^n\{\pi\in \widehat{A}:\widehat{\alpha}_{g_i}(\pi)=\pi\}$ has empty interior.
\end{itemize}
Using this, one  recovers \cite[Theorem 1]{Arch_Spiel} from Proposition \ref{uniqueness for cross-sectional algebras}.
\end{ex}

\end{document}